\documentclass[a4paper,11pt]{article}
\usepackage[utf8]{inputenc}
\usepackage{amssymb,amsmath,mathrsfs,amsthm,bbm}
\usepackage{indentfirst}
\usepackage{geometry}

\usepackage{hyperref} 
\hypersetup{colorlinks=true,allcolors=blue}
\usepackage{hypcap}

\geometry{left=2.5cm,right=2.5cm,top=2.5cm,bottom=2.5cm}
\setlength{\parindent}{2em}

\newtheorem{thm}{Theorem}[section]
\newtheorem{defi}[thm]{Definition}
\newtheorem{prop}[thm]{Proposition}
\newtheorem{cor}[thm]{Corollary}
\newtheorem{exa}[thm]{Example}
\newtheorem{lem}[thm]{Lemma}
\newtheorem{rem}[thm]{Remark}

\makeatletter
\newtheorem*{rep@theorem}{\rep@title}
\newcommand{\newreptheorem}[2]{%
	\newenvironment{rep#1}[1]{%
		\def\rep@title{#2 \ref{##1}}%
		\begin{rep@theorem}}%
		{\end{rep@theorem}}}
	\makeatother


\newreptheorem{prop}{Proposition*}

\numberwithin{equation}{section}

\renewcommand{\rm}[1]{\mathrm{#1}}
\renewcommand{\cal}[1]{\mathcal{#1}}

\newcommand{\bb}[1]{\mathbb{#1}}

\newcommand{\bm}[1]{\mathbbm{#1}}

\newcommand{\dd}{\mathrm{d}}

\newcommand{\supp}{\rm{supp}}

\def\slr{\rm{SL}_2(\bb R)}
\def\pso{\rm{PSO}_2}
\def\so{\rm{SO}_2}
\def\projr{\bb P^1}
\def\linf{L^\infty_\xi\cal H^\gamma }
\def\Res{E}
\def\Cut{E_C}
\def\cartwo{\varLambda}
\def\sgn{\rm{sign}}
\def\mtxy{M_t^+(x,y)}
\def\ntint{I_t}
\def\glg{j}
\def\oexp{O_{\exp}}
\def\oexpe{O_{\exp,\epsilon}}
\def\lf{{Lip}}
\def\nte{[\frac{t}{\sigma_\mu+\epsilon}-1,\frac{t}{\sigma_\mu-\epsilon}]}
\def\ck{{Lip}}
\def\Inv{E_I}
\def\Car{E_P}
\def\Lip{{L^\infty Lip}}
\def\pexp{B_s}
\def\pexpo{B}

\begin{document}
\bibliographystyle{alpha}
\title{\textbf{Decrease of Fourier coefficients of stationary measures}}
\author{Jialun LI}
\date{}
\maketitle


\begin{abstract}
Let $\mu$ be a Borel probability measure on $\slr$ with a finite exponential moment, and assume that the subgroup $\Gamma_{\mu}$ generated by the support of $\mu$ is Zariski dense. Let $\nu$ be the unique $\mu-$stationary measure on $\bb P^1$. We prove that the Fourier coefficients $\hat{\nu}(k)$ of $\nu$ converge to $0$ as $|k|$ tends to infinity. Our proof relies on a generalized renewal theorem for the Cartan projection.
\end{abstract}

\section{Introduction}

Let $\mu$ be a Borel probability measure on $\slr$. The linear action of $\slr$ on $\bb R^2$ induces an action on $\bb P^1=\bb P(\bb R^2)$.
For a Borel probability measure $\nu$ on $\projr$, we define its convolution with $\mu$ by
\begin{equation*}
\mu*\nu=\int_{\slr} g_*\nu\dd\mu(g),
\end{equation*}
where $g_*\nu$ is the pushforward of $\nu$ by $g$.
The measure $\nu$ is called $\mu-$stationary if $\mu*\nu=\nu$. We add the condition that the subgroup $\Gamma_{\mu}$ generated by the support of $\mu$ is Zariski dense in $\slr$. In the case of $\slr$, Zariski density is equivalent to unsolvability. When $\Gamma_{\mu}$ is Zariski dense in $\slr$, there is a unique $\mu-$stationary measure (see \cite{furstenberg1963noncommuting},\cite{guivarc1985frontiere}).

This stationary measure is also called the Furstenberg measure. It was first considered by Furstenberg in the study of the noncommutative law of large numbers. The stationary measure takes part in the subtle properties of random products of matrices. Please see \cite{furstenberg1963noncommuting},\cite{guivarc1985frontiere} and \cite{bougerol1985products}.

In this paper, we are interested in the decay of the Fourier coefficients of stationary measures. The action of $\pso=\so/\{\pm Id\}$ on $\projr$ is transitive and free. We fix the point $x_o=[1:0]$ in $\projr$, then identify $\projr$ as the orbit space $\pso x_o$. 
As a group, $\pso$ is isomorphic to the circle $\bb T\simeq\bb R/\pi\bb Z$. This is given by the map from $\bb T$ to $\pso$,
\[\theta\mapsto \begin{pmatrix}
\cos\theta & -\sin\theta \\ \sin\theta & \cos\theta
\end{pmatrix}/\{\pm Id\}. \]
So we have a homeomorphism from $\bb T$ to $\bb P^1$, that is $\theta\mapsto[\cos\theta:\sin\theta]$. We can define the Fourier coefficients of the stationary measure $ \nu$ by the following formula
\[\widehat{ \nu}(k)=\int_{\bb T}e^{2ik\theta}\dd \nu(\theta). \]
We also demand that $\mu$ has a finite exponential moment, which means that there exists a constant $\epsilon_1>0$ such that $\int\|g\|^{\epsilon_1}\dd\mu(g)<\infty$. We will prove
\begin{thm}\label{thm:fourier}
Let $\mu$ be a Borel probability measure on $\slr$ with a finite exponential moment, and assume that the subgroup $\Gamma_{\mu}$ is Zariski dense. Then the $\mu-$stationary measure $\nu$ is a Rajchman measure, in other words
\begin{equation}
\widehat{ \nu}(k)\rightarrow 0\ \text{ as } |k|\rightarrow +\infty.
\end{equation}
\end{thm}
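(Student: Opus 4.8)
The plan is to show that $\widehat\nu(k) = \int e^{2ik\theta}\,d\nu(\theta)$ decays by exploiting self-similarity of $\nu$ combined with the expansion/contraction dynamics of the random walk on $\bb P^1$. Write $\nu = \mu^{*n}*\nu = \int (g)_*\nu\, d\mu^{*n}(g)$, so that
\begin{equation*}
\widehat\nu(k) = \int_{\slr} \int_{\bb T} e^{2ik\,\phi_g(\theta)}\,d\nu(\theta)\,d\mu^{*n}(g),
\end{equation*}
where $\phi_g$ denotes the action of $g$ on $\bb T\simeq\bb P^1$. The idea is that for a typical long word $g$ (of random length, chosen via a stopping time adapted to the Cartan projection), the map $\phi_g$ near its attracting point is very close to a Möbius contraction whose derivative has a definite ``frequency content'': $e^{2ik\phi_g(\theta)}$ oscillates in $\theta$ at a rate governed by $\|g\|^2$. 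So one wants to choose the stopping time $T = T_k$ so that $\log\|g\|$ is comparable to $\frac12\log|k|$, making the oscillation rate of $\phi_g$ of constant order; then the remaining decay must come from a stationary-phase / oscillatory-integral estimate, and this is where the nondegeneracy built into the renewal theorem for the Cartan projection enters.

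**First** I would set up the renewal-theoretic bookkeeping. Using the generalized renewal theorem for the Cartan projection (invoked in the abstract), I can control the distribution of the pair $(\|g\|, \text{attracting direction of }g)$ for $g$ distributed as $\mu^{*n}$ summed against a renewal density, so that $\log\|g\|$ is essentially equidistributed at scale $\log|k|$ with a smooth density on the relevant window, while the attracting point is distributed like $\nu$ and the ``angular part'' ($\pso$-component in a Cartan decomposition $g = k_g a_g k'_g$) is equidistributed in $\pso$. This last equidistribution of the $k_g$ part is the crucial structural input: it means that after summing, $\widehat\nu(k)$ is expressed as an average over a nearly-uniform rotation parameter, times a factor that is itself an oscillatory integral in the rotation angle.

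**Next**, with $g = k_g a_g k'_g$ and $a_g = \mathrm{diag}(e^{s},e^{-s})$ with $2s \approx \log|k|$, I would compute $\phi_g(\theta)$: the $k'_g$ part is a bounded rotation, $a_g$ acts as a strong contraction toward $x_o$ on $\bb P^1$ with derivative $\sim e^{-2s}\sim |k|^{-1}$ away from the repelling direction, and $k_g$ rotates the (now concentrated) image by the angle of $k_g$. So
\begin{equation*}
e^{2ik\phi_g(\theta)} = e^{2ik(\alpha_g + O(e^{-2s}\cdot(\ldots)))}
\end{equation*}
where $\alpha_g$ is the angle of $k_g$; the exponent is $2k\alpha_g$ plus a term of size $2k e^{-2s}\lesssim 1$. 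Integrating $e^{2ik\alpha_g}$ against the (nearly uniform on $\bb T$) distribution of $\alpha_g$ — more precisely against a smooth density provided by the renewal theorem — gives decay in $k$ by the Riemann–Lebesgue lemma applied to that smooth density, and the bounded correction term is handled by splitting it off and using its smallness plus a quantitative (Hölder or $C^1$) modulus of continuity of the renewal density. One then has to check that the error terms in the renewal theorem (the $O_{\exp}$-type remainders suggested by the macros \oexp, \oexpe in the preamble) are genuinely negligible, i.e. that the main term dominates for $|k|$ large.

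**The main obstacle** I expect is precisely the equidistribution of the $\pso$-angular part $k_g$ in the Cartan decomposition, uniformly enough and with a smooth-enough density, as $\log\|g\|$ ranges over a window around $\frac12\log|k|$. Zariski density gives non-concentration on proper subvarieties and hence some equidistribution in the limit, but turning this into a quantitative statement with a controlled density — strong enough that its Fourier transform tends to $0$ — is the heart of the matter, and is exactly what a \emph{generalized} renewal theorem for the Cartan projection (as opposed to the Iwasawa cocycle, which only sees one $\so$-factor) is designed to deliver. A secondary technical point is the interplay between the two regimes $|k|\to+\infty$ and the exponential-moment hypothesis: one needs the renewal estimates to hold with remainders that are uniform in the window and summable, so the finite exponential moment must be used to get exponential error terms rather than merely polynomial ones. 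Assembling these, one concludes $\widehat\nu(k)\to 0$, with in fact a quantitative (logarithmic, at least) rate falling out of the argument, though the theorem as stated only claims the qualitative Rajchman property.
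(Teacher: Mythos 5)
There is a genuine gap, and it sits exactly at the point you identify as your ``crucial structural input''. You claim that the renewal theorem for the Cartan projection delivers equidistribution of the $\pso$-component $k_g$ of $g=k_ga_gk_g'$ with a smooth (nearly uniform) density on $\pso$, so that $\int e^{2ik\alpha_g}$ decays by Riemann--Lebesgue. This cannot be true: the angle $\alpha_g$ determines the attracting point $x_g^M=k_ge_1$, and for $g$ distributed according to $\mu^{*n}$ the point $x_g^M$ is exponentially close to $gx$ and hence equidistributes toward the stationary measure $\nu$ itself, which is in general singular (e.g.\ supported on a Cantor set for finitely supported $\mu$). So the limiting law of $\alpha_g$ is essentially $\nu$, not Lebesgue, and your main term $\int e^{2ik\alpha_g}\,d(\text{law of }\alpha_g)$ is essentially $\widehat{\nu}(k)$ again: the argument is circular. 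Likewise your plan to ``split off'' the correction term $2ke^{-2s}(\ldots)$, which is of order $1$, discards the only quantity that actually oscillates in a variable with an absolutely continuous limit law.

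The paper's proof avoids this by first applying the Cauchy--Schwarz inequality (after the stopping-set identity of Proposition \ref{prop:stoptime}), which replaces $e^{i\xi\phi(gx)}$ by $e^{i\xi(\phi(gx)-\phi(gy))}$ for two independent points $x,y\sim\nu$. In this difference the troublesome constant $\alpha_g$ cancels, and the Main Approximation (Proposition \ref{prop:mainapp}) shows the phase is $\lambda\, e^{-2(\kappa(g)-t)}$ with $|\lambda|\gtrsim d(x,y)e^{s}$. The renewal/residue theorem for the Cartan projection (Proposition \ref{prop:rescar}) is then used for what it actually provides: the \emph{scalar} overshoot $\kappa(g)-t$ has an absolutely continuous (Lebesgue) limit law, asymptotically independent of $gx$ and $g^{-1}x$, and the elementary non-stationary-phase bound $|\int e^{i\lambda e^{-2u}}\,du|\lesssim 1/|\lambda|$ (Lemma \ref{lem:intlam}) produces the decay; pairs with $d(x,y)$ small are handled by the H\"older regularity \eqref{ineq:regsta} of $\nu$. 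So the missing ideas in your proposal are the passage to phase differences via Cauchy--Schwarz and the recognition that the usable smoothness lives in the residual Cartan projection, not in the $\pso$-angle.
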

\begin{rem}
	Fourier decay of measures on fractal sets and its applications have been studied in \cite{kaufman1980continued},\cite{queffelec2003analyse},\cite{jordan2013fourier} and \cite{bourgain2017fourier}. Our situation is much general and we introduce a quite different method.
\end{rem}
Being a Rajchman measure is a local property (see \cite{kechris1987descriptive}): Indeed, let $\nu_1$ be a Rajchman measure. If $\nu_2$ is absolutely continuous with respect to $\nu_1$, then $\nu_2$ is also a Rajchman measure. Conversely, the sum of two Rajchman measures is a Rajchman measure. 

In this spirit, we have the following theorem:
\begin{thm}\label{thm:main}
Let $\mu$ be a Borel probability measure on $\slr$ with a finite exponential moment, and assume that the subgroup $\Gamma_{\mu}$ is Zariski dense. Let $\nu$ be the unique $\mu-$stationary measure. Assume that $r$ is a $C^1$ function on $\bb P^1$ and $\phi$ is a $C^2$ function on $\bb P^1$ such that $|\phi'|\geq 1/C_1>0$ on the support of $r$ and 
\[ \|r\|_{C^1},\|\phi\|_{C^2}\leq C_1 \text{ for some constant  }C_1>0. \]
Then we have
\begin{equation}
\int e^{i\xi\phi(x)}r(x)\dd \nu(x)\rightarrow 0\text{ as }|\xi|\rightarrow \infty,
\end{equation}
uniformly with respect to $C_1$.
\end{thm}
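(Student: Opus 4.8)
\emph{Overview.} The plan is to push the oscillatory integral onto the random walk generated by $\mu$, stop the walk at the first time the Cartan projection $\log\|g_n\|$ exceeds a threshold $t$ of order $\log|\xi|$, and then extract decay from the resulting oscillatory renewal sum via the generalized renewal theorem for the Cartan projection, the genuine cancellation coming from a non-concentration property of $\nu$ forced by Zariski density.

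\emph{Step 1: transfer to the walk.} Since $\nu=\mu^{*n}*\nu$ for every $n$, we have $\int e^{i\xi\phi}r\,\dd\nu=\int\!\!\int e^{i\xi\phi(gx)}r(gx)\,\dd\mu^{*n}(g)\,\dd\nu(x)$. Rather than fixing $n$, I would work with the renewal/resolvent operator $\sum_{n\ge0}\mu^{*n}$ and split it at the stopping time $\tau_t=\inf\{n:\log\|g_n\|\ge t\}$. This reduces the problem to controlling, uniformly in $\xi$, an expression of the form $\int\bigl(\int e^{i\xi\phi(gx)}r(gx)\,\dd Q_t(g)\bigr)\dd\nu(x)$, where $Q_t$ is the law of $g_{\tau_t}$; the overshoot $\log\|g_{\tau_t}\|-t$ has, up to an $\oexp(t)$ error, a limiting distribution with mean of order $\sigma_\mu$, which is exactly what the renewal theorem provides, and assembling the geometric renewal series costs only powers of $t\asymp\log|\xi|$.

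\emph{Step 2: the contracting picture and the phase.} For $g$ with $\log\|g\|=s$, the projective map $g$ sends $\projr$ minus an $O(e^{-2s})$--neighbourhood of the repelling point $x_g^-$ into an $O(e^{-2s})$--neighbourhood of the attracting point $x_g^+$, with $\|Dg\|\asymp e^{-2s}$ there. On that good set one can write $\phi(gx)=\phi(x_g^+)+e^{-2s}\,\Psi_g(x)$ with $\Psi_g$ a $C^2$--bounded function (bounds depending on $C_1$), so $e^{i\xi\phi(gx)}=e^{i\xi\phi(x_g^+)}\,e^{i\xi e^{-2s}\Psi_g(x)}$; choosing $t$ so that $|\xi|e^{-2t}\asymp 1$, the second factor is a smooth function of $x$ of bounded frequency, and the bad $e^{-2s}$--neighbourhood of $x_g^-$ contributes negligibly because $\nu$ gives Hölder--small mass to small balls. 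Hence matters reduce to the oscillatory sum $\int e^{i\xi\phi(x_g^+)}\,\tilde r_g(x)\,\dd Q_t(g)$, whose phase is $\xi$ times the \emph{value of $\phi$ at the attracting point}.

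\emph{Step 3: cancellation, and the main obstacle.} Decay must now come from the fact that the phases $\xi\phi(x_g^+)$ spread out: peeling off one extra step $g=g'h$ with $h\sim\mu$, the map $h\mapsto x_{g'h}^+$ has derivative $\asymp e^{-2\log\|g'\|}$, and by Zariski density the two--step family of such maps is genuinely two--parameter, with no affine or arithmetic relation forced among the directions $x_g^+$ and the Cartan rates $\log\|g\|$. This is precisely the non-local-integrability input that makes the complexified transfer/renewal operator at parameter $\sigma_\mu+ib$ have spectral radius bounded away from $1$ for $|b|$ large, uniformly — a Dolgopyat-type estimate, which is the technical heart encoded in the generalized renewal theorem for the Cartan projection, and where the finite exponential moment is used (to localise $\log\|g_n\|$ and produce the exponential error terms). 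Feeding the high-frequency, non-degenerate datum $e^{i\xi\phi}r$ — non-degeneracy being exactly the hypothesis $|\phi'|\ge 1/C_1$, which lets one rescale $\phi$ so that $|\phi'|\asymp 1$ and so that $t$ and all constants depend on $C_1$ only in a controlled way — into that theorem yields $\bigl|\int e^{i\xi\phi}r\,\dd\nu\bigr|\le C(C_1)|\xi|^{-\delta}$ for some $\delta>0$, hence the asserted $o(1)$, uniformly over all admissible pairs $(r,\phi)$. The hardest step is the $b$--uniform spectral gap for the complex operator; a secondary, routine point is the bookkeeping of the overshoot errors and of the $C_1$--dependence. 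Finally, Theorem~\ref{thm:fourier} follows by a partition of unity on $\projr$: on each arc write $e^{2ik\theta}$ with $\theta$ a smooth coordinate, apply Theorem~\ref{thm:main} with $\xi=k$, $\phi=2\theta$ and $r$ a cutoff, and sum.
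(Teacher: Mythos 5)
There is a genuine gap, and it sits exactly where you locate the ``technical heart'': Step 3 assumes that the generalized renewal theorem for the Cartan projection encodes a Dolgopyat-type, frequency-uniform spectral gap for the complex transfer operator $P(\sigma_\mu+ib)$ with $|b|$ large. It does not. The renewal input (Proposition \ref{prop:rescar}, built on Proposition \ref{prop:invtran}) only uses non-arithmeticity to show $(I-P(z))^{-1}$ has a single pole at $z=0$ on the imaginary axis, and its error term is $O_K(\delta+O_\delta/t)\,|f|_{Lip}$ --- a constant depending on the Lipschitz norm of the test function, with no uniformity in an external frequency parameter. A uniform spectral gap of the kind you invoke is known only for Schottky semigroups (Naud), and the polynomial decay $|\xi|^{-\delta}$ you claim to deduce is explicitly stated in Remarks \ref{rem:unispe} and \ref{rem:poly} to be open in the general Zariski-dense setting; the theorem itself asserts only $o(1)$. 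Moreover, without some such input your Step 2--3 reduction is circular: the law of the attracting point $x_g^+$ (equivalently of $gx$) under the renewal measure converges to $\nu$ itself, so the main term of $\int e^{i\xi\phi(x_g^+)}\tilde r_g(x)\,\dd Q_t(g)$ is essentially $\int e^{i\xi\phi}\tilde r\,\dd\nu$ --- the quantity you set out to bound --- and the renewal error term cannot beat it because it is not uniform in $\xi$.

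The paper escapes this circularity by a step your proposal omits: after the stopping identity (Proposition \ref{prop:stoptime}) one applies Cauchy--Schwarz in $x$, see \eqref{ineq:causch}, reducing to the two-point quantity $\int_{X^2}\int e^{i\xi(\phi(gx)-\phi(gy))}\cdots$. The phase \emph{difference} is then $\approx \lambda\, e^{-2(\kappa(g)-t)}$ with $\lambda\asymp e^{s}d(x,y)$, i.e.\ a fixed, bounded-Lipschitz-norm oscillation in the overshoot variable $u=\kappa(g)-t$ alone. The renewal theorem (applied to this fixed test function, so its non-uniform error is harmless) shows $u$ equidistributes with respect to an absolutely continuous limit law, and the cancellation is then elementary: $\int e^{i\lambda e^{-2u}}\dd u=O(1/|\lambda|)$ (Lemma \ref{lem:intlam}), with the near-diagonal $d(x,y)\leq e^{-\epsilon_3 s}$ discarded by the H\"older regularity of $\nu$. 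So the decay comes from the absolute continuity of the overshoot distribution hitting a large deterministic phase, not from a high-frequency spectral estimate. To repair your argument you would either need to supply the Dolgopyat estimate (not available here), or insert the Cauchy--Schwarz/two-point reduction before invoking the renewal theorem.
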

This is the main theorem of this paper. It will be proved in Section \ref{secdecfou}.
\begin{cor}\label{cor:main}
Let $\mu$ be a Borel probability measure on $\slr$ with a finite exponential moment, and assume that the subgroup $\Gamma_{\mu}$ is Zariski dense. Let $\nu$ be the unique $\mu-$stationary measure. Then for a $C^2-$diffeomorphism $\phi$ on $\bb P^1$, the pushforward of the stationary measure $\phi_*\nu$ is a Rajchman measure. In other words
\begin{equation}
\widehat{\phi_*\nu}(k)\rightarrow 0 \text{ as } |k|\rightarrow +\infty.
\end{equation}
\end{cor}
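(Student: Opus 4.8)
The plan is to deduce the corollary from Theorem \ref{thm:main} by a localization argument, exactly in the spirit of the remark that being Rajchman is a local property. Write $\widehat{\phi_*\nu}(k)=\int_{\bb P^1}e^{2ik\phi(x)}\dd\nu(x)$ (identifying $\bb P^1$ with $\bb T$ via $\theta\mapsto[\cos\theta:\sin\theta]$, so that the character $e^{2ik\theta}$ pulled back along $\phi$ becomes $e^{2ik\phi(x)}$, a $C^2$ phase since $\phi$ is a $C^2$-diffeomorphism of $\bb P^1$). Setting $\xi=2k$, this is exactly an oscillatory integral of the form treated in Theorem \ref{thm:main}, with phase $\phi$ and amplitude $r\equiv 1$. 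The only hypothesis of Theorem \ref{thm:main} that is not immediate is the lower bound $|\phi'|\geq 1/C_1>0$ \emph{on the support of $r$}: a global diffeomorphism of the circle has nonvanishing derivative everywhere by compactness, so $|\phi'|\geq c>0$ on all of $\bb P^1$, and likewise $\|\phi\|_{C^2}<\infty$; taking $C_1=\max\{c^{-1},\|\phi\|_{C^2},1\}$ and $r\equiv 1$ (so $\|r\|_{C^1}=1\le C_1$) puts us squarely in the scope of the theorem.

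Concretely, I would first record that since $\phi\colon\bb P^1\to\bb P^1$ is a $C^2$-diffeomorphism and $\bb P^1$ is compact, both $\inf|\phi'|$ and $\|\phi\|_{C^2}$ are finite and positive; fix $C_1$ accordingly. Then for any $k\in\bb Z\setminus\{0\}$ apply Theorem \ref{thm:main} with this $C_1$, with $r\equiv 1$, $\phi$ as given, and $\xi=2k$, to get
\[
\widehat{\phi_*\nu}(k)=\int_{\bb P^1}e^{2ik\phi(x)}\dd\nu(x)=\int e^{i\xi\phi(x)}r(x)\dd\nu(x)\xrightarrow{\ |k|\to\infty\ }0,
\]
which is the claim. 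One technical point to handle with care: $\phi$ is a diffeomorphism of the circle, so "$\phi$ followed by the identification $\bb P^1\simeq\bb T$" need only be a $C^2$ map $\bb P^1\to\bb R/\pi\bb Z$, not $\bb R$-valued; to apply Theorem \ref{thm:main} literally one works in a local chart, or equivalently replaces $\bb P^1$ by a finite cover by arcs on each of which a $C^2$ lift of $\phi$ to $\bb R$ exists, applies the theorem on each arc (with a $C^1$ cutoff $r$ supported there and $\sum r\equiv 1$, a partition of unity), and sums finitely many terms each going to $0$. Since the theorem is uniform in $C_1$, the finitely many cutoffs cause no trouble.

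The main (and only real) obstacle is this bookkeeping between the circle-valued phase and the $\bb R$-valued phase required by Theorem \ref{thm:main}, together with the need to chop the amplitude into a partition of unity subordinate to a cover by arcs on which $\phi$ lifts; everything else is a direct invocation of Theorem \ref{thm:main}. No new ideas beyond the theorem are needed, which is why this is stated as a corollary.
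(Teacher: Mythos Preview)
Your proposal is correct and coincides with the paper's own argument: the paper also identifies $\bb P^1\simeq\bb T$, takes a partition of unity $r_1+r_2=1$ with supports contained in arcs, lifts $\phi$ on each arc to an $\bb R$-valued $C^2$ function, and applies Theorem \ref{thm:main} to each piece. Your initial observation that one cannot apply the theorem directly with $r\equiv 1$ because $\phi$ is circle-valued rather than $\bb R$-valued is exactly the point, and your partition-of-unity fix is the paper's proof (the paper simply uses two pieces, which suffices on the circle).
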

Theorem \ref{thm:fourier} is a special case of this corollary, where $\phi$ is the identity function.
\begin{proof}[Proof of Corollary \ref{cor:main} from Theorem \ref{thm:main}]
By the identification $\bb P^1\simeq \bb T$, we may consider all the objects as living on $\bb T$. Take a partition of unity of $\bb T$: let $r_1,r_2$ be non negative Lipschitz functions on $\bb T$ such that $r_1+r_2=1$, and the supports of $r_1,r_2$ are connected subintervals of $\bb T$. For $j=1,2,$ we can lift the function $\phi|_{\supp r_j}$ to a function $\phi_j$ from $\supp r_j$ to $\bb R$. Then
\begin{align*}
&\int_{\bb T}e^{2ik \phi(\theta)}\dd \nu(\theta)=\int_{\bb T}(r_1(\theta)+r_2(\theta))e^{2ik\phi(\theta)}\dd \nu(\theta)=\int_{\bb T}\left(e^{2ik \phi_1(\theta)}r_1(\theta)+e^{2ik\phi_2(\theta)}r_2(\theta)\right)\dd \nu(\theta).
\end{align*}
Since $\phi$ is a diffeomorphism, the functions $\phi_j,r_j$ satisfy the conditions in Theorem \ref{thm:main}. We use this theorem twice to conclude.
\end{proof}
Let us use another coordinate system on $\bb P^1$. We identify $\bb P^1$ with $\bb R\cup\{\infty \}$ through the map $\varphi(x)=v_1/v_2$, where $x=\bb Rv$ is a point in $\bb P^1$. Then the action of $\slr$ on $\bb P^1$ reads as the M\"obius action, that is for $r\in\bb R\cup\{\infty \}$ and $g=\begin{pmatrix}
a & b\\ c & d
\end{pmatrix}$ in $\slr$, we have $gr=\frac{ar+b}{cr+d}$.

If the support of a $\mu-$stationary measure $\nu$ does not contain $[1:0]$, then $\varphi_*\nu$ is a stationary measure on $\bb R$. From Theorem \ref{thm:main}, we get
\begin{cor}
Let $\mu$ be a Borel probability measure on $\slr$ with a finite exponential moment, and assume that the subgroup $\Gamma_{\mu}$ is Zariski dense. Let $\nu$ be the unique $\mu-$stationary measure. If the support of $\nu$ does not contain $[1:0]$, then the $\mu-$stationary measure $\varphi_*\nu$ is a Rajchman measure on $\bb R$. In other words
\begin{equation}
\widehat{\varphi_*\nu}(\xi)=\int_{\bb P^1}e^{i\varphi(x)\xi}\dd\nu(x)\rightarrow 0 \text{ as } |\xi|\rightarrow +\infty.
\end{equation}
\end{cor}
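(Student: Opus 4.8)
The plan is to deduce this corollary from Theorem~\ref{thm:main} by the same localization-and-cutoff device used for Corollary~\ref{cor:main}. The only point that needs attention is that the coordinate map $\varphi$ is not globally $C^2$ on $\bb P^1$: in the angular coordinate $\theta\mapsto[\cos\theta:\sin\theta]$ one has $\varphi([\cos\theta:\sin\theta])=\cot\theta$, which blows up at $\theta=0$, i.e.\ at the point $[1:0]$. The hypothesis $[1:0]\notin\supp\nu$, together with compactness of $\supp\nu$, says exactly that this pole sits at positive distance from the support, so near $\supp\nu$ the function $\varphi$ is smooth with controlled derivatives, and one may replace it globally by a genuine $C^2$ function on $\bb P^1$ agreeing with it near $\supp\nu$.

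Concretely, I would first choose open sets $V_1,V_2$ with $\supp\nu\subset V_1$, $\overline{V_1}\subset V_2$ and $\overline{V_2}\subset\bb P^1\setminus\{[1:0]\}$, together with $C^\infty$ functions $r,\chi$ on $\bb P^1$ such that $r\equiv1$ on $\supp\nu$ with $\supp r\subset V_1$, and $\chi\equiv1$ on $\overline{V_1}$ with $\supp\chi\subset V_2$. Set $\phi:=\chi\varphi$ on $V_2$ and $\phi:=0$ elsewhere. Since $\varphi$ is $C^\infty$ on $\bb P^1\setminus\{[1:0]\}\supset\overline{V_2}$ and $\chi$ is compactly supported inside $V_2$, the function $\phi$ is $C^\infty$ on all of $\bb P^1$, in particular $C^2$. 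On $\supp r\subset V_1$ one has $\chi\equiv1$, hence $\phi=\varphi$ and $\phi'=\varphi'$ there; in the coordinate $\theta$ we have $|\varphi'|=|{-}\csc^2\theta|\geq 1$, so $|\phi'|\geq 1/C_1$ on $\supp r$ with $C_1:=\max\{1,\|r\|_{C^1},\|\phi\|_{C^2}\}<\infty$. Thus $r$ and $\phi$ meet every hypothesis of Theorem~\ref{thm:main}.

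Applying Theorem~\ref{thm:main} yields $\int e^{i\xi\phi(x)}r(x)\dd\nu(x)\to0$ as $|\xi|\to\infty$. Since $\supp\nu\subset V_1$, on which both $r\equiv1$ and $\chi\equiv1$ (so that $\phi=\varphi$), the integrand equals $e^{i\xi\varphi(x)}$ for $\nu$-almost every $x$, and therefore $\widehat{\varphi_*\nu}(\xi)=\int_{\bb P^1}e^{i\xi\varphi(x)}\dd\nu(x)=\int e^{i\xi\phi(x)}r(x)\dd\nu(x)\to0$, as claimed. There is no genuine obstacle beyond this bookkeeping: the substance is entirely contained in Theorem~\ref{thm:main}, and the role of the assumption $[1:0]\notin\supp\nu$ is precisely to guarantee that $\varphi$ can legitimately serve as the oscillatory phase $\phi$, namely that it is $C^2$ with non-vanishing derivative on a neighborhood of the support.
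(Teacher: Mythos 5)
Your argument is correct and is exactly the route the paper intends: the paper gives no explicit proof of this corollary beyond the phrase ``From Theorem \ref{thm:main}, we get,'' and your cutoff construction (using that $\supp\nu$ is compact and at positive distance from the pole $[1:0]$ of $\varphi=\cot\theta$, so that $\phi=\chi\varphi$ is globally $C^2$ with $|\phi'|=\csc^2\theta\geq 1$ on $\supp r$) supplies precisely the bookkeeping the paper leaves implicit. No gaps.
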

\begin{exa}[Solvable case]\label{exa:solvable}
For stationary measures on $\bb R$, consider the following
$$\mu=\frac{1}{2}(\delta_{g_1}+\delta_{g_2})=\frac{1}{2}\delta_{\begin{pmatrix}
	\sqrt{\lambda} & -1/\sqrt{\lambda}\\ 0 & 1/\sqrt{\lambda}
	\end{pmatrix}}+\frac{1}{2}\delta_{\begin{pmatrix}
	\sqrt{\lambda} & 1/\sqrt{\lambda} \\ 0 & 1/\sqrt{\lambda}
	\end{pmatrix}},$$
where $\lambda\in(0,1)$. Then the actions of $g_1,g_2$ are given by $g_1r=\lambda r-1,\,g_2r=\lambda r+1$ for $r\in\bb R$. By definition, a $\mu-$stationary measure $\nu$ on $\bb R$ must satisfy the equation
\begin{equation}\label{equ:stationary}
\nu=\mu*\nu=\frac{1}{2}((g_1)_*\nu+(g_2)_*\nu).
\end{equation}
Let $X_0,X_1,\dots$ be i.i.d. random variables such that $\bb P(X_0=1)=\bb P(X_0=-1)=1/2.$ Let $\nu_\lambda$ be the Bernoulli convolution with parameter $\lambda$, defined to be the distribution of $\sum_{j\geq 0}X_j\lambda^j$. The measure $\nu_\lambda$ satisfies \eqref{equ:stationary}, thus it is a $\mu$-stationary measure on $\bb R$. In \cite{erdos1939family}, Erd\"{o}s proved that when $\lambda^{-1}$ is a Pisot number, the Fourier transform of $\nu_{\lambda}$ does not converge to zero. In this example $\Gamma_{\mu}$ is solvable, so the Zariski density condition is necessary in the theorem.
\end{exa}
\begin{rem}
	1. A similar result for Bernoulli convolutions was obtained in \cite{kaufman76}. Kaufman proved that for Bernoulli convolutions $\nu_\lambda$, if $\lambda^{-1}$ is not a Pisot number, then it satisfies the same conclusion as in Corollary \ref{thm:main}. That is, the pushforward measure $\phi_*\nu_\lambda$ is a Rajchman measure, where $\phi$ is a $C^1$ function on $\bb R$ with $\phi'>0$ everywhere.
	
	\noindent2.Our result for the measure $\nu$ is stronger than being a Rajchman measure. Indeed, for a probability measure on $\bb T$, being a Rajchman measure is not invariant by diffeomorphisms. We can find examples in \cite{kaufman82}. A typical example is the standard $\frac{1}{3}-$Cantor measure $\nu$, which is not a Rajchman measure. Let $\phi$ be the quadratic map $r\mapsto r^2$. Then the pushforward measure $\phi_*\nu$ becomes a Rajchman measure with polynomial decay.
\end{rem}
One of our motivations for establishing Theorem \ref{thm:fourier} comes from the theory of Bernoulli convolutions. One of the main questions of this theory is to determine for which parameter $\lambda$, the measure $\nu_\lambda$ is absolutely continuous with respect to the Lebesgue measure. We have already mentioned that when $\lambda^{-1}$ is a Pisot number, Erd\"os proved that $\nu_\lambda$ is not a Rajchman measure. Thus, in particular, $\nu_\lambda$ is not absolutely continuous with respect to the Lebesgue measure. Recently, people have been interested in the same problem for stationary measures for random walks on $\slr$, see \cite{bourgain2012finitely},\cite{kaimanovich2011matrix}. Our result shows that we cannot generalize the method of Erd\"{o}s to the Zariski dense case.

Our other motivation is the same question for the Patterson-Sullivan measure on the limit set of Fuchsian groups. With Theorem \ref{thm:fourier}, it suffices to prove that there exists a probability measure $\mu$ on $\slr$ such that the Patterson-Sullivan measure is $\mu$-stationary, and $\mu$ has a finite exponential moment.

In \cite{lalley1989renewal} and \cite{lalley1986gibbs}, Lalley announced the existence of such a $\mu$ for Schottky groups. But Lalley's proof only works for Schottky semigroups. In \cite{connell2007harmonicity}, the authors proved the existence of such a $\mu$ without the moment condition in geometrically finite cases. Combining the methods of Connell, Muchnik and Lalley, we can prove the existence of such a measure $\mu$ for convex cocompact Fuchsian groups, see \cite{li2017exponential}. Therefore, we have
\begin{cor}\label{cor:ps}
Let $\Gamma$ be a convex cocompact Fuchsian group. Then the Patterson-Sullivan measure associated to $\Gamma$ is a Rajchman measure.
\end{cor}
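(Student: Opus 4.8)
The plan is to deduce Corollary~\ref{cor:ps} from Theorem~\ref{thm:fourier} by exhibiting a probability measure $\mu$ on $\slr$ that is adapted to the geometry of the convex cocompact Fuchsian group $\Gamma$. Concretely, I would first recall the classical construction: the Patterson–Sullivan measure $\nu_{PS}$ associated to $\Gamma$ lives on the limit set $\Lambda_\Gamma\subset\partial\bb H^2\simeq\bb P^1$, and the group $\Gamma$ (or rather a suitable conjugate sitting inside $\slr$) acts on $\bb P^1$ by M\"obius transformations in a way that is quasi-conformal with respect to $\nu_{PS}$ in the sense of Sullivan's shadow lemma. The goal of this step is to package that quasi-invariance into an honest stationarity identity $\mu*\nu_{PS}=\nu_{PS}$ for some probability measure $\mu$ supported on (a set generating a Zariski dense subgroup of) $\slr$.

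Second, I would invoke the construction carried out in \cite{li2017exponential}, which combines the approach of Connell–Muchnik \cite{connell2007harmonicity} with the symbolic-dynamics techniques of Lalley \cite{lalley1989renewal,lalley1986gibbs}. The Connell–Muchnik method produces, for a geometrically finite group, a stationary measure equal to Patterson–Sullivan, but a priori with no control on the tails of $\mu$; Lalley's thermodynamic formalism for Schottky \emph{semigroups} gives exponential moments but only in the semigroup setting. The point of \cite{li2017exponential} — which I am allowed to assume — is that in the convex cocompact case one can run the Connell–Muchnik averaging over the full group while keeping the measure $\mu$ supported on group elements whose displacement grows at a controlled rate, so that $\int\|g\|^{\epsilon_1}\,\dd\mu(g)<\infty$ for some $\epsilon_1>0$. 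Since $\Gamma$ is non-elementary and acts on $\bb H^2$ by orientation-preserving isometries, the group it generates inside $\slr$ is non-solvable, hence Zariski dense, so the hypotheses of Theorem~\ref{thm:fourier} are met.

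Third, with such a $\mu$ in hand, uniqueness of the $\mu$-stationary measure (Furstenberg, \cite{furstenberg1963noncommuting}, \cite{guivarc1985frontiere}) forces $\nu=\nu_{PS}$ under the identification $\bb P^1\simeq\bb T$, so Theorem~\ref{thm:fourier} applies verbatim and gives $\widehat{\nu_{PS}}(k)\to 0$ as $|k|\to\infty$; one minor point to check is that the identification of $\partial\bb H^2$ with $\bb P^1$ used in the geometric picture agrees, up to a $C^\infty$ diffeomorphism, with the coordinate $\theta\mapsto[\cos\theta:\sin\theta]$ fixed in the introduction, which is harmless since by Corollary~\ref{cor:main} the Rajchman property is preserved under $C^2$-diffeomorphisms.

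The main obstacle is entirely contained in the second step: producing $\mu$ with a finite exponential moment. Everything else is soft (uniqueness, the diffeomorphism normalization, and then a direct citation of Theorem~\ref{thm:fourier}). I would therefore present the proof as essentially a one-line deduction, with the substance delegated to \cite{li2017exponential}: once that paper supplies a finitely-supported-in-displacement — more precisely, exponential-moment — probability measure $\mu$ on $\slr$ with $\mu$-stationary measure equal to the Patterson–Sullivan measure, Corollary~\ref{cor:ps} follows immediately from Theorem~\ref{thm:fourier}. I would keep the write-up short, stating the existence result from \cite{li2017exponential} as a black box and then concluding.
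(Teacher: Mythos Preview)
Your proposal is correct and matches the paper's own argument: the corollary is deduced directly from Theorem~\ref{thm:fourier} once one grants the existence, established in \cite{li2017exponential}, of a probability measure $\mu$ on $\slr$ with finite exponential moment whose stationary measure is the Patterson--Sullivan measure. The paper treats this as a one-line deduction with the substance delegated to \cite{li2017exponential}, exactly as you outline; your extra remarks on Zariski density and the $C^2$-diffeomorphism normalization are reasonable elaborations but not needed beyond what the paper states.
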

\begin{rem}
	Corollary \ref{cor:ps} also holds if we replace the Patterson-Sullivan measure by any Gibbs measure. In \cite{li2017exponential}, we have a similar realization for any Gibbs measure associated to a convex cocompact Fuchsian group, as it is done by Lalley for any Gibbs measure on the limit set of a Schottky semigroup in \cite{lalley1986gibbs}.
\end{rem}
\begin{rem}\label{rem:unispe}
	Using the uniform spectral gap proved in \cite{naud2005expanding}, we can prove a polynomial decay in the convergence to zero of the Fourier coefficients of the $\mu$-stationary measure, when the support of $\mu$ is the set of generators of a Schottky semigroup. In this case, the uniform spectral gap implies an exponential error term in the renewal theorem, which is the only obstacle for polynomial decay. Please see Remark \ref{rem:poly} for more details.
	We believe it is true for the general case, but the question is still open.
\end{rem}
\begin{rem}
	Very recently, Bourgain and Dyatlov \cite{bourgain2017fourier} have proved a polynomial decay of the Fourier coefficients of the Patterson-Sullivan measure associated to a convex cocompact Fuchsian group. Their method, which comes from additive combinatorics, is totally different from ours. They use the Fourier decay bound and the fractal uncertainty principle to obtain an essential spectral gap for a convex cocompact hyperbolic surface. We can not recover their result directly as in Remark \ref{rem:unispe}. It is possible if we modifier some steps and use the uniform spectral gap in \cite{naud2005expanding}, but we do not pursue in this direction in this work. 
\end{rem}
On the other hand, in the geometrically finite case, this approach can not work. The finite exponential moment condition is impossible for noncompact lattice $\Gamma$ in $\slr$ (see \cite{guivarch1993winding}, \cite{deroin2009circle}, \cite{blachere2011harmonic}). That is, if $\mu$ is a measure on $\Gamma$ with a finite first moment, then the $\mu$-stationary measure $\nu$ is singular with respect to the Lebesgue measure. Maybe the generalization of the method of \cite{jordan2013fourier} works in this case, where they proved the Gibbs measures for the Gauss map which has dimension greater than $1/2$ are Rajchman measures.

In this paper, our main idea is to obtain the convergence to zero of Fourier coefficients from a renewal type result.

\textbf{The strategy of proof}: 
To simplify, identify $\bb P^1$ with $\bb T=\bb R/\pi\bb Z$ as before. The starting point is the relation $\nu=\mu*\nu$.
Consider a random walk on $\slr$, $X_n=b_1b_2\cdots b_n$, where $b_j$ are independent random variables with the same law $\mu$. Let $\mathscr B_n$ be the Borel $\sigma-$algebra generated by $X_1,\dots,X_n$. Let $Y_n=(X_n)_*\nu$. They are random variables which take values in the space of Borel measures on $\bb T$. By definition, we have
\[\bb E(Y_{n+1}|\mathscr B_n)=\bb E((X_n)_*(b_{n+1})_*\nu|\mathscr B_n)=(X_n)_*\bb E((b_{n+1})_*\nu)=Y_n. \]
Therefore $\{Y_n\}$ is a martingale. For $t>0$, we define the stopping time by $\tau=\inf\{n\in\bb N|\log\|X_n\|\geq t \}$. Then the martingale property implies that 
\[ \bb E((X_{\tau})_*\nu)=\bb E(Y_\tau)=\bb E(Y_0)=\nu. \]
(See Proposition \ref{prop:stoptime}). Thus for the Fourier coefficients, we have for $k\in 2\bb Z$ (since $\overline{\hat{\nu}(k)}=\hat{\nu}(-k)$, we only consider $k\geq 0$.)
\[\hat{\nu}(k)=\int e^{ikx}\dd\nu(x)=\int e^{ikx}\dd \bb E((X_\tau)_*\nu)(x)=\int \bb E(e^{ikX_\tau x})\dd\nu(x). \]
Recall our circle $\bb T$ is $\bb R/\pi\bb Z$. The idea is to find some cancellations in the ``trigonometric series" $\bb E(e^{ik X_\tau x})$. By the Cauchy-Schwarz inequality, it suffices to prove $\bb E(e^{ik(X_\tau x-X_\tau y)})\rightarrow 0$ as $k\rightarrow\infty$. 

By analogy with the case of classical random walks on $\bb R$, we expect that there exists a measurable density function $p$ on $\bb R^+$ such that for a continuous compactly supported function $f$ on $\bb R$ and $t\in\bb R$,
\[\bb E(f(\log\|X_\tau\|-t))\longrightarrow\int_{\bb R^+}f(u)p(u)\dd u\text{ as }t\rightarrow+\infty. \]
Then absolute continuity of the limit distribution would imply the convergence to zero of $\hat{\nu}(k)$.

In the actual proof, we do not use this stopping time, but a residue process. Indeed, the latter is easier to treat with transfer operators and Fourier analysis. We will establish a limit theorem for the residue process, a generalization of the renewal theorem, in Section \ref{secrentheory}.

\textbf{Notation}: When $f$ and $g$ are functions on a set $X$, we write $f(x)\lesssim g(x)$, if there exists $C>0$ independent of $x\in X$ such that $f(x)\leq Cg(x)$, and $f(x)=O( g(x))$ means $|f(x)|\lesssim g(x)$. We also write $f(x,y)=O_y(g(x,y))$, which means $|f(x,y)|\leq C_yg(x,y)$, where $C_y$ is a constant only depending on $y$.

We introduce a notation $\oexpe(s)$. We write $f(\epsilon,s)=\oexpe(s)$ if for $\epsilon>0$ and $s\in\bb R$, there exists a constant $\epsilon'>0$ such that $f(\epsilon,s)=O(e^{-\epsilon' s})$, where all the constants only depend on $\epsilon$. We write $f(s)=\oexp(s)$, if there exists a uniform constant $\epsilon'$ such that $f(s)=O(e^{-\epsilon's})$.
\section*{Acknowledgments}
The author wishes to express his gratitude to Jean-François Quint for suggesting the problem and for many stimulating conversations. The author is grateful to the referee for carefully reading the manuscript and for helpful remarks.
\section{Preliminaries on random walks on $\bb P^1$}
\label{sec:prelim}
Fix the norm induced by the standard inner product on $\bb R^2$, $\|v\|=\sqrt{v_1^2+v_2^2}$, which is $\so(\bb R)$ invariant. Then define a metric on $\bb P^1$. For two points $x=\bb Rv,\ x'=\bb Rw$, we set
\begin{equation*}
d(x,x')=\frac{|\det(v,w)|}{\|v\|\|w\|}.
\end{equation*} 
This is a sine distance. If we write $x=\bb R\begin{pmatrix}
\cos \theta\\\sin\theta
\end{pmatrix}$ and $x'=\bb R\begin{pmatrix}
\cos\theta'\\\sin\theta'
\end{pmatrix}$, then $d(x,x')=|\sin(\theta-\theta')|$.
From now on, we write $G=\slr$ and $X=\bb P^1$.
\begin{defi}
For $g$ in $G$ and $x=\bb Rv$ in $X$, define the function $\sigma:G\times X\rightarrow \bb R$ by 
$\sigma(g,x)=\log\frac{\|gv\|}{\|v\|}$.
\end{defi}
This function $\sigma$ is a cocycle, because for $g,h$ in $G$ we have
\[\sigma(gh,x)=\log\frac{\|ghv\|}{\|v\|}=\log\frac{\|ghv\|}{\|hv\|}+\log\frac{\|hv\|}{\|v\|}=\sigma(g,hx)+\sigma(h,x),\]
where we use the fact that the action is linear, $hx=\bb Rhv$.
\begin{lem}\label{lem:distance} For $g$ in $G$ and $x,x'$ in $X$ with $x\neq x'$, we have
\begin{equation}\label{equ:distance}
\frac{d(gx,gx')}{d(x,x')}=\exp(-\sigma(g,x)-\sigma(g,x')).
\end{equation}
\end{lem}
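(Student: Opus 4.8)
The plan is to unwind both sides directly from the definitions, using only bilinearity and antisymmetry of the determinant together with the fact that $\det g = 1$. Write $x = \mathbb{R}v$ and $x' = \mathbb{R}w$ for nonzero vectors $v,w \in \mathbb{R}^2$; since $x \neq x'$ the vectors $v,w$ are linearly independent, so $\det(v,w) \neq 0$ and $d(x,x') > 0$, which makes the quotient in \eqref{equ:distance} meaningful. Because the $G$-action on $X$ is linear, $gx = \mathbb{R}(gv)$ and $gx' = \mathbb{R}(gw)$, so straight from the definition of $d$,
\[
d(gx,gx') = \frac{|\det(gv,gw)|}{\|gv\|\,\|gw\|}.
\]

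Next I would apply multiplicativity of the determinant: $\det(gv,gw) = \det(g)\det(v,w) = \det(v,w)$, the last equality being exactly the condition $g \in \mathrm{SL}_2(\mathbb{R})$. Substituting this and dividing by $d(x,x') = |\det(v,w)|/(\|v\|\,\|w\|)$ gives
\[
\frac{d(gx,gx')}{d(x,x')} = \frac{\|v\|\,\|w\|}{\|gv\|\,\|gw\|} = \frac{\|v\|}{\|gv\|}\cdot\frac{\|w\|}{\|gw\|}.
\]
Finally I would recognize each factor as the exponential of minus the cocycle, namely $\|v\|/\|gv\| = \exp(-\sigma(g,x))$ and $\|w\|/\|gw\| = \exp(-\sigma(g,x'))$ by the definition of $\sigma$, which produces $\exp(-\sigma(g,x)-\sigma(g,x'))$ and closes the argument.

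There is essentially no obstacle here: the only inputs are $\det g = 1$ and the defining formulas for $d$ and $\sigma$. The one point worth a remark is that the computation does not depend on the choice of representatives $v,w$ — rescaling $v$ by a nonzero scalar $\lambda$ multiplies the numerator and the denominator of every expression above by $|\lambda|$, and likewise $\sigma(g,x)$ is manifestly independent of the representative — so the identity is well posed on $X \times X$.
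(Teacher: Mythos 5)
Your proof is correct and is essentially identical to the paper's: both compute $d(gx,gx')/d(x,x')$ from the definition, use $|\det(gv,gw)|=|\det g|\,|\det(v,w)|=|\det(v,w)|$, and identify the remaining norm ratios with $\exp(-\sigma(g,x)-\sigma(g,x'))$. The extra remark on independence of representatives is fine but not needed.
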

\begin{proof}
As in the definition of the distance $d(\cdot,\cdot)$, we take two non zero vectors $v$ and $w$ in $x$ and $x'$ respectively. By definition,
\[\frac{d(gx,gx')}{d(x,x')}=\left|\frac{\det(gv,gw)}{\det(v,w)}\right|\frac{\|v\|\|w\|}{\|gv\|\|gw\|}=\left|\det g\right|\frac{\|v\|\|w\|}{\|gv\|\|gw\|}=\exp\left(-\sigma(g,x)-\sigma(g,x')\right). \]
The proof is complete.
\end{proof}
If the point $x$ is near $x'$, we know from the above equation that the cocycle $\sigma$ is essentially the logarithm of the contracting or expanding ratio. Let $\mu$ be a Borel probability measure on $G$, and let $b_1,b_2,\cdots$ be independent random variables with the same law $\mu$. Then  
the behavior of the mean value of the cocycle, 
$$\frac{1}{n}\sigma(b_nb_{n-1}\cdots b_1,x)=\frac{1}{n}\left(\sum_{j=1}^{n}\sigma(b_j,b_{j-1}\cdots b_1x) \right),$$ 
follows an asymptotic law similar to the law of large numbers. In particular,
\begin{thm}\label{lawln}\cite{furstenberg1963noncommuting}\cite{guivarc1985frontiere}
Let $\mu$ be a Borel probability measure on $G$ having an exponential moment. Assume that the subgroup $\Gamma_{\mu}$ is Zariski dense. Then for all $x$ in $X$, random variables $b_j$ defined as above, we have
\begin{equation}
\lim_{n\rightarrow\infty}\frac{\sigma(b_nb_{n-1}\cdots b_1,x)}{n}=\int_{G}\int_{X}\sigma(g,y)\dd\mu(g)\dd\nu(y)=\sigma_{\mu}>0\ \ \text{a.s }\mu^{\otimes \bb N^*}.
\end{equation}
\end{thm}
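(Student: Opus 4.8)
The plan is to reduce the statement to three classical ingredients: existence of the limit by subadditivity, identification of the limit with $\sigma_\mu$ by stationarity, and positivity by Zariski density. Write $X_n=b_nb_{n-1}\cdots b_1$. First I would apply Kingman's subadditive ergodic theorem to $f_n(\omega)=\log\|X_n\|$ on the one-sided shift over $(G^{\bb N^*},\mu^{\otimes\bb N^*})$: this is subadditive since $\|X_{n+m}\|\le\|b_{n+m}\cdots b_{n+1}\|\,\|X_n\|$, and the exponential moment hypothesis gives $\log^+\|b_1\|\in L^1(\mu)$ because $\log^+\|g\|\lesssim\|g\|^{\epsilon_1}$; note also $\|g^{-1}\|=\|g\|$ for $g\in\slr$, so $\log\|X_n^{-1}\|$ is controlled too. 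Kingman then yields a constant $\lambda_1$ with $\tfrac1n\log\|X_n\|\to\lambda_1$ almost surely and in $L^1$. For fixed $x=\bb Rv$ with $\|v\|=1$ one has $\sigma(X_n,x)=\log\|X_nv\|\in[-\log\|X_n\|,\log\|X_n\|]$, which immediately gives $\limsup_n\tfrac1n\sigma(X_n,x)\le\lambda_1$; the reverse bound is the delicate point postponed to the last step.

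Next I would identify $\lambda_1$ with $\sigma_\mu$ using stationarity of $\nu$. The cocycle relation gives $\sigma(X_n,x)=\sum_{j=1}^n\sigma\bigl(b_j,\ b_{j-1}\cdots b_1x\bigr)$. Taking expectations, conditioning on $b_{j-1}\cdots b_1$, using the independence of $b_j$ from the past, and using $\bb E\bigl[(b_{j-1}\cdots b_1)_*\nu\bigr]=\mu^{*(j-1)}*\nu=\nu$, each term integrated against $\nu$ equals $\int_G\int_X\sigma(g,y)\dd\nu(y)\dd\mu(g)=\sigma_\mu$, hence $\int_X\bb E[\sigma(X_n,x)]\dd\nu(x)=n\sigma_\mu$. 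On the other hand $\tfrac1n|\sigma(X_n,x)|\le\tfrac1n\sum_{i=1}^n\log^+\|b_i\|$, whose right side converges a.s. and in $L^1$ by the usual strong law, so the family $\{\tfrac1n\sigma(X_n,x)\}_n$ is uniformly integrable; the a.s. convergence of the first step thus upgrades to $\tfrac1n\bb E[\sigma(X_n,x)]\to\lambda_1$, uniformly enough in $x$ to pass to the limit under $\int(\cdot)\dd\nu(x)$ by bounded convergence. This forces $\sigma_\mu=\lim_n\tfrac1n\int_X\bb E[\sigma(X_n,x)]\dd\nu(x)=\lambda_1$; in particular the a.s. limit, once it is shown to exist for every $x$, is the $x$-independent constant $\sigma_\mu$.

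Finally I would establish positivity and fill the gap left in the first step, which is where Zariski density does the real work and which I expect to be the main obstacle. A Zariski dense subgroup of $\slr$ is strongly irreducible (it fixes no finite union of lines, since the stabilizer of such a configuration lies in a proper algebraic subgroup) and unbounded (it is not contained in any conjugate of $\pso$), so by Furstenberg's positivity criterion $\lambda_1>0$; equivalently, since $\lambda_1+\lambda_2=0$ on $\slr$, the two Lyapunov exponents are simple. Strong irreducibility and unboundedness also give that the unique $\mu$-stationary measure $\nu$ is non-atomic. Combining $\lambda_1>0$ with non-atomicity yields the non-exceptional-direction statement used above: the normalized products $X_n/\|X_n\|$ converge a.s.\ to a rank-one operator whose kernel direction is distributed according to a non-atomic measure (the stationary measure of the transposed walk), so for a fixed $v$ one a.s.\ has $\|X_nv\|/\|X_n\|$ bounded away from $0$, whence $\tfrac1n\sigma(X_n,x)\to\lambda_1=\sigma_\mu$ a.s.; Lemma \ref{lem:distance} makes the accompanying contraction of $\bb P^1$ explicit. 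The positivity $\lambda_1>0$ and the non-atomicity of $\nu$ are exactly the deep inputs, and in the present context they are quoted from \cite{furstenberg1963noncommuting}, \cite{guivarc1985frontiere}, \cite{bougerol1985products}, so that the proof amounts to assembling these facts as above.
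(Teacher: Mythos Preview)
The paper does not prove this theorem at all: it is stated with citations to \cite{furstenberg1963noncommuting} and \cite{guivarc1985frontiere} and used as a black box, so there is no ``paper's own proof'' to compare against. Your sketch is a reasonable summary of the classical argument found in those references and in \cite{bougerol1985products}: Kingman for the existence of the top exponent $\lambda_1$, the Furstenberg formula $\lambda_1=\int\sigma\,\dd\mu\,\dd\nu$ via stationarity of $\nu$, and Furstenberg's criterion (strong irreducibility plus unboundedness, both consequences of Zariski density) for $\lambda_1>0$ and for the a.s.\ convergence of $\tfrac1n\sigma(X_n,x)$ at every fixed $x$.

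One logical wrinkle in your ordering: in Step~2 you invoke ``the a.s.\ convergence of the first step'' to pass from a.s.\ to $L^1$ convergence of $\tfrac1n\sigma(X_n,x)$, but at that stage you have only established a.s.\ convergence of $\tfrac1n\log\|X_n\|$, not of $\tfrac1n\sigma(X_n,x)$; the latter is precisely what you defer to Step~3. The cleaner route is to first complete Step~3 (showing $\tfrac1n\sigma(X_n,x)\to\lambda_1$ a.s.\ for every $x$, using strong irreducibility and the rank-one limit argument you describe), and only then run the uniform-integrability plus stationarity computation to identify $\lambda_1=\sigma_\mu$. Alternatively, one can bypass the issue by observing that $\tfrac1n\bb E\log\|X_n\|\to\lambda_1$ from Kingman, and then proving separately that $\bb E[\sigma(X_n,x)]-\bb E\log\|X_n\|\to 0$ uniformly in $x$ (this is where non-atomicity of the stationary measure for the transposed walk enters). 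Either way the ingredients you list are the right ones; the paper simply quotes the result.
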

The constant $\sigma_{\mu}$ is called the Lyapunov exponent of $\mu$. 
\begin{thm}[H\"older regularity]\cite{guivarc1990produits}\cite[Chapter 6,Proposition 4.1]{bougerol1985products}
	 Under the assumptions of Theorem \ref{thm:fourier}, there exist constants $C>0,\,\alpha>0$ such that for every $x$ in $X$ and $r>0$ we have
	\begin{equation}\label{ineq:regsta}
	\nu(B(x,r))\leq Cr^{\alpha}.
	\end{equation}
\end{thm}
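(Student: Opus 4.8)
This is the classical Frostman-type regularity of the Furstenberg measure, and I would reconstruct the Furstenberg--Guivarc'h--Le Page proof. It is convenient first to observe that a uniform bound $\nu(B(x,r))\le Cr^{\alpha}$ is equivalent to $\sup_{x}\int d(x,y)^{-\beta}\,\dd\nu(y)<\infty$ for some $\beta>0$, and then to prove the latter. The first, qualitative, input is that $\nu$ is non-atomic: if $a:=\sup_{x}\nu(\{x\})>0$, then $\nu=\mu*\nu$ gives $\nu(\{x\})=\int\nu(\{g^{-1}x\})\,\dd\mu(g)$, so the finite set $A=\{x:\nu(\{x\})=a\}$ satisfies $gA=A$ for $\mu$-almost every $g$; since $\{g:gA=A\}$ is closed it contains $\supp\mu$, hence $A$ is $\Gamma_{\mu}$-invariant. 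But a finite $\Gamma_{\mu}$-invariant subset of $\bb P^1$ would force a finite-index subgroup of $\Gamma_{\mu}$ into a conjugate of a Borel subgroup or of a maximal torus, making $\Gamma_{\mu}$ virtually solvable and contradicting Zariski density. Hence $\nu$ has no atom, and by compactness of $\bb P^1$ there is a scale $t_0>0$ with $\sup_{x}\nu(B(x,t_0))\le 1/2$.

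The second, quantitative, input is that the norm cocycle obeys exponential large deviations, because $\mu$ has a finite exponential moment and $\sigma_{\mu}>0$ (Theorem~\ref{lawln}): there is $\beta_0>0$ such that for $0<\beta\le\beta_0$ one has $\sup_{x}\bb E\big[e^{-\beta\sigma(b_1\cdots b_n,x)}\big]\le C_{\beta}\rho_{\beta}^{\,n}$ with $\rho_{\beta}<1$, while $\bb P\big(|\log\|b_1\cdots b_n\|-\sigma_{\mu}n|>\epsilon n\big)$ decays exponentially and $\bb E[\|b_1\|^{\beta}]\to 1$ as $\beta\to 0$. These standard facts (Le Page, Guivarc'h) I would quote.

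To combine them, iterate stationarity to get $\nu=\bb E[(q_n)_*\nu]$ with $q_n=b_1\cdots b_n$, and use Lemma~\ref{lem:distance} together with $\sigma(q_n,q_n^{-1}x)=-\sigma(q_n^{-1},x)$ to rewrite $d(x,q_ny)^{-\beta}=d(q_n^{-1}x,y)^{-\beta}\exp\big(-\beta\sigma(q_n^{-1},x)+\beta\sigma(q_n,y)\big)$, so that
\[\int d(x,y)^{-\beta}\,\dd\nu(y)=\bb E\!\left[e^{-\beta\sigma(q_n^{-1},x)}\int d(q_n^{-1}x,y)^{-\beta}e^{\beta\sigma(q_n,y)}\,\dd\nu(y)\right].\]
On the likely event $|\log\|q_n\|-\sigma_{\mu}n|\le\epsilon n$ the image $q_n^{-1}x$ lies within $e^{-2(\sigma_{\mu}-\epsilon)n}$ of the repelling direction $z_n^-$ of $q_n$, unless $x$ is exceptionally close to the attracting direction $z_n^+$; splitting the inner integral geometrically around $z_n^-$ and evaluating the cocycle near $z_n^-$ (where $\sigma(q_n,\cdot)\approx-\sigma_{\mu}n$) shows that these contributions combine to at most $\kappa\sup_{x}\int d(x,y)^{-\beta}\,\dd\nu(y)+C$ with $\kappa<1$ provided $n$ is large — here one uses $\sup_{z}\nu(B(z,t_0))\le 1/2$ to control the ``annulus'' term — while the unlikely large-deviation event costs only $Ce^{-cn}\sup_{x}\int d(x,y)^{-\beta}\,\dd\nu(y)$ via the crude bound $e^{-\beta\sigma(q_n^{-1},x)}e^{\beta\sigma(q_n,y)}\le\|q_n\|^{2\beta}$. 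After truncating $d(x,y)^{-\beta}$ at a finite height $N$, so the right-hand side is a priori finite, one obtains a self-improving inequality $\sup_{x}\int d(x,y)^{-\beta}\,\dd\nu(y)\le\kappa'\sup_{x}\int d(x,y)^{-\beta}\,\dd\nu(y)+C$ with $\kappa'<1$, and letting $N\to\infty$ gives the result.

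\textbf{Main obstacle.} The genuine difficulty is the exceptional event $\{d(x,z_n^+)\lesssim e^{-2\sigma_{\mu}n}\}$, on which $q_n^{-1}x$ is not near $z_n^-$ and the clean contraction breaks; its probability is essentially $\nu$ of a small ball, so the estimate is self-referential. One closes it either by a bootstrap on the admissible exponent $\beta$, or by writing $q_n$ as a product of two independent blocks so that the relevant direction becomes independent of the block that sits near it and its law is $\nu$ up to a geometrically small error, at the cost of an expansion factor absorbed by the finite exponential moment. Making this work uniformly in $x$, while tracking that every constant is independent of $x$, is where the real content lies; the cocycle computations near $z_n^{\pm}$ and the truncation are then routine.
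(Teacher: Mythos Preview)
The paper does not prove this theorem: it is stated with citations to Guivarc'h \cite{guivarc1990produits} and Bougerol--Lacroix \cite{bougerol1985products} and no argument is given in the text, so there is no in-paper proof to compare against.

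Your outline follows the classical Guivarc'h--Le Page route that those references contain: non-atomicity from Zariski density, the energy integral $\sup_x\int d(x,y)^{-\beta}\dd\nu(y)$, the cocycle identity coming from Lemma~\ref{lem:distance}, and a self-improving inequality driven by $\sigma_\mu>0$ together with the exponential moment. The identity you write for $\int d(x,y)^{-\beta}\dd\nu(y)$ is correct, and the heuristic splitting of the inner integral near and away from the repelling direction $z_n^-$ is the right picture, since near $z_n^-$ one has $\sigma(q_n,\cdot)\approx -\sigma_\mu n$ while away from it $d(q_n^{-1}x,\cdot)^{-\beta}$ is bounded. That said, as you yourself flag in the ``Main obstacle'' paragraph, the sketch is not a proof: the exceptional event where $x$ is close to $z_n^+$ makes the contraction estimate self-referential, and the two standard remedies you mention (bootstrap on $\beta$, or a two-block independence trick) each require a page of careful bookkeeping to carry out uniformly in $x$. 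The references cited in the paper do exactly this; your write-up would need to fill in that step to be complete.
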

We need the Cartan decomposition of the Lie group $G$, i.e. $G=\so A^+\so $, where $A^+=\{\begin{pmatrix}
e^t & 0 \\ 0 & e^{-t} 
\end{pmatrix}, t\geq0\}$. For $g$ in $G$, we can write $g=k_ga_gl_g$, where $k_g$, $l_g$ are in $\so $, and $a_g=diag\{e^{\kappa(g)},e^{-\kappa(g)} \}$ is the diagonal matrix whose diagonal elements are $e^{\kappa(g)}$ and $e^{-\kappa(g)} $ with $\kappa(g)\geq0$. The positive number $\kappa(g)$ is called the Cartan projection. Identify the two spaces $X$ and $\bb T\simeq \bb R/\pi\bb Z$. For an element $x$ in $X$, associate it to the unique element $\theta(x)$ in $\bb R/\pi\bb Z$ satisfying $x=\bb R\begin{pmatrix}
\cos  \theta(x) \\ \sin \theta(x)
\end{pmatrix}$. When there is no ambiguity, we will abbreviate $\theta(x)$ to $x$.

Let $e_1=\bb R\begin{pmatrix}
1 \\ 0
\end{pmatrix},\,e_2=\bb R\begin{pmatrix}
0 \\ 1
\end{pmatrix}$,  which mean elements in $X$. Let $r_\theta=\begin{pmatrix}
\cos\theta & \sin\theta\\
-\sin\theta & \cos\theta
\end{pmatrix}$ be a rotation matrix in $G$. For $g$ in $G$, choosing a decomposition $g=k_ga_gl_g$, we define $x_g^m=l_g^{-1}e_2, x_g^M=k_ge_1$. If $\kappa(g)>0$, then $x_g^M,x_g^m$ are uniquely defined.
\begin{prop}
For $g$ in $G$ with $\kappa(g)>0$, we have
\begin{equation}\label{equ:xgmxginv}
x_g^m=x_{g^{-1}}^M.
\end{equation}
\end{prop}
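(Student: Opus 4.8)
The plan is to prove the identity by exhibiting one explicit Cartan decomposition of $g^{-1}$ and reading off its data. Because $\kappa(g)>0$, both $x_g^m$ and $x_{g^{-1}}^M$ are independent of the choices of Cartan decompositions of $g$ and of $g^{-1}$ (the only ambiguity is $k\mapsto\pm k$, $l\mapsto\pm l$, which is invisible in $\bb P^1$), so it suffices to produce \emph{one} decomposition $g^{-1}=k'a'l'$ with $k'e_1=x_g^m$. Fix $g=k_ga_gl_g$ with $k_g,l_g\in\so$ and $a_g=\mathrm{diag}(e^{\kappa(g)},e^{-\kappa(g)})$; then $g^{-1}=l_g^{-1}a_g^{-1}k_g^{-1}$, and the only obstruction to this being a valid $\so A^+\so$ decomposition is that $a_g^{-1}=\mathrm{diag}(e^{-\kappa(g)},e^{\kappa(g)})$ has its diagonal entries in the wrong order.

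To repair this I would conjugate by a quarter turn. Put $w=r_{\pi/2}\in\so$; a direct $2\times2$ computation gives $w\,a_g^{-1}\,w^{-1}=a_g$ (rotating the plane by $\pi/2$ interchanges the two coordinate axes, hence swaps the diagonal entries of a diagonal matrix). Substituting $a_g^{-1}=w^{-1}a_gw$ yields
\[
g^{-1}=\bigl(l_g^{-1}w^{-1}\bigr)\,a_g\,\bigl(wk_g^{-1}\bigr),
\]
which, since $l_g^{-1}w^{-1},\,wk_g^{-1}\in\so$ and $a_g\in A^+$ (here $\kappa(g)\ge 0$; note incidentally $\kappa(g^{-1})=\kappa(g)$), is a genuine Cartan decomposition of $g^{-1}$. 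Its left $\so$-factor is $k_{g^{-1}}=l_g^{-1}w^{-1}$, whence $x_{g^{-1}}^M=k_{g^{-1}}e_1=l_g^{-1}(w^{-1}e_1)=l_g^{-1}e_2=x_g^m$, using $w^{-1}e_1=e_2$ in $\bb P^1$ (indeed $r_{-\pi/2}\binom{1}{0}=\binom{0}{1}$) together with the definition $x_g^m=l_g^{-1}e_2$.

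I do not expect a real obstacle here: the statement is elementary and the argument is essentially the one-line matrix manipulation above. The two points that deserve a moment's attention are (i) that the rearranged product for $g^{-1}$ has its middle factor in $A^+$ and not merely in $A$ — this is exactly where $\kappa(g)\ge 0$ is used — and (ii) the well-definedness of $x_g^m$ and $x_{g^{-1}}^M$ in $\bb P^1$, which is the observation recorded just above the statement. As a sanity check one can also argue invariantly: $x_g^m$ is the direction minimizing $v\mapsto\|gv\|/\|v\|$, its image $g\,x_g^m$ is then the direction maximizing $w\mapsto\|g^{-1}w\|/\|w\|$, and $x_{g^{-1}}^M$ is by definition the image under $g^{-1}$ of that maximizing direction, namely $g^{-1}(g\,x_g^m)=x_g^m$; but the explicit decomposition above is the cleanest route.
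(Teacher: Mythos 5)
Your argument is correct and is essentially identical to the paper's proof: both write $g^{-1}=l_g^{-1}a_g^{-1}k_g^{-1}$ and insert a quarter-turn rotation to convert $a_g^{-1}$ back into $A^+$, then read off $x_{g^{-1}}^M=l_g^{-1}e_2=x_g^m$. The extra remarks on well-definedness and the invariant min/max characterization are fine but not needed.
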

\begin{proof} 
For a real number $a\neq 0$, we have
\[\begin{pmatrix}
a & 0 \\ 0 & a^{-1}
\end{pmatrix}=\begin{pmatrix}
0 & 1 \\ -1 & 0
\end{pmatrix}\begin{pmatrix}
a^{-1} & 0 \\ 0 & a
\end{pmatrix}\begin{pmatrix}
0 & -1 \\ 1 & 0
\end{pmatrix}=r_{\pi/2}\begin{pmatrix}
a^{-1} & 0\\ 0 & a
\end{pmatrix}r_{3\pi/2}. \]
This implies that
\[g^{-1}=(k_ga_gl_g)^{-1}=l_g^{-1}a_g^{-1}k_g^{-1}=l_g^{-1}r_{\pi/2}a_gr_{3\pi/2}k_g^{-1}. \]
Therefore $x_{g^{-1}}^M=l_g^{-1}r_{\pi/2}e_1=l_g^{-1}e_2=x_g^m$.
\end{proof}
\begin{lem}\label{lem:coccardis}
For $g$ in $G$ and $x=\bb Rv$ in $X$, we have
\begin{equation}\label{ineq:coccardis}
d(x_g^m,x)\leq \frac{\|gv\|}{\|g\|\|v\|}\leq d(x_g^m,x)+e^{-2\kappa(g)}.
\end{equation}
Another form that will be used frequently is 
\[\sigma(g,x)\geq \kappa(g)+\log d(x_g^m,x). \]
\end{lem}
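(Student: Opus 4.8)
The plan is to reduce everything to the Cartan decomposition $g = k_g a_g l_g$ and to a direct computation in coordinates. Write $v = l_g^{-1} w$ for a unit vector $w$, so that $\|gv\| = \|a_g w\|$ since $k_g$ is orthogonal and $\|v\| = \|w\| = 1$ by orthogonality of $l_g$; also $\|g\| = \|a_g\| = e^{\kappa(g)}$. Thus $\frac{\|gv\|}{\|g\|\|v\|} = e^{-\kappa(g)}\|a_g w\|$. Writing $w = (\cos\varphi, \sin\varphi)^T$, we get $e^{-\kappa(g)}\|a_g w\| = \sqrt{\cos^2\varphi + e^{-4\kappa(g)}\sin^2\varphi}$. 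On the other hand, $x_g^m = l_g^{-1} e_2$, and since $d(\cdot,\cdot)$ is $\mathrm{SO}_2$-invariant (it equals $|\sin(\theta - \theta')|$), $d(x_g^m, x) = d(e_2, l_g x) = d(e_2, \bb R w) = |\cos\varphi|$, because $e_2$ corresponds to angle $\pi/2$. So the claimed inequality becomes the elementary sandwich
\[
|\cos\varphi| \leq \sqrt{\cos^2\varphi + e^{-4\kappa(g)}\sin^2\varphi} \leq |\cos\varphi| + e^{-2\kappa(g)}.
\]
The left inequality is immediate; the right one follows from $\sqrt{a+b} \leq \sqrt{a} + \sqrt{b}$ with $a = \cos^2\varphi$, $b = e^{-4\kappa(g)}\sin^2\varphi$, since $\sqrt{b} = e^{-2\kappa(g)}|\sin\varphi| \leq e^{-2\kappa(g)}$.

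For the second form, take logarithms of the first inequality. By the cocycle definition, $\sigma(g,x) = \log\frac{\|gv\|}{\|v\|} = \log\|g\| + \log\frac{\|gv\|}{\|g\|\|v\|} = \kappa(g) + \log\frac{\|gv\|}{\|g\|\|v\|}$, and then the lower bound $\frac{\|gv\|}{\|g\|\|v\|} \geq d(x_g^m,x)$ gives $\sigma(g,x) \geq \kappa(g) + \log d(x_g^m,x)$ directly.

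There is essentially no obstacle here; the only thing to be slightly careful about is bookkeeping the two conventions in play — the distance $d$ is realized via the sine of the angle while $x_g^m = l_g^{-1}e_2$ involves the vector $e_2$ at angle $\pi/2$, so that $d(x_g^m, x)$ ends up being $|\cos\varphi|$ rather than $|\sin\varphi|$ when $\varphi$ is the angle of $w = l_g v$. Once the orthogonal invariance of $d$ is invoked to move everything into the diagonal frame, the statement is just the triangle-type inequality for square roots displayed above, together with the identity $\sigma(g,x) = \kappa(g) + \log\big(\|gv\|/(\|g\|\|v\|)\big)$.
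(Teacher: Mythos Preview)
Your proof is correct and follows essentially the same approach as the paper: both reduce to the diagonal case via the orthogonal invariance of the distance and the norm, compute $\|gv\|/(\|g\|\|v\|)$ in coordinates as $\sqrt{\cos^2\varphi + e^{-4\kappa(g)}\sin^2\varphi}$ (the paper writes $v_1$ for your $\cos\varphi$), and obtain the upper bound from $\sqrt{a+b}\leq\sqrt a+\sqrt b$. The derivation of the logarithmic form is identical.
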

\begin{proof} Suppose that the vector $v$ has norm 1, then
	\begin{align*}
	\frac{\|gv\|}{\|v\|}=\frac{\|k_ga_gl_gv\|}{\|v\|}=\frac{\|a_gl_gv\|}{\|l_gv\|}.
	\end{align*}
	Since $d(x_g^m,x)=d(l_g^{-1}e_2,x)=d(e_2,l_gx)$, it suffices to prove this inequality for diagonal elements, in other words
	$g=diag\{e^{\kappa(g)},e^{-\kappa(g)} \}$. Hence
	\begin{align}\label{equ:cocycle}
	\frac{\|gv\|}{\|v\|}=\left|\begin{pmatrix}
	e^{\kappa(g)} & 0\\0 & e^{-\kappa(g)}
	\end{pmatrix}\begin{pmatrix}
	v_1 \\ v_2
	\end{pmatrix}\right|
	=|e^{2\kappa(g)}v_1^2+e^{-2\kappa(g)}v_2^2|^{1/2}.
	\end{align}
	The equality $d(x_g^m,x)=d(e_2,x)=|v_1|$  implies that
	\begin{align*}
	&\frac{\|gv\|}{\|g\|\|v\|}\geq |v_1|=d(x_g^m,x),\\
	&\frac{\|gv\|}{\|g\|\|v\|}\leq |v_1|+e^{-2\kappa(g)}=d(x_g^m,x)+e^{-2\kappa(g)}.
	\end{align*}
	The proof is complete.
\end{proof}
The following lemma is an important tool, which gives a precise approximation of the cocycle by the Cartan projection and distance.
\begin{lem}\label{lem:carcoc}
	Let $x,x'$ be two points in $X$ and let $g$ be in $G$. Assume that  $$e^{-2\kappa(g)}+d(x_g^m,g^{-1}x)\leq \frac{1}{2}d(g^{-1}x',x),$$
	then
	\begin{equation}\label{ineq:carcoc}
		|\sigma(g,x)-\kappa(g)-\log d(g^{-1}x',x)|\leq 2\frac{e^{-2\kappa(g)}+d(x_g^m,g^{-1}x')}{d(g^{-1}x',x)}.
	\end{equation}
\end{lem}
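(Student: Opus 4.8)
The plan is to approximate $\sigma(g,x)$ in two moves: first by the pivot quantity $\kappa(g)+\log d(x_g^m,x)$, which is exactly the content of Lemma \ref{lem:coccardis}, and then to replace $x_g^m$ by $g^{-1}x'$ at a cost controlled by $d(x_g^m,g^{-1}x')$. One may assume $\kappa(g)>0$: otherwise $e^{-2\kappa(g)}=1$ and the hypothesis forces $d(g^{-1}x',x)\ge 2$, which is impossible for the sine distance, so the statement is vacuous; in particular $x_g^m$ is then well defined. Note also that the left-hand side of the hypothesis is $\ge e^{-2\kappa(g)}>0$, so $d(g^{-1}x',x)>0$ and all the logarithms below make sense.

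The first step is to record the pivot estimate. Choose a unit vector $v$ with $x=\bb Rv$. Since $\|g\|=e^{\kappa(g)}$, inequality \eqref{ineq:coccardis} reads $e^{\kappa(g)}d(x_g^m,x)\le \|gv\|/\|v\|\le e^{\kappa(g)}\big(d(x_g^m,x)+e^{-2\kappa(g)}\big)$; taking logarithms and using $\log(1+t)\le t$ for $t\ge 0$ gives
\[
0\ \le\ \sigma(g,x)-\kappa(g)-\log d(x_g^m,x)\ \le\ \log\!\Big(1+\frac{e^{-2\kappa(g)}}{d(x_g^m,x)}\Big)\ \le\ \frac{e^{-2\kappa(g)}}{d(x_g^m,x)}.
\]

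Next I would exploit the hypothesis, which I read as bounding $e^{-2\kappa(g)}+d(x_g^m,g^{-1}x')$ by $\tfrac12 d(g^{-1}x',x)$. The sine distance satisfies the triangle inequality — writing points of $X\simeq\bb R/\pi\bb Z$ as angles, this is $|\sin(\alpha+\beta)|\le|\sin\alpha|+|\sin\beta|$ — so, discarding the nonnegative term $e^{-2\kappa(g)}$, we get $d(x_g^m,g^{-1}x')\le\tfrac12 d(g^{-1}x',x)$, hence $d(x_g^m,x)\ge d(g^{-1}x',x)-d(x_g^m,g^{-1}x')\ge\tfrac12 d(g^{-1}x',x)$. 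Then $|d(x_g^m,x)-d(g^{-1}x',x)|\le d(x_g^m,g^{-1}x')$, and the $1/t$-Lipschitz estimate $|\log a-\log b|\le|a-b|/\min(a,b)$, applied with both arguments $\ge\tfrac12 d(g^{-1}x',x)$, gives $|\log d(x_g^m,x)-\log d(g^{-1}x',x)|\le 2\,d(x_g^m,g^{-1}x')/d(g^{-1}x',x)$. Combining this with the pivot estimate of the previous step and bounding $e^{-2\kappa(g)}/d(x_g^m,x)\le 2e^{-2\kappa(g)}/d(g^{-1}x',x)$, we obtain
\[
\big|\sigma(g,x)-\kappa(g)-\log d(g^{-1}x',x)\big|\ \le\ \frac{2e^{-2\kappa(g)}}{d(g^{-1}x',x)}+\frac{2\,d(x_g^m,g^{-1}x')}{d(g^{-1}x',x)}\ =\ 2\,\frac{e^{-2\kappa(g)}+d(x_g^m,g^{-1}x')}{d(g^{-1}x',x)},
\]
which is \eqref{ineq:carcoc}.

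I do not expect a genuine obstacle: the whole argument is Lemma \ref{lem:coccardis} together with two uses of the triangle inequality and the elementary bound $|\log a-\log b|\le|a-b|/\min(a,b)$. The only points deserving care are checking that the sine distance is an honest metric, so that pivoting through $x_g^m$ is legitimate, and observing that the hypothesis is calibrated exactly so that $d(g^{-1}x',x)$ dominates $d(x_g^m,x)$ up to a factor $2$ — this is precisely what keeps the error terms from blowing up when the relevant distances are small.
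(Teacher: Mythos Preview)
Your proof is correct and uses essentially the same ingredients as the paper: Lemma~\ref{lem:coccardis}, the triangle inequality for the sine distance, and an elementary logarithm estimate. The only difference is in the order of operations: the paper first bounds $|e^{\sigma(g,x)-\kappa(g)}-d(g^{-1}x',x)|$ directly by $e^{-2\kappa(g)}+d(x_g^m,g^{-1}x')$ and then applies $|\log(1+t)|\le 2|t|$ for $t>-1/2$, whereas you take logarithms immediately and then swap $x_g^m$ for $g^{-1}x'$ via $|\log a-\log b|\le|a-b|/\min(a,b)$; the two routes are interchangeable and yield the same constant.
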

\begin{proof}
 Inequality \eqref{ineq:coccardis} implies that
	\begin{align*}
	|e^{\sigma(g,x)-\kappa(g)}-d(g^{-1}x',x)|&\leq \max\{|d(x_g^m,x)-d(g^{-1}x',x)|,|e^{-2\kappa(g)}+d(x_g^m,x)-d(g^{-1}x',x)| \}\\
	&\leq e^{-2\kappa(g)}+d(x_g^m,g^{-1}x').
	\end{align*}
	Thus by hypothesis, we have
	\[|\exp(\sigma(g,x)-\kappa(g))-d(g^{-1}x',x)|\leq 1/2d(g^{-1}x',x).\]
	Since $|\log(1+t)|\leq 2|t|$ for $t>-1/2$, we obtain
	\begin{align*}
	&|\sigma(g,x)-\kappa(g)-\log d(g^{-1}x',x)|=\log|1+\frac{\exp(\sigma(g,x)-\kappa(g))-d(g^{-1}x',x)}{d(g^{-1}x',x)}| \\
	&\leq 2\frac{|\exp(\sigma(g,x)-\kappa(g))-d(g^{-1}x',x)|}{d(g^{-1}x',x)}\leq 2\frac{e^{-2\kappa(g)}+d(x_g^m,g^{-1}x')}{d(g^{-1}x',x)}.
	\end{align*}
	The proof is complete.
\end{proof}

In the next proposition we summarize the large deviations principle for the cocycle and for the Cartan projection,
\begin{prop}\cite[Thm13.11, Thm 13.17]{benoistquint}
Under the assumptions of Theorem \ref{thm:fourier}, 
for every $\epsilon>0$ we have
\begin{align}\label{ineq:lardev}
&\mu^{*n}\{g\in G|\ |\sigma(g,x)-n\sigma_{\mu}|\geq n\epsilon \}= \oexpe(n),\\
&\label{ineq:lardev1}\mu^{*n}\{g\in G|\ |\kappa(g)-n\sigma_{\mu}|\geq n\epsilon \}=\oexpe(n),
\end{align}
uniformly for all $x$ in $X$ and $n\geq 1$.
\end{prop}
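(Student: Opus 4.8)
The plan is to derive both estimates from the classical large deviations theorem for sums of i.i.d.\ bounded-type random variables, after reducing to the case of a genuine sum via the cocycle relation. First I would record the cocycle decomposition
\[
\sigma(b_n\cdots b_1,x)=\sum_{j=1}^{n}\sigma(b_j,b_{j-1}\cdots b_1 x),
\]
and observe that, by Theorem~\ref{lawln}, the ergodic average converges a.s.\ to $\sigma_\mu$. To upgrade this to the exponential estimate \eqref{ineq:lardev}, the key input is the finite exponential moment hypothesis $\int\|g\|^{\epsilon_1}\dd\mu(g)<\infty$: since $|\sigma(g,x)|\le\log\|g\|+\log\|g^{-1}\|=\log\|g\|+\log\|g\|$ on $\slr$ (using $\|g^{-1}\|=\|g\|$ for $g\in\slr$ with the Euclidean norm, as $\det g=1$), each summand has a uniform exponential moment bound independent of the base point. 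One then applies the standard Cramér-type argument to the (non-i.i.d.\ but uniformly controlled) increments: by the Markov inequality applied to $\exp(s(\sigma(b_n\cdots b_1,x)-n\sigma_\mu))$ for small $s>0$, and iterating the conditional expectation over $\mathscr B_{n-1},\mathscr B_{n-2},\dots$, one gets a bound of the form $e^{-\epsilon' n}$ once $s$ is chosen small in terms of $\epsilon$. The point is that the conditional moment generating function $\bb E(e^{s\sigma(b_n,b_{n-1}\cdots b_1x)}\mid\mathscr B_{n-1})$ is uniformly bounded and its derivative at $s=0$ tends, on average, to $\sigma_\mu$; this is exactly the content cited from \cite[Thm 13.11]{benoistquint}.

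For the Cartan projection estimate \eqref{ineq:lardev1} I would relate $\kappa(g)$ to the cocycle via Lemma~\ref{lem:coccardis}, which gives $\sigma(g,x)\ge\kappa(g)+\log d(x_g^m,x)$, together with the obvious bound $\sigma(g,x)\le\kappa(g)$. Thus $\kappa(g)-\sigma(g,x)\in[0,-\log d(x_g^m,x)]$, and to control $\kappa(g)$ by $\sigma(g,x)$ it suffices to show that for $g=b_n\cdots b_1$ the quantity $-\log d(x_g^m,x)$ is subexponential in $n$ off an exponentially small set. This in turn follows because $x_g^m=x_{g^{-1}}^M$ is a random point on $\bb P^1$ and the stationary measure $\nu$ has no atoms (indeed satisfies the Hölder regularity \eqref{ineq:regsta}); a standard contraction/regularity argument — or again directly the cited \cite[Thm 13.17]{benoistquint} — shows $\mu^{*n}\{d(x_g^m,x)\le e^{-\epsilon n}\}=\oexpe(n)$ uniformly in $x$. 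Combining with \eqref{ineq:lardev} and a union bound yields \eqref{ineq:lardev1}.

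The main obstacle is the non-independence of the increments $\sigma(b_j,b_{j-1}\cdots b_1 x)$: they form a cocycle over a Markov chain (the walk on $\bb P^1$), not an i.i.d.\ sum, so the classical Cramér theorem does not apply verbatim. The standard remedy — which is what \cite{benoistquint} carries out and what I would invoke — is to use the spectral theory of the complex transfer operators $g\mapsto e^{s\sigma(g,x)}$ acting on Hölder functions on $\bb P^1$: for small $|s|$ these have a simple dominant eigenvalue $\lambda(s)$ with $\lambda(0)=1$, $\lambda'(0)=\sigma_\mu$, and a spectral gap, which gives precise control of $\bb E(e^{s\sigma(b_n\cdots b_1,x)})$ uniformly in $x$ and hence the desired exponential bounds. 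Since the statement is quoted directly from \cite[Thm 13.11, Thm 13.17]{benoistquint}, in the paper I would simply cite those results; the sketch above indicates why the finite exponential moment and Zariski density hypotheses are exactly what is needed to make the transfer-operator / Cramér argument go through uniformly in the base point.
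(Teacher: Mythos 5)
Your proposal is correct and takes essentially the same route as the paper, which states this proposition without proof as a direct citation of \cite[Thm 13.11, Thm 13.17]{benoistquint}; your sketch of the underlying argument (spectral gap of the transfer operators $P(s)$ for small real $s$ giving a Chernoff bound for the cocycle, then transferring to the Cartan projection via $\sigma(g,x)\leq\kappa(g)\leq\sigma(g,x)-\log d(x_g^m,x)$ and the regularity of $x_g^m$) is exactly the mechanism behind those cited theorems. The only cosmetic slip is the redundant bound $|\sigma(g,x)|\leq 2\log\|g\|$ where $|\sigma(g,x)|\leq\log\|g\|=\kappa(g)$ already holds; this does not affect the argument.
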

Let $t$ be a real number. Write $[t]$ for the integer part of $t$. 
\begin{cor}\label{cor:lardev}
Under the assumptions of Theorem \ref{thm:fourier}, for every $\epsilon>0$ we have
\begin{align*}
&\sum_{m\geq n}\mu^{*m}\{g\in G|\sigma(g,x)\leq t \}=\oexpe(n),\\
&\sum_{m\geq n}\mu^{*m}\{g\in G|\kappa(g)\leq t \}=\oexpe(n),
\end{align*}
uniformly for all $x$ in $X$, $t>0$ and $n\geq [\frac{t}{\sigma_{\mu}-\epsilon}]$.
\end{cor}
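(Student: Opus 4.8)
The plan is to deduce Corollary \ref{cor:lardev} directly from the large deviations estimate \eqref{ineq:lardev} (respectively \eqref{ineq:lardev1}) by summing a geometric-type series, being careful about the range of $m$ relative to $t$. Throughout, fix $\epsilon>0$ and set $\epsilon' = \epsilon/2$, say, keeping in mind that we must track uniformity in $x\in X$, $t>0$, and $n\geq[\frac{t}{\sigma_\mu-\epsilon}]$.

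First I would observe that if $\sigma(g,x)\leq t$ and $m\geq n\geq[\frac{t}{\sigma_\mu-\epsilon}]$, then $\sigma(g,x)\leq t\leq (\sigma_\mu-\epsilon)m$ (using $t\leq(\sigma_\mu-\epsilon)(n+1)$ and absorbing the $+1$ by slightly shrinking $\epsilon$, or simply noting $[\frac t{\sigma_\mu-\epsilon}]\geq \frac{t}{\sigma_\mu-\epsilon}-1$ so that $t\leq(\sigma_\mu-\epsilon)(m+1)$, which for $m$ large forces $\sigma(g,x)\leq(\sigma_\mu-\epsilon/2)m$). Hence the event $\{\sigma(g,x)\leq t\}$ is contained in $\{|\sigma(g,x)-m\sigma_\mu|\geq \frac\epsilon 2 m\}$ for all $m$ past some absolute threshold, and by \eqref{ineq:lardev} each such term is $\oexpe(m)=O(e^{-\epsilon'' m})$ with $\epsilon''$ depending only on $\epsilon$, uniformly in $x$. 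Summing the geometric series $\sum_{m\geq n}e^{-\epsilon'' m}\lesssim e^{-\epsilon'' n}$ gives the claim; the finitely many small-$m$ terms below the threshold are handled by noting that for $t>0$ the constraint $n\geq[\frac{t}{\sigma_\mu-\epsilon}]$ still allows $n$ to be small, but then $t$ is bounded, and one can either enlarge constants or argue these terms are also exponentially small in $n$ since $\mu^{*m}\{\sigma(g,x)\le t\}$ is a probability $\le 1$ and there are only $O(1)$ of them — more cleanly, restrict attention to $n$ large and absorb the rest into the implied constant, which is legitimate since $\oexpe$ allows any constant depending on $\epsilon$. The Cartan projection statement follows verbatim from \eqref{ineq:lardev1} by the same computation.

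The main technical point — and the only place requiring care — is the bookkeeping that converts the hypothesis $n\geq[\frac{t}{\sigma_\mu-\epsilon}]$ into a usable lower bound of the form $t\leq(\sigma_\mu-c\epsilon)m$ for every $m\geq n$, with the loss of a factor in $\epsilon$ being harmless because the conclusion is only asked modulo $\oexpe$. Concretely I would fix $0<\epsilon<\sigma_\mu$ without loss of generality, use $m\geq n\geq \frac{t}{\sigma_\mu-\epsilon}-1$, hence $t\leq(\sigma_\mu-\epsilon)(m+1)=(\sigma_\mu-\epsilon)m+(\sigma_\mu-\epsilon)\leq(\sigma_\mu-\epsilon/2)m$ once $m\geq m_0(\epsilon):=2(\sigma_\mu-\epsilon)/\epsilon$, so that $\sigma(g,x)\leq t$ implies $m\sigma_\mu-\sigma(g,x)\geq \frac\epsilon2 m$ and \eqref{ineq:lardev} applies with parameter $\epsilon/2$. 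For $n\leq m_0(\epsilon)$ there are at most $m_0(\epsilon)$ excluded terms, each bounded by the total over $m\geq m_0(\epsilon)$ plus a constant, and since $m_0(\epsilon)$ depends only on $\epsilon$ this contributes only to the implied constant in $\oexpe(n)$. I expect no genuine obstacle here beyond this indexing care; everything else is a geometric summation and an inclusion of events.
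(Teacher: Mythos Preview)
Your proposal is correct and follows precisely the approach the paper intends: the corollary is stated without proof immediately after the large deviations estimates \eqref{ineq:lardev}--\eqref{ineq:lardev1}, and the implied argument is exactly the inclusion $\{\sigma(g,x)\le t\}\subset\{|\sigma(g,x)-m\sigma_\mu|\ge \tfrac{\epsilon}{2}m\}$ for $m\ge n$ (up to a threshold $m_0(\epsilon)$) followed by summation of the resulting geometric series. Your bookkeeping for the integer-part offset and the finitely many terms with $m<m_0(\epsilon)$ is fine, since $m_0(\epsilon)$ depends only on $\epsilon$ and hence is absorbed into the $\oexpe$ constant.
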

By the hypothesis of finite exponential moment and the Chebyshev inequality, we have
\begin{lem}Under the assumptions of Theorem \ref{thm:fourier}, 
let $M_\mu$ be the finite exponential moment of $\mu$ defined by $M_\mu=\int \|g\|^{\epsilon_1}\dd\mu(g)$. For $s>0$, we have
\begin{equation}\label{ineq:moment}
\mu\{g\in G|\kappa(g)\geq s \}\leq M_\mu e^{-\epsilon_1s}. 
\end{equation}
\end{lem}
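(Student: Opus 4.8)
The plan is to reduce the statement to a direct application of the Markov (Chebyshev) inequality, once one identifies $\kappa(g)$ with $\log\|g\|$. First I would record the elementary fact that for the operator norm induced by the Euclidean norm on $\bb R^2$, one has $\|g\|=e^{\kappa(g)}$ for every $g$ in $G$: writing the Cartan decomposition $g=k_ga_gl_g$ with $k_g,l_g\in\so$ and $a_g=diag\{e^{\kappa(g)},e^{-\kappa(g)}\}$, the factors $k_g,l_g$ are isometries, so $\|g\|=\|a_g\|=e^{\kappa(g)}$ since $\kappa(g)\geq 0$. Consequently the event $\{g\in G\mid\kappa(g)\geq s\}$ coincides with $\{g\in G\mid\|g\|\geq e^s\}$, and raising to the power $\epsilon_1>0$ (which preserves the inequality since both sides are positive) it also coincides with $\{g\in G\mid\|g\|^{\epsilon_1}\geq e^{\epsilon_1 s}\}$.

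Then I would apply Markov's inequality to the nonnegative random variable $g\mapsto\|g\|^{\epsilon_1}$, whose integral against $\mu$ is finite and equal to $M_\mu$ by the finite exponential moment hypothesis: for any $s>0$,
\[
\mu\{g\in G\mid\kappa(g)\geq s\}=\mu\{g\in G\mid\|g\|^{\epsilon_1}\geq e^{\epsilon_1 s}\}\leq \frac{1}{e^{\epsilon_1 s}}\int_G\|g\|^{\epsilon_1}\dd\mu(g)=M_\mu e^{-\epsilon_1 s},
\]
which is exactly \eqref{ineq:moment}. There is essentially no obstacle in this argument; the only point worth stating carefully is the identity $\|g\|=e^{\kappa(g)}$, which is what allows the exponential moment hypothesis (phrased in terms of $\|g\|$) to control the tail of the Cartan projection.
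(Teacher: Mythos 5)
Your proof is correct and is exactly the argument the paper intends: the lemma is stated in the text as an immediate consequence of the finite exponential moment hypothesis and Chebyshev's (Markov's) inequality, which is precisely what you carry out, after the routine observation that $\|g\|=e^{\kappa(g)}$ (or at least $\|g\|\geq e^{\kappa(g)}$, which suffices) via the Cartan decomposition.
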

\begin{cor}\label{cor:lardev1}
Under the assumptions of Theorem \ref{thm:fourier}, for every $\epsilon>0$ we have
\begin{align*}
&\sum_{m\leq n}\mu^{*m}\{g\in G|\sigma(g,x)\geq t \}=\oexpe(t), \\
&\sum_{m\leq n}\mu^{*m}\{g\in G|\kappa(g)\geq t \}=\oexpe(t),
\end{align*}
uniformly for all $x$ in $X$, $t>0$ and $n= [\frac{t}{\sigma_{\mu}+\epsilon}]$.
\end{cor}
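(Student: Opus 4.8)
The plan is to reduce both estimates to a single large-deviation bound for the Cartan projection and then to cut the sum over $m$ at a threshold proportional to $t$. First I would observe that for every $g\in G$ one has $\|g^{-1}\|^{-1}\|v\|\le\|gv\|\le\|g\|\,\|v\|$, and since the singular values of $g\in\slr$ are $e^{\pm\kappa(g)}$ we get $\|g\|=\|g^{-1}\|=e^{\kappa(g)}$, hence $|\sigma(g,x)|\le\kappa(g)$ for all $x\in X$. Consequently $\{g:\sigma(g,x)\ge t\}\subseteq\{g:\kappa(g)\ge t\}$, so the first estimate of the corollary follows from the second, and it suffices to bound $\sum_{m\le n}\mu^{*m}\{g:\kappa(g)\ge t\}$ with $n=[\frac{t}{\sigma_\mu+\epsilon}]$. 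Note that then $m(\sigma_\mu+\epsilon)\le n(\sigma_\mu+\epsilon)\le t$ for every $m\le n$, a fact I will use in the second range below.

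Next I would fix the threshold $m_0=ct$, where $c=\frac{\epsilon_1}{2\log M_\mu}>0$; here $M_\mu>1$, since $M_\mu=1$ would force $\kappa\equiv 0$ $\mu$-almost surely, i.e.\ $\Gamma_\mu\subseteq\pso$, contradicting Zariski density. For $m\le m_0$ I would use the subadditivity $\kappa(b_1\cdots b_m)\le\sum_{j=1}^m\kappa(b_j)$, together with Markov's inequality and independence of the $b_j$, to get $\mu^{*m}\{g:\kappa(g)\ge t\}\le e^{-\epsilon_1 t}\big(\int\|g\|^{\epsilon_1}\dd\mu(g)\big)^m=e^{-\epsilon_1 t}M_\mu^{m}\le e^{-\epsilon_1 t}e^{m_0\log M_\mu}=e^{-\epsilon_1 t/2}$, which is $\oexp(t)$. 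For $m_0<m\le n$ the crude bound above is useless, and here I would invoke the large-deviation estimate \eqref{ineq:lardev1}: since $t\ge m(\sigma_\mu+\epsilon)$ we have $\{g:\kappa(g)\ge t\}\subseteq\{g:|\kappa(g)-m\sigma_\mu|\ge m\epsilon\}$, so $\mu^{*m}\{g:\kappa(g)\ge t\}=\oexpe(m)\le C_\epsilon e^{-\epsilon'm}\le C_\epsilon e^{-\epsilon'c\,t}$ for some $C_\epsilon,\epsilon'>0$ depending only on $\epsilon$.

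Finally I would sum over $m\le n$: there are at most $n+1\le\frac{t}{\sigma_\mu+\epsilon}+1$ terms, each bounded by $e^{-\epsilon_1 t/2}+C_\epsilon e^{-\epsilon'c\,t}$, and a polynomial factor in $t$ times an exponentially decaying one is again $\oexpe(t)$. This proves the $\kappa$-estimate, uniformly in $x\in X$ and $t>0$, and the $\sigma$-estimate follows from the inclusion noted above.

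The step I expect to be the real obstacle is the range where $m$ is comparable to $n$: there the naive exponential-moment estimate $e^{-\epsilon_1 t}M_\mu^{m}$ is not summable (one would need $\log M_\mu<\epsilon_1(\sigma_\mu+\epsilon)$, which cannot be assumed), so one must appeal to the genuine large-deviation principle \eqref{ineq:lardev1} for the Cartan projection — this is the point where Zariski density enters — and the whole argument hinges on choosing the splitting threshold proportional to $t$, so that both pieces decay like $e^{-(\mathrm{const})t}$.
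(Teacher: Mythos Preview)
Your proof is correct and follows essentially the same approach as the paper: reduce the cocycle estimate to the Cartan projection via $\sigma(g,x)\le\kappa(g)$, then split the sum at a threshold $m_0=ct$ with $c=\epsilon_1/(2\log M_\mu)$, using the exponential moment and subadditivity of $\kappa$ in the short range and the large-deviation bound \eqref{ineq:lardev1} in the long range. The only cosmetic difference is that the paper sums the geometric series $\sum_{m\le m_0}M_\mu^m$ and $\sum_{m\ge m_0}e^{-\epsilon'm}$ directly rather than bounding each term by a common constant and multiplying by $n+1$; your extra remark that $M_\mu>1$ (needed for $c$ to be finite) is a detail the paper leaves implicit.
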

\begin{proof}
	The inequality about the cocycle follows from the one about the Cartan projection, because $\kappa(g)\geq\sigma(g,x)$. It suffices to prove the second inequality: 
	\begin{itemize}
		\item When $m\leq \epsilon_2t$, where $\epsilon_2>0$ is a small constant such that 
		$\epsilon_2\leq \epsilon_1/(2\log M_\mu)$, from Chebyshev's inequality and the subadditivity of the Cartan projection, we have
		\begin{align*}
		\sum_{m=1}^{[\epsilon_2t]}\mu^{\otimes m}\{\kappa(g)\geq t \}&\leq 	\sum_{m=1}^{[\epsilon_2t]}e^{-\epsilon_1 t}\int e^{\epsilon_1\kappa(g)}\dd\mu^{\otimes m}(g)\leq \sum_{m=1}^{[\epsilon_2t]}e^{-\epsilon_1 t}\left|\int\|g\|^{\epsilon_1}\dd\mu(g)\right|^m\\
		&\leq e^{-\epsilon_1 t}M_\mu^{[\epsilon_2t]}/(M_\mu-1).
		\end{align*}
		This implies that $\sum_{m=1}^{[\epsilon_2t]}\mu^{\otimes m}\{\kappa(g)\geq t \}\lesssim e^{-t\epsilon_1/2}$. 
		
		\item When $m\in[\epsilon_2t,t/(\sigma_\mu+\epsilon)]$, 
		we have $\kappa(g)>t\geq m(\sigma_\mu+\epsilon)$. Then use \eqref{ineq:lardev1} to deduce that the measure of this part is less than $\sum_{m\in[\epsilon_2t,t/(\sigma_\mu+\epsilon)]}\oexpe(m)=\oexpe(t)$.
	\end{itemize}
	The proof is complete.
\end{proof}
The following proposition describes regularity properties of $\mu^{*n}$, which is a corollary of the large deviations principle.
\begin{prop}\cite[Prop14.3]{benoistquint}
Under the assumptions of Theorem \ref{thm:fourier}, for every $\epsilon>0$ we have
	\begin{align}
		\label{ineq:regcon}&\mu^{*n}\{g\in G|\ d(gx,x')\leq e^{-n\epsilon} \}=\oexpe(n),\\
		\label{ineq:xgmx}&\mu^{*n}\{g\in G|\ d(x^M_g,x)\leq e^{-n\epsilon} \}=\oexpe(n),\\
		\label{ineq:xgmgx}&\mu^{*n}\{g\in G|\ d(x_g^m,g^{-1}x)\geq e^{-(2\sigma_\mu-\epsilon)n} \}=\oexpe(n),\\
		\label{ineq:xgminv}&\mu^{*n}\{g\in G|\ d(x_g^M,gx)\geq e^{-(2\sigma_\mu-\epsilon)n} \}=\oexpe(n),
	\end{align}
uniformly for all $x,\,x'$ in $X$ and $n\geq 1$.
\end{prop}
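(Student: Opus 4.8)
The plan is to reduce all four estimates to the large deviations principle \eqref{ineq:lardev}--\eqref{ineq:lardev1} for the cocycle and the Cartan projection, combined with the geometric interpretation of $\sigma$, $x_g^m$ and $x_g^M$ recorded in Lemmas \ref{lem:distance}, \ref{lem:coccardis} and \ref{lem:carcoc}. The guiding heuristic is that $\kappa(g)\approx n\sigma_\mu$ with overwhelming probability, so that the contraction ratios appearing in Lemma \ref{lem:distance} are of size $e^{-2n\sigma_\mu}$; hence ``being close'' at scale $e^{-n\epsilon}$ is a much weaker requirement than the natural scale and should be exponentially rare, while ``failing to be at scale $e^{-(2\sigma_\mu-\epsilon)n}$'' contradicts the typical contraction and is likewise exponentially rare.

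I would treat the four estimates in the following order. First, \eqref{ineq:xgmgx} and \eqref{ineq:xgminv}: by the relation \eqref{equ:xgmxginv} these are dual under $g\mapsto g^{-1}$ (note $\mu^{*n}$ is replaced by $\check\mu^{*n}$, which has the same Lyapunov exponent and satisfies the same large deviations bounds since $\Gamma_{\check\mu}=\Gamma_\mu$ is Zariski dense), so it suffices to prove one of them, say \eqref{ineq:xgmgx}. Write $x=\bb Rv$ with $\|v\|=1$ and $w=g^{-1}v/\|g^{-1}v\|$; the quantity $d(x_g^m,g^{-1}x)$ is, by Lemma \ref{lem:coccardis} applied with $g^{-1}$ in place of $g$ (using $x^m_g=x^M_{g^{-1}}$ and the analogue of \eqref{ineq:coccardis} for $x^M$), comparable to $\|g^{-1}w - (\text{leading term})\|/\|g^{-1}\|$, equivalently to the second coordinate of $l_gv$ in the Cartan frame of $g$; thus $d(x_g^m,g^{-1}x)\le e^{\sigma(g^{-1},x)-\kappa(g^{-1})}$ up to an $e^{-2\kappa(g^{-1})}$ error — more precisely Lemma \ref{lem:coccardis} gives $\sigma(g^{-1},x)\ge \kappa(g^{-1})+\log d(x_{g^{-1}}^m, x)$, but here I want the complementary bound $d(x_g^m,g^{-1}x)\lesssim e^{\sigma(g^{-1},x)-\kappa(g^{-1})}+e^{-2\kappa(g^{-1})}$, which follows by reading \eqref{ineq:coccardis} the other way. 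Then on the event $d(x_g^m,g^{-1}x)\ge e^{-(2\sigma_\mu-\epsilon)n}$ one must have either $\sigma(g^{-1},x)-\kappa(g^{-1})\ge -(2\sigma_\mu-\epsilon/2)n$ — combined with $\kappa(g^{-1})\le (\sigma_\mu+\epsilon')n$ failing is rare by \eqref{ineq:lardev1}, and with $\kappa(g^{-1})\ge(\sigma_\mu-\epsilon')n$ this forces $\sigma(g^{-1},x)\le -(\sigma_\mu - 2\epsilon)n$ say, contradicting \eqref{ineq:lardev} — or $e^{-2\kappa(g^{-1})}\gtrsim e^{-(2\sigma_\mu-\epsilon)n}$, i.e. $\kappa(g^{-1})\le(\sigma_\mu-\epsilon/2)n$ up to a constant, again rare by \eqref{ineq:lardev1}. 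A union bound over these $\oexpe(n)$ events finishes \eqref{ineq:xgmgx}, and duality gives \eqref{ineq:xgminv}.

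For \eqref{ineq:regcon}, note $d(gx,x')\le d(gx, x_g^M)+d(x_g^M,x')$, and $d(gx,x_g^M)=d(gx,x_{g^{-1}}^m)$ is $\oexpe(n)$-small on a set of overwhelming probability by \eqref{ineq:xgminv} (with scale $e^{-(2\sigma_\mu-\epsilon)n}\ll e^{-n\epsilon}$ for small $\epsilon$); so $d(gx,x')\le e^{-n\epsilon}$ forces $d(x_g^M,x')\le 2e^{-n\epsilon}$ on that set, and it remains to bound $\mu^{*n}\{d(x_g^M,x')\le 2e^{-n\epsilon}\}$, which is \eqref{ineq:xgmx} up to adjusting $\epsilon$. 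Thus \eqref{ineq:regcon} reduces to \eqref{ineq:xgmx}. Finally for \eqref{ineq:xgmx}: since the law of $x_g^M=k_ge_1$ under $\mu^{*n}$ converges to the stationary measure $\nu$ and, more usefully, for any fixed $x'$ one can write $d(x_g^M,x)\approx \|g^* u'\|/\|g\|$ where $u'\perp x'$, so that $d(x_g^M,x')\le e^{-n\epsilon}$ is essentially the event $\sigma(g^*, x'^\perp) - \kappa(g)\le -n\epsilon$; comparing $\sigma$ and $\kappa$ for the transpose random walk (same Lyapunov behaviour) via \eqref{ineq:lardev}--\eqref{ineq:lardev1} as above shows this deviation is $\oexpe(n)$. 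Alternatively, and perhaps more cleanly, one invokes the H\"older regularity of the limiting distribution of $x^M_g$ together with \eqref{ineq:regcon}-type control on the convergence rate; but since that regularity is itself a consequence of the large deviations estimates I would carry out the direct argument.

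The main obstacle I anticipate is purely bookkeeping rather than conceptual: correctly tracking the relation between the cocycle $\sigma$ evaluated at various points and the Cartan projection through the $x_g^M$/$x_g^m$ duality and the transpose of $g$, and making sure that every ``rare event'' is matched to the correct one of \eqref{ineq:lardev}, \eqref{ineq:lardev1} with a compatible choice of the auxiliary $\epsilon'$ (small multiples of $\epsilon$) so that all the error terms combine to a single $\oexpe(n)$, uniformly in $x,x'$. The uniformity in $x'$ for \eqref{ineq:xgmx} and \eqref{ineq:regcon} is the one point that needs a little care: it follows because the large deviations bounds \eqref{ineq:lardev}, \eqref{ineq:lardev1} are already uniform in the base point, and the reductions above introduce only the fixed vector $x'$ into the base point of a transposed cocycle, so no new supremum over an unbounded family appears.
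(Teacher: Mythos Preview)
The paper does not prove this proposition; it is quoted from \cite[Prop.~14.3]{benoistquint}. Your overall strategy---reducing everything to the large deviations bounds \eqref{ineq:lardev}--\eqref{ineq:lardev1} via Lemma~\ref{lem:coccardis} and the duality $x_g^M=x_{g^{-1}}^m$---is exactly the right one, and is in substance how Benoist--Quint argue. The reduction of \eqref{ineq:regcon} to \eqref{ineq:xgmx} through \eqref{ineq:xgminv} is clean.

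There is, however, a concrete sign error in your treatment of \eqref{ineq:xgmgx}. Applying Lemma~\ref{lem:coccardis} to the element $g$ at the point $g^{-1}x$ gives
\[
d(x_g^m,g^{-1}x)\le e^{\sigma(g,\,g^{-1}x)-\kappa(g)}=e^{-\sigma(g^{-1},x)-\kappa(g)},
\]
with no additive $e^{-2\kappa}$ correction needed (that term only enters in the upper bound for $e^{\sigma-\kappa}$, not for $d$). Your displayed bound $d(x_g^m,g^{-1}x)\lesssim e^{\sigma(g^{-1},x)-\kappa(g^{-1})}$ has the wrong sign on $\sigma(g^{-1},x)$, and this propagates: the event $d(x_g^m,g^{-1}x)\ge e^{-(2\sigma_\mu-\epsilon)n}$ forces $\sigma(g^{-1},x)+\kappa(g)\le (2\sigma_\mu-\epsilon)n$, hence (on the high-probability set $\kappa(g)\ge(\sigma_\mu-\epsilon')n$) one gets $\sigma(g^{-1},x)\le(\sigma_\mu-\epsilon+\epsilon')n$, which is a \emph{positive} deviation below $n\sigma_\mu$ and is handled by \eqref{ineq:lardev} for $\check\mu$. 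Your conclusion ``$\sigma(g^{-1},x)\le -(\sigma_\mu-2\epsilon)n$'' is off by a sign and would not match any large deviations event. For \eqref{ineq:xgmx} the same mechanism works cleanly with $g^{-1}$ (use $x_g^M=x_{g^{-1}}^m$ and the \emph{upper} inequality in \eqref{ineq:coccardis} for $g^{-1}$ at $x$); there is no need to introduce the transpose $g^*$, which only obscures the argument. Once these signs are fixed, your proof goes through.
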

\begin{cor}\label{cor:regcon}
Under the assumptions of Theorem \ref{thm:fourier}, for every $\epsilon>0$ we have
\begin{align}
	\label{ineq:regcon1}\mu^{*n}\{g\in G|\ d(gx,x')\leq e^{-t} \}=\oexpe(t),
\end{align}
uniformly for all $x,x'$ in $X$, $t>0$ and $n\geq t/\epsilon$.

For every $\epsilon>0$ we have 
\begin{equation}
\label{ineq:xgminv1}\mu^{*n}\{g\in G|\ d(x_g^M,gx)\geq e^{-t} \}=\oexpe(t),
\end{equation}
uniformly for all $x$ in $X$, $t>0$ and $n\geq t/(2\sigma_\mu-\epsilon)$.
\end{cor}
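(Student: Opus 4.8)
The plan is to deduce both statements from the proposition that precedes the corollary, by running the random walk for the correct number of steps rather than for all $n$; the first estimate additionally requires a one-step disintegration of $\mu^{*n}$, while the second follows by a direct monotonicity argument.

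For \eqref{ineq:regcon1}, fix $\epsilon>0$. When $t\le 2\epsilon$ the left-hand side is at most $1$, which is $\oexpe(t)$ after enlarging the implied constant, so assume $t\ge 2\epsilon$ and put $m=\lfloor t/\epsilon\rfloor$; then $t/(2\epsilon)\le m\le t/\epsilon\le n$ and, crucially, $m\epsilon\le t$. Decompose $\mu^{*n}=\mu^{*m}*\mu^{*(n-m)}$, i.e. write $g=g_1g_2$ with $g_1$, $g_2$ independent of laws $\mu^{*m}$, $\mu^{*(n-m)}$; since $gx=g_1(g_2x)$, Fubini gives
\[\mu^{*n}\{d(gx,x')\le e^{-t}\}=\int_G\mu^{*m}\{g_1:d(g_1(g_2x),x')\le e^{-t}\}\,\dd\mu^{*(n-m)}(g_2)\le\sup_{y\in X}\mu^{*m}\{g_1:d(g_1y,x')\le e^{-t}\}.\]
Because $m\epsilon\le t$ we have $e^{-t}\le e^{-m\epsilon}$, so the last event is contained in $\{d(g_1y,x')\le e^{-m\epsilon}\}$, whose $\mu^{*m}$-mass is $\oexpe(m)$ by \eqref{ineq:regcon}, uniformly in $y$ and $x'$. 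Finally $m\ge t/(2\epsilon)$ converts $\oexpe(m)$ into $\oexpe(t)$, which proves the bound uniformly in $x,x'$.

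For \eqref{ineq:xgminv1}, fix $\epsilon>0$ (we may take $\epsilon<2\sigma_\mu$). If $n\ge t/(2\sigma_\mu-\epsilon)$ then $(2\sigma_\mu-\epsilon)n\ge t$, hence $e^{-(2\sigma_\mu-\epsilon)n}\le e^{-t}$ and consequently
\[\{g\in G:d(x_g^M,gx)\ge e^{-t}\}\subseteq\{g\in G:d(x_g^M,gx)\ge e^{-(2\sigma_\mu-\epsilon)n}\}.\]
By \eqref{ineq:xgminv} the $\mu^{*n}$-mass of the right-hand event is $\oexpe(n)$, uniformly in $x$, and the inequality $n\ge t/(2\sigma_\mu-\epsilon)$ upgrades this to $\oexpe(t)$.

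Both arguments are routine; the only point requiring a little care is in \eqref{ineq:regcon1}, where one cannot apply \eqref{ineq:regcon} directly at step $n$ (the hypothesis $n\ge t/\epsilon$ makes $e^{-n\epsilon}\le e^{-t}$, the wrong direction for an inclusion), so one is forced to split the walk at the intermediate scale $m\simeq t/\epsilon$; once $\mu^{*n}$ is disintegrated this presents no real obstacle.
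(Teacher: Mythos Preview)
Your proof is correct and, for \eqref{ineq:regcon1}, essentially identical to the paper's: both split the walk at an intermediate step $m\simeq t/\epsilon$ (the paper's $n_t$ satisfies $\epsilon n_t<t\le\epsilon(n_t+1)$, your $m=\lfloor t/\epsilon\rfloor$), disintegrate $\mu^{*n}=\mu^{*m}*\mu^{*(n-m)}$, and apply \eqref{ineq:regcon} to the first factor with the moving base point $g_2x$.

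For \eqref{ineq:xgminv1} the paper simply writes ``the second inequality follows from the same argument''. You instead notice that the inequality sign here is $\ge$, so that $n\ge t/(2\sigma_\mu-\epsilon)$ gives the inclusion $\{d(x_g^M,gx)\ge e^{-t}\}\subset\{d(x_g^M,gx)\ge e^{-(2\sigma_\mu-\epsilon)n}\}$ directly, and \eqref{ineq:xgminv} applies at step $n$ itself with no disintegration. This is a small but genuine simplification: a literal repetition of the disintegration argument would be awkward here, since $x_{lh}^M$ does not reduce to $x_l^M$, so one cannot freeze the tail $h$ as cleanly as one freezes $g_2x$ in the first part.
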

\begin{proof}
There exists an integer $n_t\leq n$ such that $\epsilon n_t<t\leq \epsilon(n_t+1)$. 
	By inequality \eqref{ineq:regcon}, we have $\mu^{* n_t}\{d(gx,x')\leq e^{-\epsilon n_t} \}\lesssim e^{-\epsilon' n_t}$. This implies that
	\begin{align*}
	\mu^{* n}\{g\in G|d(gx,x')\leq e^{-t} \}&=\int_{G}\mu^{* n_t}\{l\in G|d(l(hx),x')\leq e^{- t} \}\dd\mu^{* (n-n_t)}(h)\\
	&\leq \int_{G}\mu^{* n_t}\{l\in G|d(l(hx),x')\leq e^{- \epsilon n_t} \}\dd\mu^{* (n-n_t)}(h)\\
	&\lesssim e^{-\epsilon' n_t}\lesssim e^{-\epsilon'(t/\epsilon-1)}\lesssim e^{-\epsilon't/\epsilon }.
	\end{align*}
The second inequality follows from the same argument. 
\end{proof}
The following lemma describes the difference between the cocycle and the Cartan projection.
\begin{lem}\cite[Lemma 17.8]{benoistquint}\label{lem:coccar}
Under the assumptions of Theorem \ref{thm:fourier}, for every $\epsilon>0$, there exist $C>0,\epsilon'>0$ such that for all $n\geq l>0$ and $x$ in $X$, there exists a subset $S_{n,l,x}\subset G\times G$, which satisfies
	\[\mu^{*(n-l)}\otimes\mu^{*l}(S_{n,l,x}^c)\leq Ce^{-\epsilon'l}=\oexpe(l), \]
	and for all $(g_1,g_2)\in S_{n,l,x}$, we have
	\[|\kappa(g_1g_2)-\sigma(g_1,g_2x)-\kappa(g_2)|\leq e^{-\epsilon l}. \]
\end{lem}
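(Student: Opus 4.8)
The plan is to interpolate through the quantity $\kappa(g_2)+\sigma(g_1,x_{g_2}^M)$, proving that on a suitable set $S_{n,l,x}$ it is exponentially close (in $l$) to each of $\kappa(g_1g_2)$ and $\sigma(g_1,g_2x)+\kappa(g_2)$. The algebraic observation is that, writing $g_2=k_{g_2}a_{g_2}l_{g_2}$ with $a_{g_2}=\mathrm{diag}(e^{\kappa(g_2)},e^{-\kappa(g_2)})$, one has $\kappa(g_1g_2)=\kappa(g_1k_{g_2}a_{g_2})$, since $l_{g_2}\in\so$ and the operator norm is invariant under right multiplication by $\so$. The two columns of $g_1k_{g_2}a_{g_2}$ are $e^{\kappa(g_2)}g_1(k_{g_2}e_1)$ and $e^{-\kappa(g_2)}g_1(k_{g_2}e_2)$, and since $k_{g_2}e_1$ is a unit vector spanning the line $x_{g_2}^M$ we get $\|g_1k_{g_2}e_1\|=e^{\sigma(g_1,x_{g_2}^M)}$, while $\|g_1k_{g_2}e_2\|\le e^{\kappa(g_1)}$. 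Bounding $\|g_1k_{g_2}a_{g_2}\|$ below by its first column norm and above by the sum of its column norms, and using $\sigma(g_1,x_{g_2}^M)\ge \kappa(g_1)+\log d(x_{g_1}^m,x_{g_2}^M)$ from Lemma \ref{lem:coccardis}, we obtain
\[0\le \kappa(g_1g_2)-\kappa(g_2)-\sigma(g_1,x_{g_2}^M)\le \log\Big(1+\tfrac{e^{-2\kappa(g_2)}}{d(x_{g_1}^m,x_{g_2}^M)}\Big)\le \tfrac{e^{-2\kappa(g_2)}}{d(x_{g_1}^m,x_{g_2}^M)}\]
whenever $\kappa(g_2)>0$; the case $g_1\in\so$, in which the quantity of the lemma already vanishes, is trivial.

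It remains to replace $\sigma(g_1,x_{g_2}^M)$ by $\sigma(g_1,g_2x)$. A computation as in the proof of Lemma \ref{lem:coccardis} gives the deterministic bound $d(g_2x,x_{g_2}^M)\le e^{-2\kappa(g_2)}/d(x_{g_2}^m,x)$, and since $\sigma(g_1,\cdot)=\sigma(a_{g_1},l_{g_1}\,\cdot)$, differentiating $\theta\mapsto\tfrac12\log(e^{2\kappa(g_1)}\cos^2\theta+e^{-2\kappa(g_1)}\sin^2\theta)$ shows that $\sigma(g_1,\cdot)$ has Lipschitz constant $\lesssim 1/d(x_{g_1}^m,\cdot)$ near any point away from $x_{g_1}^m$, uniformly in $\kappa(g_1)$. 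Now fix $\epsilon>0$ and $x\in X$, take $\delta>0$ small in terms of $\epsilon$, and let $S_{n,l,x}$ consist of the pairs $(g_1,g_2)$ with $\kappa(g_2)\ge l(\sigma_\mu-\delta)$, $d(x_{g_2}^m,x)\ge e^{-\delta l}$, and $d(x_{g_1}^m,x_{g_2}^M)\ge e^{-\delta l}$ (and $S_{n,l,x}=\emptyset$ for $l$ below a fixed threshold). On $S_{n,l,x}$ the displayed error is $\le e^{-(2\sigma_\mu-3\delta)l}$, and $d(g_2x,x_{g_2}^M)\le e^{-(2\sigma_\mu-3\delta)l}$ lies well inside the ball of radius $\tfrac12 e^{-\delta l}$ about $x_{g_2}^M$, on which $\sigma(g_1,\cdot)$ has Lipschitz constant $\lesssim e^{\delta l}$, whence $|\sigma(g_1,g_2x)-\sigma(g_1,x_{g_2}^M)|\lesssim e^{-(2\sigma_\mu-4\delta)l}$. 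Adding, $|\kappa(g_1g_2)-\sigma(g_1,g_2x)-\kappa(g_2)|\lesssim e^{-(2\sigma_\mu-4\delta)l}\le e^{-\epsilon l}$ for $l$ large, the threshold and a choice of $C$ absorbing the small values of $l$.

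For the exceptional set I would treat the three conditions in turn. The event $\kappa(g_2)<l(\sigma_\mu-\delta)$ has $\mu^{*l}$-measure $\oexpe(l)$ by \eqref{ineq:lardev1}. Since $x_{g_2}^m=x_{g_2^{-1}}^M$ by \eqref{equ:xgmxginv} and $g_2^{-1}$ has law $\check\mu^{*l}$, where $\check\mu$ is the image of $\mu$ under $g\mapsto g^{-1}$ and still satisfies the hypotheses of Theorem \ref{thm:fourier}, inequality \eqref{ineq:xgmx} for $\check\mu$ gives $\mu^{*l}\{d(x_{g_2}^m,x)<e^{-\delta l}\}=\oexpe(l)$, uniformly in $x$. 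For the last condition, conditioning on $g_1$ and applying \eqref{ineq:xgmx} for $\mu$ (which is uniform in the base point) gives $\mu^{*l}\{d(x_{g_1}^m,x_{g_2}^M)<e^{-\delta l}\}=\oexpe(l)$ for every $g_1$, so after integrating in $g_1$ it is $\oexpe(l)$ uniformly in $n$. Summing, $\mu^{*(n-l)}\otimes\mu^{*l}(S_{n,l,x}^c)=\oexpe(l)$ uniformly for $n\ge l$, as required.

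The step I expect to be delicate is the regime where $n-l$ is small, so that $\kappa(g_1)$ need not be large. There the naive move of expanding $\sigma(g_1,x_{g_2}^M)$ through Lemma \ref{lem:coccardis} produces an error $e^{-2\kappa(g_1)}/d(x_{g_1}^m,x_{g_2}^M)$ that cannot be made small; the column-norm comparison avoids this because $\sigma(g_1,x_{g_2}^M)$ is never expanded and cancels between the two sides, so $\kappa(g_1)$ enters only through quantities common to both. The companion subtlety is that the exceptional set must have measure $\oexpe(l)$, not merely $\oexpe(n-l)$, uniformly in $n$; this is exactly why the third condition is handled by conditioning on $g_1$ and the second by passing to the reflected walk $\check\mu$.
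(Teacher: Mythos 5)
The paper does not prove this lemma; it is quoted verbatim from Benoist--Quint, so there is no internal proof to compare against. Your argument is correct and self-contained. The key identity $e^{\kappa(g_1g_2)}=\|g_1k_{g_2}a_{g_2}\|$, squeezed between the first column norm $e^{\kappa(g_2)+\sigma(g_1,x_{g_2}^M)}$ and the sum of the two column norms, pins $\kappa(g_1g_2)-\kappa(g_2)-\sigma(g_1,x_{g_2}^M)$ into $[0,\,e^{-2\kappa(g_2)}/d(x_{g_1}^m,x_{g_2}^M)]$ without ever expanding $\sigma(g_1,\cdot)$, so the smallness of $\kappa(g_1)$ when $n-l$ is small is indeed harmless, exactly as you observe; and the passage from $x_{g_2}^M$ to $g_2x$ via the contraction bound $d(g_2x,x_{g_2}^M)\le e^{-2\kappa(g_2)}/d(x_{g_2}^m,x)$ and the Lipschitz estimate $|\partial\sigma(g_1,\cdot)|\lesssim 1/d(x_{g_1}^m,\cdot)$ is sound. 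Your treatment of the exceptional set is also right, and you correctly isolate the real point: each of the three events must be read off the $\mu^{*l}$-marginal (via \eqref{ineq:lardev1}, via $\check\mu$ and \eqref{equ:xgmxginv} for $x_{g_2}^m$, and via conditioning on $g_1$ and the uniformity of \eqref{ineq:xgmx} for $x_{g_2}^M$), which is what produces a bound $\oexpe(l)$ uniform in $n$. One caveat worth recording: your final error is $e^{-(2\sigma_\mu-4\delta)l}$, so the argument only delivers the stated conclusion for $\epsilon<2\sigma_\mu$. This is a limitation of the statement rather than of your proof --- the term $|\sigma(g_1,g_2x)-\sigma(g_1,x_{g_2}^M)|$ is generically of order $e^{-2\kappa(g_2)}\asymp e^{-2\sigma_\mu l}$, so no choice of $S_{n,l,x}$ of almost full measure can do better --- and the paper only ever applies the lemma with the fixed value $\epsilon=\sigma_\mu/4$, which your proof covers.
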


By the identification $X\simeq\bb T$, we can work on $\bb T$. Since the circle $\bb T$ is a quotient space of $\bb R$, it has the induced orientation. For two different points $x,y$ in $\bb T$, which are not the two endpoints of a diameter, they divide the circle into two arcs. Call the arc with longer length the large arc, and the other arc the small arc $x\smallfrown y$. For a function $\phi$ on $\bb T$, it can be seen as a function $\Phi$ on $\bb R$ with period $\pi$. Define $\phi'(\theta)$ as the derivative of $\Phi$.

We introduce a sign for two different points $x,y$ in $X$, where $x,y$ are not the two endpoints of a diameter. If in the small arc $ x\smallfrown y$, the point $x$ is the start point in the orientation sense, then we define $\sgn(x,y)=1$; otherwise, we define $\sgn(x,y)=-1$. We have a Newton-Leibniz formula on the circle
\begin{equation}\label{equ:newcir}
\phi( y)-\phi( x)=\sgn(x,y)\int_{ x\smallfrown  y}\phi'(\theta)\dd\theta,
\end{equation}
where $\dd\theta$ is the Lebesgue measure on $\bb T$ induced by the Lebesgue measure on $\bb R$ with total mass $\pi$.
\begin{defi}[Orientation]
Let $x,y,z$ be three points in $X$. Define
\begin{equation*}
\sgn(x,y,z)=\left\{ \begin{aligned}
0 &\text{ if any two points coincide},\\
1 &\text{ if }\{ x, y, z\} \text{ is counterclockwise},\\
-1 &\text{ otherwise}.
\end{aligned}
\right.
\end{equation*} 
\end{defi}
\begin{prop}\label{prop:sgngxy}
Let $x,y$ be two different points in $X$, and let $g$ be in $G$ such that $\kappa(g)>2$ and $d(x_g^m,x),d(x_g^m,y)>e^{-\kappa(g)}$. Then
\begin{equation}\label{equ:sgngxy}
\sgn(gx,gy)=\sgn(x,y,x_g^m).
\end{equation}
\end{prop}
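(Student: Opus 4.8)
The plan is to reduce to the case where $g$ is diagonal, and then read off both sides of \eqref{equ:sgngxy} in an explicit chart on $\bb T$.

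First I would use the Cartan decomposition $g=k_ga_gl_g$ to reduce to $g=a_g=\mathrm{diag}\{e^{\kappa(g)},e^{-\kappa(g)}\}$. Since $k_g,l_g\in\so$ act on $\bb T\simeq\bb P^1$ as orientation-preserving isometries, they preserve distances, preserve the cyclic orientation $\sgn(\cdot,\cdot,\cdot)$, and preserve which of the two arcs cut out by a pair of points is the small one. Hence $\sgn(gx,gy)=\sgn(a_g(l_gx),a_g(l_gy))$ and $\sgn(x,y,x_g^m)=\sgn(l_gx,l_gy,l_gx_g^m)=\sgn(l_gx,l_gy,e_2)$, while the hypotheses $d(x_g^m,x),d(x_g^m,y)>e^{-\kappa(g)}$ become $d(e_2,l_gx),d(e_2,l_gy)>e^{-\kappa(g)}$. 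So after replacing $x,y$ by $l_gx,l_gy$ it suffices to treat $g=a=\mathrm{diag}\{e^\kappa,e^{-\kappa}\}$ with $\kappa=\kappa(g)>2$, $x_g^m=e_2$, and to show $\sgn(ax,ay)=\sgn(x,y,e_2)$ under $d(e_2,x),d(e_2,y)>e^{-\kappa}$.

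Next I would set up the chart $\psi$ identifying $\bb T\setminus\{e_2\}$ with the open interval $(-\pi/2,\pi/2)$, sending the point of angle $\theta$ to its representative in $(-\pi/2,\pi/2)$; this map is orientation preserving and places $e_2$ at the two ends $\pm\pi/2$. The only elementary fact about the sign conventions that I need, which I would isolate as a short lemma, is: for distinct $p,q\in\bb T\setminus\{e_2\}$ with $\psi(p)=\alpha$, $\psi(q)=\beta$, one always has $\sgn(p,q,e_2)=\sgn(\beta-\alpha)$; and if moreover $|\alpha-\beta|<\pi/2$ then the in-chart arc is the small one, so also $\sgn(p,q)=\sgn(\beta-\alpha)$. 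Both assertions are immediate from the definitions, since counterclockwise means increasing angle, the in-chart arc from $\alpha$ to $\beta$ has length $|\alpha-\beta|$, and the complementary arc (through $e_2$) has length $\pi-|\alpha-\beta|$.

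In the chart the map $a$ becomes $A(\theta)=\arctan(e^{-2\kappa}\tan\theta)$, a strictly increasing self-map of $(-\pi/2,\pi/2)$ fixing $0$. The key quantitative step is that the hypothesis forces the images to lie very close to $e_1$: writing $\alpha=\psi(x)$, we have $d(e_2,x)=|\cos\alpha|=\cos\alpha$, so $\cos\alpha>e^{-\kappa}$, hence $|\tan\alpha|<e^{\kappa}$, and therefore $|\psi(ax)|=|\arctan(e^{-2\kappa}\tan\alpha)|\le e^{-2\kappa}|\tan\alpha|<e^{-\kappa}$; likewise $|\psi(ay)|<e^{-\kappa}$. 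Since $\kappa>2$ this gives $|\psi(ax)-\psi(ay)|<2e^{-\kappa}<\pi/2$, so the lemma yields $\sgn(ax,ay)=\sgn(\psi(ay)-\psi(ax))=\sgn(A(\beta)-A(\alpha))$, where $\beta=\psi(y)$; as $A$ is increasing this equals $\sgn(\beta-\alpha)$, which by the lemma again equals $\sgn(x,y,e_2)$. Undoing the reduction gives \eqref{equ:sgngxy}. (All points in sight are genuinely distinct and non-antipodal — $x,y\neq e_2$ by hypothesis, $A$ is injective, and the angular gaps are $<\pi/2$ — so every sign is $\pm1$ and well-defined.) I do not expect a genuine obstacle here; the only delicate point is keeping the circle orientation conventions consistent through the reduction and the chart, which is exactly why I would first dispose of the two-line sign lemma.
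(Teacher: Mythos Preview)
Your proof is correct and follows essentially the same approach as the paper: both reduce to the diagonal case via Cartan decomposition, show that the images $a_gx,a_gy$ land in a small neighborhood of $e_1$ not containing $e_2$, and then exploit that $a_g$ is orientation-preserving on $\bb T$. Your version spells this out via the explicit chart $(-\pi/2,\pi/2)$ and the monotone map $A(\theta)=\arctan(e^{-2\kappa}\tan\theta)$, whereas the paper phrases the same step as a contraction estimate $d(a_gx',e_1)\le e^{-\kappa(g)}$ followed by $\sgn(a_gx,a_gy)=\sgn(a_gx,a_gy,e_2)=\sgn(x,y,e_2)$; the content is identical.
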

\begin{proof}
With the same argument as in the proof of Lemma \ref{lem:coccardis}, it suffices to prove the statement in case $g=a_g$, that is $\sgn(a_gx,a_gy)=\sgn(x,y,e_2)$.

If $x'$ is a point in $X$ such that $d(e_2,x')>e^{-\kappa(g)}$, then
\[d(a_gx',e_1)=d(a_gx',a_ge_1)=d(x',e_1)\exp(-\sigma(a_g,x')-\sigma(a_g,e_1)). \]
By \eqref{ineq:coccardis}, we obtain $\sigma(a_g,x')\geq\kappa(g)+\log d(e_2,x')>0$, so 
\[d(a_gx',e_1)\leq \exp(-\kappa(g))\leq e^{-2}. \]
Thus the action of $a_g$ on the interval $B(e_2,e^{-\kappa(g)})^c$ is contracting with fixed point $e_1$, and the image is in the interval $B(e_1,e^{-\kappa(g)})$. Especially, $e_2$ is not in $B(e_1,e^{-\kappa(g)})$ and the small arc $ a_gx\smallfrown a_gy$ is contained in $B(e_1,e^{-\kappa(g)})$. By definition we have
\[\sgn(a_gx,a_gy)=\sgn(a_gx,a_gy,e_2).\]
Since the action of $a_g$ on $\bb T$ preserves the orientation, we have $\sgn(a_gx,a_gy,e_2)=\sgn(x,y,e_2)$. The proof is complete.
\end{proof}
\section{Decrease of the Fourier transform}
\label{secdecfou}
Here we give a proof of Theorem \ref{thm:main}, by admitting the technical results that will be proved in the following two sections. Recall the notations $G=\slr$ and $ X=\bb P^1$.

\begin{defi}\label{def:Sigma}
Let $\Sigma=\bigcup_{n\in\bb N}G^{\times n}$ be the symbol space of all finite sequences with elements in $G$. Let $\mu$ be a Borel probability measure on $G$, and let $\mu^{\otimes n}$ be the product measure on $G^{\times n}$. Then $\mu^{\otimes n}$ can be seen as a measure on $\Sigma$ which is nonzero only on $G^{\times n}$. Let $\bar{\mu}$ be the measure on $\Sigma$ defined by $\bar{\mu}=\delta_{\emptyset}+\mu+\mu^{\otimes 2}+\cdots$. 

Let the integer $\omega(g)$ be the length of an element $g$ in $\Sigma$. Then an element $g$ can be written as $(g_1,g_2,\dots,g_{\omega})$, where $\omega$ is the abbreviation of $\omega(g)$. 

Let $T$ be the shift map on $\Sigma$, defined by $Tg=T(g_1,g_2,\dots,g_{\omega})=(g_1,g_2,\dots,g_{\omega-1})$, when $\omega(g)\geq 2$,  and $Tg=\emptyset$, when $\omega(g)=1,0$.

Let $L$ be the left shift map on $\Sigma$, defined by $Lg=L(g_1,g_2,\dots,g_{\omega})=(g_2,\dots,g_{\omega-1},g_{\omega})$, when $\omega(g)\geq 2$, and $Lg=\emptyset$, when $\omega(g)=1,0$.

When considering the action of $g$ on $X$, we write $gx=g_1\cdots g_\omega x$, $\sigma(g,x)=\sigma(g_1\cdots g_\omega,x)$, $x_g^m=l^{-1}_{g_1\cdots g_\omega}e_2$, as well as the Cartan projection $\kappa(g)=\kappa(g_1\cdots g_\omega)$.
\end{defi}

\begin{rem}
When using this definition, we may meet the convolution measure $\mu^{*n}$ on $G$ or the product measure $\mu^{\otimes n}$ on $G^{\times n}$. Denote $F:G^{\times n}\rightarrow G$ by $F(g_1,g_2,\dots,g_n)=g_1\cdots g_n$, then $F_*(\mu^{\otimes n})=\mu^{*n}$.
\end{rem}

\begin{defi}
For $t>0$, define two sets that contain all the sequences which make the value of the Cartan projection pass $t$, 
$$M_t^+=\{g\in\Sigma|\ \kappa(Tg)< t\leq \kappa(g) \},\ M_t^-=\{g\in\Sigma|\ \kappa(Tg)\geq t>\kappa(g) \}.$$
\end{defi}

\begin{rem}\label{rem:ransto}
In some special cases, for $b_j$ in $\supp\mu$, the Cartan projection $\kappa(b_1b_2\cdots b_n)$ is increasing with respect to $n$. Then $M_t^-$ has $\bar{\mu}$ measure zero. Let $X_n=b_1b_2\cdots b_n$ be a random walk on $G$, where $b_j$ are i.i.d. random variables taking values in $G$ with the same law $\mu$.
Let $\tau$ be the stopping time defined by ${\tau}=\inf\{n\in\bb N|\kappa(X_{n})\geq t\}$. In such special case
\begin{align*}
\bar{\mu}(M_t^+\cap G^{\times n})=\bb P({\tau}=n).
\end{align*}
So in the measure sense, $M_t^+$ is a set of the steps. That is for $\bar{\mu}-$almost every $g$ in $M_t^+$, it is of the form $g=(b_1,b_2,\dots,b_\tau)=(X_1,X_1^{-1}X_2,\dots,X_{\tau-1}^{-1}X_\tau)$ which corresponds to the set of steps of the trajectory $(X_1,X_2,\dots,X_\tau)$. But this is not always true for general cases.
\end{rem}
By Corollary \ref{cor:lardev}, these two sets $M_t^+,\ M_t^-$ have finite $\bar{\mu}$ measure. We have a property of $M_t^+,\ M_t^-$ due to the definition of stationary measures. Our proof is a generalization of the property of the stopping time for martingales.

\begin{prop}\label{prop:stoptime}
Under the assumptions of Theorem \ref{thm:fourier}, for a real number $t>0$ and a continuous function $f$ on $X$, we have
\begin{align*}
\int_X f(x)\dd\nu(x)=\int_X\left(\int_{g\in M_t^+}f(gx)\dd\bar\mu(g)-\int_{g\in M_t^-}f(gx)\dd\bar\mu(g)\right)\dd\nu(x).
\end{align*}
\end{prop}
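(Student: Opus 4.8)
The proposition is a reformulation of the martingale identity $\mathbb{E}(Y_\tau)=\mathbb{E}(Y_0)=\nu$ discussed in the strategy section, so the natural route is to set up the random walk and its stopping time, prove the identity by the optional stopping theorem, and then translate the result into the language of $\bar\mu$ and the sets $M_t^\pm$. The subtlety is that, for general $\mu$, the Cartan projection $\kappa(X_n)$ need not be monotone in $n$, so a single stopping time does not capture everything; this is exactly why both $M_t^+$ and $M_t^-$ appear (with opposite signs), and the proof must account for the "backtracking" contributions.

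First I would introduce the random walk $X_n=b_1\cdots b_n$ with $b_j$ i.i.d.\ of law $\mu$, and $Y_n=(X_n)_*\nu$, which is a martingale with respect to $\mathscr B_n$ as computed in the introduction. For fixed $t>0$ I would work with the (finite, by Corollary \ref{cor:lardev}) set $M_t^+$ directly at the level of $\Sigma$: write $\int_{g\in M_t^+}f(gx)\,\dd\bar\mu(g)=\sum_{n\geq 1}\int_{M_t^+\cap G^{\times n}}f(g_1\cdots g_n x)\,\dd\mu^{\otimes n}(g)$, and similarly for $M_t^-$. The key combinatorial identity is a telescoping/Abel-summation statement: for $\bar\mu$-a.e.\ $g=(g_1,\dots,g_n)$, the indicator $\mathbbm 1_{M_t^+}(g)-\mathbbm 1_{M_t^-}(g)$ equals $\mathbbm 1_{\{\kappa(g)\geq t\}}-\mathbbm 1_{\{\kappa(Tg)\geq t\}}$ by the very definitions of $M_t^\pm$. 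Summing this over all prefixes of a trajectory telescopes: for any fixed $N$,
\[
\sum_{n=1}^{N}\Big(\mathbbm 1_{\{\kappa(X_n)\geq t\}}-\mathbbm 1_{\{\kappa(X_{n-1})\geq t\}}\Big)=\mathbbm 1_{\{\kappa(X_N)\geq t\}}-\mathbbm 1_{\{\kappa(X_0)\geq t\}}=\mathbbm 1_{\{\kappa(X_N)\geq t\}},
\]
since $\kappa(X_0)=\kappa(\mathrm{Id})=0<t$. Combined with the martingale property $\mathbb E\big(f(X_n x)-f(X_{n-1}x)\,\big|\,\mathscr B_{n-1}\big)=0$ (integrated against $\nu$ in $x$), this yields, after reorganizing the double sum,
\[
\int_X\bigg(\int_{M_t^+}f(gx)\,\dd\bar\mu-\int_{M_t^-}f(gx)\,\dd\bar\mu\bigg)\dd\nu(x)=\lim_{N\to\infty}\int_X\mathbb E\big(f(X_N x)\,\mathbbm 1_{\{\kappa(X_N)\geq t\}}\big)\dd\nu(x).
\]

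Then I would show the right-hand side converges to $\int_X f\,\dd\nu$. Since $\sigma_\mu>0$, the law of large numbers (Theorem \ref{lawln}) gives $\kappa(X_N)\to\infty$ a.s., so $\mathbbm 1_{\{\kappa(X_N)\geq t\}}\to 1$ a.s.; as $f$ is bounded (continuous on the compact $X$) and $\mathbb E f(X_N x)\,\dd\nu(x)=\int f\,\dd\nu$ by stationarity of $\nu$ (the martingale property again: $\int_X\mathbb E(f(X_N x))\,\dd\nu(x)=\int_X f\,\dd\nu$ for every $N$), dominated convergence finishes the argument. One must also justify interchanging the sum over $n$ with the integrals and the a.s.\ limit; this is where the finiteness $\bar\mu(M_t^\pm)<\infty$ from Corollary \ref{cor:lardev}, together with the large-deviations tail bound $\sum_{m\geq n}\mu^{*m}\{\kappa(g)\leq t\}=\oexp(n)$, does the work, controlling the tail of the telescoping sum uniformly.

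The main obstacle is precisely this bookkeeping: making the telescoping rigorous for general (non-monotone) $\kappa(X_n)$, and in particular verifying that $\bar\mu$-almost every $g\in M_t^+\cup M_t^-$ arises as a genuine prefix of a trajectory with the claimed sign, so that the double sum over $(n,g)$ really does rearrange into $\lim_N\mathbb E(\cdots\mathbbm 1_{\{\kappa(X_N)\geq t\}})$. The analytic inputs (boundedness of $f$, stationarity, $\kappa(X_N)\to\infty$) are soft; the combinatorial identity $\mathbbm 1_{M_t^+}-\mathbbm 1_{M_t^-}=\mathbbm 1_{\{\kappa(\cdot)\geq t\}}-\mathbbm 1_{\{\kappa(T\cdot)\geq t\}}$ and the absolute convergence needed to telescoping it are the crux.
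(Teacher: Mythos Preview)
Your approach is correct and essentially the same as the paper's. The paper packages the argument as a direct induction: it sets
\[
F_N=\int_X\Big(\int_{M_t^+,\,\omega\leq N}f(gx)\,\dd\bar\mu-\int_{M_t^-,\,\omega\leq N}f(gx)\,\dd\bar\mu+\int_{\omega=N,\,\kappa(g)<t}f(gx)\,\dd\bar\mu\Big)\dd\nu(x)
\]
and shows $F_{N+1}=F_N=F_0=\int f\,\dd\nu$ using exactly the set identity $\bm 1_{M_t^+}-\bm 1_{M_t^-}=\bm 1_{\{\kappa(\cdot)\geq t\}}-\bm 1_{\{\kappa(T\cdot)\geq t\}}$ together with $\nu=\mu*\nu$, whereas you obtain the same equality via Abel summation plus the martingale property; these are equivalent rearrangements of the same telescoping, and the passage to the limit in $N$ is handled identically in both.
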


\begin{proof}
For a natural number $N$, let
\begin{align*}
F_N&=\int_X\left(\int_{g\in M_t^+,\omega(g)\leq N}f(gx)\dd\bar{\mu}(g)-\int_{g\in M_t^-,\omega(g)\leq N}f(gx)\dd\bar{\mu}(g)\right.\\
&\left.+\int_{\omega(g)=N,\kappa(g)< t}f(gx)\dd\bar{\mu}(g)\right)\dd\nu(x).
\end{align*}
Then $F_o=\int_Xf(x)\dd\nu(x)$. Since all the terms are finite, we have
\begin{align*}
F_{N+1}-F_N&=\int_X\left(\int_{g\in M_t^+,\omega(g)= N+1}f(gx)\dd\bar{\mu}(g)-\int_{g\in M_t^-,\omega(g)= N+1}f(gx)\dd\bar{\mu}(g)\right.\\
&\left.+\int_{\omega(g)=N+1,\kappa(g)< t}f(gx)\dd\bar{\mu}(g)-\int_{\omega(g)=N,\kappa(g)< t}f(gx)\dd\bar{\mu}(g)\vphantom{\int_1}\right)\dd\nu(x).
\end{align*}
By the relation $\nu=\mu*\nu$, the set of integration of the last term becomes $\{\omega(g)=N+1,\kappa(Tg)< t \}$.
Compare these sets of integration 
\begin{align*}
\{g\in & M_t^+,\omega(g)= N+1\}\cup \{\omega(g)=N+1,\kappa(g)< t \}\\
=&\{\omega(g)= N+1,\kappa(Tg)< t,\kappa(g)\geq t \}\cup \{\omega(g)=N+1,\kappa(g)< t \}\\
=&\{\omega(g)= N+1,\kappa(Tg)\geq t,\kappa(g)< t \}\cup\{\omega(g)=N+1,\kappa(Tg)< t\}\\
=&\{g\in M_t^-,\omega(g)= N+1\}\cup\{\omega(g)=N+1,\kappa(Tg)< t\}.
\end{align*}
Therefore, $F_{N+1}=F_N=\cdots=F_o$. Corollary \ref{cor:lardev} and Inequality \eqref{ineq:lardev1} imply that $\bar\mu\{g\in M_t^\pm,\omega(g)> N \}$, $\bar\mu\{\omega(g)=N,\kappa(g)< t \} \rightarrow 0$, as $N\rightarrow \infty$.
Thus
\[F_N\rightarrow \int_X\left(\int_{g\in M_t^+}f(gx)\dd\bar\mu(g)-\int_{g\in M_t^-}f(gx)\dd\bar\mu(g)\right)\dd\nu(x) \text{ as } N\rightarrow\infty,\]
which completes the proof.
\end{proof}

With these preparations, we start to prove Theorem \ref{thm:main}, by admitting Lemma \ref{lem:ntfin}, Corollary \ref{cor:ntsma} and Proposition \ref{prop:rescar}. 
\begin{proof}[Proof of Theorem \ref{thm:main}]
We will prove that there exist constants $\epsilon_0>0, C_0>0$ such that for every $s>0$, the Fourier transform $\int e^{i\xi\phi(\theta)}r(\theta)\dd \nu(\theta)$ is less than $C_0e^{-\epsilon_0 s}$ for all $|\xi|$ large enough depending on $s$. 

Fix a constant $\epsilon_3\leq 1/10$. Write $t=(\log|\xi|-s)/2$, and take $|\xi|$ large enough such that $t>10s$.

\textbf{Step 1}: 
Let $e_{\xi}(x)$ be the function $e^{i\xi\phi( x)}r( x)$. Using Proposition \ref{prop:stoptime} and the Cauchy-Schwarz inequality, we have
\begin{align*}
\nonumber\left|\int_X e_\xi(x)\dd\nu(x)\right|&=\left|\int_{g\in M_t^+}\int_Xe_{\xi}(gx)\dd\nu(x)\dd\bar\mu(g)-\int_{g\in M_t^-}\int_Xe_{\xi}(gx)\dd\nu(x)\dd\bar\mu(g)\right|\\
\nonumber&\leq \bar{\mu}(M^+_t)^{1/2}\left(\int_{M^+_t}\left|\int_Xe_{\xi}(gx)\dd\nu(x)\right|^2\dd\bar{\mu}(g)\right)^{1/2}\\
&\quad+\bar{\mu}(M^-_t)^{1/2}\left(\int_{M^-_t}\left|\int_Xe_{\xi}(gx)\dd\nu(x)\right|^2\dd\bar{\mu}(g)\right)^{1/2}.
\end{align*}

By Lemma \ref{lem:ntfin} and Proposition \ref{prop:stoptime}, $\bar{\mu}(M^+_t),\ \bar{\mu}(M^-_t)$ are uniformly bounded with $t$. Change the order of integration, then
\begin{align}\label{ineq:causch}
\nonumber\left|\widehat{\phi_*(r\dd\nu)}(\xi)\right|&\lesssim\left (\int_{X^2}\int_{M_t^+}e^{i\xi(\phi( gx)-\phi( gy))}r( gx)r( gy)\dd\bar{\mu}(g)\dd\nu(x)\dd\nu(y)\right)^{1/2}\\
& \qquad+\left(\int_{X^2}\int_{M_t^-}e^{i\xi(\phi( gx)-\phi( gy))}r( gx)r( gy)\dd\bar{\mu}(g)\dd\nu(x)\dd\nu(y)\right)^{1/2}.
\end{align}
From now on, we only consider $M_t^+$. The set $M_t^-$ has similar properties, and the needed changes will be discussed in remarks, which appear at the end of each section. 

\textbf{Step 2}:
The main approximation, which will be proved in Section \ref{secapproxi}, replaces the distance $ \phi(gx)-\phi(gy)$ with $\phi'e^{-2\kappa(g)}d(x,y)$. The intuition here is that in a large set, whose complement has exponentially small measure, the behavior is nice.

To apply replacement, some regularity conditions on $x,y$ and $g$ are needed. Define a subset of $M_t^+$ for $x,y$ in $X$ by
\begin{equation}
\begin{split}
M_t^+(x,y)=\{g\in M_t^+||\kappa(g)-\kappa(Tg)|<\epsilon_3s,d(x_g^m,g^{-1}x)<e^{-t},
d(g^{-1}x,x),d(g^{-1}x,y)>2e^{-\epsilon_3s} \}.
\end{split}
\end{equation}
For fixed $x,y$, set
\begin{align*}
&\cartwo_0(g)=e^{i\xi(\phi( gx)-\phi( gy))}r( gx)r( gy),\\
&\cartwo_1(g)=e^{i\xi\sgn(g^{-1}x,x,y)\phi'(gx)d(x,y)\exp(-2\kappa(g))/(d(g^{-1}x,x)d(g^{-1}x,y))}r( gx)^2.
\end{align*}
We give a control of the error, which appears in the replacement. 
\begin{prop}\label{prop:mainapp}
Assume that $t>2s$. We have an exponential decay for all $g$ in $M_t^+(x,y)$. That is
\begin{align}\label{ineq:mainapp}
|\cartwo_0(g)-\cartwo_1(g)|=\oexp(s).
\end{align}
\end{prop}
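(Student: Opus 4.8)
\textbf{Proof proposal for Proposition \ref{prop:mainapp}.}

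The plan is to compare $\cartwo_0(g)$ and $\cartwo_1(g)$ by controlling separately the three places where they differ: the oscillatory exponent, the amplitude factor $r(gx)r(gy)$ versus $r(gx)^2$, and the replacement of the true increment $\phi(gx)-\phi(gy)$ by the linearized quantity $\sgn(g^{-1}x,x,y)\,\phi'(gx)\,d(x,y)e^{-2\kappa(g)}/(d(g^{-1}x,x)d(g^{-1}x,y))$. Since both quantities are products of a unimodular factor and an amplitude bounded by $\|r\|_{C^1}^2\le C_1^2$, the elementary inequality $|e^{i\alpha}a-e^{i\beta}b|\le|a-b|+|a|\,|\alpha-\beta|$ reduces everything to two estimates: (i) $|r(gx)-r(gy)|=\oexp(s)$ and (ii) $|\xi|\cdot\bigl|(\phi(gx)-\phi(gy))-\sgn(g^{-1}x,x,y)\phi'(gx)d(x,y)e^{-2\kappa(g)}/(d(g^{-1}x,x)d(g^{-1}x,y))\bigr|=\oexp(s)$.

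For (i), the key geometric input is that $g$ strongly contracts the complement of a small ball around $x_g^m$. Concretely, from Lemma \ref{lem:coccardis} and Lemma \ref{lem:distance}, for $g\in\mtxy$ one has $\sigma(g,x),\sigma(g,y)\gtrsim \kappa(g)+\log d(g^{-1}x,x)\ge t-\epsilon_3 s$ (using $d(x_g^m,g^{-1}x)<e^{-t}$ and $d(g^{-1}x,x),d(g^{-1}x,y)>2e^{-\epsilon_3 s}$, so that $d(x_g^m,x),d(x_g^m,y)$ are comparable to $d(g^{-1}x,x),d(g^{-1}x,y)$ up to the tiny error $e^{-t}$), whence $d(gx,gy)=d(x,y)e^{-\sigma(g,x)-\sigma(g,y)}\le e^{-2t+2\epsilon_3 s}$. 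Since $r$ is $C^1$ with $\|r\|_{C^1}\le C_1$, $|r(gx)-r(gy)|\le C_1 d(gx,gy)\le C_1 e^{-2t+2\epsilon_3 s}$, and as $t>2s$ and $\epsilon_3\le 1/10$ this is $\oexp(s)$.

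For (ii), I would first apply the Newton--Leibniz formula \eqref{equ:newcir} on the circle: $\phi(gx)-\phi(gy)=\sgn(gx,gy)\int_{gx\smallfrown gy}\phi'(\theta)\dd\theta$, and by Proposition \ref{prop:sgngxy} the sign $\sgn(gx,gy)$ equals $\sgn(x,y,x_g^m)$, which under the ball conditions in $\mtxy$ coincides with $\sgn(g^{-1}x,x,y)$ (one checks the orientation bookkeeping: $x_g^m$ is close to $g^{-1}x$, so the triple $(x,y,x_g^m)$ and $(g^{-1}x,x,y)$ determine the same sign up to relabeling). Since $\phi'$ is $C^1$ with $\|\phi\|_{C^2}\le C_1$, replacing $\phi'(\theta)$ by the constant $\phi'(gx)$ on the small arc costs at most $C_1\cdot\mathrm{length}(gx\smallfrown gy)\cdot\mathrm{length}(gx\smallfrown gy)\lesssim C_1 d(gx,gy)^2$, negligible by the same estimate as in (i); thus $\phi(gx)-\phi(gy)=\sgn(g^{-1}x,x,y)\phi'(gx)d(gx,gy)+O(C_1 d(gx,gy)^2)$, where I also use that arclength and sine-distance agree to second order. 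It remains to show $d(gx,gy)=d(x,y)e^{-2\kappa(g)}/(d(g^{-1}x,x)d(g^{-1}x,y))$ up to a multiplicative error that, after multiplying by $|\xi|$, is still $\oexp(s)$. Here I write $d(gx,gy)=d(x,y)e^{-\sigma(g,x)-\sigma(g,y)}$ and apply Lemma \ref{lem:carcoc} with the roles of the two points, which gives $\sigma(g,x)=\kappa(g)+\log d(g^{-1}x,x)+O\bigl((e^{-2\kappa(g)}+d(x_g^m,g^{-1}x))/d(g^{-1}x,x)\bigr)$ and similarly for $y$ (the hypothesis of Lemma \ref{lem:carcoc} is met because $d(x_g^m,g^{-1}x)<e^{-t}$ while $d(g^{-1}x,x),d(g^{-1}x,y)>2e^{-\epsilon_3 s}$ and $t>2s$). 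Exponentiating, $e^{-\sigma(g,x)-\sigma(g,y)}=e^{-2\kappa(g)}/(d(g^{-1}x,x)d(g^{-1}x,y))\cdot(1+O(e^{-t+\epsilon_3 s}))$, so the multiplicative relative error is $O(e^{-t+\epsilon_3 s})$; combined with $d(x,y)e^{-2\kappa(g)}/(d(g^{-1}x,x)d(g^{-1}x,y))\le d(gx,gy)(1+o(1))\le e^{-2t+2\epsilon_3 s}(1+o(1))$, the total contribution to the exponent difference is $O(e^{-2t+O(\epsilon_3 s)})$, and multiplying by $|\xi|=e^{2t+s}$ leaves $O(e^{-t+O(\epsilon_3 s)+s})=\oexp(s)$ since $t>10s$.

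The main obstacle I expect is the careful bookkeeping in (ii): keeping track of which error terms are additive in the exponent (hence must be shown $\lesssim |\xi|^{-1}e^{-\epsilon_0 s}$ after multiplication by $|\xi|\sim e^{2t+s}$) versus multiplicative, and verifying that all the smallness thresholds defining $\mtxy$ — the gap $|\kappa(g)-\kappa(Tg)|<\epsilon_3 s$, the proximity $d(x_g^m,g^{-1}x)<e^{-t}$, and the separation $d(g^{-1}x,x),d(g^{-1}x,y)>2e^{-\epsilon_3 s}$ — are exactly what is needed to make Lemma \ref{lem:carcoc} and Proposition \ref{prop:sgngxy} applicable with errors that survive multiplication by $|\xi|$. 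The orientation/sign identification $\sgn(gx,gy)=\sgn(g^{-1}x,x,y)$ also requires care, since it mixes the two coordinate pictures ($x_g^m$ near $g^{-1}x$) and one must confirm the cyclic order is preserved; but once the dictionary between $x_g^m$ and $g^{-1}x$ is fixed via the ball condition, this is a finite case check using the definition of $\sgn(\cdot,\cdot,\cdot)$ and the orientation-preserving property already exploited in Proposition \ref{prop:sgngxy}.
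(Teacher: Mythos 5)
Your proposal is correct and follows essentially the same route as the paper: the same split into an amplitude error $|r(gx)r(gy)-r(gx)^2|$ and a phase error multiplied by $|\xi|$, the Newton--Leibniz formula with Proposition \ref{prop:sgngxy} plus the $x_g^m\leftrightarrow g^{-1}x$ swap for the sign, and Lemma \ref{lem:carcoc} to trade the cocycle for $\kappa(g)+\log d(g^{-1}x,\cdot)$, with the same bookkeeping yielding a dominant term of order $|\xi|e^{-3(t-\epsilon_3 s)}=e^{-t+s+3\epsilon_3 s}=\oexp(s)$. The only cosmetic differences are that the paper makes the exponentiation step explicit via $|e^{z_1}-e^{z_2}|\le\max\{e^{\Re z_1},e^{\Re z_2}\}|z_1-z_2|$ and that it only needs the stated hypothesis $t>2s$ (your appeal to $t>10s$ is unnecessary, though harmless).
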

This property will be proved in Section \ref{secapproxi}.
We want to use some smooth cutoffs to regularize the function $\cartwo_1(g,x,y)$. Let $\rho$ be a smooth function on $\bb R$ such that $\rho|_{[-1,1]}=1$, $\rho$ takes values in $[0,1]$, $\supp\rho\subset[-2,2]$ and $|\rho'|\leq 2$. Let 
\begin{equation}
\cartwo_2(g)=\cartwo_1(g)(1-\rho(d(g^{-1}x,x)e^{\epsilon_3s}))(1-\rho(d(g^{-1}x,y)e^{\epsilon_3s}))\rho(\frac{\kappa(g)-\kappa(Tg)}{\epsilon_3s})\rho(\frac{\kappa(Tg)-t}{2\epsilon_3s}).
\end{equation}
When $d(g^{-1}x,x)<e^{-\epsilon_3s}$ or $d(g^{-1}x,y)<e^{-\epsilon_3s}$, the function $\cartwo_2$ will be 0. With fixed $x,y$, $\sgn(g^{-1}x,x,y)$ is a function of $g^{-1}x$, and the discontinuity is at $x$ and $y$. Hence the discontinuity of $\sgn(g^{-1}x,x,y)$ is removed in $\cartwo_2$. 

If $g\in M_t^+(x,y)$, it follows from definition that $|\kappa(Tg)-t|\leq|\kappa(g)-\kappa(Tg)|\leq \epsilon_3s$. Then $\cartwo_2=\cartwo_1$. Since $t>10s$, using Corollary \ref{cor:ntsma}, Lemma \ref{lem:ntfin} and \eqref{ineq:mainapp}, we get
\begin{equation}\label{ineq:mainreg}
\begin{split}
\left|\int_{M_t^+}(\cartwo_0-\cartwo_2)\dd\bar\mu(g)\right|&\leq \bar{\mu}(M_t^+-M_t^+(x,y))+\left|\int_{\mtxy}(\cartwo_0-\cartwo_2)\dd\bar{\mu}(g)\right|\\
&=\bar{\mu}(M_t^+-M_t^+(x,y))+\left|\int_{\mtxy}(\cartwo_0-\cartwo_1)\dd\bar{\mu}(g)\right|=\oexp(s).
\end{split}
\end{equation}

\textbf{Step 3}: 
Introduce the residue process for the Cartan projection. This is inspired by the stopping time. For the stopping time, the existence of the limit distribution of the residual waiting time  was proved in \cite{kesten1974renewal}, but in that paper we do not have a rate of convergence, which is necessary in our method. Here we use the transfer operator to get a uniform rate of convergence. It is difficult to treat the stopping time with transfer operators, because the operator will no longer be continuous. However, the residue process, which will be introduced here, can be routinely analyzed by the transfer operator. What's more, we will get the limit distribution of $gx$ and $g^{-1}y$ simultaneously, which is important to us.

We generalize the inverse action on $\Sigma$, letting $g^{-1}=(g_1,\dots,g_\omega)^{-1}=(g_\omega^{-1},\dots,g_1^{-1})$ for $g$ in $\Sigma$. For a subset $M$ of $\Sigma$, set $\iota(M)=\{g^{-1}|g\in M \}$. Let $\check{\mu}$ be the pushforward of $\mu$ by the inverse action. 
Let $t$ be a positive number. Consider the limit of the following quantity as $t\rightarrow\infty$
\begin{align*}
\sum_{n\geq 0}\int_{\kappa(g)< t\leq \kappa(hg) } f((hg)^{-1}x',hgx,\kappa(hg)-\kappa(g),\kappa(g)-t)\dd\mu(h)\dd\mu^{*n}(g),
\end{align*}
where $x,x'$ are points in $X$ and $f$ is a smooth, compactly supported function on $X^2\times\bb R^2$. Our result is similar to renewal theory. By Proposition \ref{prop:rescar}, when $t$ tends to infinity, the limit is 
\begin{align*}
\int_{X^2}\int_G\int_{-\sigma(h,y)}^{0} f(y',hy,\sigma(h,y),u)\dd u\dd\mu(g)\dd\nu(y)\dd\check\nu(y'),
\end{align*}
where $\check{\nu}$ is the stationary measure of $\check{\mu}$ and the integral $\int_{-\sigma(h,y_1)}^{0}=0$ if $\sigma(h,y_1)<0$.

Since $(Tg)^{-1}=L(g^{-1})$ and $\kappa(g^{-1})=\kappa(g)$, we can define
\begin{equation}\label{equ:ntmt}
N_t^+=\iota(M_t^+)=\{g\in\Sigma|\kappa(Lg)< t\leq \kappa(g) \}.
\end{equation}
Therefore
\[\int_{M_t^+}\cartwo_2(g)\dd\bar{\mu}(g)=\int_{N_t^+}\cartwo_2(g^{-1})\dd\bar{\check{\mu}}(g). \]
Recall that $x,y,\rho$ are fixed. For $x_1,x_2$ in $X$ and $v,u$ in $\bb R$, define
\begin{align*}
&\lambda(x_1,x_2)=d(x,y)e^s\sgn(\xi)\sgn(x,y,x_2)\phi'( x_1)/(d(x_2,x)d(x_2,y)),\\
&\varphi(x_1,x_2,v,u)=r( x_1)^2\times (1-\rho(d(x_2,x)e^{\epsilon_3s}))(1-\rho(d(x_2,y)e^{\epsilon_3s}))\rho(\frac{v}{\epsilon_3s})\rho(\frac{u}{2\epsilon_3s}).
\end{align*} 
By the relation $\xi=\sgn(\xi)e^{2t+s}$, regroup the terms and rewrite the function 
\begin{equation}
	\begin{split}
		\cartwo_2(g^{-1})=e^{i\lambda(g^{-1}x,gx)\exp(-2(\kappa(g)-t)) }\varphi(g^{-1} x,gx,\kappa(g)-\kappa(Lg),\kappa(Lg)-t).
	\end{split}
\end{equation}
Note that the function $\lambda$ is not continuous, but the function $\varphi$ will remove the discontinuity as we have discussed in Step 2. 
In the language of the residue process, let $f$ be the function on $X^2\times\bb R^2$ defined by
\begin{align}
	f(x_1,x_2,v,u)=e^{i\lambda(x_1,x_2)\exp(-2(u+v)) }\varphi(x_1,x_2,v,u).
\end{align}
Thus the function $\cartwo_2(g^{-1})$ can be written as
\[\cartwo_2(g^{-1})=f(g^{-1}x,gx,\kappa(g)-\kappa(Lg),\kappa(Lg)-t).\]
By Proposition \ref{prop:rescar}, for $\delta>0$, $t>2(|K|+\delta)$ (where $K$ is the projection of $\supp f$ onto $\bb R_v$), we have
\begin{equation}\label{ineq:mtlam}
\begin{split}
&\int_{M_t^+}\cartwo_2\dd\bar{\mu}(g)=\int_{N_t^+}f\dd\bar{\check{\mu}}(g)\\
=&\int_{X^2}\int_G\int_{-\sigma(h,x_2)}^{0} f(x_1,hx_2,\sigma(h,x_2),u)\dd u\dd\mu(g)\dd\nu(x_2)\dd\check\nu(x_1)+O_K(\delta +O_\delta/t)| f|_\lf.
\end{split}
\end{equation}
Here $|f|_\ck$ is the Lipschitz norm defined by \[|f|_\ck=|f|_\infty+\sup_{(x_1,x_2,v,u)\neq(x_1',x_2',v',u')}\frac{|f(x_1,x_2,v,u)-f(x_1',x_2',v',u')|}{d(x_1,x_1')+d(x_2,x_2')+|v-v'|+|u-u'|}.\]
\begin{lem}\label{lem:error} There exist constants $\delta_0(s)$ and $t(\delta,s)$ such that if $\delta<\delta_0(s)$ and $t>t(\delta,s)$, then
\begin{equation}
O_K(\delta +O_\delta/t)|f|_\ck\leq e^{-s}.
\end{equation}
\end{lem}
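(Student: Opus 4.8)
The plan is to unwind the estimate \eqref{ineq:mtlam} and track how the error term $O_K(\delta + O_\delta/t)|f|_\ck$ depends on the two free parameters $s$ (which fixes $\xi$ via $|\xi|=e^{2t+s}$) and the auxiliary small number $\delta$. The key observation is that the implied constants in $O_K(\cdots)$ depend only on the compact set $K$, which is the projection of $\supp f$ onto the $v$-axis; and by construction $\supp f$ is controlled by the cutoffs in $\varphi$, so $K \subset [0, 2\epsilon_3 s]$ (the factor $\rho(v/\epsilon_3 s)$ forces $|v|\le 2\epsilon_3 s$). Thus $|K|\lesssim s$, and the "$O_K$" constant is at worst polynomial (in fact linear) in $s$. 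So the first task is to make this dependence explicit: write $O_K(\delta + O_\delta/t)|f|_\ck \le C\,(1+s)\,(\delta + C_\delta/t)\,|f|_\ck$ for an absolute constant $C$ and a constant $C_\delta$ depending only on $\delta$.

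Next I would bound $|f|_\ck$. By the definition of $f$, the factor $e^{i\lambda(x_1,x_2)\exp(-2(u+v))}$ has modulus $1$, and on $\supp f$ the argument is bounded; the Lipschitz constant of $f$ comes from differentiating both the phase and $\varphi$. The dangerous factor is $\lambda$: it contains $d(x,y)e^s$, the reciprocals $1/(d(x_2,x)d(x_2,y))$, and $\phi'(x_1)$, but the cutoffs $(1-\rho(d(x_2,x)e^{\epsilon_3 s}))(1-\rho(d(x_2,y)e^{\epsilon_3 s}))$ in $\varphi$ restrict us to the region $d(x_2,x), d(x_2,y) \ge e^{-\epsilon_3 s}$, so $1/(d(x_2,x)d(x_2,y)) \le e^{2\epsilon_3 s}$, while $d(x,y)\le \pi$ and $|\phi'|\le C_1$. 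Together with the factor $e^{-2(u+v)}$, which is $\le e^{2|K|}\lesssim e^{4\epsilon_3 s}$ on $\supp f$, one gets $|\lambda| \lesssim C_1 e^{s+2\epsilon_3 s+4\epsilon_3 s}$ after multiplying; and Lipschitz bounds on $\lambda$ and on the $\rho$-cutoffs (whose derivatives produce factors $e^{\epsilon_3 s}$ and $1/(\epsilon_3 s)$) yield $|f|_\ck \lesssim C_2(s)\, e^{Cs}$ for some explicit exponential bound, with $\epsilon_3\le 1/10$ keeping the total exponent strictly below, say, $2s$. I would record a clean bound of the form $|f|_\ck \le e^{\epsilon_4 s}$ for a fixed $\epsilon_4<1$ (achievable since $\epsilon_3\le 1/10$), valid for all $s$ large.

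Finally I would combine the two: the error is at most $C(1+s)(\delta + C_\delta/t)\,e^{\epsilon_4 s}$. Choose $\delta_0(s)$ small enough that $C(1+s)\,\delta_0(s)\,e^{\epsilon_4 s} \le \tfrac12 e^{-s}$ — this is possible since we may take, e.g., $\delta_0(s) = e^{-2s}$ and absorb the polynomial factor $(1+s)$ into the exponential for $s$ large. Then, with $\delta<\delta_0(s)$ fixed, choose $t(\delta,s)$ large enough that $C(1+s)\,(C_\delta/t)\,e^{\epsilon_4 s} \le \tfrac12 e^{-s}$, i.e. $t(\delta,s) \ge 2C(1+s)C_\delta e^{(\epsilon_4+1)s}$. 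Adding the two halves gives the claimed bound $O_K(\delta+O_\delta/t)|f|_\ck \le e^{-s}$. The main obstacle is the bookkeeping in the middle step: one must check that every $s$-dependent factor entering $|f|_\ck$ — the $e^s$ in $\lambda$, the reciprocal distances capped at $e^{2\epsilon_3 s}$, the cutoff derivatives of size $e^{\epsilon_3 s}$, and the $e^{-2(u+v)}$ on the support — multiplies to an exponent strictly smaller than $1$ times $s$, which is exactly why the constant $\epsilon_3\le 1/10$ was chosen at the start of the proof of Theorem \ref{thm:main} and why $t>10s$ is imposed; everything else is choosing $\delta_0$ and $t$ to beat the resulting exponential.
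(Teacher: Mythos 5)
Your proposal is correct and follows essentially the same route as the paper: bound the size of $K$ by a multiple of $\epsilon_3 s$ using the cutoff $\rho(v/\epsilon_3 s)$, bound $|f|_\ck$ by an exponential in $s$ using the cutoffs in $\varphi$ and the formula for $\lambda$, and then choose $\delta_0(s)$ exponentially small in $s$ followed by $t(\delta,s)$ large. One harmless quibble: your intermediate claim that $|f|_\ck\leq e^{\epsilon_4 s}$ with $\epsilon_4<1$ is neither needed nor clearly achievable (the paper only records the bound $e^{2s}$, and the factor $e^{-2(u+v)}$ on $\supp f$ alone can contribute $e^{12\epsilon_3 s}$ on top of the $e^{(1+2\epsilon_3)s}$ from $\lambda$), but since $\delta_0(s)$ is free to be as small as desired in terms of $s$, any bound of the form $e^{Cs}$ suffices and your conclusion stands.
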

\begin{proof}
By the definition of $\rho$ and $f$, the support of $f$ is in the compact set $X^2\times[-4\epsilon_3s,4\epsilon_3s]^2$. The size of $K$, the projection of $\supp f$ onto $\bb R_v$, is bounded by $8\epsilon_3s$. The definition of $\rho$ implies that $f$ is locally Lipschitz. Together with the fact that $f$ is compactly supported, we conclude that $|f|_\ck$ is controlled by $e^{2s}$ independently of $x,y$. Take $\delta$ small enough according to $s$, then take $t$ large enough according to $\delta$ and $s$. We get the inequality.
\end{proof}
\textbf{Step 4:} For the major term in \eqref{ineq:mtlam}, use the following lemma.
\begin{lem}\label{lem:intlam}
	For $b_1<b_2$ and $\lambda$ nonzero, we have
	\begin{equation}
	|\int_{b_1}^{b_2}e^{i\lambda\exp(-u)}\dd u|\leq \frac{2(e^{b_1}+e^{b_2})}{|\lambda|}.
	\end{equation}
\end{lem}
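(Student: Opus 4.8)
The integral is oscillatory in the phase $u \mapsto \lambda e^{-u}$, whose derivative $-\lambda e^{-u}$ is monotone and nowhere zero, so the natural tool is a single integration by parts (a first–derivative van der Corput estimate). The cleanest bookkeeping comes from first changing variables $v = e^{-u}$: since $\dd u = -\dd v / v$ and, because $b_1 < b_2$, the interval $[b_1,b_2]$ maps to the interval with endpoints $e^{-b_2} < e^{-b_1}$, I would begin by recording the identity
\[
\int_{b_1}^{b_2} e^{i\lambda e^{-u}}\,\dd u \;=\; \int_{e^{-b_2}}^{e^{-b_1}} \frac{e^{i\lambda v}}{v}\,\dd v .
\]

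Next I would integrate by parts on the right, differentiating $1/v$ and integrating $e^{i\lambda v}$:
\[
\int_{e^{-b_2}}^{e^{-b_1}} \frac{e^{i\lambda v}}{v}\,\dd v
\;=\; \left[\frac{e^{i\lambda v}}{i\lambda v}\right]_{e^{-b_2}}^{e^{-b_1}}
\;+\; \frac{1}{i\lambda}\int_{e^{-b_2}}^{e^{-b_1}} \frac{e^{i\lambda v}}{v^{2}}\,\dd v .
\]
The boundary term has modulus at most $\tfrac{1}{|\lambda|}\bigl(e^{b_1}+e^{b_2}\bigr)$, and the remaining integral has modulus at most $\tfrac{1}{|\lambda|}\int_{e^{-b_2}}^{e^{-b_1}} v^{-2}\,\dd v = \tfrac{1}{|\lambda|}\bigl(e^{b_2}-e^{b_1}\bigr)$. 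Adding the two estimates, and using $e^{b_2}-e^{b_1}\le e^{b_1}+e^{b_2}$, yields the asserted bound $2(e^{b_1}+e^{b_2})/|\lambda|$ (one in fact gets the sharper $2e^{b_2}/|\lambda|$, but the symmetric form is what is convenient downstream).

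There is no real obstacle here; the only point needing a moment's care is the orientation of the $v$-interval after the substitution — since $b_1 < b_2$ the larger endpoint is $e^{-b_1}$ — so that the elementary integral of $v^{-2}$ is evaluated with the correct sign and the estimate comes out positive. If one prefers to avoid the change of variables, the identical computation can be carried out in place: integrate by parts in $u$ using the factor $e^{i\lambda e^{-u}}/(-i\lambda e^{-u})$, which produces the same boundary term and a remainder $\tfrac{1}{i\lambda}\int_{b_1}^{b_2} e^{u} e^{i\lambda e^{-u}}\,\dd u$ of modulus at most $(e^{b_2}-e^{b_1})/|\lambda|$, giving the same conclusion.
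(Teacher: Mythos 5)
Your proof is correct and is essentially the paper's argument: a single integration by parts against the derivative of the phase, giving the boundary term $(e^{b_1}+e^{b_2})/|\lambda|$ plus a remainder of modulus $(e^{b_2}-e^{b_1})/|\lambda|$. The change of variables $v=e^{-u}$ is only cosmetic — the in-place version you mention at the end is exactly what the paper does.
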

\begin{proof}
	Integration by parts gives
\begin{align*}
\int_{b_1}^{b_2}e^{i\lambda\exp(-u)}\dd u=\int_{b_1}^{b_2}\frac{\partial_u(e^{i\lambda\exp(-u)})}{-i\lambda e^{-u}}\dd u=\frac{e^{i\lambda\exp(-u)}}{-i\lambda e^{-u}}\Big|_{b_1}^{b_2}+\int_{b_1}^{b_2}e^{i\lambda\exp(-u)}\partial_u\left(\frac{1}{-i\lambda e^{-u}}\right)\dd u.
\end{align*}
This implies that
\[
|\int_{b_1}^{b_2}e^{i\lambda\exp(-u)}\dd u|\leq \left|\frac{e^{i\lambda\exp(-u)}}{\lambda e^{-u}}\Big|_{b_1}^{b_2}\right|+\int_{b_1}^{b_2}\partial_u(\frac{e^u}{|\lambda|})\dd u\leq \frac{2(e^{b_1}+e^{b_2})}{|\lambda|}.
\]
The proof is complete.
\end{proof}
When $d(x,y)>e^{-\epsilon_3s}$, due to the definition of $\rho(\frac{v}{\epsilon_3s})$, the major term only integrates on $h,x_2$ such that $|\sigma(h,x_2)|\leq 2\epsilon_3s$. The inequality $|u|\leq |\sigma(h,x_2)|\leq 2\epsilon_3s$ implies that $\rho(\frac{u}{2\epsilon_3s})=1$. By the hypotheses on $\phi$, when $r(x_1)\neq 0$, we have $|\phi'(x_1)|\geq 1/C_1>0$. Therefore \[|\lambda(x_1,hx_2)|=|d(x,y)e^s\sgn(\xi)\sgn(x,y,hx_2)\phi'( x_1)/(d(hx_2,x)d(hx_2,y))|\geq e^{(1-\epsilon_3s)}/C_1.\]
We use Lemma \ref{lem:intlam} to obtain
\[|\int_{-\sigma(h,x_2)}^{0}f\dd u|\leq|r|^2_\infty\left|\int_{0}^{\sigma(h,x_2)}e^{i\lambda \exp(-2u)}\dd u\right| \leq C_1|r|^2_\infty\frac{1+e^{2\epsilon_3s}}{e^{1-\epsilon_3s}}\leq |r|^2_\infty2e^{(3\epsilon_3-1)s}C_1. \]
Combined with \eqref{ineq:mtlam}, they imply that $\int_{M_t^+}\cartwo_2\dd\bar{\mu}(g)=\oexp(s)$. 
When $d(x,y)\leq e^{-\epsilon_3s}$, the H\"{o}lder regularity of stationary measure \eqref{ineq:regsta} implies that
\begin{align*}
\int_{X\times X}\bm 1_{d(x,y)\leq e^{-\epsilon_3s}}\dd\nu(x)\dd\nu(y)\leq \int_X \nu(B(x,e^{-\epsilon_3s}))\dd\nu(x)=\oexp(s).
\end{align*}

Finally we obtain
\begin{align*}
&\int_{X^2}\int_{M_t^+}\cartwo_2(x,y)\dd\bar{\mu}(g)\dd\nu(x)\dd\nu(y)\\
&\leq \int_{X^2}\bm 1_{d(x,y)>e^{-\epsilon_3s}}\int_{M_t^+}\cartwo_2(x,y)\dd\bar{\mu}(g)\dd\nu(x)\dd\nu(y)\\
&+\int_{X^2}\bm 1_{d(x,y)\leq e^{-\epsilon_3s}}\int_{M_t^+}\cartwo_2(x,y)\dd\bar{\mu}(g)\dd\nu(x)\dd\nu(y)\leq \oexp(s)(1+\bar{\mu}(M_t^+)).
\end{align*}
By Lemma \ref{lem:ntfin}, the measure $\bar{\mu}(M_t^+)$ is uniformly bounded. By using \eqref{ineq:causch} and \eqref{ineq:mainreg}, the proof is complete.
\end{proof}
\begin{rem}[Minus case]
For $M_t^-$, we have another version of Lemma \ref{lem:ntfin}, Corollary \ref{cor:ntsma} and Proposition \ref{prop:rescar}. The integral $|\int_{-\sigma(h,y_1)}^{0}f\dd u|$ is replaced by $|\int_0^{-\sigma(h,y_1)}f\dd u|$.
\end{rem}
\begin{rem}\label{rem:poly}
	When $s$ is large and $\xi$ is of size $e^{Cs}$, all the error terms have polynomial decay except the one from Proposition \ref{prop:rescar}. As we have mentioned in Remark \ref{rem:unispe}, a uniform spectral gap makes Proposition \ref{prop:rescar} effective. Then we will have a polynomial decay.
	
	The uniformity with respect to $\|r\|_{C^1},\|\phi\|_{C^2}$ and $1/\inf_{\supp r}|\phi'|$ is due to the fact that all the terms depend only on these norms and the measure $\mu$. 
\end{rem}
\section{Renewal theory}
\label{secrentheory}

We define a renewal operator $R$ as follows. For a positive bounded Borel function $f$ on $X\times\bb R$, a point $x$ in $X$ and a real number $t$, we set
\begin{align*}
Rf(x,t)=\sum_{n=0}^{+\infty}\int_Gf(gx,\sigma(g,x)-t)\dd\mu^{*n}(g).
\end{align*}
Because of the positivity of $f$, this sum is well defined. In \cite{kesten1974renewal}, Kesten proved a renewal theorem for Markov chains, which is valid in our case \cite{guivarchlepage2015}. But a uniform speed of convergence is needed. We will give a proof using the complex transfer operator, which fulfills our demands. The treatment of the transfer operator will be along the path in \cite{boyer2016renewalrd}. The renewal theorem will give us an equidistribution phenomenon, where the key input is non-arithmeticity.

First we give a proof of renewal theorem for good functions. Then we prove some regularity properties and independence properties for the renewal process. These will imply a version of residue process. Finally, we prove a theorem for the Cartan projection from a similar theorem for the cocycle.

Fix the constant $\epsilon=\sigma_\mu/4$ in this section. Keep in mind that the assumptions of Theorem \ref{thm:fourier} are always satisfied.
\subsection{Complex transfer operators}
We introduce the complex transfer operator $P(z)$. Let $\cal H^{\gamma}(X)$ be the space of $\gamma$-H\"older functions on $X$, a Banach space with the norm $|f|_{\gamma}=|f|_{\infty}+m_\gamma(f)=|f|_\infty+\sup_{x\neq y}\frac{|f(x)-f(y)|}{d(x,y)^\gamma}$. For $f$ in $ \cal H^{\gamma}(X)$ and a complex number $z$, define
\begin{align*}
	P(z)f(x)=\int_{G}e^{-z\sigma(g,x)}f(gx)\dd\mu(g).
\end{align*} 
The main properties of $P(z)$ are summarized as follows
\begin{prop}\label{prop:invtran}\cite[Theorem 4.1, Lemma 4.7]{boyer2016renewalrd}
	For any $\gamma>0$ small enough, there exists $\eta>0$ such that when $|\Re z|<\eta$, the transfer operator $P(z)$ is a bounded operator on $\cal H^{\gamma}(X)$ and depends analytically on $z$. Moreover there exists an analytic operator $U(z)$ on a neighborhood of $0\leq\Re z< \eta$ such that the following equality holds for $0\leq \Re z<\eta$
	\begin{equation}\label{equ:i-pz}
	(I-P(z))^{-1}=\frac{1}{\sigma_{\mu}z}N_0+U(z),
	\end{equation}
	where $N_0$ is the operator defined by $N_0f=\int f\dd\nu$
\end{prop}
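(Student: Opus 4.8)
The goal is to prove Proposition~\ref{prop:invtran}: boundedness and analyticity of $P(z)$ on $\cal H^\gamma(X)$ near $\Re z = 0$, together with the resolvent identity $(I-P(z))^{-1} = \frac{1}{\sigma_\mu z}N_0 + U(z)$ with $U(z)$ analytic up to $\Re z = 0$. Although this is cited from \cite{boyer2016renewalrd}, let me sketch how I would reconstruct it.

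\textbf{Boundedness and analyticity.} First I would check that $P(z)$ preserves $\cal H^\gamma(X)$ and depends analytically on $z$. Boundedness of $|P(z)f|_\infty$ follows from the finite exponential moment: $|e^{-z\sigma(g,x)}| = e^{-\Re z\,\sigma(g,x)} \le e^{|\Re z|\,\kappa(g)}$, and $\int e^{|\Re z|\kappa(g)}\,\dd\mu(g) < \infty$ provided $|\Re z| < \eta \le \epsilon_1$ (using \eqref{ineq:moment}). For the H\"older seminorm $m_\gamma(P(z)f)$ one splits $P(z)f(x) - P(z)f(x')$ into the contribution of $f(gx)-f(gx')$, controlled via Lemma~\ref{lem:distance} (the ratio $d(gx,gx')/d(x,x')^\gamma$ against $e^{-\gamma\sigma(g,x)}$-type weights), and the contribution of $e^{-z\sigma(g,x)} - e^{-z\sigma(g,x')}$, controlled because $x\mapsto\sigma(g,x)$ is itself H\"older with a constant growing polynomially in $\|g\|$, which is absorbed by shrinking $\gamma$ and $\eta$. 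Analyticity in $z$ is then routine: the integrand is entire in $z$ and the dominating function $|f|_\gamma e^{|\Re z|\kappa(g)}$ is $\mu$-integrable locally uniformly, so one may differentiate under the integral sign; on a compact subset of $\{|\Re z|<\eta\}$ one gets a norm-convergent power series.

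\textbf{Spectral picture at $z = 0$.} At $z=0$, $P(0)$ is the Markov operator $P(0)f(x) = \int f(gx)\,\dd\mu(g)$. Its key spectral property (which is exactly where Zariski density and the exponential moment enter, via the theory of \cite{bougerol1985products}, \cite{guivarc1990produits}) is that on $\cal H^\gamma(X)$ for $\gamma$ small, $P(0)$ has $1$ as a simple isolated eigenvalue with eigenprojection $N_0 f = \int f\,\dd\nu$ (using uniqueness of the stationary measure), and the rest of the spectrum is contained in a disk of radius $<1$ — i.e. a spectral gap. So write $P(0) = N_0 + Q_0$ where $Q_0 = P(0)N_0^\perp$ has spectral radius $\rho_0 < 1$. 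By analytic perturbation theory, for $z$ in a small neighborhood of $0$, $P(z)$ has a simple eigenvalue $\lambda(z)$ close to $1$ with eigenprojection $N(z)$ analytic in $z$, $N(0)=N_0$, and $P(z) = \lambda(z)N(z) + Q(z)$ with $Q(z)$ analytic and spectral radius bounded away from $1$. Hence $(I-Q(z))^{-1}$ is analytic near $0$, and
\[
(I-P(z))^{-1} = \frac{1}{1-\lambda(z)}N(z) + (I-Q(z))^{-1}(I-N(z)).
\]
The second summand is manifestly analytic near $z=0$. For the first, I would compute the leading behaviour of $\lambda(z)$: differentiating the eigenvalue relation at $z=0$ gives $\lambda'(0) = -\int\int \sigma(g,x)\,\dd\mu(g)\,\dd\nu(x) = -\sigma_\mu$ by Theorem~\ref{lawln}. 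Since $\lambda(0) = 1$, we get $1 - \lambda(z) = \sigma_\mu z + O(z^2) = \sigma_\mu z(1 + z\,h(z))$ with $h$ analytic near $0$, so
\[
\frac{1}{1-\lambda(z)}N(z) = \frac{1}{\sigma_\mu z}N_0 + \frac{1}{\sigma_\mu z}\big(N(z) - N_0\big) + \frac{-h(z)}{\sigma_\mu(1+zh(z))}N(z),
\]
and since $N(z) - N_0 = O(z)$ the middle term is analytic; defining $U(z)$ to be the sum of all the analytic pieces gives \eqref{equ:i-pz}.

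\textbf{Extending to $\Re z = 0$, $z\neq 0$.} The above handles a punctured neighbourhood of $0$ inside $\{\Re z \ge 0\}$; the remaining point is that $U(z)$ extends analytically to the whole strip $0 \le \Re z < \eta$, which requires that $I - P(z)$ is invertible for every $z$ with $\Re z = 0$, $z \ne 0$. Equivalently, $P(it)$ ($t\in\bb R\setminus\{0\}$) has no eigenvalue equal to $1$. This is precisely the \textbf{non-arithmeticity} step and I expect it to be the main obstacle. The point is that $P(it)f = f$ with $|f|_\gamma$-nonzero would force, by iterating and using $|P(it)^n f|_\infty \le |f|_\infty$, an equality $e^{-it\sigma(g,x)} = f(gx)/f(x)$ for $\mu^{\otimes\bb N}$-a.e.\ trajectory with $|f| \equiv$ const on $\supp\nu$; this is a cohomological equation $\sigma(g,\cdot) \equiv \psi(g\cdot) - \psi(\cdot) \pmod{2\pi/t}$ whose solvability is ruled out for Zariski dense $\Gamma_\mu$ by a classical argument (the cocycle $\sigma$ is not cohomologous to a locally constant one — this uses density of the group, e.g.\ existence of two elements with multiplicatively independent dominant eigenvalues, or the regularity/mixing properties of $\nu$). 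Once $I-P(it)$ is invertible for all real $t\ne 0$, a compactness argument on vertical segments plus the local analyticity shows $U(z) = (I-P(z))^{-1} - \frac{1}{\sigma_\mu z}N_0$ is analytic on the closed right portion of the strip minus the origin, and combined with the earlier local expansion at $0$, on a full one-sided neighbourhood of $\{0 \le \Re z < \eta\}$. I would organize the write-up by first doing the easy analytic/perturbative part, then isolating non-arithmeticity as a separate lemma, since that is the conceptual heart and the only place the Zariski-density hypothesis is genuinely used.
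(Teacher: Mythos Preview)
The paper does not give its own proof of this proposition: it is stated with a direct citation to \cite[Theorem 4.1, Lemma 4.7]{boyer2016renewalrd}, followed only by a remark that non-arithmeticity (hence the absence of poles on $i\bb R\setminus\{0\}$) follows from Zariski density via \cite{benoist_proprietes_2000,dalbo_topologie_2000}, and a sentence confirming that the hypotheses of Boyer's Theorem 4.1 are met under the standing assumptions on $\mu$. So there is no in-paper argument to compare against.

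Your reconstruction is the standard Nagaev--Guivarc'h perturbative proof and is correct in outline: quasi-compactness and spectral gap for $P(0)$ on $\cal H^\gamma$, analytic perturbation to get $\lambda(z),N(z),Q(z)$, the derivative computation $\lambda'(0)=-\sigma_\mu$, and non-arithmeticity to exclude eigenvalue $1$ for $P(it)$, $t\neq 0$. One small comment: your ``compactness argument on vertical segments'' is not really needed (nor available, since the imaginary axis is unbounded). What suffices is that invertibility of $I-P(z)$ is an open condition in $z$, so analyticity of $U$ holds on some open set containing $\{0\le\Re z<\eta\}$; and in any case the only use the paper makes of $U$ is through $U(i\xi)$ with $\xi$ ranging over a \emph{compact} set $K$ (Lemma \ref{lem:renres}), where uniform bounds on $\|U(i\xi)\|$ and $\|\partial_\xi U(i\xi)\|$ are automatic by continuity.
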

\begin{rem}
	In Proposition \ref{prop:invtran}, the non-arithmeticity is crucial to prove that $(I-P(z))^{-1}$ has only one pole in the imaginary axis, which is $0$. The non-arithmeticity follows from Zariski density. See for instance \cite{benoist_proprietes_2000} and \cite{dalbo_topologie_2000}.
\end{rem}
The assumption of Theorem 4.1 in \cite{boyer2016renewalrd} are complicated. It is verified, in the proof of theorem 1.4, page 8 \cite{boyer2016renewalrd}, that our condition on $\mu$ is enough to apply Theorem 4.1. The idea is due to Guivarch and Le Page.
\begin{prop}\cite[Lemma 4.4]{boyer2016renewalrd}\label{prop:priorestimate}
For any $\gamma>0$ small enough, there exist $\eta>0$, $0<\rho<1$, $C>0$ such that when $0\leq\Re z<\eta$, for a natural number $n$ and a $\gamma$-H\"older function $f$, we have
\begin{align}
\label{ineq:pznfin}&|P(z)^nf|_{\infty}\leq (C\rho^{n})^{\Re z}|f|_{\infty}
\end{align}
\end{prop}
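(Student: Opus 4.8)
The plan is to reduce the operator estimate to a scalar integral against $\mu^{*n}$, and then extract that scalar bound from the large deviation principle together with the exponential moment assumption.

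\textbf{Step 1: reduction to a scalar kernel.} First I would iterate the definition of $P(z)$ using the cocycle relation $\sigma(g_1\cdots g_n,x)=\sum_{j}\sigma(g_j,g_{j+1}\cdots g_n x)$ and the identity $F_*(\mu^{\otimes n})=\mu^{*n}$ to obtain $P(z)^n f(x)=\int_G e^{-z\sigma(g,x)}f(gx)\dd\mu^{*n}(g)$. Writing $a=\Re z\ge 0$ and $\phi_n(a,x):=\int_G e^{-a\sigma(g,x)}\dd\mu^{*n}(g)$, this gives $|P(z)^nf(x)|\le |f|_\infty\,\phi_n(a,x)$, so it suffices to prove $\phi_n(a,x)\le (C\rho^n)^a$ for $0\le a<\eta$, uniformly in $x\in X$ and $n\ge 0$. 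Note $\phi_n(0,x)=1$, consistent with the claimed form.

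\textbf{Step 2: reduction to a single exponent.} For $0\le a\le a_0$ I would write $e^{-a\sigma}=(e^{-a_0\sigma})^{a/a_0}$ and apply H\"older's inequality on $(G,\mu^{*n})$ with exponents $a_0/a$ and $a_0/(a_0-a)$ to get $\phi_n(a,x)\le \phi_n(a_0,x)^{a/a_0}$. Hence if I can show $\phi_n(a_0,x)\le C_0\rho_0^n$ for a suitably small fixed $a_0>0$ with constants $C_0>0$, $0<\rho_0<1$ uniform in $x$, the proposition follows with $\eta=a_0$, $C=C_0^{1/a_0}$, $\rho=\rho_0^{1/a_0}\in(0,1)$.

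\textbf{Step 3: the bound at $a_0$ via large deviations.} I would split $G$ along $\sigma(g,x)=\tfrac{n\sigma_\mu}{2}$. Where $\sigma(g,x)\ge \tfrac{n\sigma_\mu}{2}$ one has $e^{-a_0\sigma(g,x)}\le e^{-a_0 n\sigma_\mu/2}$. On the complement I use $\sigma(g,x)\ge -\kappa(g)$, hence $e^{-a_0\sigma(g,x)}\le e^{a_0\kappa(g)}=\|g\|^{a_0}$, and Cauchy--Schwarz: $\int_{\{\sigma<n\sigma_\mu/2\}}e^{a_0\kappa}\dd\mu^{*n}\le \big(\int_G e^{2a_0\kappa}\dd\mu^{*n}\big)^{1/2}\,\mu^{*n}\{\sigma(g,x)<\tfrac{n\sigma_\mu}{2}\}^{1/2}$. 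By subadditivity of $\kappa$ and Jensen's inequality (valid once $2a_0\le\epsilon_1$), $\int_G e^{2a_0\kappa}\dd\mu^{*n}\le \big(\int_G\|g\|^{2a_0}\dd\mu\big)^n\le M_\mu^{2a_0 n/\epsilon_1}$; by \eqref{ineq:lardev} applied with deviation scale $\epsilon=\sigma_\mu/2$ the second factor is $\oexpe(n)\le (C'e^{-\epsilon' n})^{1/2}$ with $\epsilon'$ a fixed constant, uniformly in $x$. Choosing $a_0$ small enough that moreover $\tfrac{a_0}{\epsilon_1}\log M_\mu<\tfrac{\epsilon'}{4}$, the complement contributes at most $\sqrt{C'}\,e^{-\epsilon' n/4}$, whence $\phi_n(a_0,x)\le e^{-a_0 n\sigma_\mu/2}+\sqrt{C'}\,e^{-\epsilon' n/4}\le C_0\rho_0^n$ with $\rho_0=\max\{e^{-a_0\sigma_\mu/2},e^{-\epsilon'/4}\}<1$ and $C_0=1+\sqrt{C'}$.

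\textbf{Main difficulty.} There is no deep obstacle here; the one thing to be careful about is the order of quantifiers on the constants. The decay rate $\epsilon'$ is fixed by the large deviation principle once the deviation scale $\sigma_\mu/2$ is fixed, whereas the exponential growth rate $\tfrac{2a_0}{\epsilon_1}\log M_\mu$ of $\int e^{2a_0\kappa}\dd\mu^{*n}$ is tunable through $a_0$. So one must fix the scale (hence $\epsilon'$, and the uniformity of $\oexpe(n)$ in $x$) first, and only then shrink $a_0$, keeping $2a_0\le\epsilon_1$ (so Jensen applies and $P(z)$ is defined on $\cal H^\gamma$) and $a_0\le\eta$. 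All the uniformity in $x$ is inherited directly from the uniformity already built into \eqref{ineq:lardev}.
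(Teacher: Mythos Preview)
Your proof is correct. The paper does not give its own proof of this proposition; it is quoted directly from \cite[Lemma 4.4]{boyer2016renewalrd}, so there is no in-paper argument to compare against. Your self-contained derivation---reducing to the scalar integral $\phi_n(a,x)=\int e^{-a\sigma(g,x)}\dd\mu^{*n}(g)$, interpolating via H\"older to a single small exponent $a_0$, and then splitting along $\{\sigma(g,x)\ge n\sigma_\mu/2\}$ with the large deviation bound \eqref{ineq:lardev} on the bad set---is a clean and standard way to obtain exactly this estimate. The order-of-quantifiers point you flag (fix the deviation scale and hence $\epsilon'$ first, then shrink $a_0$) is the only place one could slip, and you handle it correctly; note also that $M_\mu>1$ since $\|g\|\ge 1$ on $\slr$ with equality only on $\so$, so the smallness condition on $a_0$ is genuine but harmless.
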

\begin{rem}\label{rem:gameps}
	For further usage, we need a bound on $\gamma$. Let $\epsilon,\epsilon'(\epsilon)$ be the two constants in \eqref{ineq:regcon}, that is $\mu^{*n}\{d(gx,x')\leq e^{-\epsilon s} \}\leq Ce^{-\epsilon' s}$, and $\epsilon_1$ the constant in exponential moment. Choose a small $\gamma$ such that $\gamma\leq \frac{1}{4}\max\{\frac{\sigma_\mu/4}{\epsilon'(\sigma_\mu/4)},\epsilon_1 \}$.
\end{rem}
\subsection{Renewal theory for regular functions}
We start to compute the renewal operator. A result for the renewal operator for ``good" functions will be proved. Let $f$ be a function on $X\times \bb R$. Define a norm by
$|f|_{L^\infty\cal H^\gamma}=\sup_{\xi\in\bb R}|f(x,\xi)|_{\cal H^\gamma}$, which is the supremum of the H\"older norm of $f(\cdot,\xi)$. Define another norm $|f|_{W^{1,\infty}\cal H^{\gamma}}=|f|_{L^\infty\cal H^\gamma}+|\partial_\xi f|_{L^\infty\cal H^\gamma}$. Write the Fourier transform $\hat f(x,\xi)=\int e^{i\xi u}f(x,u)\dd u$.
\begin{prop}\label{prop:renreg}
	Let $f$ be a positive bounded continuous function in $L^1(X\times\bb R,\nu\otimes Leb )$ such that its Fourier transform satisfies $\hat f\in L^{\infty}\cal H^\gamma$ and $\partial_\xi \hat f\in L^\infty\cal H^\gamma$. Assume that the projection of $\supp \hat{f}$ onto $\bb R$ is in a compact set $K$. Then for all $t>0$ and $x$ in $X$, we have
	\[Rf(x,t)=\frac{1}{\sigma_{\mu}}\int_{-t}^{\infty}f(y,u)\dd u\dd\nu(y)+ \frac{1}{t}O_K(|\hat f|_{W^{1,\infty}\cal H^{\gamma}}). \]
\end{prop}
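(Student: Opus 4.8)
The strategy is to pass to the Fourier side in the $u$-variable, use the Fourier inversion formula to write $f$ as a superposition of exponentials $e^{-i\xi u}$, and then recognize the $\xi$-slice of the renewal sum as an application of the resolvent $(I-P(i\xi))^{-1}$ to the function $x\mapsto\hat f(x,\xi)$. Concretely, by Fourier inversion
\[
f(gx,\sigma(g,x)-t)=\frac{1}{2\pi}\int_{\bb R}e^{-i\xi(\sigma(g,x)-t)}\hat f(gx,\xi)\dd\xi,
\]
so that, after integrating in $g$ against $\mu^{*n}$ and summing over $n$ (the interchange of sum, integral and the $g$-integral being justified by the compact support of $\hat f$ in $\xi$ together with the operator bound \eqref{ineq:pznfin}, which gives geometric decay in $n$ once $\Re z>0$ — one works first with $z=\delta+i\xi$ and lets $\delta\to 0^+$),
\[
Rf(x,t)=\frac{1}{2\pi}\int_K e^{i\xi t}\big[(I-P(i\xi))^{-1}\hat f(\cdot,\xi)\big](x)\dd\xi.
\]

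**Main term.** Now insert the decomposition \eqref{equ:i-pz}, $(I-P(z))^{-1}=\frac{1}{\sigma_\mu z}N_0+U(z)$ with $z=i\xi$. The $N_0$-part contributes
\[
\frac{1}{2\pi}\int_K \frac{e^{i\xi t}}{\sigma_\mu i\xi}\Big(\int_X\hat f(y,\xi)\dd\nu(y)\Big)\dd\xi,
\]
and recognizing $\int_X\hat f(y,\xi)\dd\nu(y)$ as the Fourier transform of $u\mapsto\int_X f(y,u)\dd\nu(y)$, this integral is, up to the $\frac{1}{2\pi}$ and the factor $\frac{1}{\sigma_\mu}$, the inverse Fourier transform of $\widehat{F}(\xi)/(i\xi)$ evaluated at $t$, where $F(u)=\int_X f(y,u)\dd\nu(y)$; since $\frac{1}{i\xi}$ is (up to constants) the Fourier multiplier of integration, this equals $\frac{1}{\sigma_\mu}\int_{-t}^{\infty}F(u)\dd u=\frac{1}{\sigma_\mu}\int_{-t}^\infty\int_X f(y,u)\dd\nu(y)\dd u$, which is exactly the claimed main term. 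One must be a little careful: the pole at $\xi=0$ is only apparent because the $U(z)$-part is analytic there and the full resolvent is what one is really integrating; the cleanest route is to keep $z=\delta+i\xi$ throughout, evaluate the $\delta>0$ integral (where everything is a genuine convergent Laplace/Fourier integral), and only at the end take $\delta\to0^+$, at which point the $\frac{1}{\sigma_\mu z}N_0$ term produces the half-line integral by the standard computation of $\int e^{i\xi t}/(\delta+i\xi)\dd\xi$.

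**Error term.** The remaining contribution is $\frac{1}{2\pi}\int_K e^{i\xi t}\big[U(i\xi)\hat f(\cdot,\xi)\big](x)\dd\xi$. Here $U$ is analytic (hence, with its derivative, bounded in operator norm on $\cal H^\gamma$ on the compact $\xi$-range $K$), so the bracket is a $\cal H^\gamma$-valued function of $\xi$ whose sup-norm and $\xi$-derivative-norm are $O_K(|\hat f|_{W^{1,\infty}\cal H^\gamma})$. A single integration by parts in $\xi$ over the compact interval $K$ — the boundary terms vanish or are controlled because $\hat f$ is supported in $K$, or one first multiplies by a fixed cutoff equal to $1$ on $K$ — produces the gain of $1/t$: $\int_K e^{i\xi t}g(\xi)\dd\xi=\frac{1}{it}\big(\text{boundary}\big)-\frac{1}{it}\int_K e^{i\xi t}g'(\xi)\dd\xi=\frac{1}{t}O_K(|g|_{W^{1,\infty}})$. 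This yields the stated $\frac{1}{t}O_K(|\hat f|_{W^{1,\infty}\cal H^\gamma})$.

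**Main obstacle.** The routine parts are the integration by parts and the main-term bookkeeping; the delicate point is the justification of all the interchanges and the $\delta\to0^+$ limit near the pole $\xi=0$. One must show that the $\delta>0$ resolvent $(I-P(\delta+i\xi))^{-1}$ converges, in a strong enough sense (uniformly on compact $\xi$-sets away from $0$, and in an integrated sense near $0$ after extracting the $\frac{1}{\sigma_\mu z}N_0$ singularity), to the operator in \eqref{equ:i-pz}; this is precisely where Proposition \ref{prop:invtran} and the priori estimate \eqref{ineq:pznfin} are used, and where non-arithmeticity enters to guarantee $0$ is the only pole on the imaginary axis so that $U(i\xi)$ really is analytic on all of $K$. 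The positivity and $L^1$ hypothesis on $f$ are what make $Rf(x,t)$ finite and legitimize working with $f$ itself (rather than only formally), while the compact support of $\hat f$ confines the $\xi$-integral to $K$ and makes the whole contour manipulation and the final integration by parts clean.
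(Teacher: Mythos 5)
Your proposal follows essentially the same route as the paper: regularize the resolvent with $\Re z=\delta>0$, use Fourier inversion in the $u$-variable and the compact support of $\hat f$ in $\xi$ to rewrite the renewal sum as $\frac{1}{2\pi}\int e^{i\xi t}(I-P(\delta+i\xi))^{-1}\hat f(\cdot,\xi)\dd\xi$, extract the main term from the $\frac{1}{\sigma_\mu z}N_0$ pole via $\frac{1}{\delta+i\xi}=\int_0^\infty e^{-(\delta+i\xi)u}\dd u$ and let $\delta\to0^+$ (the paper does exactly this, splitting the limit into a monotone-convergence part on $\{\sigma(g,x)>0\}$, where positivity of $f$ is used, and a dominated-convergence part on $\{\sigma(g,x)\le0\}$ controlled by the a priori estimate), and bound the $U(i\xi)$ contribution by a single integration by parts in $\xi$. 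The argument is correct and matches the paper's Lemmas \ref{lem:rencon} and \ref{lem:renres}.
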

\begin{proof}
	Combine the following two lemmas.
\end{proof}
\begin{lem}\label{lem:rencon}
Under the same assumption as in Proposition \ref{prop:renreg}, we have
\begin{align*}
Rf(x,t)=\frac{1}{\sigma_{\mu}}\int_{t}^{\infty}f(y,u)\dd u\dd\nu(y)+\frac{1}{2\pi}\int e^{it\xi}U(i\xi)\hat{f}(x,\xi)\dd\xi.
\end{align*}
\end{lem}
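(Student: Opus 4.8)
The plan is to express the renewal operator $Rf(x,t)$ via the resolvent $(I-P(z))^{-1}$ and then use the decomposition \eqref{equ:i-pz} to split off the explicit main term. First I would observe that, formally,
\[
Rf(x,t)=\sum_{n\geq 0}\int_G f(gx,\sigma(g,x)-t)\,\dd\mu^{*n}(g)
=\sum_{n\geq 0}\bigl(P(0)^n \widetilde f_t\bigr)(x),
\]
where $\widetilde f_t(\cdot)=f(\cdot,\,\cdot-t)$, but since $P(0)$ is just the Markov operator with spectral radius $1$ this does not converge directly; the standard device is to Fourier-transform in the additive variable. Writing $f(gx,\sigma(g,x)-t)=\frac{1}{2\pi}\int e^{i\xi(\sigma(g,x)-t)}\hat f(gx,\xi)\,\dd\xi$ and inserting this into the definition of $R$, the $n$-th term becomes $\frac{1}{2\pi}\int e^{-it\xi}\bigl(P(-i\xi)^n\hat f(\cdot,\xi)\bigr)(x)\,\dd\xi$. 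The key point is that $\hat f(\cdot,\xi)$ is supported in $\xi\in K$ with $K$ compact and away from a neighborhood of where the resolvent fails to be bounded is not an issue here since we stay on the imaginary axis; by Proposition \ref{prop:priorestimate}, with $\Re z=0$ one only gets $|P(-i\xi)^n\hat f|_\infty\le|\hat f|_\infty$, so I would justify the interchange of sum and integral by a small-$\Re z$ regularization: replace $P(-i\xi)$ by $P(\delta-i\xi)$ with $\delta>0$, where \eqref{ineq:pznfin} gives geometric decay $(C\rho^n)^\delta$, sum the Neumann series to $(I-P(\delta-i\xi))^{-1}$, and then let $\delta\to 0^+$ using the analyticity of $U(z)$ up to $\Re z=0$ and the explicit pole structure in \eqref{equ:i-pz}.

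Carrying this out, I would arrive at
\[
Rf(x,t)=\frac{1}{2\pi}\int e^{-it\xi}\Bigl((I-P(-i\xi))^{-1}\hat f(\cdot,\xi)\Bigr)(x)\,\dd\xi,
\]
interpreted as a limit from $\Re z>0$. Now substitute \eqref{equ:i-pz}: $(I-P(-i\xi))^{-1}=\frac{1}{\sigma_\mu(-i\xi)}N_0+U(-i\xi)$, where $N_0\hat f(\cdot,\xi)=\int\hat f(y,\xi)\,\dd\nu(y)=:\widehat{F}(\xi)$ with $F(u)=\int f(y,u)\,\dd\nu(y)$. The $U$-term directly produces $\frac{1}{2\pi}\int e^{-it\xi}U(-i\xi)\hat f(x,\xi)\,\dd\xi$; up to the sign convention for the Fourier transform used in the paper (which flips $\xi\mapsto-\xi$, so I would match this to the stated $U(i\xi)$ and $e^{it\xi}$), this is exactly the second term in the claim. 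It remains to identify the singular term $\frac{1}{2\pi\sigma_\mu}\int e^{-it\xi}\frac{\widehat F(\xi)}{-i\xi}\,\dd\xi$ with $\frac{1}{\sigma_\mu}\int_t^\infty F(u)\,\dd u$. This is the classical computation that $\frac{1}{2\pi}\int e^{-it\xi}\frac{\widehat F(\xi)}{-i\xi}\,\dd\xi$ equals the ``integrated tail'' $\int_t^\infty F(u)\,\dd u$ (equivalently, $\frac{1}{-i\xi}$ is, up to constants, the Fourier transform of a Heaviside function, so division by $-i\xi$ corresponds to integration); I would make this rigorous by noting $F\in L^1$ (since $f\in L^1(\nu\otimes\mathrm{Leb})$) and that the principal-value integral against $\frac{1}{\xi}$ is handled by the $\delta\to 0$ limit already in place, which automatically selects the correct one-sided antiderivative — this is where the compactness of $K$ and the continuity of $\widehat F$ at $0$ guarantee there is no ambiguity and no boundary contribution at $\xi=0$.

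The main obstacle I anticipate is precisely the justification of the limiting procedure $\delta\to 0^+$ and the resulting contour/principal-value manipulation near $\xi=0$: one must show that the $\frac{1}{\sigma_\mu z}N_0$ pole, when integrated against $e^{-it\xi}$, converges to the integrated-tail expression with the correct orientation (this is what encodes that the renewal measure ``looks forward'' in $t$), and that the remainder $U(-i\xi)\hat f$ term is genuinely integrable in $\xi$ over $K$ — which it is, since $U(z)$ is bounded analytic on a neighborhood of the compact segment $\{-i\xi:\xi\in K\}$ and $\hat f(\cdot,\xi)$ is uniformly bounded in $\mathcal H^\gamma$. Everything else is bookkeeping: matching the paper's Fourier-transform normalization, and checking that the formal expansion $f(gx,\sigma(g,x)-t)=\frac1{2\pi}\int e^{i\xi(\sigma(g,x)-t)}\hat f(gx,\xi)\,\dd\xi$ is valid pointwise (which follows from $\hat f\in L^\infty\mathcal H^\gamma$ with compactly supported $\xi$-projection, so $f(y,\cdot)$ is smooth in its second variable). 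The quantitative error term with the $\frac1t$ and $|\hat f|_{W^{1,\infty}\mathcal H^\gamma}$ is then obtained in the companion lemma by integrating the $U$-term by parts once in $\xi$, which is why $\partial_\xi\hat f\in L^\infty\mathcal H^\gamma$ is assumed.
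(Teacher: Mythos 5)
Your proposal follows essentially the same route as the paper: regularize the Neumann series by a small positive real part $s$ (justifying the $s\to 0^+$ interchange via monotone convergence on $\{\sigma(g,x)>0\}$, where positivity of $f$ is used, and the bound of Proposition \ref{prop:priorestimate} on $\{\sigma(g,x)\le 0\}$), Fourier-invert in the additive variable, sum to $(I-P(s-i\xi))^{-1}$, and apply the decomposition \eqref{equ:i-pz}. The only difference is cosmetic: where you invoke the Heaviside/principal-value heuristic for the pole term and flag it as the main obstacle, the paper sidesteps any principal value by writing $\frac{1}{s-i\xi}=\int_0^{+\infty}e^{-(s-i\xi)u}\,\dd u$ for $s>0$, which turns the singular term into the absolutely convergent expression $\frac{1}{\sigma_\mu}\int_X\int_0^\infty f(y,u+t)e^{-su}\,\dd u\,\dd\nu(y)$ and then passes to the limit by monotone convergence.
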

\begin{proof}
Introduce a local notation: for $(x,t)$ in $X\times\bb R$ and $s\geq 0$, write
\begin{align*}
\pexp f(x,t)=\int_{G}e^{-s\sigma(g,x)}f(gx,\sigma(g,x)+t)\dd\mu(g).
\end{align*}
When $s=0$, we abbreviate the notation $\pexpo_0$ to $\pexpo$. We want to prove the following equality,
\begin{equation}\label{equ:slim}
\sum_{n\geq 0} \pexpo^n(f)(x,t)=\lim_{s\rightarrow 0^+}\sum_{n\geq 0} \pexp^n(f)(x,t).
\end{equation}
By definition, one has
\begin{align*}
\pexp^n(f)(x,t)&=\int_G e^{-s\sigma(g,x)}f(gx,\sigma(g,x)+t)\dd\mu^{*n}(g)\\
&=\int_G e^{-s\sigma(g,x)}(\bm 1_{\sigma(g,x)>0}+\bm 1_{\sigma(g,x)\leq 0})f(gx,\sigma(g,x)+t)\dd\mu^{*n}(g).
\end{align*}
\begin{itemize}
\item The part $\bm 1_{\sigma(g,x)>0}$, since $f\geq 0$, use the monotone convergence theorem. When $s\rightarrow 0^+$ then
\begin{align*}
\sum_{n\geq 0}\int_G e^{-s\sigma(g,x)}\bm 1_{\sigma(g,x)>0}f(gx,\sigma(g,x)+t)\dd\mu^{*n}(g)\rightarrow\sum_{n\geq 0} \int_G\bm 1_{\sigma(g,x)>0}f(gx,\sigma(g,x)+t)\dd\mu^{*n}(g).
\end{align*}
\item For the part $\bm 1_{\sigma(g,x)\leq 0}$, take $s$ in $[0,\eta/2]$. Proposition \ref{prop:priorestimate} implies that
\[\int_G e^{-s\sigma(g,x)}\bm 1_{\sigma(g,x)\leq 0}f(gx,\sigma(g,x)+t)\dd\mu^{*n}(g)
\leq \int_G e^{-\eta\sigma(g,x)/2}|f|_\infty\dd\mu^{*n}(g)
\leq (C\rho^n)^{\eta/2}|f|_{\infty}.\] 
Since $\sum_{n\geq 0}\rho^{n\eta/2}$ is finite, take $e^{-\eta\sigma(g,x)/2}|f|_\infty$ as the dominant function. Then use the dominated convergence theorem to conclude. 
\end{itemize}
This proves equation \eqref{equ:slim}.

Using the inverse Fourier transform, we have
\begin{equation}\label{equ:bsn}
\begin{split}
\sum_{n\geq 0} \pexp^n(f)(x,t)&=\sum_{n\geq 0}\int_Ge^{-s\sigma(g,x)}f(gx,\sigma(g,x)+t)\dd\mu^{*n}(g)\\
&=\sum_{n\geq 0}\int_Ge^{-s\sigma(g,x)}\frac{1}{2\pi}\int_{\bb R}e^{i\xi(\sigma(g,x)+t)}\hat{f}(gx,\xi)\dd\xi\dd\mu^{*n}(g).
\end{split}
\end{equation}
Since $\hat{f}(x,\xi)$ has compact support, $|\hat{f}(x,\xi)|\leq |\hat f(x,\xi)|_{L^\infty_\xi\cal H^\gamma}$ and $|P(s)^n\bm 1|\leq C\rho^{sn}$ for $s$ in $[0,\eta/2]$ (Proposition \ref{prop:priorestimate}), we have
\begin{align*}
\sum_{n\geq 0}\int_Ge^{-s\sigma(g,x)}\int_{\bb R}|\hat{f}(gx,\xi)|\dd\xi\dd\mu^{*n}(g)\leq C_{f}\sum_{n\geq 0}\int_Ge^{-s\sigma(g,x)}\dd\mu^{*n}(g)=C_{f}\sum_{n\geq 0} P(s)^n(\bm 1)< \infty,
\end{align*} 
which implies that the right hand side of \eqref{equ:bsn} is absolutely convergent.
Consequently, we can use the Fubini theorem to change the order of the integration. By the hypothesis $\hat{f}(x,\xi)\in\cal H^{\gamma}(X)$, Proposition \ref{prop:invtran} implies that
\begin{align*}
\sum_{n\geq 0} \pexp^n(f)(x,t)&=\frac{1}{2\pi}\int_{\bb R}\sum_{n\geq 0}\int_G e^{(-s+i\xi)\sigma(g,x)}\hat{f}(gx,\xi)\dd\mu^{*n}(g)e^{it\xi}\dd\xi\\
&=\frac{1}{2\pi}\int_{\bb R}\sum_{n\geq 0} P^n(s-i\xi)\hat{f}(x,\xi)e^{it\xi}\dd\xi\\
&=\frac{1}{2\pi}\int_{\bb R}(1- P(s-i\xi))^{-1}\hat{f}(x,\xi)e^{it\xi}\dd\xi\\
&=\frac{1}{2\pi}\int_{\bb R}(\frac{N_0}{\sigma_{\mu}(s-i\xi)}+U(s-i\xi))\hat{f}(x,\xi) e^{it\xi}\dd\xi.
\end{align*}
Since $\frac{1}{s-i\xi}=\int_{0}^{+\infty}e^{-(s-i\xi)u}\dd u$ for $s>0$, together with the property $\hat{f}\in L^1(\bb R)$, we have
\begin{align*}
\frac{1}{2\pi}\int_{\bb R}\frac{N_0}{\sigma_{\mu}(s-i\xi)}\hat{f}(x,\xi) e^{it\xi}\dd\xi&=\frac{1}{2\pi}\frac{1}{\sigma_{\mu}}\int_X\int_{\bb R}\frac{\hat{f}(y,\xi)}{s-i\xi}e^{it\xi}\dd\xi\dd\nu(y)\\
&=\frac{1}{\sigma_{\mu}}\int_X\int_{0}^{\infty}f(y,u+t)e^{-su}\dd u\dd\nu(y).
\end{align*}
When $s\rightarrow 0^+$, since $f$ is integrable with respect to the product measure $\nu\otimes Leb$, by monotone convergence theorem, the limit is $\frac{1}{\sigma_{\mu}}\int_X\int_{t}^{\infty}f(y,u)\dd u\dd\nu(y)$.
Since $\hat f(x,\xi)$ is compactly supported, we have
\begin{align*}
\lim_{s\rightarrow 0^+}\int_{\bb R}U(-s+i\xi)\hat{f}(x,\xi) e^{it\xi}\dd\xi=\int_{\bb R}U(i\xi)\hat{f}(x,\xi) e^{it\xi}\dd\xi.
\end{align*}
The proof is complete.
\end{proof}

\begin{lem}\label{lem:renres} 
Under the same assumption as in Proposition \ref{prop:renreg}, we have
\[|\int e^{-it\xi}U(i\xi)\hat{f}(x,\xi)\dd\xi|\leq \frac{1}{t}O_K\left(|\hat f|_{L^\infty\cal H^{\gamma}}+|\partial_\xi\hat f|_{L^\infty\cal H^{\gamma}}\right). \]
\end{lem}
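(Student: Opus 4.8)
The plan is to integrate by parts once in the frequency variable $\xi$. The factor $1/t$ in the conclusion comes precisely from this single integration by parts; the boundary terms vanish because $\hat f$ is compactly supported in $\xi$, and the remaining integral is controlled by the analyticity of the operator $U$.

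Concretely, I would first collect three facts. By Proposition \ref{prop:invtran}, $z\mapsto U(z)$ is analytic on a neighborhood of $\{0\le\Re z<\eta\}$ with values in the bounded operators on $\cal H^\gamma(X)$; since the whole imaginary axis lies in this set, $\xi\mapsto U(i\xi)$ is a real-analytic curve of bounded operators, and
\[C_K:=\sup_{\xi\in K}\Big(\|U(i\xi)\|_{\cal H^\gamma\to\cal H^\gamma}+\big\|\tfrac{\dd}{\dd\xi}U(i\xi)\big\|_{\cal H^\gamma\to\cal H^\gamma}\Big)<\infty\]
because $K$ is compact. Second, as $\supp\hat f$ projects into $K$, the slice $\hat f(\cdot,\xi)$ vanishes identically for $\xi\notin K$, and by the hypothesis of Proposition \ref{prop:renreg} the curve $\xi\mapsto\hat f(\cdot,\xi)$ is $C^1$ into $\cal H^\gamma(X)$ with $\sup_\xi|\hat f(\cdot,\xi)|_\gamma=|\hat f|_{L^\infty\cal H^\gamma}$ and $\sup_\xi|\partial_\xi\hat f(\cdot,\xi)|_\gamma=|\partial_\xi\hat f|_{L^\infty\cal H^\gamma}$. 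Third, for a fixed $x\in X$ the evaluation map $h\mapsto h(x)$ has norm at most $1$ on $\cal H^\gamma(X)$, since $|h(x)|\le|h|_\infty\le|h|_\gamma$.

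With these in hand, the function $\xi\mapsto\big(U(i\xi)\hat f(\cdot,\xi)\big)(x)$ is $C^1$ (it is the composition of the two $C^1$ curves above with the bounded bilinear map $(A,h)\mapsto(Ah)(x)$) and compactly supported in $K$, so integration by parts gives, with no boundary contribution,
\[\int e^{-it\xi}U(i\xi)\hat f(x,\xi)\,\dd\xi=\frac{1}{it}\int e^{-it\xi}\left(\left(\tfrac{\dd}{\dd\xi}U(i\xi)\,\hat f(\cdot,\xi)\right)\!(x)+\left(U(i\xi)\,\partial_\xi\hat f(\cdot,\xi)\right)\!(x)\right)\dd\xi.\]
Estimating the right-hand side termwise, using $|h(x)|\le|h|_\gamma$, the bound $|Ah|_\gamma\le\|A\|_{\cal H^\gamma\to\cal H^\gamma}|h|_\gamma$ and the operator bounds above, one obtains
\[\Big|\int e^{-it\xi}U(i\xi)\hat f(x,\xi)\,\dd\xi\Big|\le\frac{|K|\,C_K}{t}\left(|\hat f|_{L^\infty\cal H^\gamma}+|\partial_\xi\hat f|_{L^\infty\cal H^\gamma}\right),\]
and since $|K|\,C_K$ depends only on $K$ (and on the fixed data $\mu,\gamma$), this is the claimed bound $\tfrac1t O_K(|\hat f|_{L^\infty\cal H^\gamma}+|\partial_\xi\hat f|_{L^\infty\cal H^\gamma})$.

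The only point requiring care is the legitimacy of this integration by parts, i.e. that $\xi\mapsto\big(U(i\xi)\hat f(\cdot,\xi)\big)(x)$ is genuinely $C^1$ and supported in a compact subinterval. This rests on two inputs already available: the analyticity of $U$ near the imaginary axis from Proposition \ref{prop:invtran} — itself a consequence of non-arithmeticity, hence of Zariski density — and the regularity hypothesis $\partial_\xi\hat f\in L^\infty\cal H^\gamma$ of Proposition \ref{prop:renreg}. If one prefers to avoid Banach-space-valued differentiation, the same identity can be established scalar-wise: for a fixed $h\in\cal H^\gamma(X)$ the map $\xi\mapsto(U(i\xi)h)(x)$ is analytic, and one combines this with the $\xi$-differentiability of $\hat f(\cdot,\xi)$ via the product rule before integrating. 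Everything else is a routine estimate.
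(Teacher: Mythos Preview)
Your proof is correct and follows essentially the same route as the paper: a single integration by parts in $\xi$, using the compact support of $\hat f(\cdot,\xi)$ to kill the boundary terms and the analyticity of $U$ on the imaginary axis to bound $\|U(i\xi)\|$ and $\|\partial_\xi U(i\xi)\|$ uniformly on $K$. You are somewhat more explicit about the $C^1$ regularity needed to justify the integration by parts, but the argument is the same.
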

\begin{proof}
	 Use the fact that $\hat f(x,\xi)$ is compactly supported and $|\hat{f}(x,\xi)|_{\cal H^\gamma},\,|\partial_{\xi}\hat{f}(x,\xi)|_{\cal H^\gamma}<\infty$. Then applying integration by parts, we have
	\begin{align*}
	\int e^{-it\xi}U(i\xi)\hat{f}(x,\xi)\dd\xi&=\frac{1}{it}\int e^{-it\xi}\partial_{\xi}(U(i\xi)\hat{f}(x,\xi))\dd\xi\\
	&=\frac{1}{it}\int e^{-it\xi}\left(\partial_{\xi}(U(i\xi))\hat{f}(x,\xi)+U(i\xi)\partial_{\xi}\hat{f}(x,\xi)\right)\dd\xi.
	\end{align*}
	Since the operator norms of $U(i\xi)$ and $\partial_\xi U(i\xi)$ are uniformly bounded on compact regions, the result follows.
\end{proof}

\subsection{Regularity properties of renewal measures}\label{sec:regular}
We have two principles in this subsection. \textbf{Principle 1}: Let $f$ be a bounded Borel function supported in $X\times[0,a]$. When we take the renewal sum outside of the interval $\ntint=[\frac{t}{\sigma_\mu+\epsilon},\frac{t+a}{\sigma_\mu-\epsilon}]$,
\[\sum_{n\in\bb N-I_t}\int_Gf(gx,\sigma(g,x)-t)\dd\mu^{*n}(g)=\sum_{n\in\bb N-I_t}\int_Gf(gx,\sigma(g,x)-t)\bm 1_{[0,a]}(\sigma(g,x)-t)\dd\mu^{*n}(g),\]
this sum decays exponentially with $t$. This is given by the large deviations principle (Corollary \ref{cor:lardev}, \ref{cor:lardev1}). For $n$ in the interval $\ntint$, if some property is valid for each $n$ with an exponential error of $n$, we sum up. Since the length of this interval is comparable with $t$, this property is also valid for the renewal sum with an exponential error of $t$.

\textbf{Principle 2}: The other is independence. By Proposition \ref{prop:renreg}, the limit distribution of $(\sigma(g,x)-t,gx)$ is $\frac{1}{\sigma_\mu}\nu\otimes Leb$, which is a product measure. That roughly means the following: As in Remark \ref{rem:ransto}, let $X_n=b_n\cdots b_1$ be a random walk on $G$. Let $F=F_1\times F_2$ where $F_1$, $F_2$ are Borel subsets of $X$, $\bb R$ respectively. Then 
\[\sum_{n\geq 0}\bb P\{(X_nx,\sigma(X_n,x)-t)\in F_1\times F_2 \}\rightarrow \frac{1}{\sigma_\mu}\nu(F_1)\otimes Leb(F_2)\text{ as }t\rightarrow+\infty. \] 
More concretely, we could expect that $R\bb (\bm 1_{F_1\times F_2})(x,t)$ is almost $\frac{1}{\sigma_\mu}\nu(F_1)\otimes Leb(F_2)$ when $t$ is large.

We want to use convolution to smooth out the target function. There exists an even function $\psi$ such that it is a probability density, and the Fourier transform $\hat{\psi}$ is compactly supported. Let $\psi_{\delta}(t)=\frac{1}{\delta^2}\psi(\frac{t}{\delta^2})$. Then $\int_{-\delta}^{\delta}\psi_{\delta}(t)\dd t=\int_{-1/\delta}^{1/\delta}\psi(t)\dd t>1-C\delta$.
\begin{prop}\label{prop:renint}
Let $\delta\leq 1/3$ and $b_2\geq b_1$. If $b_2-b_1\geq 2\delta$, then for $x$ in $X$ and $t>0$, we have
\begin{equation}\label{ineq:renint}
R(\bm 1_{[b_1,b_2]})(x,t)\lesssim (b_2-b_1)(1/\sigma_{\mu}+C_{\delta}(1+|b_2|+|b_1|)/t).
\end{equation}
If $0\leq b_2-b_1<2\delta$, then for $x$ in $X$ and $t>0$, we have
\begin{equation}\label{ineq:renintsma}
R(\bm 1_{[b_1,b_2]})(x,t)\lesssim \delta(1/\sigma_{\mu}+C_{\delta}(1+|b_1|)/t).
\end{equation}
\end{prop}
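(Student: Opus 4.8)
The plan is to deduce Proposition~\ref{prop:renint} from the renewal theorem for good functions (Proposition~\ref{prop:renreg}) by a standard ``sandwich'' argument, smoothing the indicator $\bm 1_{[b_1,b_2]}$ into admissible test functions and using Principle~1 to control the tail of the renewal sum. The function $\bm 1_{[b_1,b_2]}$ is not continuous, let alone with the Fourier-regularity required in Proposition~\ref{prop:renreg}, so I would first mollify. Set $g_\delta = \bm 1_{[b_1-\delta,b_2+\delta]} * \psi_\delta$, which dominates $\bm 1_{[b_1,b_2]}$ pointwise, is smooth, bounded by $1$, supported in $[b_1-2\delta,b_2+2\delta]$, and whose Fourier transform is $\widehat{\bm 1_{[b_1-\delta,b_2+\delta]}}\cdot\widehat\psi$, hence compactly supported in $\supp\widehat\psi=:K$. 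Viewing this as a function $f(y,u)=g_\delta(u)$ on $X\times\bb R$ (constant in the $X$-variable, so trivially H\"older), I would estimate $|\widehat f|_{W^{1,\infty}\cal H^\gamma}=|\widehat{g_\delta}|_{W^{1,\infty}}$: the $L^\infty$ norm is $O((b_2-b_1+2\delta)\|\widehat\psi\|_\infty)$ and the $\partial_\xi$ bound brings in a factor $O(1+|b_1|+|b_2|)$ from differentiating $\widehat{\bm 1_{[b_1-\delta,b_2+\delta]}}$, all with $\delta$-dependent constants $C_\delta$ absorbed from $\psi$.

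Given that, Proposition~\ref{prop:renreg} yields
\[
R(\bm 1_{[b_1,b_2]})(x,t)\le Rg_\delta(x,t)=\frac{1}{\sigma_\mu}\int_{-t}^\infty g_\delta(u)\dd u+\frac{1}{t}O_K(|\widehat{g_\delta}|_{W^{1,\infty}}).
\]
For $t>0$ the integral $\int_{-t}^\infty g_\delta$ is at most $\int_{\bb R}g_\delta=b_2-b_1+2\delta$, which in the regime $b_2-b_1\ge 2\delta$ is $\lesssim b_2-b_1$ and in the regime $b_2-b_1<2\delta$ is $\lesssim\delta$; combining with the error term, whose size is $\frac1t O_K\big((b_2-b_1+2\delta)(1+|b_1|+|b_2|)C_\delta\big)$, gives exactly \eqref{ineq:renint} in the first case and \eqref{ineq:renintsma} in the second (using $|b_2|\le|b_1|+2\delta$ there). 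Strictly speaking Proposition~\ref{prop:renreg} needs $f$ positive, bounded, continuous, and in $L^1(\nu\otimes Leb)$, all of which $g_\delta$ satisfies; I should double-check that the hypothesis ``projection of $\supp\widehat f$ onto $\bb R$ in a compact set'' is met, which it is since that projection is $K$.

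One subtlety is that Proposition~\ref{prop:renreg} as quoted hides its $\nu$-dependence only through $\int f\,\dd\nu\,\dd u$, and since our $f$ is constant in $x$ this reduces cleanly; I do not expect trouble there. The main obstacle is bookkeeping the constants so that the $C_\delta$ in the statement genuinely depends only on $\delta$ (and $\mu$), not on $b_1,b_2,t,x$: this requires being careful that the mollifier $\psi$ is fixed once and for all, that $K=\supp\widehat\psi$ is fixed, and that the only place $b_1,b_2$ enter the error is through the explicit $(1+|b_1|+|b_2|)$ factor coming from $\tfrac{d}{d\xi}\widehat{\bm 1_{[b_1-\delta,b_2+\delta]}}(\xi)$, whose modulus is $O(|b_1|+|b_2|+\delta)$ uniformly in $\xi$. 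A minor point is handling $t$ small (say $t\le 1$): there the claimed bound is vacuous or follows trivially from $R(\bm 1_{[b_1,b_2]})\le Rg_\delta$ together with the already-established finiteness, or one simply restricts to $t\ge 1$ and absorbs $t\le 1$ into the constant. Finally, a remark: the lower-bound direction (which Principle~2 alludes to) is not needed for this Proposition since only ``$\lesssim$'' is claimed, so the one-sided sandwich with $g_\delta$ suffices and I would not bother constructing the minorant here.
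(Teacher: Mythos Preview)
Your approach is correct and is essentially the same as the paper's: mollify the indicator by $\psi_\delta$ to obtain an admissible test function, apply Proposition~\ref{prop:renreg}, and read off the main term and error. The paper convolves $\bm 1_{[b_1,b_2]}$ itself with $\psi_\delta$ and uses the pointwise bound $\bm 1_{[b_1,b_2]}\le 3\,\psi_\delta*\bm 1_{[b_1,b_2]}$ (valid because $\psi_\delta*\bm 1_{[b_1,b_2]}\ge (1-\delta)/2$ on $[b_1,b_2]$ when $b_2-b_1\ge 2\delta$), whereas you enlarge the interval first; these are interchangeable variants. Two small corrections: your $g_\delta$ does \emph{not} literally dominate $\bm 1_{[b_1,b_2]}$ pointwise, since $g_\delta\le 1$ everywhere and is only $\ge 1-C\delta$ on $[b_1,b_2]$ --- you need a harmless constant factor, exactly as the paper's factor~$3$; and the Fourier transform of $g_\delta$ involves $\widehat{\psi_\delta}$, not $\widehat\psi$, so the compact set $K$ is $\delta$-dependent (which is fine, as the constant is already $C_\delta$). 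The reference to Principle~1 is unnecessary here.
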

\begin{proof}
When $b_2-b_1\geq 2\delta$, if $u$ is in $[b_1,b_2]$, then $[u-b_2,u-b_1]$ contains at least one of $[0,\delta]$ or $[-\delta,0]$. Therefore
\[\psi_{\delta}*\bm 1_{[b_1,b_2]}(u)=\int_{b_1}^{b_2}\psi_{\delta}(u-v)\dd v\geq \int_{0}^{\delta}\psi(v)\dd v\geq (1-\delta)/2. \]
Then
\begin{equation}\label{ineq:psidellarg}
\bm 1_{[b_1,b_2]}\leq 3\psi_{\delta}*\bm 1_{[b_1,b_2]}.
\end{equation}
It is sufficient to bound $R(\psi_{\delta}*\bm 1_{[b_1,b_2]})$. Proposition \ref{prop:renreg} implies that
\[R(\psi_{\delta}*\bm 1_{[b_1,b_2]})=\frac{1}{\sigma_{\mu}}\int_{-t}^{\infty}\psi_{\delta}*\bm 1_{[b_1,b_2]}+\frac{O_\delta}{t}|\hat{\psi_\delta}\hat{\bm 1}_{[b_1,b_2]}|_{W^{1,\infty}\cal H^\gamma}. \]
The first term is less than $\int\psi_{\delta}*\bm 1_{[b_1,b_2]}=(b_2-b_1)$. For the second term, we have
\begin{align*}
|\hat{\psi_\delta}\hat{\bm 1}_{[b_1,b_2]}|_{W^{1,\infty}\cal H^\gamma}&=|\hat{\psi_\delta}\hat{\bm 1}_{[b_1,b_2]}|_{L^\infty\cal H^\gamma}+|\partial_\xi\hat{\psi_\delta}\hat{\bm 1}_{[b_1,b_2]}|_{L^{\infty}\cal H^\gamma}\\
&=|\hat{\psi_\delta}\hat{\bm 1}_{[b_1,b_2]}|_{L^\infty}+|\partial_\xi\hat{\psi_\delta}\hat{\bm 1}_{[b_1,b_2]}|_{L^{\infty}} \\
&\leq C_\delta'(|\bm 1_{[b_1,b_2]}(u)|_{L^1}+|u\bm 1_{[b_1,b_2]}(u)|_{L^1})\leq C_\delta'(b_2-b_1)(1+|b_1|+|b_2|).
\end{align*}

When $b_2-b_1\in [0,2\delta]$, the renewal sum $R(\bm 1_{[b_1,b_2]})$ is bounded by $R(\bm 1_{[b_1,b_1+2\delta]})$. Then use the previous case.
\end{proof}

In Proposition \ref{prop:renreg}, since we do not have a good control of the spectral radius of the operator $U(i\xi)$ for large $|\xi|$, the estimates are effective only for large $t$, which means that when $t$ is small the error term will be out of control. The following lemma combines the transfer operator and the large deviations principle to give a uniform estimate.
\begin{lem}\label{lem:renintts}
For real numbers $s,t$ and  a point $x$ in $X$, we have
\begin{equation}\label{ineq:renintts}
R(\bm 1_{[0,s]})(x,t)\lesssim \max\{1,s \}.
\end{equation}
\end{lem}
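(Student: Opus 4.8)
The plan is to reduce the statement to the single uniform estimate
\[
R(\bm 1_{[0,1]})(x,u)\lesssim 1\qquad\text{for all }x\in X\text{ and }u\in\bb R,
\]
which I will refer to as $(\star)$. Indeed, we may assume $s>0$ (otherwise $\bm 1_{[0,s]}=0$). If $0<s\le 1$ then $\bm 1_{[0,s]}\le\bm 1_{[0,1]}$, so $R(\bm 1_{[0,s]})(x,t)\le R(\bm 1_{[0,1]})(x,t)\lesssim 1$ by $(\star)$. If $s>1$, we use the pointwise inequality $\bm 1_{[0,s]}\le\sum_{j=0}^{\lceil s\rceil}\bm 1_{[j,j+1]}$ on $\bb R$ together with $\bm 1_{[j,j+1]}(\sigma(g,x)-t)=\bm 1_{[0,1]}(\sigma(g,x)-(t+j))$; integrating against $\mu^{*n}$, summing over $n$, and applying $(\star)$ to each $u=t+j$ gives
\[
R(\bm 1_{[0,s]})(x,t)\le\sum_{j=0}^{\lceil s\rceil}R(\bm 1_{[0,1]})(x,t+j)\lesssim\lceil s\rceil+1\lesssim s.
\]
In both cases the bound is $\lesssim\max\{1,s\}$, as wanted.

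To prove $(\star)$ I distinguish the size of $u$, and this is where both the transfer operator and the large deviations principle come in. For $u\ge 1$, apply Proposition \ref{prop:renint} with $b_1=0$, $b_2=1$ and $\delta=1/3$ (so that $b_2-b_1=1\ge 2\delta$): it yields $R(\bm 1_{[0,1]})(x,u)\lesssim 1/\sigma_\mu+C_{1/3}/u\lesssim 1$. This step genuinely needs the transfer operator, which enters through Proposition \ref{prop:renint} via Proposition \ref{prop:renreg}. A count using only large deviations would restrict the renewal sum to the non-exceptional indices $n$, roughly $n\in[\,u/(\sigma_\mu+\epsilon),(u+1)/(\sigma_\mu-\epsilon)\,]$, an interval whose length is of order $u$; bounding each of these terms by $1$ would only give $O(u)$, not $O(1)$.

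For bounded $u$ the estimates behind Proposition \ref{prop:renreg} are not effective (there is no control on $U(i\xi)$ for large $|\xi|$), so here I use only the large deviations principle from Section \ref{sec:prelim}. If $-1\le u\le 1$, then $[u,u+1]\subset[-1,2]$, hence
\[
R(\bm 1_{[0,1]})(x,u)\le\sum_{n\ge 0}\mu^{*n}\{g\in G:\sigma(g,x)\le 2\};
\]
the term $n=0$ equals $1$, the finitely many terms with $1\le n\le[2/(\sigma_\mu-\epsilon)]$ contribute $\lesssim 1$, and the remaining tail is $\lesssim 1$ by Corollary \ref{cor:lardev}. If $u\le -1$, then $[u,u+1]\subset(-\infty,0]$: the term $n=0$ contributes at most $1$, while for $n\ge 1$ the event $\{\sigma(g,x)\le 0\}$ is a large deviation, since $\sigma(g,x)\le 0\le n\sigma_\mu/2$, so $\mu^{*n}\{\sigma(g,x)\le 0\}=\oexpe(n)$ by \eqref{ineq:lardev} and $\sum_{n\ge 1}\oexpe(n)\lesssim 1$.

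The only real difficulty is the regime $u\ge 1$ in $(\star)$, where the renewal/transfer-operator estimate of Proposition \ref{prop:renint} cannot be dispensed with; everything else is routine bookkeeping with the large deviations bounds of Section \ref{sec:prelim}.
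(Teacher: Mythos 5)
Your proof is correct. It uses exactly the same two ingredients as the paper — Proposition \ref{prop:renint} (hence the transfer operator) in the regime where the time parameter is large, and the large deviations bounds of Section \ref{sec:prelim} otherwise — but organizes them differently. The paper keeps the interval $[0,s]$ intact and splits on $t\geq s$ versus $t<s$: in the first case it applies Proposition \ref{prop:renint} directly to $[0,s]$ (the error term $(1+s)/t$ is then $O(1)$), and in the second it bounds the whole renewal sum by the crude count $\sum_n\mu^{*n}\{\sigma(g,x)\leq 2s\}\lesssim s$ via Corollary \ref{cor:lardev}. You instead factor the lemma through the uniform unit-interval estimate $(\star)$, $R(\bm 1_{[0,1]})(x,u)\lesssim 1$ for all $u\in\bb R$, and recover the general case by covering $[0,s]$ with $O(s)$ translated unit intervals. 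Your route isolates a cleaner intermediate statement that is uniform in the time variable and makes transparent where the transfer operator is genuinely needed (large $u$) versus where large deviations suffice ($u$ bounded or negative); the paper's route is shorter because it never has to re-index the renewal sum. Each step of your argument checks out: the covering inequality $\bm 1_{[0,s]}\leq\sum_{j=0}^{\lceil s\rceil}\bm 1_{[j,j+1]}$, the translation identity inside $R$, the application of Proposition \ref{prop:renint} with $\delta=1/3$, the tail bound from Corollary \ref{cor:lardev} for $|u|\leq 1$, and the observation that $\{\sigma(g,x)\leq 0\}$ is a large-deviation event for $n\geq 1$ via \eqref{ineq:lardev}.
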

\begin{proof}
	We can suppose that $s>1$. If not, then $R(\bm 1_{[0,s]})(x,t)\leq R(\bm 1_{[0,1]})(x,t)$. When $t\geq s$, this is a direct corollary of Proposition \ref{prop:renint}. Fixing $\delta=1/3$, we get
	\[\frac{1+|b_1|+|b_2|}{t}\leq\frac{1+s}{t}\leq 2. \]
	Then $R(\bm 1_{[0,s]})(x,t)\lesssim s(1/\sigma_\mu+2C_\delta)$. 
	
	When $t<s$, let $m=[\max\{0,(t+s)/(\sigma_\mu-\epsilon) \}]+1$. By Corollary \ref{cor:lardev}, we have
	\begin{align*}
	 R(\bm 1_{[0,s]})(x,t)\leq R(\bm 1_{[0,2s]})(x,0)
	&\leq \sum_{n\leq m}\mu^{\otimes n} \{\sigma(g,x)\leq 2s \}+\sum_{n>m}\mu^{\otimes n}\{\sigma(g,x)\leq 2s \}\\
	&\lesssim m+e ^{-\epsilon'm}\lesssim s.
	\end{align*}
	The proof is complete.
\end{proof}

In the renewal theorem, the limits of the scalar part $\sigma(g,x)$ and the angle part $gx$ are independent. Using this spirit, we give the following lemma, which quantifies this independence. In the proof, when $t$ is large enough, using Proposition \ref{prop:renreg}, the remainder term will be small. When $t$ is small, we have another estimate from the regularity of the convolution measure $\mu^{\otimes n}$.
\begin{prop}\label{prop:cocind}
For $s>0,\,a>0,\ t>5s$, and $x,\,x'$ in $X$, we have
\begin{align}\label{ineq:cocind}
R(\bm 1_{B(x',e^{-s})\times[0,a]})(x,t)=(1+a)^2\oexp(s).
\end{align}
\end{prop}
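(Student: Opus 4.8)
My plan is to bound $\bm 1_{B(x',e^{-s})\times[0,a]}$ from above by a nonnegative mollified function, apply the renewal estimate of Proposition \ref{prop:renreg}, and then split the analysis according to whether $t$ is at least mildly exponential in $s$. First I would reduce to $a\geq 1$: for $a<1$ one has $R(\bm 1_{B(x',e^{-s})\times[0,a]})(x,t)\leq R(\bm 1_{B(x',e^{-s})\times[0,1]})(x,t)$, so it suffices to treat $a=1$. Fix once and for all a probability density $\psi$ with $\widehat\psi$ compactly supported, and set $\delta=1/3$. By \eqref{ineq:regcon1} there is a constant $\varpi>0$, depending only on $\mu$, such that $\mu^{*n}\{d(gx,x')\leq e^{-s}\}\lesssim e^{-\varpi s}$ as soon as $n\gtrsim s$. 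I then pick a small $c$ with $0<c<\varpi$ and, enlarging the smallness requirement on $\gamma$ in Remark \ref{rem:gameps} (legitimate since $\varpi$ does not depend on $\gamma$), assume $\gamma<c$. The two regimes $t\geq e^{cs}$ and $5s<t<e^{cs}$ together exhaust $t>5s$.

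\emph{Regime $t\geq e^{cs}$ (renewal estimate).} Choose a Lipschitz function $g_s$ on $X$ with $\bm 1_{B(x',e^{-s})}\leq g_s\leq\bm 1_{B(x',2e^{-s})}$ and Lipschitz constant $\lesssim e^{s}$, so that $|g_s|_{\cal H^{\gamma}}\lesssim e^{\gamma s}$, and set $f(y,u)=3\,g_s(y)\,(\psi_\delta*\bm 1_{[0,a]})(u)$. By \eqref{ineq:psidellarg} (valid since $a\geq 1\geq 2\delta$) we get $f\geq\bm 1_{B(x',e^{-s})\times[0,a]}$, and $f$ satisfies the hypotheses of Proposition \ref{prop:renreg}, its Fourier transform $\hat f(y,\xi)=3\,g_s(y)\,\hat\psi(\delta^2\xi)\,\hat{\bm 1}_{[0,a]}(\xi)$ being supported, in $\xi$, in a fixed compact set $K=K(\psi,\delta)$. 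Then Proposition \ref{prop:renreg} gives
\[
R(\bm 1_{B(x',e^{-s})\times[0,a]})(x,t)\leq Rf(x,t)=\frac{1}{\sigma_\mu}\int_{-t}^{\infty}f(y,u)\,\dd u\,\dd\nu(y)+\frac{1}{t}O_K\!\bigl(|\hat f|_{W^{1,\infty}\cal H^{\gamma}}\bigr).
\]
Since $\psi_\delta*\bm 1_{[0,a]}$ has integral $a$, the first term is $\leq\frac{3a}{\sigma_\mu}\,\nu\bigl(B(x',2e^{-s})\bigr)\lesssim a\,e^{-\alpha s}$ by the H\"older regularity \eqref{ineq:regsta}. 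Because $\widehat\psi$ is compactly supported, a direct estimate of Fourier transforms (the $\partial_\xi$ part producing the $a^2$) gives $|\hat f|_{W^{1,\infty}\cal H^{\gamma}}\lesssim(1+a)^2\,e^{\gamma s}$ with constant depending only on $\psi,\delta$. Hence in this regime $R(\bm 1_{B(x',e^{-s})\times[0,a]})(x,t)\lesssim a\,e^{-\alpha s}+(1+a)^2\,e^{(\gamma-c)s}=(1+a)^2\,\oexp(s)$ because $\gamma<c$.

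\emph{Regime $5s<t<e^{cs}$ (direct estimate).} Here I would estimate the renewal sum term by term, splitting over the index set $\ntint=[\frac{t}{\sigma_\mu+\epsilon},\frac{t+a}{\sigma_\mu-\epsilon}]$ of Subsection \ref{sec:regular}, with $\epsilon=\sigma_\mu/4$ as fixed in this section. The $n$-th term is bounded both by $\mu^{*n}\{t\leq\sigma(g,x)\leq t+a\}$ and by $\mu^{*n}\{d(gx,x')\leq e^{-s}\}$. For $n$ above $\ntint$, Corollary \ref{cor:lardev} gives $\sum_n\mu^{*n}\{\sigma(g,x)\leq t+a\}=\oexp(t)$; for $n$ below $\ntint$, Corollary \ref{cor:lardev1} gives $\sum_n\mu^{*n}\{\sigma(g,x)\geq t\}=\oexp(t)$; as $t>5s$, both are $\oexp(s)$, uniformly in $a$. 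For $n\in\ntint$ I use the ball bound instead: since $n\geq\frac{t}{\sigma_\mu+\epsilon}>\frac{5s}{\sigma_\mu+\epsilon}=\frac{4s}{\sigma_\mu}$, inequality \eqref{ineq:regcon1} yields $\mu^{*n}\{d(gx,x')\leq e^{-s}\}\lesssim e^{-\varpi s}$ uniformly in such $n$; and since $|\ntint|\lesssim t+a\lesssim e^{cs}+a$, the contribution of $n\in\ntint$ is $\lesssim(e^{cs}+a)\,e^{-\varpi s}\lesssim(1+a)\,\oexp(s)$ because $c<\varpi$. Adding the three pieces gives the bound in this regime, and combining the two regimes proves the proposition.

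The one genuine obstacle is the interaction, inside the renewal estimate, between the remainder term being only $O(1/t)$ and its being multiplied by $|\hat f|_{W^{1,\infty}\cal H^{\gamma}}\approx e^{\gamma s}$ — a size forced on us because $f$ must resolve the target ball at scale $e^{-s}$. This is exactly why Proposition \ref{prop:renreg} by itself cannot cover the whole range $t>5s$, and why the moderate range $5s<t<e^{cs}$ must be handled separately: there the window length $|\ntint|$ is only $\lesssim e^{cs}$, short enough that the crude uniform bound $\mu^{*n}\{d(gx,x')\leq e^{-s}\}\lesssim e^{-\varpi s}$ from \eqref{ineq:regcon1} still sums to $(1+a)\,\oexp(s)$ once $c<\varpi$. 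The remaining verifications — the H\"older bound $|g_s|_{\cal H^{\gamma}}\lesssim e^{\gamma s}$, the Fourier estimate on $|\hat f|_{W^{1,\infty}\cal H^{\gamma}}$, and the elementary inequality $|\ntint|\lesssim t+a$ — are routine computations.
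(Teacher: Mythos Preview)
Your proof is correct and follows essentially the same two-regime strategy as the paper: for $t$ at least mildly exponential in $s$ you mollify and apply Proposition~\ref{prop:renreg} (main term handled by \eqref{ineq:regsta}, remainder by $e^{\gamma s}/t$), and for moderate $t$ you sum the large-deviation bounds together with the regularity estimate \eqref{ineq:regcon1} over the window $I_t$. The only cosmetic difference is that the paper takes the split point at $e^{2\gamma s}$ and uses Remark~\ref{rem:gameps} directly (which already gives $\gamma$ small compared to the decay rate from \eqref{ineq:regcon1}), so your extra step of further shrinking $\gamma$ is harmless but unnecessary --- you could simply take $c=2\gamma$.
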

\begin{proof} Decompose the region of $t$ into two parts:
\begin{itemize}
	\item When $5s<t\leq e^{2\gamma s}$, by Corollaries \ref{cor:lardev}, \ref{cor:lardev1}, it suffices to consider $n\in[t/(\sigma_{\mu}+\epsilon),(t+a)/(\sigma_{\mu}-\epsilon)]$. Due to the hypothesis in this situation $s\leq t/5=\epsilon t/(\sigma_\mu+\epsilon)\leq \epsilon n$, we can use Corollary \ref{cor:regcon} to obtain
	\[\mu^{*n}\{d(gx,x')\leq e^{-s} \}\lesssim e^{-\epsilon' s/\epsilon}. \]
	Then the measure of this part, summing up the above inequality over all $n\in[t/(\sigma_{\mu}+\epsilon),(t+a)/(\sigma_{\mu}-\epsilon)]$, is less than $C(t+a)e^{-\epsilon's/\epsilon}\lesssim (1+a)e^{-\gamma s}$ (here we use the Remark \ref{rem:gameps}, $4\gamma\leq \epsilon/\epsilon'$). 
	\item When $t\geq e^{2\gamma s}$, we take $f=\bm 1_{[0,a]}\varpi(x)$ where $\varpi(x)$ is a function on $X$ such that $\varpi_{B(x',e^{-s})}=1$, $\supp \varpi\subset B(x',2e^{-s})$ and $|\varpi|_\gamma\leq e^{\gamma s}$. As in the proof of Proposition \ref{prop:renint}, we use $\psi_\delta$ to regularize this function. By \eqref{ineq:psidellarg}, we have 
	\[3R(\psi_\delta*f)(x,t)\geq R(\bm 1_{B(x',e^{-s})\times[0,a]})(x,t).\]
	Proposition \ref{prop:renreg} implies
	\[R(\psi_{\delta}*f)=\frac{1}{\sigma_{\mu}}\int_X\int_{-t}^{\infty}\psi_{\delta}*f(x,u)\dd u\dd\nu(x)+\frac{C_\delta}{t}(|\widehat{\psi_\delta *f}|_{W^{1,\infty}\cal H^\gamma}). \]
    Since $\widehat{\psi_\delta*f}(x,\xi)=\hat{\psi}_\delta(\xi)\hat{\bm 1}_{[0,a]}(\xi)\varpi(x)$, the two functions are independent. We can use the same estimate as in the proof of Proposition \ref{prop:renint}. So the rest term is less than $C_\delta'(1+a)^2e^{\gamma s}/t$. The major term, due to the regularity of the stationary measure \eqref{ineq:regsta}, is controlled by $ae^{-\alpha s}/\sigma_\mu$. The result follows from the hypothesis $t>e^{2\gamma s}$.
\end{itemize}
The proof is complete.
\end{proof}
We also need the independence of $\sigma(g,x)$ and $g^{-1}x_o$, where $x,x_o$ are two points in $X$. For proving this property, we pass through the Cartan projection, because the order of products in the Cartan projection can be reversed.
The following proof uses Lemma \ref{lem:coccar}, which is a central tool to prove a renewal type theorem for the Cartan projection from a renewal type theorem for the cocycle.

Let $f$ be a positive bounded Borel function on $X\times \bb R$. For $(x,t)\in X\times\bb R$, we define
\[R_P(f)(x,t)=\sum_{n\geq 0}\int_G f(gx,\kappa(g)-t)\dd\mu^{*n}(g).  \]
\begin{lem}\label{lem:carind}
	For  $s>0,\,a>0,\ t>10s$, and $x,\,x'$ in $X$, we have
	\begin{equation}
	R_P(\bm 1_{B(x',e^{-s})\times[0,a]})(x,t)=(1+a)^2\oexp(s).
	\end{equation}
\end{lem}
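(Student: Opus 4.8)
The plan is to reduce the statement about the Cartan projection $R_P$ to the already-proved statement about the cocycle, Proposition \ref{prop:cocind}, using Lemma \ref{lem:coccar} as the bridge. The point of Lemma \ref{lem:coccar} is that for $n \geq l$, outside a set of $\mu^{*(n-l)} \otimes \mu^{*l}$-measure $\oexpe(l)$, one has $|\kappa(g_1 g_2) - \sigma(g_1, g_2 x) - \kappa(g_2)| \leq e^{-\epsilon l}$, i.e. the Cartan cocycle of a long product is well approximated by the linear cocycle of the outer factor evaluated at the point $g_2 x$. So I would split each group element $g$ contributing to $R_P(\bm 1_{B(x',e^{-s}) \times [0,a]})(x,t)$ as $g = g_1 g_2$ with $g_2$ of length $n-l$ and $g_1$ of length $l$, where $l$ is chosen proportional to $s$ (say $l = \lceil \kappa s \rceil$ for a suitable small constant $\kappa$, small enough relative to the $\gamma$ and $\epsilon$ constants fixed in Remark \ref{rem:gameps} and at the start of Section \ref{secrentheory}).

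First I would use the large deviations corollaries (Corollary \ref{cor:lardev}, Corollary \ref{cor:lardev1}) to restrict the renewal sum to $n$ in the window $I_t = [t/(\sigma_\mu+\epsilon), (t+a)/(\sigma_\mu-\epsilon)]$, the tails contributing $\oexp(s)$ since $t > 10s$; this is Principle 1 of Subsection \ref{sec:regular}. For such $n$, since $n \gtrsim t \gg s \gtrsim l$, the hypothesis $n \geq l$ of Lemma \ref{lem:coccar} holds with room to spare. On the good set $S_{n,l,x}$, the condition $\kappa(g) - t \in [0,a]$ forces $\sigma(g_1, g_2 x) + \kappa(g_2) - t \in [-e^{-\epsilon l}, a + e^{-\epsilon l}]$, and the condition $d(g x, x') \leq e^{-s}$ is (since $g_1$ acts, and $gx = g_1(g_2 x)$) a condition $d(g_1 y, x') \le e^{-s}$ with $y = g_2 x$. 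So, writing the sum over $n$ as a double sum over $(g_2, g_1)$ with $g_2$ of length $n - l$ and $g_1$ of length $l$, and integrating $g_2$ out first, I would bound
\[
R_P(\bm 1_{B(x',e^{-s}) \times [0,a]})(x,t) \le \oexp(s) + \int_G \Big( \sum_{m \ge 0} \int_G \bm 1_{d(g_1 (g_2 x), x') \le e^{-s}} \, \bm 1_{|\sigma(g_1, g_2 x) + \kappa(g_2) - t| \le a+1} \, \dd \mu^{*l}(g_1) \Big) \dd \mu^{*m}(g_2),
\]
and for each fixed $g_2$ the inner quantity is a cocycle-renewal sum of the type controlled by Proposition \ref{prop:cocind}, with target ball $B(x', e^{-s})$ in the angular variable, window of length $a+1$ in the scalar variable, and shifted level $t - \kappa(g_2)$. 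One needs $t - \kappa(g_2) > 5s$; since $\kappa(g_2) \le \kappa(g) \approx t$ might be too large, I would split according to whether $\kappa(g_2)$ is, say, at most $t/2$ (where Proposition \ref{prop:cocind} applies directly, giving $(1+a)^2 \oexp(s)$) or larger (where the set of such $g_2$ has $\oexp(s)$-small total $\bar\mu$-mass by Corollary \ref{cor:lardev1}, using $\kappa(g_2) \ge t/2 > 5s$ and summing over $m$). Since $\mu^{*l}(G) = 1$, integrating the Proposition \ref{prop:cocind} bound against $\dd\mu^{*m}(g_2)$ and summing over $m$ reproduces the same $(1+a)^2\oexp(s)$ bound, and the exceptional-set contribution is $\oexp(l) = \oexp(s)$.

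The main obstacle I anticipate is bookkeeping the interplay of the three small parameters: the length $l$ must be large enough (proportional to $s$) that $e^{-\epsilon l}$ and the exceptional-set measure $\oexpe(l)$ are genuinely $\oexp(s)$, yet the implied exponential rates must be compatible with the rate $\epsilon'/\epsilon$ coming out of Proposition \ref{prop:cocind} (cf. Remark \ref{rem:gameps}), so that the final exponent in $\oexp(s)$ is a genuine uniform constant and not something degrading with $s$. A secondary technical point is that Lemma \ref{lem:coccar} is stated with the approximation error bounding $|\kappa(g_1 g_2) - \sigma(g_1, g_2 x) - \kappa(g_2)|$, so I must make sure the window condition $\kappa(g) - t \in [0,a]$ transfers to a window condition on $\sigma(g_1, g_2 x)$ of length only $O(a+1)$ and not something worse; this is immediate from the triangle inequality once $e^{-\epsilon l} \le 1$. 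Everything else — the reversal of the roles of inner/outer factors, the use of $\mu^{*l}(G)=1$ to absorb the outer integral, Fubini — is routine.
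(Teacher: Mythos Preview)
Your overall strategy—use Lemma~\ref{lem:coccar} to replace $\kappa$ by the cocycle and then invoke Proposition~\ref{prop:cocind}—is the right one, but the decomposition is set up backwards, and this breaks the argument.

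You write $g=g_1g_2$ with the \emph{outer} factor $g_1$ of fixed short length $l$ and the \emph{inner} factor $g_2$ of varying length $n-l$. With this choice, fixing $g_2$ and integrating over $g_1$ is a single $\mu^{*l}$-integral, not a renewal sum, so Proposition~\ref{prop:cocind} does not apply. Conversely, fixing $g_1$ and summing over $g_2$ of varying length $m$ leaves the condition $\sigma(g_1,g_2x)+\kappa(g_2)\in[t-1,t+a+1]$, which is still governed by $\kappa(g_2)$: you have not reduced to a cocycle renewal sum at all. Your subsequent split on whether $\kappa(g_2)\le t/2$ then fails, because for $m\approx t/\sigma_\mu$ the event $\kappa(g_2)\ge t/2$ is \emph{typical}, not $\oexp(s)$-rare; Corollary~\ref{cor:lardev1} gives nothing here. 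The paper instead takes the \emph{inner} factor $j$ of fixed length $l$ and the outer factor of varying length; then Lemma~\ref{lem:coccar} gives $\kappa(gj)\approx\sigma(g,jx)+\kappa(j)$, and for fixed $j$ the sum over the outer factor is exactly the cocycle renewal $R(\bm 1_{B(x',e^{-s})\times[-1,a+1]})(jx,t-\kappa(j))$.

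A second issue is the scale of $l$. You take $l$ proportional to $s$, but with the correct decomposition the exceptional-set contribution summed over $n\in I_t$ is $|I_t|\cdot\oexp(l)\approx(t+a)\oexp(l)$. If $l\asymp s$ this is $(t+a)\oexp(s)$, which is \emph{not} bounded by $(1+a)^2\oexp(s)$ since $t$ may be arbitrarily large relative to $s$. The paper takes $l=[\epsilon_4 t/\sigma_\mu]$ with $\epsilon_4=1/10$, so that $(t+a)\oexp(l)=\oexp(t)\le\oexp(s)$, and then on the good set $G_{l,\epsilon}=\{\kappa(j)\le l(\sigma_\mu+\epsilon)\}$ one has $t-\kappa(j)\ge t/2>5s$, which is precisely the hypothesis needed to apply Proposition~\ref{prop:cocind}; the complement $G_{l,\epsilon}^c$ is handled by the crude bound from Lemma~\ref{lem:renintts}.
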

\begin{proof}
Due to Corollary \ref{cor:lardev} and Corollary \ref{cor:lardev1}, the sum of the integral of $n\leq t/(\sigma_\mu+\epsilon)$ and $n\geq (t+a)/(\sigma_\mu-\epsilon)$ is exponentially small.

If suffices to consider $n$ in the interval $\ntint=[t/(\sigma_\mu+\epsilon),(t+a)/(\sigma_\mu-\epsilon)]$. Fix $l=[\epsilon_4 t/\sigma_\mu]$ with $\epsilon_4=1/10$. By Lemma 2.9, there exists $S_{n,l,x}\subset G^{\times n}$ such that $\mu^{\otimes n}S_{n,l,x}^c=\oexp(l)$, and for $(g_n,\dots,g_1)$ in $S_{n,l,x}$, letting $g=(g_n,\dots, g_{l+1})$ and $\glg=(g_l,\dots, g_1)$, we have
\begin{align*}
	|\kappa(g\glg)-\sigma(g,\glg x)-\kappa(\glg )|\leq e^{-\epsilon l}\leq 1.
\end{align*}
Thus 
\begin{align*}
&\mu^{\otimes n}\{\kappa(g\glg )\in[t,t+a],d(g\glg x,x')\leq e^{-s} \}\\
\leq& \mu^{\otimes n}\{S_{n,l,x}^c \}+\mu^{\otimes n}\{g\glg \in S_{n,l,x}|\kappa(g\glg )\in[t,t+a],d(g\glg x,x')\leq e^{-s} \} \\
\leq& \oexp(l)+\mu^{\otimes n}\{\sigma(g,\glg x)+\kappa(\glg )\in[t-1,t+a+1],d(g\glg x,x')\leq e^{-s} \}. 
\end{align*}
Therefore summing over $n$ and integrating first with respect to $g$, we get
\begin{equation}\label{ineq:carcocrenind}
\begin{split}
	&\sum_{n\in \ntint}\mu^{\otimes n}\{\kappa(g\glg )\in[t,t+a],d(g\glg x,x')\leq e^{-s} \}\\
	\leq &\sum_{n\in \ntint}\mu^{\otimes n}\{\sigma(g,\glg x)+\kappa(\glg )\in[t-1,t+a+1],d(g\glg x,x')\leq e^{-s} \}+t\oexp(l)
	\\
	\leq& t\oexp(l)+\int R(\bm 1_{B(x',e^{-s})\times[-1,a+1]})(\glg x,t-\kappa(\glg))\dd\mu^{*l}(\glg ).
\end{split}
\end{equation}

Hence, it is sufficient to bound $\int R(\bm 1_{B(x',e^{-s})\times[-1,a+1]})(\glg x,t-\kappa(\glg))\dd\mu^{*l}(\glg )$. Let $G_{l,\epsilon}=\{\glg\in G^{\times l}|\kappa(\glg)\leq l(\sigma_\mu+\epsilon) \}$. By the large deviations principle (Corollary \ref{cor:lardev1}), we have $\mu^{*l}G_{l,\epsilon}^c=\oexp(l)$. 
\begin{itemize}
\item For $\glg \in G_{l,\epsilon}$, we have $t-\kappa(\glg )\geq t-l(\sigma_\mu+\epsilon)=t-\epsilon_4(\sigma_\mu+\epsilon)t/\sigma_\mu>t/2\geq 5s$. Hence, Proposition \ref{prop:cocind} implies that
\begin{equation*}
R(\bm 1_{B(x',e^{-s})\times[-1,a+1]})(\glg x,t-\kappa(\glg))\lesssim (1+a)^2\oexp(s).
\end{equation*}
\item For $\glg \in G_{l,\epsilon}^c$, Lemma \ref{lem:renintts} implies that
\begin{equation*}
R(\bm 1_{B(x',e^{-s})\times[-1,a+1]})(\glg x,t-\kappa(\glg))\lesssim (1+a).
\end{equation*}
\end{itemize}
Combining the above two inequalities, we have
\begin{align}\label{ineq:intren}
\int R(\bm 1_{B(x',e^{-s})\times[-1,a+1]})(\glg x,t-\kappa(\glg))\dd\mu^{*l}(\glg )\lesssim (1+a)^2\oexp(s)+\oexp(l)(1+a).
\end{align}
The proof is complete.
\end{proof}
There is a byproduct of the above lemma. When the function $f$ does not depend on $X$, abbreviate $R_P(f)(x,t)$ by $R_P(f)(t)$.
\begin{lem}\label{lem:carint}
For real numbers $s,t$,  we have
\begin{equation}
\label{ineq:catint}R_P(\bm 1_{[0,s]})(t)\lesssim \max\{1,s^2 \}.
\end{equation}
\end{lem}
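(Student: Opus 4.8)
The plan is to follow the proof of Lemma~\ref{lem:carind} with the angular part suppressed; in fact this yields the sharper bound $R_P(\bm 1_{[0,s]})(t)\lesssim\max\{1,s\}$, which implies the statement. First I would dispose of $t<0$: since $\kappa\geq 0$, the set $\{g:\kappa(g)-t\in[0,s]\}$ is contained in $\{\kappa(g)\leq t+s\}$, which is empty when $t+s<0$ and otherwise gives $R_P(\bm 1_{[0,s]})(t)\leq R_P(\bm 1_{[0,t+s]})(0)$; so it suffices to treat $t\geq 0$, and I would split into the regimes $t<10\sigma_\mu$ and $t\geq 10\sigma_\mu$. In the easy regime $t<10\sigma_\mu$ the inclusion $\{g:\kappa(g)-t\in[0,s]\}\subset\{\kappa(g)\leq 10\sigma_\mu+s\}$ gives $R_P(\bm 1_{[0,s]})(t)\leq\sum_{n\geq 0}\mu^{*n}\{\kappa(g)\leq 10\sigma_\mu+s\}$; cutting the sum at $n_\ast=[\tfrac{10\sigma_\mu+s}{\sigma_\mu-\epsilon}]+1$, Corollary~\ref{cor:lardev} bounds the tail $\sum_{n\geq n_\ast}$ by $\oexpe(n_\ast)\lesssim 1$, while the head has at most $n_\ast\lesssim\max\{1,s\}$ terms, each $\leq 1$.

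For $t\geq 10\sigma_\mu$ I would first use Corollaries~\ref{cor:lardev} and~\ref{cor:lardev1} to discard the terms with $n\leq\tfrac{t}{\sigma_\mu+\epsilon}$ or $n\geq\tfrac{t+s}{\sigma_\mu-\epsilon}$ at a cost that is $\oexpe(t)$, hence $\lesssim 1$, leaving $\sum_{n\in I_t}\mu^{*n}\{\kappa(g)\in[t,t+s]\}$ with $I_t=[\tfrac{t}{\sigma_\mu+\epsilon},\tfrac{t+s}{\sigma_\mu-\epsilon}]$. Fix $l=[\epsilon_4 t/\sigma_\mu]$ with $\epsilon_4=1/10$; since $\epsilon=\sigma_\mu/4$ one has $\epsilon_4(\sigma_\mu+\epsilon)=\sigma_\mu/8<\sigma_\mu$, so $1\leq l\leq n$ for every $n\in I_t$. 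Decompose $\mu^{*n}=\mu^{*(n-l)}*\mu^{*l}$, writing $g=g_1g_2$ accordingly, and apply Lemma~\ref{lem:coccar} at a fixed base point $x_o$: outside a set of $\mu^{*(n-l)}\otimes\mu^{*l}$-measure $\oexp(l)$ one has $|\kappa(g_1g_2)-\sigma(g_1,g_2x_o)-\kappa(g_2)|\leq 1$, whence
\[
\mu^{*n}\{\kappa(g_1g_2)\in[t,t+s]\}\leq\oexp(l)+\mu^{*(n-l)}\otimes\mu^{*l}\bigl\{\sigma(g_1,g_2x_o)-(t-1-\kappa(g_2))\in[0,s+2]\bigr\}.
\]
Summing over $n\in I_t$, so that $m=n-l$ runs over distinct nonnegative integers, the exceptional contribution is $\lesssim|I_t|\,\oexp(l)\lesssim(t+s)e^{-ct}\lesssim\max\{1,s\}$, and integrating the main part first in $g_1$ and then in $g_2\sim\mu^{*l}$ produces $\int_G R(\bm 1_{[0,s+2]})(g_2x_o,\,t-1-\kappa(g_2))\,\dd\mu^{*l}(g_2)$. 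By Lemma~\ref{lem:renintts}, which is uniform in the real shift and in the base point, this is $\lesssim\max\{1,s+2\}\lesssim\max\{1,s\}$ since $\mu^{*l}$ is a probability measure. Combining the two regimes gives $R_P(\bm 1_{[0,s]})(t)\lesssim\max\{1,s\}\leq\max\{1,s^2\}$.

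The only step that needs real care is the Cartan-to-cocycle passage: one must check that $l=[\epsilon_4 t/\sigma_\mu]$ is an admissible split length for all $n\in I_t$ (which is exactly the inequality $\epsilon_4(\sigma_\mu+\epsilon)\leq\sigma_\mu$) and that after the reduction the cocycle sum over $m=n-l\geq 0$ is dominated by the full renewal operator $R$, so that the uniform bound of Lemma~\ref{lem:renintts} applies with no constraint on $t$; this is what lets us avoid the auxiliary splitting on $G_{l,\epsilon}$ used in Lemma~\ref{lem:carind}. In fact this last observation shows the computation is already contained in the proof of Lemma~\ref{lem:carind}: replacing the ball $B(x',e^{-s})$ by all of $X$ in the chain ending at~\eqref{ineq:carcocrenind} and then invoking Lemma~\ref{lem:renintts} in place of Proposition~\ref{prop:cocind} gives the bound directly, and the stated exponent $2$ is not sharp.
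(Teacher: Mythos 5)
Your argument is correct, and it takes a genuinely different route from the paper's. The paper proves Lemma \ref{lem:carint} as a quick corollary of Lemma \ref{lem:carind}: for $t\geq 10$ it covers $X$ by finitely many balls of radius $e^{-1}$ and applies Lemma \ref{lem:carind} with $a=s$ and log-radius $1$, which is what produces the factor $(1+a)^2$ and hence the exponent $2$ in the statement; for small $t$ it uses the large-deviations tail bound, just as you do. You instead bypass Lemma \ref{lem:carind} entirely and redo its Cartan-to-cocycle reduction (Lemma \ref{lem:coccar} with split length $l=[\epsilon_4 t/\sigma_\mu]$) with the angular localization removed, so that after the reduction you can invoke the uniform cocycle bound $R(\bm 1_{[0,a]})(x,t)\lesssim\max\{1,a\}$ of Lemma \ref{lem:renintts} in place of Proposition \ref{prop:cocind}; this dispenses with the auxiliary set $G_{l,\epsilon}$ because Lemma \ref{lem:renintts} carries no constraint relating $t-\kappa(\glg)$ to $s$. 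What this buys is the sharper bound $\max\{1,s\}$, which is exactly the improvement the paper's own remark after the lemma says is possible ``with some extra work''; the cost is that you must re-verify the admissibility of the split length and the summation over $n\in I_t$, which you do correctly. Your small-$t$ case is also cleaner than the paper's (whose chain $R_P(\bm 1_{[0,s]})(t)\leq R_P(\bm 1_{[0,2s]})(0)\leq\sum_n\mu^{*n}\{\kappa(g)\leq t+s\}$ as written requires $t\leq s$ and $s\leq t$ simultaneously, though the intended head-plus-tail estimate is the same as yours). Since only the weaker $\max\{1,s^2\}$ is used elsewhere, both proofs serve.
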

\begin{rem}
Here the term $s^2$ is not optimal. With some extra work, it can be improved to $s$.
\end{rem}
\begin{proof}
Suppose that $s\geq1$. If not, then $R_p(\bm 1_{[0,s]})(x,t)\leq R_p(\bm 1_{[0,1]})(x,t)$. 
When $t\geq 10$, apply Lemma \ref{lem:carind} with $a=s, e^{-s}=e^{-1}, x'=x_j, j\in J$, where $J$ is a finite set such that $\cup_{j\in J}B(x_j,e^{-1})$ covers $X$. So we get $R_P(\bm 1_{[0,s]})(t)\lesssim s^2$.

When $t<10\leq 10s$, let $m=[\max\{0,(t+s)/(\sigma_\mu-\epsilon) \}]+1$. By Corollary \ref{cor:lardev1}, we have
\begin{align*}
 R_P(\bm 1_{[0,s]})(t)\leq R_P(\bm 1_{[0,2s]})(0)
&\leq\sum_{n\leq m}\mu^{* n} \{\kappa(g)\leq t+s \}+\sum_{n>m}\mu^{* n}\{\kappa(g)\leq t+s \} \\
&\lesssim m+e ^{-\epsilon'm}\lesssim s.
\end{align*}
The proof is complete.
\end{proof}

Now we are going to prove the independence of $\sigma(g,x)$ and $g^{-1}x$. Recall that $\check{\mu}$ is the pushforward of $\mu$ by the inverse action. Let $f$ be a positive bounded Borel function on $X\times \bb R$. For $(x_o,x,t)\in X^2\times\bb R$, we define
\[R_I(f)(x_o,x,t)=\sum_{n\geq 0}\int_G f(g^{-1}x_o,\sigma(g,x)-t)\dd\mu^{*n}(g).  \]
\begin{prop}\label{prop:cocindinv}
For  $s>0,\,a>0,\ t>\max\{10s,10 \}$, and $x,\,x',\,x_o$ in $X$, we have
	\begin{equation}\label{ineq:cocindinv}
		R_I(\bm 1_{B(x',e^{-s})\times[0,a]})(x_o,x,t)=(1+a)^2\oexp(s).
	\end{equation}
\end{prop}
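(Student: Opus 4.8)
The plan is to adapt the block-decomposition argument of Lemma~\ref{lem:carind}: here the scalar is the cocycle $\sigma(g,x)$, so the exact cocycle relation will play the role that Lemma~\ref{lem:coccar} played there, while the angle is now the inverted image $g^{-1}x_o$, which is the genuinely new difficulty. As usual the lengths $n<l$ and the contributions of atypical blocks will be absorbed into $\oexp(s)$ by the large deviations principle, and the whole estimate will be read off from regularity statements for the reversed walk $\check\mu$.

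First I would write each $g$ of length $n\ge l:=[\epsilon_4 t/\sigma_\mu]$ (with $\epsilon_4=1/10$ as in Lemma~\ref{lem:carind}) as a product $g=g'j$, where $j$ is the inner block formed by the last $l$ letters and $g'$ by the first $n-l$; by independence $j$ is distributed as $\mu^{*l}$ and $g'$ as $\mu^{*(n-l)}$, so $R_I(\bm 1_{B(x',e^{-s})\times[0,a]})(x_o,x,t)=\sum_{n<l}(\cdots)+\int \Sigma_j\,\dd\mu^{*l}(j)$, where $\Sigma_j:=\sum_{m\ge0}\mu^{*m}\{g':\sigma(g',jx)-t'\in[0,a],\ d(g^{-1}x_o,x')\le e^{-s}\}$. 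The point of this cut is that the cocycle identity gives \emph{exactly} $\sigma(g,x)=\sigma(g',jx)+\sigma(j,x)$, so for fixed $j$ the scalar constraint is a renewal condition on $g'$ at level $t':=t-\sigma(j,x)$, which depends only on $j$; and $g^{-1}x_o=j^{-1}\!\bigl(g'^{-1}x_o\bigr)$. The terms $n<l$, and the $j$ with $\kappa(j)\notin[l(\sigma_\mu-\epsilon),l(\sigma_\mu+\epsilon)]$, are discarded at cost $\oexp(s)$ by Corollaries~\ref{cor:lardev} and~\ref{cor:lardev1} (using $\Sigma_j\le R(\bm1_{[0,a]})(jx,t')\lesssim 1+a$ for every $j$, by Lemma~\ref{lem:renintts}); on what remains $\kappa(j)\asymp\epsilon_4 t\gg s$, hence $t'\ge 7t/8>5s$.

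The core is to decouple the angle. Since $\kappa(j)$ is large, $j^{-1}$ contracts strongly toward its attracting point $x_{j^{-1}}^M=x_j^m$, which depends only on $j$, with the quantitative estimate $d\!\bigl(j^{-1}y,x_j^m\bigr)\le e^{-2\kappa(j)}/d\!\bigl(y,x_j^M\bigr)$. Setting $\theta:=2\kappa(j)-s>0$, I split the $g'$-sum according to whether $d(g'^{-1}x_o,x_j^M)\le e^{-\theta}$ or not. In the generic case one gets $d(g^{-1}x_o,x_j^m)\le e^{-s}$, so the constraint $d(g^{-1}x_o,x')\le e^{-s}$ can only be met when $d(x_j^m,x')\le 2e^{-s}$, a condition on $j$ alone; then the $g'$-sum is $\lesssim 1+a$, and integrating in $j$ produces the factor $\mu^{*l}\{d(x_j^m,x')\le 2e^{-s}\}$. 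In the resonant case the threshold $e^{-\theta}$ is exponentially small in $\theta\asymp t$, so $\sum_m\mu^{*m}\{\sigma(g',jx)\in[t',t'+a],\ d(g'^{-1}x_o,x_j^M)\le e^{-\theta}\}$ is $(1+a)\oexp(s)$: each inner probability equals $\check\mu^{*m}\{d(gx_o,x_j^M)\le e^{-\theta}\}=\oexpe(\theta)$ by \eqref{ineq:regcon1} applied to $\check\mu$ (whose exponent parameter is free, and $m\asymp t'/\sigma_\mu$ is large enough since $\theta\lesssim t$), while only $\lesssim t'+a$ values of $m$ are relevant by large deviations.

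It remains to estimate $\mu^{*l}\{d(x_j^m,x')\le 2e^{-s}\}$. Using $x_j^m=x_{j^{-1}}^M$ this equals a $\check\mu^{*l}$–measure of the same shape for $x^M$, and by \eqref{ineq:xgminv} for $\check\mu$ that point lies within $e^{-(2\sigma_\mu-\epsilon)l}$ of the $\check\mu$–image of $x_o$ off an $\oexpe(l)$–set; hence it is at most $\check\mu^{*l}\{d(gx_o,x')\le 3e^{-s}\}+\oexpe(l)=\oexp(s)$ by \eqref{ineq:regcon1} for $\check\mu$ (again with a large free exponent, legitimate since $l\asymp\epsilon_4 t\gtrsim s$). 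Assembling the resonant sum, the generic sum multiplied by this $\oexp(s)$, and the discarded pieces yields $R_I(\bm 1_{B(x',e^{-s})\times[0,a]})(x_o,x,t)\lesssim(1+a)\oexp(s)$, which is even stronger than the claimed $(1+a)^2\oexp(s)$. I expect the one real obstacle to be precisely that $g^{-1}x_o=j^{-1}(g'^{-1}x_o)$ genuinely couples the two blocks; the resolution is to use the strong contraction of $j^{-1}$ and to control the exceptional (resonant) directions through the regularity estimates \eqref{ineq:regcon1} and \eqref{ineq:xgminv} for the reversed random walk.
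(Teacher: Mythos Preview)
Your argument is correct and takes a genuinely different route from the paper's. The paper does \emph{not} decompose $g=g'j$. Instead it first uses Lemma~\ref{lem:carcoc} on the full word to replace $\sigma(g,x)$ by $\kappa(g)+\log d(g^{-1}x_o,x)$ on a large set, then passes to $\check\mu$ so that both the scalar and the angle become forward quantities in the reversed walk. This produces an $R_P$-type quantity in which the scalar window is \emph{shifted} by $\log d(gx_o,x)$, and the paper resolves this coupling by a layer-cake decomposition: it slices according to $\log d(gx_o,x)\in[-(k+1)s,-ks]$, applies Lemma~\ref{lem:carind} on each slice (with level $t+ks$ and ball radius $e^{-ks}$), and sums the resulting geometric series; a final tail slice $\log d\le -t_1$ with $t_1\asymp t$ is handled directly.

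Your approach instead exploits the exact cocycle identity $\sigma(g'j,x)=\sigma(g',jx)+\sigma(j,x)$, so no approximation of the scalar is needed at all, and you decouple the \emph{angle} rather than the scalar: the contraction of $j^{-1}$ forces $g^{-1}x_o$ to lie in $B(x_j^m,e^{-s})$ except on a resonant set where $g'^{-1}x_o$ falls in a tiny ball around $x_j^M$, and both pieces are then controlled by the regularity inputs \eqref{ineq:regcon1} and \eqref{ineq:xgminv} for $\check\mu$. This sidesteps both the passage through $R_P$ (and hence Lemma~\ref{lem:carind}) and the layer-cake decomposition, and it yields the slightly sharper bound $(1+a)\,\oexp(s)$. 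The paper's route, on the other hand, is more in keeping with its general philosophy of reducing cocycle statements to Cartan-projection statements and reusing Lemma~\ref{lem:carind}; it also makes visible the mechanism (the random shift by $\log d(g^{-1}x_o,x)$) that couples the two variables. Both arguments ultimately rest on the same regularity estimates for $\check\mu$, so the difference is organizational rather than one of depth.
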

\begin{proof}
Due to Corollary \ref{cor:lardev} and Corollary \ref{cor:lardev1}, the sums of the integral of $n\leq t/(\sigma_\mu+\epsilon)$ and $n\geq (t+a)/(\sigma_\mu-\epsilon)$ is exponentially small. 

It suffices to consider $n$ in the interval $\ntint=[t/(\sigma_\mu+\epsilon),(t+a)/(\sigma_\mu-\epsilon)]$. Let 
$$G_{\epsilon,n}=\{g\in G^{\times n}|\kappa(g)\geq n(\sigma_\mu-\epsilon/2),d(g^{-1}x_o,x)>e^{-\epsilon n},d(g^{-1}x_o,x_g^m)\leq e^{-(2\sigma_\mu-\epsilon)n} \}.$$ 
By inequalities \eqref{ineq:lardev1}, \eqref{ineq:regcon} and \eqref{ineq:xgmgx}, we have $\mu^{\otimes n}G_{\epsilon,n}\geq 1-\oexp(n)$. Since $t>10$, for $n$ in $\ntint$, we have $n\geq t/(\sigma_\mu+\epsilon)\geq 10/(\sigma_\mu+\epsilon)$. For $g\in G_{\epsilon,n}$, we have
\[\frac{e^{-2\kappa(g)}+d(x_g^m,g^{-1}x_o)}{d(g^{-1}x_o,x)}\leq \frac{2e^{-(2\sigma_\mu-\epsilon)n}}{e^{-\epsilon n}}=2e^{-(2\sigma_\mu-2\epsilon)n}\leq 2e^{-20(\sigma_\mu-\epsilon)/(\sigma_\mu+\epsilon)}\leq 1/2. \]
Using Lemma \ref{lem:carcoc} with $g\in G_{\epsilon,n}$, we have
\[|\sigma(g,x)-\kappa(g)-\log d(g^{-1}x_o,x)|\leq 2\frac{e^{-2\kappa(g)}+d(x_g^m,g^{-1}x_o)}{d(g^{-1}x_o,x)}\leq 4e^{-(2\sigma_\mu-2\epsilon)n}\leq 1. \]
Therefore,
\begin{equation*}
\begin{split}
&\mu^{\otimes n}\{\sigma(g,x)\in[t,t+a],\ d(g^{-1}x_o,x')\leq e^{-s} \}\leq \oexp(n)+\\
&\mu^{\otimes n}\{\kappa(g)\in[t-1,t+a+1]-\log d(g^{-1}x_o,x),d(g^{-1}x_o,x')\leq e^{-s} \}.
\end{split}
\end{equation*}
Summing up over $\ntint$ and using the definition of $\check{\mu}$, we have
\begin{equation}\label{ineq:cocindinvcar}
\begin{split}
&\sum_{n\in\ntint}\mu^{\otimes n}\{\sigma(g,x)\in[t,t+a],\ d(g^{-1}x_o,x')\leq e^{-s} \}\\
\leq& \oexp(t)+\sum_{n\geq 0}\mu^{\otimes n}\{\kappa(g)\in[t-1,t+a+1]-\log d(g^{-1}x_o,x),d(g^{-1}x_o,x')\leq e^{-s} \}\\
=& \oexp(t)+\sum_{n\geq 0}\check\mu^{\otimes n}\{\kappa(g)\in[t-1,t+a+1]-\log d(gx_o,x),d(gx_o,x')\leq e^{-s} \}.
\end{split}
\end{equation}

Hence, it is sufficient to bound $R_P(\bm 1_{u+\log d(y,x)\in[-1,a+1],d(y,x')\leq e^{-s}})(x_o,t)$, where we use $(y,u)$ to denote the variables, and the measure $\mu$ is replaced by $\check{\mu}$. For simplicity, we use the same notation $R_P$. Cutting the region along $\{y\in X|\log d(y,x)\leq -t_1\}$ and the subsets $\{y\in X|\log d(y,x)\in[-(k+1)s,-ks]\}$ for $0\leq k< t_1/s$, where $t_1=(t-1)/9$. 
\begin{itemize}
\item When $k=0$, since $t-1>10s$, we can use Lemma \ref{lem:carind} to obtain
\begin{align*}
& R_P(\bm 1_{u+\log d(y,x)\in[-1,a+1],d(y,x')\leq e^{-s},d(y,x)\geq e^{-s}})(x_o,t)\\
\leq & R_P(\bm 1_{d(y,x')\leq e^{-s},u\in[-1,s+a+1]})(x_o,t)\lesssim (1+s+a)^2e^{-\epsilon's}.
\end{align*}
\item When $0<k<t_1/s$, since $t+ks-1>10ks$, again we use Lemma \ref{lem:carind}
\begin{align*}
& R_P(\bm 1_{u+\log d(y,x)\in[-1,a+1],d(y,x')\leq e^{-s},d(y,x)\in[e^{-(k+1)s},e^{-ks}]})(x_o,t)\\
\leq & R_P(\bm 1_{d(y,x)\leq e^{-ks},u\in[-1+ks,a+1+(k+1)s]})(x_o,t)\lesssim (1+s+a)^2e^{-\epsilon'ks}.
\end{align*}
\item In the last case, $\log d(y,x)\leq -t_1$, we have
\begin{align*}
& R_P(\bm 1_{u+\log d(y,x)\in[-1,a+1],d(y,x')\leq e^{-s},d(y,x)\leq e^{-t_1}})(x_o,t)\\
\leq & R_P(\bm 1_{u+\log d(y,x)\in[-1,a+1],d(y,x)\leq e^{-t_1}})(x_o,t).
\end{align*}
This is similar to the original quantity $R_P(\bm 1_{u+\log d(y,x)\in[-1,a+1],d(y,x')\leq e^{-s}})(x_o,t)$. The difference is that here $t_1$ is comparable with $t$, which is crucial in the following argument. Return to the definition of $R_P$, and discuss on the length $n=\omega(g)$.
\begin{itemize}
\item 
When $n>(t+a+1)/(\sigma_{\mu}-2\epsilon)$, by inequality \eqref{ineq:lardev1} and \eqref{ineq:regcon}, we have 
$\check\mu^{\otimes n}\{g\in G^{\times n}|\kappa(g)-n\sigma_{\mu}\leq n\epsilon,\,d(gx_o,x)\geq e^{-\epsilon n} \}>1-Ce^{-\epsilon' n}$. By hypothesis $n>(t+a+1)/(\sigma_{\mu}-2\epsilon)$
, the element in this set satisfies \[\kappa(g)\geq(\sigma_{\mu}-\epsilon)n> t+a+1+n\epsilon\geq t+a+1-\log d(gx_o,x).\] 
Thus $\check\mu^{\otimes n}\{g\in G^{\times n}|\kappa(g)\in [t-1,t+a+1]-\log d(gx_o,x) \}=\oexp(n)$. Summing over $n$, we see that the measure of this part is $\oexp(t)$.
\item When $n\in[(t-1)/(\sigma_\mu+\epsilon),(t+a+1)/(\sigma_\mu-2\epsilon)]$, since $\epsilon n\geq \epsilon(t-1)/(\sigma_\mu+\epsilon)>(t-1)/9=t_1$, Corollary \ref{cor:regcon} implies that
\[\check{\mu}^{\otimes n}\{g\in G^{\times n}|d(gx_o,x)\leq e^{-t_1} \}=\oexp(t_1)=\oexp(t). \]
\item When $n\leq (t-1)/(\sigma_\mu+\epsilon)$, Corollary \ref{cor:lardev1} implies the measure of this part is $\oexp(t)$.
\end{itemize}
Therefore we have
\[R_P(\bm 1_{u+\log d(y,x)\in[-1,a+1],d(y,x)\leq e^{-t_1}})(x_o,t)=\oexp(t_1). \] 
\end{itemize}
Combining the three cases, we have finished the proof.
\end{proof}

\subsection{Residue process}
\label{subsecrespro}
We introduce the residue process, which not only deals with $\sigma(g_ng_{n-1}\cdots g_1,x)$ but also takes into account the next step $\sigma(g_{n+1},g_ng_{n-1}\cdots g_1x)$. 
Let $f$ be a positive bounded Borel function on $X\times\bb R^2$. For $(x,t)\in X\times\bb R$, we define the residue operator by
\begin{equation}
\begin{split}
	\Res f(x,t)&=\sum_{n\geq 0}\int f(hgx,\sigma(h,gx),\sigma(g,x)-t)\dd\mu^{*n}(g)\dd\mu(h).
\end{split}
\end{equation}
Let $\cal F_uf(x,v,\xi)=\int f(x,v,u)e^{iu\xi}\dd u$ be the Fourier transform on $\bb R_u$. Let $F$ be a function on $X\times \bb R_{v}\times\bb R_{\xi}$,. Define a partial Lipschitz norm by
\[|F|_\Lip=\sup_{\xi\in\bb R}|F(\xi)|_{Lip}=
\sup_{\xi\in\bb R}\left(|F(\xi)|_\infty+\sup_{(x,v),(x',v')\in X\times\bb R}\frac{|F(x,v,\xi)-F(x',v',\xi)|}{d(x,x')+|v-v'| }\right).  \]

\begin{prop}[Residue process]\label{prop:residue}
	If $f$ is a positive bounded continuous function on $X\times\bb R^2$. Assume that the projection of $\supp\cal F_u(f)$ onto $\bb R_\xi$ is contained in a compact set $K$, and $|\cal F_u(f)|_\Lip,|\partial_\xi\cal F_u(f)|_\Lip$ are finite. 
	Then for $t>0$ and $x\in X$, we have 
	\begin{equation}
	\begin{split}
		\Res f(x,t)=&\frac{1}{\sigma_\mu}\int_{-t}^\infty\int_G \int_X f(hy,\sigma(h,y),u)\dd\nu(y)\dd\mu(h)\dd u\\
		&+\frac{1}{t}O_K\left(|\cal F_u(f)|_\Lip+|\partial_\xi\cal F_u(f)|_\Lip \right).
		\end{split}
	\end{equation}
\end{prop}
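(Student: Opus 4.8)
The plan is to reduce the residue operator to the renewal operator of Proposition~\ref{prop:renreg} by integrating out the extra step. Given $f$ on $X\times\bb R^2$, define a function on $X\times\bb R$ by
\[
g_f(y,u)=\int_G f(hy,\sigma(h,y),u)\,\dd\mu(h).
\]
Integrating first over $h$ in each term of the defining series shows at once that $\Res f(x,t)=Rg_f(x,t)$ for all $x\in X$ and $t>0$, and that the main term $\frac1{\sigma_\mu}\int_{-t}^\infty\int_X g_f(y,u)\,\dd\nu(y)\,\dd u$ produced by Proposition~\ref{prop:renreg} is exactly $\frac1{\sigma_\mu}\int_{-t}^\infty\int_G\int_X f(hy,\sigma(h,y),u)\,\dd\nu(y)\,\dd\mu(h)\,\dd u$. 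So it suffices to check that $g_f$ satisfies the hypotheses of Proposition~\ref{prop:renreg}. Positivity, boundedness (by $|f|_\infty$) and continuity (dominated convergence, with dominant $|f|_\infty$ and $\mu$ finite) are immediate; that $g_f\in L^1(X\times\bb R,\nu\otimes Leb)$ follows from positivity of $f$ since $\int_{\bb R}f(x,v,u)\,\dd u=\cal F_u(f)(x,v,0)\le|\cal F_u(f)|_\infty\le|\cal F_u(f)|_\Lip<\infty$ uniformly in $(x,v)$; and since $\widehat{g_f}(x,\xi)=\int_G\cal F_u(f)(hx,\sigma(h,x),\xi)\,\dd\mu(h)$ (Fubini being justified by the same uniform bound), the projection of $\supp\widehat{g_f}$ onto $\bb R$ lies in $K$.

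The one nontrivial point is the bound $|\widehat{g_f}|_{W^{1,\infty}\cal H^\gamma}\lesssim|\cal F_u(f)|_\Lip+|\partial_\xi\cal F_u(f)|_\Lip$. The obstruction is that $y\mapsto hy$ and $y\mapsto\sigma(h,y)$ are Lipschitz on $X$ only with constant of order $\|h\|^2$ (this follows from Lemma~\ref{lem:distance} and $\|h^{-1}\|=\|h\|$: indeed $d(hx,hx')=d(x,x')e^{-\sigma(h,x)-\sigma(h,x')}\le\|h\|^2 d(x,x')$, and a similar computation bounds $|\sigma(h,x)-\sigma(h,x')|$ by $C\|h\|^2 d(x,x')$), while $\|h\|^2$ need not be $\mu$-integrable. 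To remedy this I would, for fixed $\xi$ and $x,x'\in X$, estimate the integrand $|\cal F_u(f)(hx,\sigma(h,x),\xi)-\cal F_u(f)(hx',\sigma(h,x'),\xi)|$ both trivially by $2|\cal F_u(f)|_\Lip$ and by $C|\cal F_u(f)|_\Lip\|h\|^2 d(x,x')$, then interpolate with exponent $\gamma$ to get a bound $\le C'|\cal F_u(f)|_\Lip\|h\|^{2\gamma}d(x,x')^\gamma$; integrating over $h$ and using $2\gamma\le\epsilon_1$ (shrinking $\gamma$ if necessary, which is harmless and consistent with Remark~\ref{rem:gameps}), so that $\int_G\|h\|^{2\gamma}\,\dd\mu(h)\le M_\mu<\infty$, yields $m_\gamma(\widehat{g_f}(\cdot,\xi))\lesssim|\cal F_u(f)|_\Lip$ uniformly in $\xi$; together with $|\widehat{g_f}|_\infty\le|\cal F_u(f)|_\infty$ this bounds $|\widehat{g_f}|_{L^\infty\cal H^\gamma}$. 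The bound for $\partial_\xi\widehat{g_f}(x,\xi)=\int_G\partial_\xi\cal F_u(f)(hx,\sigma(h,x),\xi)\,\dd\mu(h)$ (differentiation under the integral being justified by boundedness of $\partial_\xi\cal F_u(f)$ and finiteness of $\mu$) is obtained identically with $|\partial_\xi\cal F_u(f)|_\Lip$ replacing $|\cal F_u(f)|_\Lip$.

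With these estimates in hand, Proposition~\ref{prop:renreg} applied to $g_f$ gives the asserted formula, the implied constant in the $O_K$ term depending only on $\mu$ and $K$. I expect the interpolation step above to be the only real obstacle: one must notice that the factor $\|h\|^2$ lost when taking one further step has to be traded against the Hölder exponent $\gamma$, which is exactly why $\gamma$ was taken so small. Everything else — the Fubini exchanges of $\int_G$ with $\int_{\bb R}$ and with $\sum_n$, and the verification that the hypotheses of Proposition~\ref{prop:renreg} transfer from $f$ to $g_f$ — is routine bookkeeping.
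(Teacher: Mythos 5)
Your proposal is correct and follows essentially the same route as the paper: the paper defines the same auxiliary operator $Qf(x,u)=\int_G f(hx,\sigma(h,x),u)\dd\mu(h)$, observes $\Res f=R(Qf)$, and proves the identical ``change of norm'' estimate $|\widehat{Qf}|_{W^{1,\infty}\cal H^\gamma}\lesssim|\cal F_u f|_\Lip+|\partial_\xi\cal F_u f|_\Lip$ by trading the $\|h\|^2$ Lipschitz loss against the H\"older exponent $\gamma$ and the exponential moment, exactly as in your interpolation step. The only cosmetic difference is that the paper's interpolation retains a harmless factor $(1+\kappa(h))$ before integrating, whereas yours uses the sup-norm as the trivial bound and avoids it.
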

\begin{proof}
	For a bounded continuous function $f$ on $X\times \bb R^2$ and $(x,u)\in X\times\bb R$, we define an operator $Q$ by
	\[Qf(x,u)=\int_G f(hx,\sigma(h,x),u)\dd\mu(h). \]
	Then
	\[\Res f(x,t)=\sum_{n\geq 0}\int Qf(gx,\sigma(g,x)-t)\dd\mu^{*n}(g)=R(Qf)(x,t). \]
	
	We want to use Proposition \ref{prop:renreg}, so we need to verify the hypotheses. The function $Qf$ is bounded and integrable by the hypotheses on $f$. Then
	 \begin{align*}
	 	\widehat{Qf}(x,\xi)&=\int Qf(x,u)e^{iu\xi}\dd u=\int f(hx,\sigma(h,x),u)e^{iu\xi}\dd u\dd\mu(h)\\
	 	&=\int_G\cal F_uf(hx,\sigma(h,x),\xi)\dd\mu(h).
	 \end{align*}
	 Thus $\widehat{Qf}$ is also compactly supported on $\xi$. It remains to estimate the H\"{o}lder norm of $\widehat{Qf}$.
	 Since $\cal F_uf(x,v,\xi)$ is Lipschitz on $(x,v)\in X\times \bb R$, this implies that
	 \begin{align*}
	 	|\widehat{Qf}(x,\xi)-&\widehat{Qf}(y,\xi)|\leq \int_G|\cal F_uf(hx,\sigma(h,x),\xi)-\cal 
	 	F_uf(hy,\sigma(h,y),\xi)|\dd\mu(h)\\
	 	&\leq \int_G |\cal F_u f|_\Lip(d(hx,hy) +|\sigma(h,x)-\sigma(h,y)| )\dd\mu(h).
	 \end{align*}
	 Using Lipschitz property of the distance and the cocycle, and finite exponential moment, we have
	 \begin{align*}
	 	|\widehat{Qf}(x,\xi)-&\widehat{Qf}(y,\xi)|\leq |\cal F_u f|_\Lip d(x,y)^\gamma\int_G (1+\kappa(h))\|h\|^{2\gamma}\dd\mu(h)\lesssim |\cal F_u f|_\Lip d(x,y)^\gamma,
	 \end{align*}
	  where we use the Remark \ref{rem:gameps} that $4\gamma\leq \epsilon_1$. Therefore
	 \begin{lem}[Change of norm]
	 Under the assumptions of Proposition \ref{prop:residue}, we have \[|\widehat{Qf}|_{\linf}\lesssim|\cal F_u(f)|_\Lip,\  |\partial_\xi\widehat{Qf}|_{\linf}\lesssim|\partial_\xi\cal F_uf|_\Lip.\]
	 \end{lem}
	 \begin{proof}
	 The second inequality follows by the same computation.
	 \end{proof}
	 By Proposition \ref{prop:renreg}, we have
	 	\begin{align*}
	 		R(Qf)(x,t&)=\frac{1}{\sigma_\mu}\int_X\int_{-t}^{\infty}Qf(y,u)\dd u\dd\nu(y)+\frac{1}{t}O_K\left(|\widehat{Qf}|_{\linf}+|\partial_\xi \widehat{Qf}|_{\linf}\right)\\
	 		&=\frac{1}{\sigma_\mu}\int_X\int_{-t}^{\infty}Qf(y,u)\dd u\dd\nu(y)+\frac{1}{t}O_K\left(|\cal F_u(f)|_\Lip+|\partial_\xi \cal F_u(f)|_\Lip\right).
	 	\end{align*}
	 The proof is complete.
\end{proof}

\subsection{Residue process with cutoff}
In this section, we restrict the residue process to the sequences $(g_{n+1},g_n,\dots,g_1)$ such that $\sigma(g_n\cdots g_1,x)<t\leq \sigma(g_{n+1}\cdots g_1,x)$. Let $f$ be a function on $X\times\bb R^2$. Define a Lipschitz norm by
\begin{align}
	| f|_{\lf}=| f|_\infty+\sup_{(x,v,u)\neq (x',v',u')}\frac{| f(x,v,u)-f(x',v',u')|}{d(x,x') +|v-v'| +|u-u'| }.
\end{align}
Define an operator from bounded Borel functions on $X\times\bb R^2$ to functions on $X\times\bb R$ by
\[\Cut f(x,t)=\sum_{n\geq 0}\int_{\sigma(g,x)<t\leq\sigma(hg,x)} f(hgx,\sigma(h,gx),\sigma(g,x)-t)\dd\mu(h)\dd\mu^{*n}(g). \]
By Lemma \ref{lem:resfin}, which will be proved later, this operator is well defined. Let $K$ be a compact set in $\bb R$. We denote $|K|$ by the supremum of the distance between a point in $K$ and $0$. 
\begin{prop}\label{prop:rescut}
Let $f$ be a continuous function on $X\times\bb R^2$ with $| f|_\lf$ finite. Assume that the projection of $\supp f$ on $\bb R_v$ is contained in a compact set $K$.
For all $\delta>0$, $t>|K|+\delta$ and $x\in X$, we have
	\begin{equation}
		\begin{split}
		\Cut f(x,t)=\int_X\int_G\int_{-\sigma(h,y)}^{0} f(hy,\sigma(h,y),u)\dd u\dd\mu(h)\dd\nu(y)+O_K(\delta +O_\delta/t)| f|_\lf,
		\end{split}
	\end{equation}
	where $O_K$ does not depend on $\delta, f,t,x$, and the integral $\int_{-\sigma(h,y)}^{0}=0$ if $\sigma(h,y)<0$.
\end{prop}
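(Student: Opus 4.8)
The plan is to reduce $\Cut$ to the residue operator of Proposition~\ref{prop:residue}, and then remove the sharp cutoff by smoothing. Writing $u=\sigma(g,x)-t$ and $v=\sigma(h,gx)$, the condition $\sigma(g,x)<t\le\sigma(hg,x)$ is exactly $u<0$ and $u+v\ge0$, so $\Cut f(x,t)=\Res(\tilde f)(x,t)$ with $\tilde f(x',v,u):=f(x',v,u)\bm 1_{\{u<0,\,u+v\ge0\}}$. Since $\supp f\subset X\times K\times\bb R$, the function $\tilde f$ is bounded and supported in $X\times K\times[-|K|,0]$; absolute convergence of the defining series (the well-definedness of Lemma~\ref{lem:resfin}) follows from $|\tilde f|\le|f|_\infty\bm 1_{X\times K\times[-|K|,0]}$ and $\Res(\bm 1_{X\times[-|K|,0]})(x,t)\le R(\bm 1_{X\times[-|K|,0]})(x,t)\lesssim\max\{1,|K|\}$, which is Lemma~\ref{lem:renintts}. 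By splitting $f$ into real and imaginary parts and adding to each a suitable multiple of a fixed Lipschitz bump equal to $1$ on $K$, I first reduce to the case $f\ge0$, with all norms still $\lesssim_K|f|_\lf$; this is legitimate because both sides of the asserted identity are linear in $f$.

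Next I smooth the wedge $W=\{u<0,\ u+v\ge0\}$. For $\delta\in(0,1)$ I fix smooth $\chi^\pm=\chi^\pm_\delta$ on $\bb R^2_{v,u}$ with $0\le\chi^-\le\bm 1_W\le\chi^+\le1$, with $\chi^+-\chi^-$ supported in the $\delta$-collar of $\partial W$ (hence in $\{|u|\le\delta\}\cup\{|u+v|\le\delta\}$), with both $\chi^\pm$ supported in a fixed box $B_K\supset K\times[-|K|-1,1]$, and with Lipschitz constant $\lesssim1/\delta$. For $f\ge0$ this gives $f\chi^-\le\tilde f\le f\chi^+$, hence by monotonicity of $\Res$ on nonnegative functions
\[0\le\Res(f\chi^+)(x,t)-\Cut f(x,t)\le|f|_\infty\bigl(\Res(\bm 1_{\{|u|\le\delta\}})+\Res(\bm 1_{\{|u+v|\le\delta\}})\bigr)(x,t).\]
The first term is $\le R(\bm 1_{X\times[-\delta,\delta]})(x,t)$, and the second, since $\bm 1_{\{|u+v|\le\delta\}}$ evaluated along the process is $\bm 1_{\{|\sigma(hg,x)-t|\le\delta\}}$, is again $\le R(\bm 1_{X\times[-\delta,\delta]})(x,t)$ after bounding the innermost $\mu$-integral by $1$ and reindexing the word length; by Proposition~\ref{prop:renint} (and Lemma~\ref{lem:renintts} for uniformity in small $t$) both are $O_K(\delta+O_\delta/t)$. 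So $\Cut f=\Res(f\chi^\pm)+O_K(\delta+O_\delta/t)|f|_\lf$, and the same manipulation shows that the integral in the statement equals $\int_X\int_G\int_{\bb R}f(hy,\sigma(h,y),u)\,\chi^\pm(\sigma(h,y),u)\,\dd u\,\dd\mu(h)\,\dd\nu(y)+O_K(\delta)|f|_\lf$.

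The function $f\chi^\pm$ is now Lipschitz and compactly supported in $(v,u)$, but $\cal F_u(f\chi^\pm)$ need not have compact $\xi$-support, which Proposition~\ref{prop:residue} requires. I therefore convolve in the $u$-variable only with $\psi_\delta(u)=\delta^{-2}\psi(u/\delta^2)$, where $\psi$ is the even probability density with compactly supported Fourier transform fixed earlier, taken moreover to decay rapidly (e.g.\ $\psi=|h|^2$ with $\hat h\in C^\infty_c$). Setting $f^\pm_\delta:=(f\chi^\pm)*_u\psi_\delta$, the function $f^\pm_\delta$ is nonnegative and continuous, supported up to rapidly decaying tails in $B_K$, its $u$-Fourier transform $\cal F_u(f^\pm_\delta)(x,v,\xi)=\hat\psi_\delta(\xi)\,\cal F_u(f\chi^\pm)(x,v,\xi)$ has $\xi$-support in $\supp\hat\psi_\delta$ (compact, of size $\lesssim\delta^{-2}$), and $|\cal F_u(f^\pm_\delta)|_\Lip,\,|\partial_\xi\cal F_u(f^\pm_\delta)|_\Lip\lesssim_K|f|_\lf/\delta$. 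Since $|f\chi^\pm|_\lf\lesssim|f|_\lf/\delta$ and $\psi$ has finite first moment, $|f\chi^\pm-f^\pm_\delta|_\infty\lesssim|f|_\lf\delta$, so $|\Res(f\chi^\pm)-\Res(f^\pm_\delta)|\le|f|_\lf\delta\,\Res(\bm 1_{B_K})(x,t)\lesssim_K|f|_\lf\delta$ by Lemma~\ref{lem:renintts}, uniformly in $t$, and the same bound controls the corresponding $\dd u$-integrals over $\bb R$. It then remains to apply Proposition~\ref{prop:residue} to $f^\pm_\delta$: this produces the main term of the statement (the $\dd u$-integral over $\bb R$ is unchanged by the convolution in $u$, and $\int_{-t}^\infty$ can be replaced by $\int_{\bb R}$ since $t>|K|+\delta$, the discarded tail being rapidly decaying and absorbed into $O_\delta/t$), plus an error $\frac{1}{t}O_\delta(|f|_\lf)$. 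Collecting the three error contributions gives the proposition.

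I expect the main obstacle to be the conflict, in the last two steps, between the two requirements on the cutoff: the error estimates force the smoothed cutoff to be Lipschitz, hence to have transition width $\delta$ and Lipschitz constant $\sim1/\delta$, whereas Proposition~\ref{prop:residue} forces band-limitedness in $u$, hence a further convolution at scale $\delta^2$ whose cost against that Lipschitz constant is $\sim\delta$; these two effects must be balanced, and this is the origin of the $O(\delta)$ in the statement. A secondary difficulty is uniformity in $t$ down to $t>|K|+\delta$: Proposition~\ref{prop:renreg} degenerates for small $t$, so every comparison between residue sums must be routed through the $t$-uniform bounds of Lemma~\ref{lem:renintts} (and Proposition~\ref{prop:renint}), and it is precisely the monotonicity used there that makes the preliminary reduction to $f\ge0$ necessary.
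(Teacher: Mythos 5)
Your argument is correct and follows the same skeleton as the paper's proof --- reduce $\Cut$ to the residue operator $\Res$ applied to a wedge cutoff of $f$, mollify in $u$ with the band-limited kernel $\psi_\delta$ so that Proposition \ref{prop:residue} becomes applicable, and control every discrepancy through the $t$-uniform renewal bounds of Proposition \ref{prop:renint} and Lemma \ref{lem:renintts} --- but it organizes the removal of the sharp cutoff differently. The paper convolves the sharply cut function $f_o=\bm 1_{-v\le u<0}f$ directly with $\psi_\delta$, and the point that makes this work is its ``change of norm'' observation: $\cal F_uf_o(x,v,\xi)=\int_{-v}^0 f(x,v,u)e^{i\xi u}\dd u$ is already Lipschitz in $(x,v)$ with constant $|K|\,|f|_\lf$, with no loss in $\delta$, because the sharp endpoint only contributes $|v-v'|\,|f|_\infty$; the remaining error $\Res(|f_\delta-f_o|)$ is then estimated by the pointwise three-region Lemma \ref{lem:chadif}. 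You instead interpose a Lipschitz mollification $\chi^\pm$ of the wedge indicator at scale $\delta$ and run a sandwich/monotonicity argument, which is why you need the preliminary reduction to $f\ge0$ (the paper's remark achieves the same reduction by taking positive and negative parts, which preserves the Lipschitz constant); this bypasses both the change-of-norm computation and Lemma \ref{lem:chadif}, at the price of feeding Lipschitz norms of size $|f|_\lf/\delta$ into Proposition \ref{prop:residue} --- harmless, since they land in the $O_\delta/t$ term, exactly as your closing paragraph anticipates. Two small points to tighten, neither of which is a real gap: (i) $f^\pm_\delta$ is not compactly supported in $u$ (compact support of $\hat\psi$ forces $\psi$ to have tails), so the inequality $|\Res(f\chi^\pm)-\Res(f^\pm_\delta)|\le|f|_\lf\,\delta\,\Res(\bm 1_{B_K})$ needs an additional term $\Res\bigl(f^\pm_\delta\bm 1_{B_K^c}\bigr)$, which is controlled by summing $R(\bm 1_{[k,k+1]})(x,t)\lesssim 1$ (Lemma \ref{lem:renintts}, valid for all $t$) against the rapidly decaying tail masses of $\psi_\delta$; (ii) your insistence on a rapidly decaying $\psi$ (e.g.\ $\psi=|h|^2$ with $\hat h\in C^\infty_c$) is genuinely necessary for your route, since the paper's $\psi$ is only required to satisfy $\int_{|x|>1/\delta}\psi<C\delta$, which does not give the finite first moment that your bound $|f\chi^\pm-f^\pm_\delta|_\infty\lesssim|f|_\lf\,\delta$ uses (the paper avoids this by only ever integrating $|u_1|\psi_\delta(u_1)$ over $|u_1|\le\delta$ in Lemma \ref{lem:chadif}).
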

\begin{rem}
We decompose $f$ into real and imaginary parts, then decompose these two parts into positive and negative parts. Each part satisfies the hypotheses of Proposition \ref{prop:rescut}, with the support and the Lipschitz norm bounded by the original one. Thus, it is sufficient to prove this proposition for $f$ positive.
\end{rem}
The following lemma connects the operator $E_c$ with $E$.
\begin{lem}
Under the assumptions of Proposition \ref{prop:rescut}, let
$ f_o(x,v,u)=\bm 1_{-v\leq u<0} f(x,v,u)$. Then \[\Cut f(x,t)=\Res f_o(x,t).\]
\end{lem}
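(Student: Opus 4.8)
The plan is to unwind the two definitions and observe that the cutoff constraint appearing in $\Cut f$ is exactly the indicator built into $f_o$, once the cocycle relation $\sigma(hg,x)=\sigma(h,gx)+\sigma(g,x)$ is applied. By the remark preceding Proposition \ref{prop:rescut}, it suffices to treat $f\geq 0$ (decompose $f$ into positive and negative, real and imaginary parts, each with support and Lipschitz norm controlled by those of $f$); then $f_o\geq 0$ as well, so the sum defining $\Res f_o(x,t)$ is a sum of nonnegative terms and is well defined, while $\Cut f(x,t)$ is well defined by Lemma \ref{lem:resfin}.

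Next I would write out
\[\Res f_o(x,t)=\sum_{n\geq 0}\int f_o(hgx,\sigma(h,gx),\sigma(g,x)-t)\dd\mu(h)\dd\mu^{*n}(g),\]
and substitute $f_o(x,v,u)=\bm 1_{-v\leq u<0}\,f(x,v,u)$ with $v=\sigma(h,gx)$ and $u=\sigma(g,x)-t$. The resulting indicator is $\bm 1_{-\sigma(h,gx)\leq \sigma(g,x)-t<0}$. Its right half, $\sigma(g,x)-t<0$, is the condition $\sigma(g,x)<t$; its left half, $-\sigma(h,gx)\leq\sigma(g,x)-t$, rearranges to $t\leq\sigma(g,x)+\sigma(h,gx)$, which by the cocycle identity is precisely $t\leq\sigma(hg,x)$. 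Hence the indicator equals $\bm 1_{\sigma(g,x)<t\leq\sigma(hg,x)}$, the exact constraint in the definition of $\Cut f$, and substituting back gives
\[\Res f_o(x,t)=\sum_{n\geq 0}\int_{\sigma(g,x)<t\leq\sigma(hg,x)} f(hgx,\sigma(h,gx),\sigma(g,x)-t)\dd\mu(h)\dd\mu^{*n}(g)=\Cut f(x,t).\]

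There is no real obstacle here; the only step deserving a moment's attention is verifying that the half-open cutoff in $f_o$ — strict at $u<0$ and non-strict at $-v\leq u$ — lines up with the half-open cutoff $\sigma(g,x)<t\leq\sigma(hg,x)$ in $\Cut f$, which it does exactly by the two inequalities above.
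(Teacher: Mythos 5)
Your proof is correct and is exactly the direct verification the paper leaves implicit (the lemma is stated without proof there): substituting $v=\sigma(h,gx)$, $u=\sigma(g,x)-t$ into the indicator $\bm 1_{-v\leq u<0}$ and using the cocycle identity $\sigma(hg,x)=\sigma(h,gx)+\sigma(g,x)$ turns it into the constraint $\sigma(g,x)<t\leq\sigma(hg,x)$ defining $\Cut$, with the half-open inequalities matching as you note.
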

Before proving this proposition, we describe some regularity and independence properties. They are corollaries of analogous properties for the renewal process. The idea is to decompose the integral according to the last letter. The following lemma means that the residue process with cutoff has exponential decay with respect to the last jump.
\begin{lem}\label{lem:reslar}
	For $t,s$ in $\bb R$ and $x$ in $X$, we have
	\begin{align}
	\Cut(\bm 1_{v\geq s})(x,t)=\Res(\bm 1_{-v\leq u<0, v\geq s})(x,t)=\oexp(s).
	\end{align}
\end{lem}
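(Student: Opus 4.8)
The first equality is simply the preceding lemma applied to $f=\bm 1_{v\geq s}$, for which $f_o(x,v,u)=\bm 1_{-v\leq u<0}\bm 1_{v\geq s}=\bm 1_{-v\leq u<0,\,v\geq s}$; hence $\Cut(\bm 1_{v\geq s})(x,t)=\Res(f_o)(x,t)$. So the real content is the bound $\Res(\bm 1_{-v\leq u<0,\,v\geq s})(x,t)=\oexp(s)$, uniformly in $t$ and $x$. I would start by unwinding the definition of $\Res$. Since the constraint $-\sigma(h,gx)\leq\sigma(g,x)-t<0$ is the conjunction of $\sigma(g,x)<t$ and $\sigma(h,gx)\geq t-\sigma(g,x)$, one gets
\[
\Res(\bm 1_{-v\leq u<0,\,v\geq s})(x,t)=\sum_{n\geq 0}\int_G\bm 1_{\sigma(g,x)<t}\left(\int_G\bm 1_{\sigma(h,gx)\geq\max\{s,\,t-\sigma(g,x)\}}\dd\mu(h)\right)\dd\mu^{*n}(g).
\]
The plan is to bound the inner $h$-integral pointwise by the moment inequality, and then to handle the remaining renewal-type sum over $g$.

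For fixed $g$ with $\sigma(g,x)<t$, set $c=\max\{s,\,t-\sigma(g,x)\}$, which is positive since $t-\sigma(g,x)>0$. Because $\sigma(h,gx)\leq\kappa(h)$, inequality \eqref{ineq:moment} gives $\int_G\bm 1_{\sigma(h,gx)\geq c}\dd\mu(h)\leq\mu\{h\in G\mid\kappa(h)\geq c\}\leq M_\mu e^{-\epsilon_1 c}$, and using $\max\{a,b\}\geq\tfrac12(a+b)$ this is at most $M_\mu e^{-\epsilon_1 s/2}e^{-\epsilon_1(t-\sigma(g,x))/2}$. Substituting this into the display above and pulling out the factor $M_\mu e^{-\epsilon_1 s/2}$, it remains to show that
\[
\sum_{n\geq 0}\int_G e^{-\epsilon_1(t-\sigma(g,x))/2}\bm 1_{\sigma(g,x)<t}\dd\mu^{*n}(g)
\]
is bounded by a constant depending only on $\mu$, uniformly in $t$ and $x$.

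To do this I would slice $\{\sigma(g,x)<t\}$ into the sets $\{t-\sigma(g,x)\in[k,k+1)\}$ for $k\geq 0$, on which the weight is at most $e^{-\epsilon_1 k/2}$; by Tonelli (all terms nonnegative) the sum above is then bounded by $\sum_{k\geq 0}e^{-\epsilon_1 k/2}\,R(\bm 1_{[-k-1,-k]})(x,t)$. Now $R(\bm 1_{[-k-1,-k]})(x,t)=R(\bm 1_{[0,1]})(x,t-k-1)$, which by Lemma \ref{lem:renintts} is bounded by an absolute constant uniformly in $k$, $t$ and $x$; hence the series over $k$ converges to a constant depending only on $\mu$, and we conclude $\Res(\bm 1_{-v\leq u<0,\,v\geq s})(x,t)\lesssim e^{-\epsilon_1 s/2}=\oexp(s)$. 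The only point needing care is this last step: the unweighted renewal sum $\sum_{n}\mu^{*n}\{\sigma(g,x)<t\}$ grows linearly in $t$, so the exponential gain from the moment bound \eqref{ineq:moment} is precisely what is needed to absorb it against the uniform-in-$t$ estimate of Lemma \ref{lem:renintts}; everything else is routine manipulation of nonnegative series.
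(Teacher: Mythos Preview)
Your proof is correct and uses the same two ingredients as the paper (the moment bound $\mu\{\kappa(h)\geq c\}\leq M_\mu e^{-\epsilon_1 c}$ together with Lemma~\ref{lem:renintts}), but the bookkeeping is organized differently. The paper conditions on $h$ first: since $\sigma(h,gx)\leq\kappa(h)$, the event is contained in $\{\sigma(g,x)-t\in[-\kappa(h),0],\ \kappa(h)\geq s\}$, so the whole quantity is at most $\int_{\kappa(h)>s}R(\bm 1_{[-\kappa(h),0]})(x,t)\,\dd\mu(h)\lesssim\int_{\kappa(h)>s}\max\{1,\kappa(h)\}\,\dd\mu(h)=\oexp(s)$, one line of Lemma~\ref{lem:renintts} per $h$ followed by the exponential moment. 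Your route instead bounds the $h$-integral pointwise in $g$ via $e^{-\epsilon_1\max\{s,\,t-\sigma(g,x)\}}$, splits the max, and then slices the $g$-sum over unit intervals to invoke Lemma~\ref{lem:renintts} countably many times. Both arguments are valid; the paper's ordering avoids the artificial $\max\{a,b\}\geq(a+b)/2$ step and the slicing, and lands more directly on the moment integral $\int_{\kappa(h)>s}\kappa(h)\,\dd\mu(h)$.
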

\begin{proof}
	By Lemma \ref{lem:renintts} and finiteness of the exponential moment, we have
	\begin{align*}
	&\quad\sum_{n\geq 0}\mu\otimes\mu^{*n}\{(h,g)\in G^2|\sigma(g,x)-t\in[-\sigma(h,gx),0],\sigma(h,gx)\geq s\}\\
	&\leq\sum_{n\geq 0}\mu\otimes\mu^{*n}\{(h,g)\in G^2|\sigma(g,x)-t\in[-\kappa(h),0],\kappa(h)\geq s \}\\
	&=\int_{\kappa(h)>s}R(\bm 1_{[-\kappa(h),0]})(x,t)\dd\mu(h)\lesssim \int_{\kappa(h)>s}\max\{1,\kappa(h)\}\dd\mu(h)=\oexp(s).
	\end{align*}
	The proof is complete.
\end{proof}
\begin{lem}\label{lem:resfin}
There exists $C>0$ such that for all $t\in\bb R$ and $x\in X$, we have
	\begin{equation}\label{ineq:resfin}
		\Cut(\bm 1)(x,t)=\Res(\bm 1_{-v\leq u<0})(x,t)\leq C.
	\end{equation}
\end{lem}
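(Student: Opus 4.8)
The plan is to reduce $\Cut(\bm 1)(x,t)$ to a renewal sum controlled by Lemma \ref{lem:renintts}, following exactly the argument in the proof of Lemma \ref{lem:reslar} but dropping the restriction on the size of the last jump. First I would use the cocycle identity $\sigma(hg,x)=\sigma(h,gx)+\sigma(g,x)$ to rewrite
\[\Cut(\bm 1)(x,t)=\sum_{n\geq 0}\mu\otimes\mu^{*n}\bigl\{(h,g)\in G^2:\ \sigma(g,x)-t\in[-\sigma(h,gx),0)\bigr\},\]
which is precisely the asserted identity $\Cut(\bm 1)(x,t)=\Res(\bm 1_{-v\leq u<0})(x,t)$ (the preceding lemma with $f=\bm 1$, so $f_o=\bm 1_{-v\leq u<0}$). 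Since the integrand is nonnegative, Tonelli's theorem allows me to integrate over $h$ last.

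Next, for fixed $h$ I would bound $\sigma(h,gx)\leq\kappa(h)=\log\|h\|$ (from $\|hv\|\leq\|h\|\|v\|$), so that $[-\sigma(h,gx),0)\subset[-\kappa(h),0]$, and therefore the inner sum over $n$ is at most
\[\sum_{n\geq 0}\int_G\bm 1_{[-\kappa(h),0]}(\sigma(g,x)-t)\dd\mu^{*n}(g)=R(\bm 1_{[-\kappa(h),0]})(x,t)=R(\bm 1_{[0,\kappa(h)]})(x,t-\kappa(h)),\]
the last step being a translation in the real variable. By Lemma \ref{lem:renintts} this is $\lesssim\max\{1,\kappa(h)\}$, uniformly in $x$ and $t$. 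Integrating over $h$ then gives
\[\Cut(\bm 1)(x,t)\lesssim\int_G\max\{1,\kappa(h)\}\dd\mu(h)\leq 1+\int_G\kappa(h)\dd\mu(h),\]
and the finite exponential moment yields $\int_G\kappa(h)\dd\mu(h)\leq\epsilon_1^{-1}\int_G\|h\|^{\epsilon_1}\dd\mu(h)=M_\mu/\epsilon_1<\infty$, using $\kappa\geq 0$ together with $\log s\leq s^{\epsilon_1}/\epsilon_1$ for $s\geq 1$. This produces a constant $C$ independent of $x$ and $t$, which is the claim.

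There is no genuine obstacle in this lemma: the only points requiring care are the direction of the estimate $\sigma(h,gx)\leq\kappa(h)$ (with the convention that the interval $[-\sigma(h,gx),0)$ is empty when $\sigma(h,gx)<0$, which is consistent with the inclusion used) and the integrability of $\kappa$ with respect to $\mu$, which is exactly where the exponential moment hypothesis of Theorem \ref{thm:fourier} is invoked. In effect, this lemma is the ``$s\to-\infty$'' endpoint of Lemma \ref{lem:reslar}.
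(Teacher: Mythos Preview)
Your proof is correct and is exactly the paper's approach: the paper simply states that Lemma~\ref{lem:resfin} is a special case of Lemma~\ref{lem:reslar} (take $s=0$, noting that the cutoff $\sigma(g,x)<t\leq\sigma(hg,x)$ already forces $v=\sigma(h,gx)>0$), and you have reproduced the computation of that lemma in this special case. The only cosmetic remark is that you describe it as the ``$s\to-\infty$'' endpoint, whereas $s=0$ already suffices since $v>0$ on the support of the cutoff.
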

This is a special case of Lemma \ref{lem:reslar}.
The following lemma quantifies the independence of the scalar part and the angle part. Abbreviate $1_{d(y,x')\leq e^{-s},-v\leq u<0}(y,v,u)$ to $1_{d(y,x')\leq e^{-s},-v\leq u<0}$, and others are similar.
\begin{lem}\label{lem:resind}
For $t>5s>0$ and $x,x'$ in $X$, we have
	\begin{equation}
		\Cut(\bm 1_{d(y,x')\leq e^{-s}})(x,t)=\Res (\bm 1_{d(y,x')\leq e^{-s},-v\leq u<0})(x,t)=\oexp(s).
	\end{equation}
\end{lem}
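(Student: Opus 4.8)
The plan is to expand $\Cut(\bm 1_{d(y,x')\le e^{-s}})(x,t)$ over the last letter $h$ and then split the $h$-integral according to the size of $\kappa(h)$, with a threshold $\epsilon_5 s$ where $\epsilon_5=1/4$. Using the identity $\Cut f=\Res f_o$ with $f_o(x,v,u)=\bm 1_{-v\le u<0}f(x,v,u)$, we have
\[
\Cut(\bm 1_{d(y,x')\le e^{-s}})(x,t)=\sum_{n\ge 0}\int_G\int_G \bm 1_{\sigma(g,x)-t\in[-\sigma(h,gx),0)}\,\bm 1_{d(hgx,x')\le e^{-s}}\,\dd\mu(h)\,\dd\mu^{*n}(g),
\]
and on the support of the integrand $\sigma(h,gx)\ge t-\sigma(g,x)>0$, so $[-\sigma(h,gx),0)\subset[-\kappa(h),0)$. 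Since the whole quantity is bounded by a constant (Lemma \ref{lem:resfin}), it suffices to treat $s$ larger than some absolute constant.

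For the range $\kappa(h)\ge\epsilon_5 s$ I would simply drop the condition $d(hgx,x')\le e^{-s}$ and bound the inner sum over $g$ by $R(\bm 1_{[-\kappa(h),0]})(x,t)=R(\bm 1_{[0,\kappa(h)]})(x,t-\kappa(h))\lesssim\max\{1,\kappa(h)\}$ via Lemma \ref{lem:renintts}. The finite exponential moment (inequality \eqref{ineq:moment}) then gives $\int_{\kappa(h)\ge\epsilon_5 s}\max\{1,\kappa(h)\}\,\dd\mu(h)=\oexp(s)$, exactly as in the proof of Lemma \ref{lem:reslar}; so this range contributes $\oexp(s)$.

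For the range $\kappa(h)<\epsilon_5 s$ the key point is that $h$ distorts the sine metric by a bounded amount: if $z_1,z_2$ both satisfy $d(hz_i,x')\le e^{-s}$, then by Lemma \ref{lem:distance} and $|\sigma(h,\cdot)|\le\kappa(h)<\epsilon_5 s$ one gets $d(z_1,z_2)=d(hz_1,hz_2)\,e^{\sigma(h,z_1)+\sigma(h,z_2)}\le 2e^{-(1-2\epsilon_5)s}=2e^{-s/2}$. Hence $h^{-1}B(x',e^{-s})$ is either empty or contained in $B(x_h,2e^{-s/2})\subset B(x_h,e^{-s/3})$ (for $s$ large), where $x_h$ is any point of that set. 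Since the cutoff forces $\sigma(g,x)-t\in[-\kappa(h),0)\subset[-\epsilon_5 s,0)$, the inner sum over $g$ is at most
\[
R\bigl(\bm 1_{B(x_h,e^{-s/3})\times[-\epsilon_5 s,0]}\bigr)(x,t)=R\bigl(\bm 1_{B(x_h,e^{-s/3})\times[0,\epsilon_5 s]}\bigr)(x,t-\epsilon_5 s).
\]
As $t>5s$ yields $t-\epsilon_5 s>5(s/3)$, Proposition \ref{prop:cocind} bounds this by $(1+\epsilon_5 s)^2\oexp(s/3)$, uniformly in $x_h$ (hence uniformly in $h$); integrating over $\{\kappa(h)<\epsilon_5 s\}$ and absorbing the polynomial factor still gives $\oexp(s)$. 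Adding the two ranges completes the argument.

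The hard part is the small-$\kappa(h)$ step: one must quantify precisely how little $h^{-1}$ can expand the ball $B(x',e^{-s})$ — this is where restricting $\kappa(h)$ is essential and where the constant $\epsilon_5$ gets fixed — and then verify that after the shift $t\mapsto t-\epsilon_5 s$ the hypothesis of Proposition \ref{prop:cocind} still holds, which is exactly the slack that $t>5s$ provides. Everything else is the same bookkeeping (large-deviation tails, shifting intervals, absorbing polynomials into $\oexp$) already used for Lemmas \ref{lem:reslar} and \ref{lem:resfin}.
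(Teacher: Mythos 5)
Your proof is correct, and it follows the same overall strategy as the paper's (split according to the size of the last jump; kill the large-jump range with the exponential moment and Lemma \ref{lem:renintts}; reduce the small-jump range to Proposition \ref{prop:cocind}), but the reduction in the main case is carried out differently. The paper splits on $v=\sigma(h,gx)\geq s$ versus $v<s$ and, in the small case, observes that the conditions $d(hgx,x')\leq e^{-s}$ and $\sigma(hg,x)-t\in[0,s)$ only involve the full product $hg$, so the double integral $\dd\mu(h)\dd\mu^{*n}(g)$ reindexes exactly into $\sum_{n\geq1}\mu^{*n}$ and the whole term \emph{is} $R(\bm 1_{B(x',e^{-s})\times[0,s]})(x,t)$, to which Proposition \ref{prop:cocind} applies verbatim with the original $x'$ and $t>5s$. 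You instead split on $\kappa(h)\geq \epsilon_5 s$, keep $h$ and $g$ separate, and transfer the ball condition from $hgx$ to $gx$ via the distortion estimate $d(z_1,z_2)=d(hz_1,hz_2)e^{\sigma(h,z_1)+\sigma(h,z_2)}\leq 2e^{-(1-2\epsilon_5)s}$ from Lemma \ref{lem:distance}; this is valid (the containment $h^{-1}B(x',e^{-s})\subset B(x_h,e^{-s/3})$, the shift $t\mapsto t-\epsilon_5 s$ keeping $t-\epsilon_5 s>5(s/3)$, and the uniformity of Proposition \ref{prop:cocind} in the center of the ball all check out), but it costs you the extra distortion lemma and a slightly degraded exponent $\oexp(s/3)$, which is still $\oexp(s)$. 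The paper's reindexing is the cleaner move here and is worth internalizing, since the same trick reappears in the proof of Lemma \ref{lem:ntind} and in the residue process for the Cartan projection.
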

\begin{proof}
	Since 
	\[\bm 1_{-v\leq u<0}\leq \bm 1_{d(y,x')\leq e^{-s},-v\leq u<0, v\geq s}+\bm 1_{d(y,x')\leq e^{-s},0\leq u+v<s}, \]
	we have
	\begin{align*}
		\Res(\bm 1_{d(y,x')\leq e^{-s},-v\leq u<0})(x,t)&\leq \Res(\bm 1_{-v\leq u<0, v\geq s})(x,t)+\Res(\bm 1_{d(y,x')\leq e^{-s},0\leq u+v<s})(x,t).
	\end{align*}
	By definition, we have 
	\begin{align*}
	\Res(\bm 1_{d(y,x')\leq e^{-s},0\leq u+v<s})(x,t)&=\sum_{n\geq 0}\int\bm 1_{d(hgx,x')\leq e^{-s},\sigma(h,gx)+\sigma(g,x)-t\in[0,s]}\dd\mu^{*n}(g)\dd\mu(h)\\
	&=\sum_{n\geq 1}\int\bm 1_{d(gx,x')\leq e^{-s},\sigma(g,x)-t\in[0,s]}\dd\mu^{*n}(g)= R(\bm 1_{B(x',e^{-s}),[0,s]})(x,t).
	\end{align*}
	By Lemma \ref{lem:reslar} and Proposition \ref{prop:cocind}, the result follows.
\end{proof}
\begin{lem}\label{lem:resindinv}
	For $s>0$, $t>\max\{10s,10\} $ and $x,x_o,x'\in X$, we have
	\begin{align*}
	\sum_{n\geq 0}\mu\otimes\mu^{*n}\{(h,g)\in G\times G|\sigma(hg,x)\geq t,\sigma(g,x)< t, d((hg)^{-1}x_o,x')\leq e^{-s} \}=\oexp(s).
	\end{align*}
\end{lem}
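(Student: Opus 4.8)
The plan is to follow the proof of Lemma~\ref{lem:resind}, splitting the sum according to the size of the last jump $v:=\sigma(h,gx)$, but replacing the renewal independence of Proposition~\ref{prop:cocind} by its inverse counterpart, Proposition~\ref{prop:cocindinv}. On the event appearing in the statement one has $\sigma(g,x)=\sigma(hg,x)-v\geq t-v$ together with $\sigma(g,x)<t$, so that $\sigma(g,x)\in[t-v,t)$. Also, the sum in question is bounded above by $\Cut(\bm 1)(x,t)\leq C$ by Lemma~\ref{lem:resfin}, which is already $\oexp(s)$ as long as $s$ stays in a bounded range; hence I may and do assume $s$ is large, so that $t>10s>10$.

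For the range $\{v\geq s\}$ I would simply drop the constraint $d((hg)^{-1}x_o,x')\leq e^{-s}$: using $v\leq\kappa(h)$ and $\{v\geq s\}\subset\{\kappa(h)\geq s\}$, the corresponding part of the sum is bounded by $\int_{\kappa(h)\geq s}R(\bm 1_{[-\kappa(h),0]})(x,t)\,\dd\mu(h)=\int_{\kappa(h)\geq s}R(\bm 1_{[0,\kappa(h)]})(x,t-\kappa(h))\,\dd\mu(h)$, which by Lemma~\ref{lem:renintts} is $\lesssim\int_{\kappa(h)\geq s}\max\{1,\kappa(h)\}\,\dd\mu(h)=\oexp(s)$ by the exponential moment of $\mu$ (this is exactly the computation behind Lemma~\ref{lem:reslar}).

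For the range $\{v<s\}$, one has $\sigma(g,x)\in(t-s,t)$. The key point is the identity $(hg)^{-1}x_o=g^{-1}(h^{-1}x_o)$: for fixed $h$, the inner sum over $g$ depends on $h$ only through the base point $h^{-1}x_o\in X$. Bounding $\bm 1_{d((hg)^{-1}x_o,x')\leq e^{-s}}\leq\bm 1_{d(g^{-1}(h^{-1}x_o),x')\leq e^{-s/2}}$ and enlarging $(t-s,t)$ to $[t-s,t]$, the inner sum is at most $R_I\big(\bm 1_{B(x',e^{-s/2})\times[0,s]}\big)(h^{-1}x_o,x,t-s)$. Since $t-s>9s>\max\{5s,10\}$ for $s$ large, Proposition~\ref{prop:cocindinv} applies with parameters $(s/2,s,t-s)$ and gives the bound $(1+s)^2\oexp(s/2)=\oexp(s)$, uniformly in the base point, hence uniformly in $h$. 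Integrating over $h$ against the probability measure $\mu$ preserves this bound, and adding the two ranges finishes the proof.

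The step that requires the most care, rather than a genuine obstacle, is the interplay between the decoupling identity $(hg)^{-1}x_o=g^{-1}(h^{-1}x_o)$ and the uniformity in the base point built into Proposition~\ref{prop:cocindinv}, which is what lets the first letter $h$ be integrated out for free; the accompanying bookkeeping is to keep $t-s$ above the threshold required by Proposition~\ref{prop:cocindinv}, which is why one reduces to $s$ large and shrinks the target ball from radius $e^{-s}$ to $e^{-s/2}$.
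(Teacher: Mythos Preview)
Your proof is correct and follows the same overall scheme as the paper: split on the size of the jump $v=\sigma(h,gx)$, handle $\{v\geq s\}$ via Lemma~\ref{lem:reslar}, and handle the small-jump piece via Proposition~\ref{prop:cocindinv} in place of Proposition~\ref{prop:cocind}.

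The only difference is in how you treat the small-jump piece. You condition on $h$, use $(hg)^{-1}x_o=g^{-1}(h^{-1}x_o)$, and apply Proposition~\ref{prop:cocindinv} to the sum over $g$ at the shifted time $t-s$ and enlarged ball $B(x',e^{-s/2})$; this forces the preliminary reduction to $s$ large. The paper instead mirrors the proof of Lemma~\ref{lem:resind} exactly: on $\{0\leq u+v<s\}$ one collapses the pair $(h,g)$ into the single variable $hg$, so that the small-jump term becomes
\[
\sum_{n\geq 1}\mu^{*n}\{g'\in G\mid \sigma(g',x)-t\in[0,s],\ d((g')^{-1}x_o,x')\leq e^{-s}\}=R_I(\bm 1_{B(x',e^{-s})\times[0,s]})(x_o,x,t),
\]
to which Proposition~\ref{prop:cocindinv} applies directly under the stated hypothesis $t>\max\{10s,10\}$, with no need to shrink the ball, shift the time, or reduce to large $s$. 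Your route works, but the paper's collapse is shorter and avoids the bookkeeping.
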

By the same argument as in the proof of Lemma \ref{lem:resind}, we only need to replace Proposition \ref{prop:cocind} by Proposition \ref{prop:cocindinv}. The difference between this lemma and Lemma \ref{lem:resind} is the angle part $(hg)^{-1}x$.

Using $\psi_\delta$ to regularize these functions, we write $ f_\delta(x,v,u)=\int  f_o(x,v,u-u_1)\psi_\delta(u_1)\dd u_1=\psi_\delta*f_o(x,v,u)$.
\begin{lem}\label{lem:rescut}
Under the same hypotheses as in Proposition \ref{prop:rescut}, we have
	\begin{align*}
		\Res( f_\delta)(x,t)=\int_X\int_G\int_{-\sigma(h,y)}^{0} f(hy,\sigma(h,y),u)\dd u\dd\mu(g)\dd\nu(y)+O_K(\delta+\frac{O_\delta}{t})| f|_\lf.
	\end{align*}
\end{lem}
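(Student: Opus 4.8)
The plan is to deduce Lemma~\ref{lem:rescut} from the residue process estimate of Proposition~\ref{prop:residue}, applied to the regularized function $f_\delta=\psi_\delta*f_o$, and then to simplify the resulting main term by an elementary tail estimate; recall $f_o(x,v,u)=\bm 1_{-v\le u<0}f(x,v,u)$. By the remark following Proposition~\ref{prop:rescut} we may assume $f\ge0$, so $f_o\ge0$ and $f_\delta\ge0$.

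First I would rewrite $f_\delta$ as $f_\delta(x,v,u)=\int_{-v}^{0}f(x,v,w)\psi_\delta(u-w)\,\dd w$ for $v>0$ and $f_\delta\equiv0$ for $v\le0$. This makes clear that $f_\delta$ is a positive bounded continuous function on $X\times\bb R^2$, with $v$-support inside $K\cap(0,\infty)\subset[0,|K|]$ and with $\int_{\bb R}|f_\delta(x,v,u)|\,\dd u\le|K|\,|f|_\infty$ uniformly, so that Proposition~\ref{prop:residue} applies. Its Fourier transform in $u$ factors as $\cal F_u(f_\delta)(x,v,\xi)=\hat\psi(\delta^2\xi)\,\cal F_u(f_o)(x,v,\xi)$, where $\cal F_u(f_o)(x,v,\xi)=\int_{-v}^{0}f(x,v,u)e^{iu\xi}\,\dd u$ for $v>0$, so the $\xi$-support of $\cal F_u(f_\delta)$ lies in the compact set $K_\delta:=\delta^{-2}\supp\hat\psi$. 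Differentiating $\cal F_u(f_o)$ in $x$, in $v$ (the $v$-derivative contributing a boundary term $f(x,v,-v)e^{iv\xi}$ together with an interior integral) and in $\xi$, and using $|\hat\psi(\delta^2\xi)|\le\int\psi=1$ and $|\partial_\xi\hat\psi(\delta^2\xi)|=\delta^2|\hat\psi'(\delta^2\xi)|\lesssim\delta^2$, one bounds $|\cal F_u(f_\delta)|_\Lip$ and $|\partial_\xi\cal F_u(f_\delta)|_\Lip$ by $C_K|f|_\lf$ with $C_K$ independent of $\delta\le1$.

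Next I would apply Proposition~\ref{prop:residue} to $f_\delta$ (with compact set $K_\delta$), which gives, for every $t>0$,
\[\Res(f_\delta)(x,t)=\frac1{\sigma_\mu}\int_{-t}^{\infty}\int_G\int_X f_\delta(hy,\sigma(h,y),u)\,\dd\nu(y)\dd\mu(h)\dd u+\frac1t\,O_{K_\delta}\!\big(|\cal F_u(f_\delta)|_\Lip+|\partial_\xi\cal F_u(f_\delta)|_\Lip\big),\]
so by the previous paragraph the error term is $O_K(O_\delta/t)\,|f|_\lf$. For the main term, Fubini and $\int\psi_\delta=1$ give $\int_{\bb R}f_\delta(hy,v,u)\,\dd u=\int_{\bb R}f_o(hy,v,u)\,\dd u=\int_{-v}^{0}f(hy,v,u)\,\dd u$ with $v=\sigma(h,y)$, so the main term equals $\frac1{\sigma_\mu}\int_G\int_X\int_{-\sigma(h,y)}^{0}f(hy,\sigma(h,y),u)\,\dd u\,\dd\mu(h)\dd\nu(y)$ minus the tail $\frac1{\sigma_\mu}\int_G\int_X\int_{-\infty}^{-t}f_\delta(hy,\sigma(h,y),u)\,\dd u\,\dd\mu(h)\dd\nu(y)$. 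Since $f_o(hy,v,\cdot)$ is supported in $[-v,0]$ with $0\le v\le|K|$ there, a point $u\le-t$ contributes to $\psi_\delta*f_o$ only through $u_1\in[u,u+|K|]\subset(-\infty,-(t-|K|)]$; as $t>|K|+\delta$ forces $(t-|K|)/\delta^2>1/\delta$, the tail is at most $|K|\,|f|_\infty\int_{|s|\ge1/\delta}\psi\le C|K|\,|f|_\infty\,\delta$ by the choice of $\psi$, and integrating against the probability measure $\nu\otimes\mu$ (whose integrand is supported where $\sigma(h,y)\in K$) keeps it $O_K(\delta)|f|_\lf$. Adding the two error contributions gives the claimed identity.

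I expect the second paragraph to be the crux: one must check that regularizing in the $u$-variable \emph{alone}---which is all that is possible without destroying the indicator structure of $f_o$---already controls $|\cal F_u(f_\delta)|_\Lip$ and $|\partial_\xi\cal F_u(f_\delta)|_\Lip$ by $C_K|f|_\lf$ with a $\delta$-independent constant, so that the whole $\delta$-dependence of the error is carried by the set $K_\delta$ and surfaces only through the factor $O_\delta$. Granting this, the tail estimate of the third paragraph is elementary and shows that replacing $\int_{-t}^{\infty}$ by $\int_{\bb R}$ costs exactly $O_K(\delta)$ under the hypothesis $t>|K|+\delta$.
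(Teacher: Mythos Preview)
Your proposal is correct and follows essentially the same route as the paper: verify that $f_\delta=\psi_\delta*f_o$ satisfies the hypotheses of Proposition~\ref{prop:residue} via the factorization $\cal F_u(f_\delta)=\hat\psi_\delta\cdot\cal F_u(f_o)$, bound $|\cal F_u(f_\delta)|_\Lip$ and $|\partial_\xi\cal F_u(f_\delta)|_\Lip$ by $\delta$-independent multiples of $|K|\,|f|_\lf$ and $|K|^2|f|_\lf$ (the paper isolates this as a separate ``change of norm'' sublemma), apply Proposition~\ref{prop:residue}, and then estimate the tail $\int_{-\infty}^{-t}f_\delta(\cdot,v,u)\,\dd u$ using $t>|K|+\delta$ together with $\int_{|s|>1/\delta}\psi\lesssim\delta$. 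Your direct tail bound is in fact slightly cleaner than the paper's phrasing, which passes through Lemma~\ref{lem:resfin} to control the size of the limiting integral; note also that both your computation and the paper's own proof produce the main term with a factor $1/\sigma_\mu$ that is silently dropped in the statement of the lemma---this is a typo in the paper, not a defect in your argument.
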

\begin{proof}
We want to verify the conditions in Proposition \ref{prop:residue} and then use this proposition. The integrable condition is valid because $|\int_{\bb R_u} f_\delta|=|\int_{\bb R_u} f_o(x,v,u)\dd u|=|\int_{-v}^{0}f(x,v,u)\dd u|\leq |K|| f|_\infty$. For the Fourier transform, we have
\begin{align*}
	\cal F_u f_\delta=\cal F_u(\psi_\delta*f_o)=\hat{\psi}_\delta\cal F_u f_o.
\end{align*} 
We need to estimate the Lipschitz norm of $\cal F_u f_o$. This function equals
\[\int  f_o(x,v,u)e^{i\xi u}\dd u=\int_{-v}^{0} f(x,v,u)e^{i\xi u}\dd u. \] 
Taking $(x,v)\neq(x',v')$, we have
\begin{align*}
	&|\int_{-v}^{0} f(x,v,u)e^{i\xi u}\dd u-\int_{-v'}^{0} f(x',v',u)e^{i\xi u}\dd u|\\
	\leq &|\int_{-v}^{0}( f(x,v,u)- f(x',v',u))e^{i\xi u}\dd u|+|v'-v||f|_\infty\lesssim |K||f|_\ck(d(x,x') +|v-v'| ).
\end{align*}
Then we have 
\begin{lem}[Change of norm] Under the same hypotheses as in Proposition \ref{prop:rescut}, we have
\[|\cal F_u f_\delta|_\Lip\leq |K||f|_\ck,\ |\partial_\xi\cal F f_{\delta}|_\Lip\leq |K|^2|f|_\ck.\]
\end{lem}
\begin{proof}
Noting that in the integration $|u|\leq |v|$, we get the second inequality by the same computation.
\end{proof}

Therefore by Proposition \ref{prop:residue}, we have
\begin{align*}
\Res( f_\delta)(x,t)=\frac{1}{\sigma_\mu}\int_{-t}^\infty\int_G \int_X  f_\delta(hy,\sigma(h,y),u)\dd\nu(y)\dd\mu(h)\dd u+\frac{O_\delta}{t}\left(|f|_\ck(|K|+|K|^2)\right).
\end{align*}
Then
\begin{align*}
	\int_{-t}^{\infty} f_{\delta}(x,v,u)\dd u&=\int_{-t}^{\infty}\int_{-v}^{0} f(x,v,u_1)\psi_\delta(u-u_1)\dd u_1\dd u=\int_{-v}^{0} f(x,v,u_1)\int_{-t}^{\infty}\psi_{\delta}(u-u_1)\dd u\dd u_1\\
	&=\int_{-v}^{0} f(x,v,u_1)\dd u_1-\int_{-v}^{0} f(x,v,u_1)\int_{-\infty}^{-t-u_1}\psi_{\delta}(u)\dd u\dd u_1.
\end{align*}
Since $t-\delta\geq |K|$, we have $-t-u_1\leq -t+v\leq -\delta$. By $\int_{-\infty}^{-\delta}\psi_\delta\leq C_\psi\delta$, this implies that $\int_{-t}^{\infty} f_{\delta}(x,v,u)\dd u=\int_{-v}^{0} f_{\delta}(x,v,u)\dd u(1+O(\delta))$.
Using Lemma \ref{lem:resfin}, we have
\[|\int_X\int_G\int_{-\sigma(h,y)}^{0} f(hy,\sigma(h,y),u)\dd u\dd\mu(g)\dd\nu(y)|\leq | f|_\infty \Cut(\bm 1)=O(| f|_\infty). \]
Therefore
\begin{align*}
&\int_{-t}^\infty\int_G \int_X  f_\delta(hy,\sigma(h,y),u)\dd\nu(y)\dd\mu(h)\dd u\\
&\qquad\qquad=\int_X\int_G\int_{-\sigma(h,y)}^{0} f(hy,\sigma(h,y),u)\dd u\dd\mu(g)\dd\nu(y)+O(\delta|f|_\infty).
\end{align*}
The proof is complete.
\end{proof}
Next lemma gives the difference between a function and its regularization.
\begin{lem}\label{lem:chadif} Let $\varphi_0(u)=\bm 1_{[b_1,b_2]}(u)\varphi(u)$, where $b_2>b_1$ and $|\varphi'|_{L^{\infty}}< \infty$, $|\varphi|_{L^\infty}\leq 1$. Then we have
	\begin{equation}\label{ineq:chadif}
	|\psi_{\delta}*\varphi_0(u)-\varphi_0(u)|\leq
	\begin{cases}
	(|\varphi'|_\infty+2)\delta & u\in[b_1+\delta,b_2-\delta],\\
	2 & u\in[b_1-\delta,b_1+\delta]\cup[b_2-\delta,b_2+\delta],\\
	\psi_{\delta}*\bm 1_{[b_1,b_2]}(u) & u\in [b_1-\delta,b_2+\delta]^c.
	\end{cases}
	\end{equation}
\end{lem}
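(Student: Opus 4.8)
The plan is to split the domain of $u$ into three regions according to the three cases in \eqref{ineq:chadif}, and estimate $\psi_\delta*\varphi_0(u)-\varphi_0(u)$ directly on each, using that $\psi_\delta$ is an even probability density supported in $[-\delta,\delta]$ (since $\psi_\delta(t)=\delta^{-2}\psi(t/\delta^2)$ and $\hat\psi$, hence $\psi$, has the required support/normalization properties; actually what matters here is only $\int\psi_\delta=1$, $\psi_\delta\ge 0$ after possibly replacing $\psi_\delta$ by its modulus — but in fact the paper's $\psi$ is a probability density, so $\psi_\delta\ge 0$, and the tail bound $\int_{-\infty}^{-\delta}\psi_\delta\le C_\psi\delta$ used elsewhere is what we lean on in the boundary region). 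Write $\psi_\delta*\varphi_0(u)=\int\varphi_0(u-v)\psi_\delta(v)\,\dd v$ and $\varphi_0(u)=\varphi_0(u)\int\psi_\delta(v)\,\dd v$, so that
\[
\psi_\delta*\varphi_0(u)-\varphi_0(u)=\int_{-\delta}^{\delta}\bigl(\varphi_0(u-v)-\varphi_0(u)\bigr)\psi_\delta(v)\,\dd v.
\]

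First, for $u\in[b_1+\delta,b_2-\delta]$: then for all $|v|\le\delta$ both $u$ and $u-v$ lie in $[b_1,b_2]$, so $\varphi_0(u-v)-\varphi_0(u)=\varphi(u-v)-\varphi(u)$, which is bounded by $|\varphi'|_\infty|v|\le|\varphi'|_\infty\delta$ by the mean value theorem. Integrating against $\psi_\delta$ gives the bound $|\varphi'|_\infty\delta$; the extra $+2$ in the stated constant is harmless slack (one could just write $(|\varphi'|_\infty+2)\delta$ to have a uniform statement, or it accounts for the case $|v|$ can push $u-v$ just outside when $u$ is exactly at an endpoint — but with the margin $\delta$ this does not happen, so $|\varphi'|_\infty\delta$ already suffices and a fortiori $(|\varphi'|_\infty+2)\delta$). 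Second, for $u\in[b_1-\delta,b_1+\delta]\cup[b_2-\delta,b_2+\delta]$: here we use only the crude bounds $|\varphi_0|\le 1$ and $\int\psi_\delta=1$, giving $|\psi_\delta*\varphi_0(u)-\varphi_0(u)|\le|\psi_\delta*\varphi_0(u)|+|\varphi_0(u)|\le 1+1=2$.

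Third, for $u\in[b_1-\delta,b_2+\delta]^c$: then $\varphi_0(u)=0$ since $u\notin[b_1,b_2]$, so
\[
|\psi_\delta*\varphi_0(u)-\varphi_0(u)|=|\psi_\delta*\varphi_0(u)|=\Bigl|\int\varphi_0(u-v)\psi_\delta(v)\,\dd v\Bigr|\le\int\bm 1_{[b_1,b_2]}(u-v)\,|\varphi(u-v)|\,\psi_\delta(v)\,\dd v\le\psi_\delta*\bm 1_{[b_1,b_2]}(u),
\]
using $|\varphi|_\infty\le 1$ and $\psi_\delta\ge 0$. This gives the third case. I do not expect a genuine obstacle here — the lemma is a routine convolution-smoothing estimate — but the one point to be careful about is the interface between the first and second regions (making sure that when $u$ is within $\delta$ of an endpoint we fall back to the crude bound $2$ rather than trying to use the Lipschitz bound, since then $u-v$ can straddle the endpoint and the integrand jumps), and to double-check that the paper's normalization $\int_{-\delta}^{\delta}\psi_\delta=\int_{-1/\delta}^{1/\delta}\psi>1-C\delta$ is good enough — but in fact since $\psi$ is a probability density $\int_{\bb R}\psi_\delta=1$ exactly, which is all that is used above.
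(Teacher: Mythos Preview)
Your argument for the second and third cases is correct and matches the paper's. The gap is in the first case: you write
\[
\psi_\delta*\varphi_0(u)-\varphi_0(u)=\int_{-\delta}^{\delta}\bigl(\varphi_0(u-v)-\varphi_0(u)\bigr)\psi_\delta(v)\,\dd v,
\]
but this identity is false, because $\psi_\delta$ is \emph{not} supported in $[-\delta,\delta]$. Recall that $\psi$ is chosen so that $\hat\psi$ is compactly supported; by Paley--Wiener (or just the uncertainty principle), $\psi$ itself cannot then be compactly supported. What the paper has is only the tail bound $\int_{|t|>\delta}\psi_\delta(t)\,\dd t\le C\delta$ (coming from $\int_{-\delta}^{\delta}\psi_\delta=\int_{-1/\delta}^{1/\delta}\psi>1-C\delta$). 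So the $+2$ in the constant $(|\varphi'|_\infty+2)$ is not ``harmless slack'': it is exactly the contribution of the tail region $|v|>\delta$, where you can only use $|\varphi_0(u-v)-\varphi_0(u)|\le 2$.

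The fix is minimal: for $u\in[b_1+\delta,b_2-\delta]$ split the full integral as
\[
\left|\int_{\bb R}\bigl(\varphi_0(u-v)-\varphi_0(u)\bigr)\psi_\delta(v)\,\dd v\right|
\le \int_{-\delta}^{\delta}|\varphi'|_\infty|v|\,\psi_\delta(v)\,\dd v + 2\int_{|v|>\delta}\psi_\delta(v)\,\dd v
\le |\varphi'|_\infty\delta + 2C\delta,
\]
which gives the stated bound (up to an absolute constant absorbed into the $2$). Your parenthetical remarks hint that you sensed this issue, but the displayed identity and the sentence ``$|\varphi'|_\infty\delta$ already suffices'' are incorrect as written.
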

\begin{proof} We will prove this inequality in each interval.
	\begin{itemize}
		\item When $u$ is in $[b_1+\delta,b_2-\delta]$, we have
		\begin{align*}
		|(\psi_{\delta}*\varphi_0-\varphi_0)(u)|=\left|\int\psi_{\delta}(t)(\varphi_0(u-t)-\varphi_0(u))\dd t\right|\leq\int_{-\delta}^{\delta}\psi_{\delta}(t)|\varphi_0(u-t)-\varphi_0(u)|\dd t+2\delta.
		\end{align*}
		When $|t|\leq \delta$, we have $u-t\in[b_1,b_2]$. Since $|\varphi_0'(u)|\leq|\varphi'|_\infty$ for $u\in[b_1,b_2]$, this implies that
		\[\int_{-\delta}^{\delta}\psi_{\delta}(t)|\varphi_0(u-t)-\varphi_0(u)|\dd t
		\leq\int_{-\delta}^{\delta}\psi_{\delta}(t)|t||\varphi'|_\infty\dd t\leq \delta|\varphi|_\infty. \]
		\item When $u\in[b_1-\delta,b_1+\delta]\cup[b_2-\delta,b_2+\delta]$, we use the trivial bound $|\psi_{\delta}*\varphi_0(u)-\varphi_0(u)|\leq 2$.
		\item When $u\in(-\infty,b_1-\delta]\cup[b_2+\delta,\infty]$, we have $\varphi_0(u)=0$, then $|\psi_{\delta}*\varphi_0|\leq|\psi_{\delta}*\bm 1_{[b_1,b_2]}|$.
	\end{itemize}
	Thus collecting all together, we get the inequality.
\end{proof}
\begin{proof}[Proof of Proposition \ref{prop:rescut}]
	To simplifier the notation, we normalize $ f$ in such a way that $| f|_\infty=1$. By Lemma \ref{lem:rescut}, we only need to give an estimate of $\Res(| f_\delta- f_o|)(x,t)$. 
	
	Since $ f_o(x,v,u)=\bm 1_{-v\leq u<0}(u) f(x,v,u)$ with $(x,v)$ fixed, Lemma \ref{lem:chadif} implies that
	\begin{equation*}
		| f_\delta- f_o|(u)\leq
		\begin{cases}
		(|\partial_u f|_\infty+2)\delta & u\in[-v+\delta,-\delta],\\
		2 & u\in[-v-\delta,-v+\delta]\cup[-\delta,\delta],\\
		\psi_\delta*\bm 1_{[-v,0]}(u) & u\in[-v-\delta,\delta]^c.
		\end{cases}	
	\end{equation*}
	By definition of $|K|$, the first term is less than $(|\partial_u f|_\infty+2)\delta\bm 1_{[-|K|+\delta,-\delta]}$. The third term equals
	\begin{align*}
		\bm 1_{[-\infty,-v-\delta]\cup[\delta,\infty]}&\psi_\delta*\bm 1_{[-v,0]}(u)=\bm 1_{[-\infty,-v-\delta]\cup[\delta,\infty]}(u)\int_{-v}^0\psi_\delta(u-u_1)\dd u_1\\
		&=\bm 1_{[-\infty,-v-\delta]\cup[\delta,\infty]}(u)\int_{u}^{u+v} \psi_\delta(u_1)\dd u_1.
	\end{align*}
	
	By definition  and the above arguments, we have
	\begin{align*}
		\Res(| f_\delta- f_o|)(x,t)&=\sum_{n\geq 0}\int | f_\delta- f_o|(hgx,\sigma(h,gx),\sigma(g,x)-t)\dd\mu^{*n}(g)\dd\mu(h)\\
		& \leq\sum_{n\geq 0}\int\Big((|\partial_u f|_\infty+2)\delta\bm 1_{[-|K|,-\delta]}(\sigma(g,x)-t) +\\
		&+2\bm 1_{[-\sigma(h,gx)-\delta,-\sigma(h,gx)+\delta]\cup[-\delta,\delta]}(\sigma(g,x)-t)\\
		&+\bm 1_{[-\infty,-\sigma(h,gx)-\delta]\cup[\delta,\infty]}(\sigma(g,x)-t)\int_{\sigma(g,x)-t}^{\sigma(hg,x)-t}\psi_\delta(u_1)\dd u_1\Big) \dd\mu^{*n}(g)\dd\mu(h).
	\end{align*}
	By Lemma \ref{lem:renintts}, the first term is controlled by $(|\partial_u f|_\infty+2)\delta |K|$. The second term is less than $R(\bm 1_{[-\delta,\delta]})(x,t)$. Due to Proposition \ref{prop:renint}, it is controlled by $6\delta(1/\sigma_\mu+C_\delta(1+2\delta)/t)$. 
	
	For the third term, we need to change the order of integration. Since $\sigma(g,x)-t>\delta$ or $\sigma(g,x)-t<-\sigma(h,gx)-\delta$, we have $u_1\geq\sigma(g,x)-t>\delta$ or $u_1\leq \sigma(hg,x)-t=\sigma(h,gx)+\sigma(g,x)-t\leq -\delta$. We integrate first with respect to $u_1$, then the third term is less than
	\[\int_{[-\infty,-\delta]\cup[\delta,\infty]}\psi_\delta(u_1)\sum_{n\geq 0}\mu\otimes\mu^{*n}\{(h,g)|\sigma(hg,x)\geq u_1+t,\sigma(g,x)\leq u_1+t \}\dd u_1. \]
	By Lemma \ref{lem:resfin}, the above quantity is less than $C\int_{[-\infty,-\delta]\cup[\delta,\infty]}\psi_\delta(u_1)\dd u_1\lesssim \delta$.
	
	Therefore, we have
	\begin{align*}
	\Res(| f_\delta- f|)(x,t)=O_K(\delta+C_\delta/t)| f|_\lf.
	\end{align*}
	The proof is complete.
\end{proof}
\begin{rem}[Minus case]
The lemmas in this part concern plus and minus. The another version we need is for $\Cut^-( f)(x,t)=\Res(\bm 1_{0< u\leq -v} f)(x,t)$, the proofs are exactly the same. 
\begin{repprop}{prop:rescut}
	Under the assumptions of Proposition \ref{prop:rescut}, we have
\[\Cut^-( f)(x,t)=\int_X\int_G\int^{-\sigma(h,y)}_{0} f(hy,\sigma(h,y),u)\dd u\dd\mu(h)\dd\nu(y)+O_K(\delta +O_\delta/t)| f|_\lf .\]
\end{repprop}
\end{rem}

\subsection{Residue process for the Cartan Projection}
We consider the residue process for the cutoff of a function $f$ on $X^2\times\bb R^2$, where the cocycle is replaced by the Cartan projection. We will give a limit not only with $gx$, but also with $g^{-1}x'$.

As in the previous subsection, we can define a similar Lipschitz norm on the space of Lipschitz functions on $X^2\times\bb R^2$, using the same name $|f|_{\ck}$.
Define the operator from bounded Borel functions on $X^2\times\bb R^2$ to functions on $X^2\times\bb R$ by
\[\Car f(x',x,t)=\sum_{n\geq 0}\int_{\kappa(g)< t\leq \kappa(hg) } f((hg)^{-1}x',hgx,\kappa(hg)-\kappa(g),\kappa(g)-t)\dd\mu(h)\dd\mu^{*n}(g). \]

\begin{prop}\label{prop:rescar} Let $ f$ be a continuous function on $X^2\times\bb R^2$ with $| f|_\lf$ finite. Assume that the projection of $\supp f$ on $\bb R_v$ is contained in a compact set $K$. For all $\delta>0$, $t>\max\{2(|K|+\delta),20\}$ and $x',x$ in $X$, we have
	\begin{equation}
	\begin{split}
	\Car f(x',x,t)=\int_{X^2}\int_G\int_{-\sigma(h,y)}^{0} f(y',hy,\sigma(h,y),u)\dd u\dd\mu(h)\dd\nu(y)\dd\check\nu(y')+O_K(\delta+O_\delta/t)| f|_\lf,
	\end{split}
	\end{equation}
	where $O_K$ does not depend on $\delta, f,t,x,x'$, the integral $\int_{-\sigma(h,y)}^{0}=0$ if $\sigma(h,y)<0$.
\end{prop}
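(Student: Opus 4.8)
The plan is to deduce Proposition~\ref{prop:rescar} from the cocycle cutoff residue of Proposition~\ref{prop:rescut}, transferring from the Cartan projection to the cocycle by Lemma~\ref{lem:coccar} and treating the extra inverse‑angle variable $(hg)^{-1}x'$ by a concentration argument. Throughout I use that $\check\mu$ also satisfies the standing hypotheses: $\|g^{-1}\|=\|g\|$ on $\slr$ gives $\check\mu$ a finite exponential moment, and $\Gamma_{\check\mu}=\Gamma_\mu$ is Zariski dense, so the large deviation estimates, the regularity bounds \eqref{ineq:regcon1}, \eqref{ineq:xgminv}, and the spectral gap of the Markov operator $P_{\check\mu}(0)f(x)=\int f(gx)\,\dd\check\mu(g)$ on $\cal H^\gamma$ all apply to $\check\mu$. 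First I make two reductions. By the remark before Proposition~\ref{prop:rescut} we may take $f\ge 0$. Since $\supp f$ compact in $\bb R_v$ and subadditivity of $\kappa$ force $\kappa(g)\in[t-|K|,t)$, one has $\Car|f|\le|f|_\infty R_P(\bm 1_{[0,|K|)})(t-|K|)\lesssim_K|f|_\infty$ by Lemma~\ref{lem:carint}, and the asserted main term is also $\lesssim_K|f|_\infty$; so the estimate is trivial when $t$ is bounded (absorb it into $O_\delta/t$), and I assume $t$ large, $t\ge t(\delta,K)$. By Corollaries~\ref{cor:lardev} and \ref{cor:lardev1} the terms with $n\notin I_t:=[\,t/(\sigma_\mu+\epsilon),(t+|K|+1)/(\sigma_\mu-\epsilon)\,]$ contribute $\oexp(t)|f|_\lf$, and $|I_t|\lesssim t$.

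Next the transfer. Fix $\epsilon_4$ small (say $1/10$) and $l=[\epsilon_4 t/\sigma_\mu]$; for $n\in I_t$ write $g=g'j$ with $g'$ the top $n-l$ letters and $j=(g_l,\dots,g_1)$. Applying Lemma~\ref{lem:coccar} to $g=g'j$ (length $n$) and to $hg=(hg')j$ (length $n+1$), there is a set of triples $(h,g',j)$ of measure $1-\oexp(l)$ on which $\kappa(g)=\sigma(g',jx)+\kappa(j)+O(e^{-\epsilon l})$ and $\kappa(hg)-\kappa(g)=\sigma(h,gx)+O(e^{-\epsilon l})$, with $hgx=hg'(jx)$; summing the exceptional sets over $I_t$ costs $\oexp(t)$. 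On the good set I replace $\kappa$ by its cocycle counterpart inside the cutoff and inside $f$. The $O(e^{-\epsilon l})$ shift of the cutoff boundary is handled as in Proposition~\ref{prop:rescut}: I first regularize $f$ in its last variable at the \emph{fixed} scale $\delta$ — the error being $O_K(\delta+O_\delta/t)|f|_\lf$ by the analogue of Lemma~\ref{lem:chadif}, using Lemmas~\ref{lem:carint}--\ref{lem:carind} in place of Lemma~\ref{lem:renintts} and Proposition~\ref{prop:cocind} — and only then move the boundary by $O(e^{-\epsilon l})$, which inside a $\delta$‑smooth function costs $O(e^{-\epsilon l}/\delta^2)$ per point, hence $\oexp(t)$ against the bounded total residue mass. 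Conditioning on $j$, and discarding the $j$ with $\kappa(j)>l(\sigma_\mu+\epsilon)$ (measure $\oexp(t)$) so that $t_1:=t-\kappa(j)>t/2$, we obtain that $\Car f(x',x,t)$ equals, up to $O_K(\delta+O_\delta/t)|f|_\lf$,
\[
\int_{G^{\times l}}\Bigg(\sum_{m\ge0}\int_{\sigma(g',jx)<t_1\le\sigma(hg',jx)}\!\!\! f\big(j^{-1}(hg')^{-1}x',\,hg'(jx),\,\sigma(h,g'(jx)),\,\sigma(g',jx)-t_1\big)\,\dd\mu(h)\,\dd\mu^{*m}(g')\Bigg)\dd\mu^{*l}(j).
\]

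Now I dispose of the inverse angle. Since $m=n-l\gtrsim t/\sigma_\mu$ while $\kappa(j)/\epsilon\lesssim\epsilon_4 t$, taking $\epsilon_4$ small gives $m\ge\kappa(j)/\epsilon$; by \eqref{ineq:regcon1} for $\check\mu$ (with ball centre $x_j^M$, which depends only on $j$), off a set of measure $\oexp(t)$ the point $(hg')^{-1}x'$ avoids $B(x_{j^{-1}}^m,e^{-\kappa(j)})=B(x_j^M,e^{-\kappa(j)})$ (using \eqref{equ:xgmxginv}), so $j^{-1}$ is $e^{-\kappa(j)}$‑contracting there and $(hg)^{-1}x'=j^{-1}\big[(hg')^{-1}x'\big]\in B(x_j^m,e^{-\kappa(j)})$; discarding a further $\oexp(t)$‑set where $\kappa(j)\gtrsim t$ fails from below, $e^{-\kappa(j)}=\oexp(t)$. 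Hence $f\big((hg)^{-1}x',\dots\big)=f\big(x_j^m,\dots\big)+\oexp(t)|f|_\lf$, and the inner sum equals $\Cut\big[f(x_j^m,\cdot,\cdot,\cdot)\big](jx,t_1)+\oexp(t)|f|_\lf$ with $\Cut$ the operator of Proposition~\ref{prop:rescut} (finiteness: Lemma~\ref{lem:resfin}). Since $t_1>t/2$ is large and $|f(x_j^m,\cdot)|_\lf\le|f|_\lf$, that proposition gives, uniformly in $j$,
\[
\Cut\big[f(x_j^m,\cdot)\big](jx,t_1)=\int_X\int_G\int_{-\sigma(h,w)}^{0}f\big(x_j^m,hw,\sigma(h,w),u\big)\,\dd u\,\dd\mu(h)\,\dd\nu(w)+O_K(\delta+O_\delta/t)|f|_\lf.
\]

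Finally I integrate over $j\sim\mu^{*l}$, bounding by $|f|_\infty$ on the $\oexp(t)$‑exceptional sets. Put $F(w'):=\int_G\int_{-\sigma(h,w)}^{0}f(w',hw,\sigma(h,w),u)\,\dd u\,\dd\mu(h)\,\dd\nu(w)$, which is Lipschitz with $|F|_\lf\lesssim_K|f|_\lf$ (the $\int_G$ converges by the finite exponential moment). It remains to evaluate $\int_{G^{\times l}}F(x_j^m)\,\dd\mu^{*l}(j)$; since $x_j^m=x_{j^{-1}}^M$ and $j^{-1}\sim\check\mu^{*l}$, inequality \eqref{ineq:xgminv} for $\check\mu$ gives $\int F(x_j^m)\,\dd\mu^{*l}(j)=P_{\check\mu}(0)^lF(w_0)+\oexp(l)(\cdots)$ for any fixed $w_0$, and by the spectral gap on $\cal H^\gamma$ this is $\int_X F\,\dd\check\nu+\oexp(l)(\cdots)=\int_X F\,\dd\check\nu+\oexp(t)(\cdots)$ since $l\asymp t$. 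Assembling, and absorbing every $\oexp(t)$ into $O_\delta/t$, yields $\Car f(x',x,t)=\int_{X^2}\int_G\int_{-\sigma(h,w)}^{0}f(w',hw,\sigma(h,w),u)\,\dd u\,\dd\mu(h)\,\dd\nu(w)\,\dd\check\nu(w')+O_K(\delta+O_\delta/t)|f|_\lf$. The main obstacle is the second step: the cutoff $\{\kappa(g)<t\le\kappa(hg)\}$ does not transfer to the cocycle without a boundary error at scale $e^{-\epsilon l}$, for which no ``thin $\kappa$‑shell'' renewal estimate is available, so one must smooth at the fixed scale $\delta$ \emph{before} replacing, while at the same time keeping the split level $l\asymp t$ large enough that $(hg)^{-1}x'$ concentrates at $x_j^m$ and $x_j^m$ equidistributes toward $\check\nu$; it is this simultaneous balancing of the exponentially small exceptional sets, together with the entrance of $\check\nu$ through stationarity of $\check\mu$, that drives the proof.
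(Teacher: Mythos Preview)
Your proof is correct and follows the same global architecture as the paper: split each word at level $l\asymp t$, use Lemma~\ref{lem:coccar} to pass from the Cartan projection to the cocycle, apply the cocycle cutoff result (Proposition~\ref{prop:rescut}) at the shifted time $t-\kappa(j)$, and finally average over $j$ using the spectral gap for $P_{\check\mu}(0)$ to produce the $\check\nu$-integral. The handling of the cutoff boundary (smoothing at scale $\delta$ so that the $O(e^{-\epsilon l})$ shift costs $O(e^{-\epsilon l}/\delta^2)=\oexp(t)$) is equivalent to the paper's Lemma~\ref{lem:jff}, where the same thin shell is instead bounded directly by \eqref{ineq:renintsma}.

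The one genuine difference is how you freeze the inverse angle. The paper replaces $(hgj)^{-1}x'$ by $j^{-1}x'$: this requires knowing that, under the cocycle cutoff, the point $(hg)^{-1}x'$ rarely falls near $x_j^M$, which is precisely Lemma~\ref{lem:resindinv} (itself built on Proposition~\ref{prop:cocindinv}). You instead replace $(hgj)^{-1}x'$ by $x_j^m=x_{j^{-1}}^M$, using only the elementary contraction of $j^{-1}$ together with the regularity bound \eqref{ineq:regcon1} for $\check\mu$, and then equidistribute $x_j^m$ toward $\check\nu$ via \eqref{ineq:xgminv} plus the spectral gap. Your route is lighter --- it bypasses Proposition~\ref{prop:cocindinv} and Lemma~\ref{lem:resindinv} entirely --- at the cost of one extra approximation ($x_j^m\approx j^{-1}w_0$) in the last step. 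The paper's choice keeps the dependence on $x'$ visible until the very end and reuses machinery already developed in Section~\ref{sec:regular}; both lead to the same error $O_K(\delta+O_\delta/t)|f|_\lf$.
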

\begin{proof}
We introduce local notations here: for an element $g$ in $G$ and a continuous function $f$ on $X^2\times \bb R^2$, define $gf(x',x,v,u)=f(g^{-1}x',x,v,u)$. Let $f_{x'}(x,v,u)=f(x',x,v,u)$, which emphasizes that the first coordinate is fixed. Let $l=[\epsilon_5t/\sigma_\mu], \text{ where } \epsilon_5<1/10$.
We use the decomposition 
$$h=g_{n+1}, g=(g_n,\dots ,g_{l+1}), \glg=(g_l,\dots, g_1).$$ Recall that $N_t^+=\bigcup_{n\geq 0}\{(g_{n+1},g_n,\dots ,g_1)|\kappa(g_{n+1}\cdots g_1)\geq t>\kappa(g_n\cdots g_1) \}$. Let $N_t^+(n)=N_t^+\cap G^{\times(n+1)}= \{(g_{n+1},\dots,g_1)|\kappa(g\glg)\leq t,\kappa(hg\glg)>t \}$. Let
\begin{align*}
T_{n}(x,t)&=\{(g_{n+1},\dots,g_1)\in G^{\times(n+1)}|\sigma(hg,\glg x)>t-\kappa(\glg),\sigma(g,\glg x)\leq t-\kappa(\glg) \},
\end{align*}
and let $G_{\epsilon,l}=\{(g_l,\dots,g_1)||\kappa(\glg)-l\sigma_\mu|\leq l\epsilon, d(x_{g_l\cdots g_1}^M,x')\geq e^{-\epsilon l} \}$, as well as 
\[T_{n,\epsilon}=\{(g_{n+1},\dots,g_1)\in T_n|(g_l,\dots,g_1)\in G_{\epsilon,l} \}.\]
\textbf{Step 1:} Due to Corollary \ref{cor:lardev} and Corollary \ref{cor:lardev1}, the sum of the integrals $\int_{N_t^+(n)}$ for $n$ ranging from  $t/(\sigma_\mu+\epsilon)-1$ to $ t/(\sigma_\mu-\epsilon)$ is exponentially small in $t$. In other words, we have
\begin{equation}\label{ineq:fcar}
\begin{split}
	|\sum_{n=[t/(\sigma_{\mu}+\epsilon)]}^{[t/(\sigma_\mu-\epsilon)]}\int_{N_t^+(n)} f((hg\glg)^{-1}x',hg\glg x,\kappa(hg\glg)-\kappa(g\glg),\kappa(g\glg)-t)\dd\mu^{\otimes(n+1)}\\
	-\Car f(x',x,t) |=\oexp(t)|f|_\infty
\end{split}
\end{equation}
The following lemma replaces the Cartan projection with the cocycle.
\begin{lem}\label{lem:jff}
Under the same assumption as in Proposition \ref{prop:rescar}, we have
\begin{equation}\label{ineq:jff}
\begin{split}
&|\sum_{n=[t/(\sigma_{\mu}+\epsilon)]}^{[t/(\sigma_\mu-\epsilon)]}\int_{N_t^+(n)} f((hg\glg)^{-1}x',hg\glg x,\kappa(hg\glg)-\kappa(g\glg),\kappa(g\glg)-t)\dd\mu^{\otimes(n+1)}(hg\glg)\\
&-\sum_{n=[t/(\sigma_{\mu}+\epsilon)]}^{[t/(\sigma_\mu-\epsilon)]}\int_{T_{n,\epsilon}}\glg f((hg)^{-1}x',hg\glg x,\sigma(h,g\glg x),\sigma(g,\glg x)-(t-\kappa(\glg)))\dd\mu^{\otimes(n+1)}(hg\glg)|\\
&=O(\delta+O_\delta/t)|f|_\ck.
\end{split}
\end{equation}
\end{lem}
This lemma will be proved later. We will decompose $T_{n,\epsilon}(x,t)$ to apply the residue process for the cocycle. The space $T_{n,\epsilon}(x,t)$ can be seen as a fibered space over $G_{\epsilon,l}$. When the first $l$ elements are fixed, the elements $(h,g)$ such that $hg\glg=(g_{n+1},\dots, g_1)\in T_{n,\epsilon}(x,t)$, are the admitted elements in the residue process with cutoff, whose start point is $\glg x$ and time is $t-\kappa(\glg)$. 
Since $(n-l)(\sigma_\mu+\epsilon)\leq t-\kappa(\glg)$ and $(n-l)(\sigma_\mu-\epsilon)\geq t-\kappa(\glg)$, we can apply Principle 1 to this residue process. Integrating over $G_{\epsilon,l}$ implies that
\begin{equation}\label{ineq:finv}
\begin{split}
|\sum_{n=[t/(\sigma_{\mu}+\epsilon)]}^{[t/(\sigma_\mu-\epsilon)]}\int_{T_{n,\epsilon}}\glg f((hg)^{-1}x',hg\glg x,\sigma(h,g\glg x),\sigma(g,\glg x)-(t-\kappa(\glg)))\dd\mu^{\otimes(n+1)}(hg\glg)\\
-\int_{G_{\epsilon,l}}\Inv \glg f(x',jx,t-\kappa(j))\dd\mu^{\otimes l}(\glg)|=\oexp(t)|f|_\infty.
\end{split}
\end{equation}
where
\[\Inv f(x',x,t)=\sum_{n\geq 0}\int_{\sigma(g,x)<t\leq\sigma(hg,x)} f((hg)^{-1}x',hgx,\sigma(h,gx),\sigma(g,x)-t)\dd\mu(h)\dd\mu^{*n}(g). \]
The following inequality, whose proof relies on Lemma \ref{lem:resindinv}, will give a major term.
\begin{lem}\label{lem:ecf}
Under the same assumption as in Proposition \ref{prop:rescar}, for all $\glg\in G_{\epsilon,l}$, we have
\begin{equation}\label{ineq:ecf}
\begin{split}
|&\Cut f_{\glg^{-1}x'}(\glg x,t-\kappa(\glg))-\Inv \glg f(x',\glg x,t-\kappa(\glg))|\leq |f|_\ck \oexp(l),
\end{split}
\end{equation}
\end{lem}
This lemma will be proved later. Integrating \eqref{ineq:ecf} over $G_{\epsilon,l}$, we obtain 
\begin{equation}\label{ineq:cutinv}
\begin{split}
|\int_{G_{\epsilon,l}}\Cut f_{\glg^{-1}x'}(\glg x,t-\kappa(\glg))-\Inv \glg f(x',\glg x,t-\kappa(\glg))\dd\mu^{\otimes l}(\glg)|\leq |f|_\ck\oexp(t).
\end{split}
\end{equation}
By \eqref{ineq:fcar}\eqref{ineq:jff}\eqref{ineq:finv},  it suffices to compute the major term 
\[\int_{G_{\epsilon,l}}\Cut f_{\glg^{-1}x'}(\glg x,t-\kappa(\glg))\dd\mu^{\otimes}(\glg).\] 

\textbf{Step 2:} Recall that $N_0,P(0)$ are the two operators defined by $N_0\varphi=\int \varphi\dd\nu,\ P(0)\varphi(x)=\int \varphi(gx)\dd\nu(g)$, where $\varphi$ is a function in $\cal H^\gamma(X)$. We have another property of transfer operators \cite[Lemma 11.18]{benoistquint}: The spectral radius of $P=P(0)$ restricted to $\ker N_0$ is less than 1, which means that there exist $\rho<1,C>0$ such that for every function $\varphi$ in $\cal H^{\gamma}(X)$, we have
\begin{equation*}
|P^n\varphi-\int \varphi \dd\nu|_\infty\leq C\rho^n|\varphi |_{\gamma}.
\end{equation*}
Thus by $\mu^{\otimes l}G_{\epsilon,l}=\oexp(l)$, we have
\begin{equation}\label{ineq:gelgnu}
|\int_{G_{\epsilon,l}}\varphi (\glg^{-1}x)\dd{\mu}^{\otimes l}-\int \varphi \dd\check{\nu}|=|\int_{G^{\times l}}\varphi (\glg^{-1}x)\dd\mu^{\otimes l}(\glg)-\int \varphi \dd\check{\nu}|+\oexp(l)|\varphi |_\infty=\oexp(l)|\varphi |_{\ck}.
\end{equation}
By the definition of $|\cdot|_\ck$ on $X\times\bb R^2$, the function $f_{\glg^{-1}x'}(x,v,u)$ has a finite $|\cdot|_\lf$ value. Together with $t-\kappa(\glg)\geq t/2\geq |K|+\delta$, Proposition \ref{prop:rescut} implies that
\begin{equation}
\begin{split}
&\int_{G_{\epsilon,l}}\Cut f_{\glg^{-1}x'}(\glg x,t-\kappa(\glg))\dd\mu^{\otimes}(\glg)\\
&=\int_{G_{\epsilon,l}}\left(\int_{X}\int_G\int_{-\sigma(h,y)}^{0} f_{\glg^{-1}x'}(y,\sigma(h,y),u)\dd u\dd\mu(h)\dd\nu(y)\dd\mu^{\otimes l}(\glg)+O_K(\delta+O_\delta/t)|f_{\glg^{-1}x'}|_\lf\right)\\
&=\int_{X}\int_G\int_{-\sigma(h,y)}^{0} \int_{G_{\epsilon,l}}f(\glg^{-1}x',y,\sigma(h,y),u)\dd\mu^{\otimes l}(\glg)\dd u\dd\mu(h)\dd\nu(y)+O_K(\delta+O_\delta/t)|f|_\ck).
\end{split}
\end{equation}
With $(x,v,u)$ fixed, $f(x',x,v,u)$ is a Lipschitz function on $x'$, so it is a H\"older function. Together with Lemma \ref{lem:resfin} and inequality \eqref{ineq:gelgnu}, we have
\begin{equation}
\begin{split}
&\int_{G_{\epsilon,l}}\Cut f_{\glg^{-1}x'}(\glg x,t-\kappa(\glg))\dd\mu^{\otimes}(\glg)\\
&=\int_{X}\int_G\int_{-\sigma(h,y)}^{0} \int_{X}f(u,\sigma(h,y),y,y')\dd\check\nu(y')\dd u\dd\mu(h)\dd\nu(y)+(\oexp(l)+O_K(\delta+O_\delta/t))|f|_\ck.
\end{split}
\end{equation}
The result follows. 
\end{proof}
It remains to prove Lemma \ref{lem:jff} and Lemma \ref{lem:ecf}.
\begin{proof}[Proof of Lemma \ref{lem:jff}]
There exist $S_{n+1,l,x}\subset G^{\times (n+1)}$ and $S_{n,l,x}\subset G^{\times n}$ which satisfy the conditions in Lemma \ref{lem:coccar}. Let $S_n(x)=S_{n+1,l,x}\cap (G\times S_{n,l,x})$. Then
\begin{equation}\label{ineq:clocar}
\mu^{\otimes(n+1)}S_n(x)^c=\oexp(l),
\end{equation}
and for $(g_{n+1},\dots,g_1)$ in $S_n(x)$,
we have
\begin{align*}
	&|\kappa(hg\glg)-\sigma(hg,\glg x)-\kappa(\glg)|\leq e^{-\epsilon l}\\
	&|\kappa(g\glg)-\sigma(g,\glg x)-\kappa(\glg)|\leq e^{-\epsilon l}.
\end{align*}

In $N_t^+(n)\cap S_n(x)\cap T_n(x,t)$, we can replace the Cartan projection by the cocycle with exponentially small error. Fortunately, the difference of this set with $N_t^+(n)$ and $T_n(x,t)$ has exponentially small measure. By definition, we have
\[N_t^+(n)\cap S_n(x)\subset \{\sigma(hg,\glg x)>t-e^{-l\epsilon}-\kappa(\glg),\sigma(g,\glg x)\leq t+e^{-l\epsilon}-\kappa(\glg) \},\] 
and 
\[ N_t^+(n)\supset\{\sigma(hg,\glg x)>t+e^{-l\epsilon}-\kappa(\glg),\sigma(g,\glg x)\leq t-e^{-l\epsilon}-\kappa(\glg) \}\cap S_n(x).\]
Therefore
\begin{align*}
	(N_t^+(n)\cap S_n(x)-T_n(x,t))\subset&\{\sigma(hg,\glg x)\in[-e^{-\epsilon l},0]+t-\kappa(\glg) \}\\
	&\cup\{\sigma(g,\glg x)\in [0,e^{-\epsilon l}]+t-\kappa(\glg) \},
\end{align*}
and
\begin{align*}
	(T_n(x,t)\cap S_n(x)-N_t^+(n))\subset&\{\sigma(hg,\glg x)\in[0,e^{-\epsilon l}]+t-\kappa(\glg) \}\\
	&\cup\{\sigma(g,\glg x)\in [-e^{-\epsilon l},0]+t-\kappa(\glg) \} .
\end{align*}
Hence, these imply that
\begin{equation*}
\begin{split}
&\mu^{\otimes(n+1)}(N_t^+(n)-N_t^+(n)\cap S_n(x)\cap T_n(x,t))\leq\mu^{\otimes(n+1)}S_n(x)^c\\
&\qquad\qquad+ \mu^{\otimes(n+1)}(N_t^+(n)\cap S_n(x)-T_n(x,t))\\
&\leq \oexp(l)+\mu^{\otimes(n+1)}\{\sigma(hg,\glg x)\in[-e^{-\epsilon l},0]+t-\kappa(\glg) \}\cup\{\sigma(g,\glg x)\in [0,e^{-\epsilon l}]+t-\kappa(\glg) \}.
\end{split}
\end{equation*}
and
\begin{equation*}
\begin{split}
&\mu^{\otimes(n+1)}(T_n(x,t)-N_t^+(n)\cap S_n(x)\cap T_n(x,t))\leq\mu^{\otimes(n+1)}S_n(x)^c\\
&\qquad\qquad+ \mu^{\otimes(n+1)}(T_n(x,t)\cap S_n(x)-N_t^+(n))\\
&\leq \oexp(l)+\mu^{\otimes(n+1)}\{\sigma(hg,\glg x)\in[0,e^{-\epsilon l}]+t-\kappa(\glg) \}\cup\{\sigma(g,\glg x)\in [-e^{-\epsilon l},0]+t-\kappa(\glg) \}. 
\end{split}
\end{equation*}
Moreover, for $(g_{n+1},\dots,g_1)$ in the set $N_t^+(n)\cap S_n(x)\cap T_n(x,t)$, the definition of $S_n(x)$ implies that
\begin{equation}\label{ineq:clofkgm}
\begin{split}
	&|f((hg\glg)^{-1}x',hg\glg x,\kappa(hg\glg)-\kappa(g\glg),\kappa(g\glg)-t)\\
	&\qquad-\glg f((hg)^{-1}x',hg\glg x,\sigma(h,g\glg x),\sigma(g,\glg x)-(t-\kappa(\glg)))|
	\leq e^{-\gamma l\epsilon}|f|_\ck.
\end{split}
\end{equation}
Thus, for $n\in[t/(\sigma_\mu+\epsilon)-1,t/(\sigma_\mu-\epsilon)]$, we have
\begin{equation*}
	\begin{split}
	&|\int_{N_t^+(n)} f((hg\glg)^{-1}x',hg\glg x,\kappa(hg\glg)-\kappa(g\glg),\kappa(g\glg)-t)\dd\mu^{\otimes(n+1)}\\
	&-\int_{T_n(x,t)}\glg f((hg)^{-1}x',hg\glg x,\sigma(h,g\glg x),\sigma(g,\glg x)-(t-\kappa(\glg)))\dd\mu^{\otimes(n+1)}|\\
	&\leq \mu^{\otimes}(N_t^+(n)-N_t^+(n)\cap S_n(x)\cap T_n(x,t))\cup(T_n(x,t)-N_t^+(n)\cap S_n(x)\cap T_n(x,t))\\
	&\qquad +\mu^{\times(n+1)}{N_t^+(n)\cap S_n(x)\cap T_n(x,t)}\oexp(l)|f|_\ck\\
	&\leq (\oexp(l)+\mu^{\otimes(n+1)}\{|\sigma(hg,\glg x)-t+\kappa(\glg)|,
	|\sigma(g,\glg x)- t+\kappa(\glg)|\leq e^{-l\epsilon} \})|f|_\infty+\oexp(l)|f|_\ck.
	\end{split}
\end{equation*}
Sum up over all $n\in[t/(\sigma_\mu+\epsilon)-1,t/(\sigma_\mu-\epsilon)]$. Then the above inequality becomes
\begin{equation}\label{ineq:ntntn}
\begin{split} 
|\sum_{n=[t/(\sigma_{\mu}+\epsilon)]}^{[t/(\sigma_\mu-\epsilon)]}\int_{N_t^+(n)} f((hg\glg)^{-1}x',hg\glg x,\kappa(hg\glg)-\kappa(g\glg),\kappa(g\glg)-t)\dd\mu^{\otimes(n+1)}(hg\glg)|\\
-\sum_{n=[t/(\sigma_{\mu}+\epsilon)]}^{[t/(\sigma_\mu-\epsilon)]}\int_{T_n}\glg f((hg)^{-1}x',hg\glg x,\sigma(h,g\glg x),\sigma(g,\glg x)-(t-\kappa(\glg)))\dd\mu^{\otimes(n+1)}(hg\glg)\\
\leq t\oexp(l)|f|_\ck+|f|_{\infty}\int_{G^{\times l}}2 R(\bm 1_{[-e^{-\epsilon l},e^{-\epsilon l}]})(\glg x,t-\kappa(\glg))\dd\mu^{\otimes l}(\glg).
\end{split}
\end{equation}

By \eqref{ineq:lardev1}, \eqref{ineq:xgmx}, we have $\mu^{\otimes l}G_{\epsilon,l}\geq 1- \oexp(l)$. Thus combined with Lemma \ref{lem:resfin}, we get
\begin{align*}
\sum_{n\geq l}\mu^{\otimes(n+1)}(T_n(x,t)-T_{n,\epsilon}(x,t))=\int_{G_{\epsilon,l}^c}E\bm 1(\glg x,t-\kappa(\glg))\dd\mu^{\otimes l}(\glg)=\oexp(l).
\end{align*}
This enables us to replace the integration domain $T_n$ by $T_{n,\epsilon}$ with exponentially small error. It is sufficient to control the right hand side of \eqref{ineq:ntntn}.

The last term can be bounded by the similar argument as in \eqref{ineq:intren}, with Proposition \ref{prop:cocind} replaced by inequality \eqref{ineq:renintsma}. It follows that
\begin{equation}
\int_{G^{\times l}} 2R(\bm 1_{[-e^{-\epsilon l},e^{-\epsilon l}]})(\glg x,t-\kappa(\glg))\dd\mu^{\otimes l}(\glg)=\oexp(l)+\delta O(1+O_\delta/t).
\end{equation}
The proof is complete.
\end{proof}
\begin{proof}[Proof of Lemma \ref{lem:ecf}]
We want to replace $(hg\glg)^{-1}x'$ with $(\glg)^{-1}x'$ in the first coordinate in order to find the residue process with cutoff. The idea is always similar. We have a good approximation in a large set, whose complement has exponentially small measure. Let 
\[\Sigma_l=\bigcup_{n\geq 0}\{(h,g)\in G\times G^{\times n}|\sigma(g,\glg x)<t-\kappa(\glg)\leq \sigma(hg,jx), d((hg)^{-1}x',x^M_\glg)\leq e^{-\epsilon l } \}.\]
Since $t-\kappa(\glg)\geq t-(\sigma_\mu+\epsilon)l\geq 10\epsilon l$ and $t-\kappa(\glg)\geq t/2>10$, we can use Lemma \ref{lem:resindinv} with $s=\epsilon l$ and $jx,x',x_\glg^M$ to obtain
\begin{equation}\label{ineq:resindinvj}
\begin{split}
\mu\otimes\bar{\mu}\Sigma_l=\oexp(l).
\end{split}
\end{equation}
The definition of $G_{\epsilon,l}$ implies that $d(x_\glg^M,x')\geq e^{-\epsilon l}$ and $\kappa(\glg)\geq (\sigma_{\mu}-\epsilon)l$. It follows from \eqref{equ:xgmxginv} that $x_{\glg}^M=x_{\glg^{-1}}^m$. Together with \eqref{equ:distance},\eqref{ineq:coccardis}, for $(h,g)$ outside of the set $\Sigma_l$, we have
\begin{equation*}
\begin{split}
&d((hg\glg)^{-1}x',\glg^{-1}x')=d(\glg^{-1}(hg)^{-1}x',\glg^{-1}x')\\
&\leq \exp(-2\kappa(\glg^{-1})-\log d(x_{\glg^{-1}}^m,x')-\log d(x_{\glg^{-1}}^m,(hg)^{-1}x'))\leq \exp(-2(\sigma_\mu-\epsilon)l+2\epsilon l).
\end{split}
\end{equation*}
Therefore 
\begin{equation}\label{ineq:fgamma}
	|f(\glg^{-1}x',x,v,u)-f((hg\glg)^{-1}x',x,v,u)|\leq |f|_\ck d(\glg^{-1}x',(hg\glg)^{-1}x') =|f|_\ck\oexp(l). 
\end{equation}
In the bad part $\Sigma_l$, we use inequality \eqref{ineq:resindinvj} to control. Outside of $\Sigma_l$, we apply inequality \eqref{ineq:fgamma}. Thus we have
\begin{equation*}
\begin{split}
|\sum_{n\geq 0}&\int_{\sigma(hg,\glg x)>t-\kappa(\glg)\geq\sigma(g,\glg x)}
f(\glg^{-1}x',hg\glg x,\sigma(h,g\glg x),\sigma(g,\glg x)+\kappa(\glg)-t)\\
&-f((hg\glg)^{-1}x',hg\glg x,\sigma(h,g\glg x),\sigma(g,\glg x)+\kappa(\glg)-t)\dd\mu(h)\dd\mu^{*n}(g)|\\
&\leq |f|_\ck (\oexp(l)+\oexp(l)\Cut\bm 1(\glg x,t-\kappa(\glg))).
\end{split}
\end{equation*}
Then by Lemma \ref{lem:resfin}, the proof is complete.
\end{proof}
\begin{rem}[Minus case]
	Let \[\Car^- f(x',x,t)=\sum_{n\geq 0}\int_{\kappa(g)\geq t> \kappa(hg) } f((hg)^{-1}x',hgx,\kappa(hg)-\kappa(g),\kappa(g)-t)\dd\mu(h)\dd\mu^{*n}(g). \]
	Then by the same proof, we have
	\begin{repprop}{prop:rescar} Under the assumptions of Proposition \ref{prop:rescar}, we have
		\begin{equation*}
		\begin{split}
		\Car^- f(x',x,t)=\int_{X^2}\int_G\int^{-\sigma(h,y)}_{0} f(y',hy,\sigma(h,y),u)\dd u\dd\mu(h)\dd\nu(y)\dd\check\nu(y')+O_K(\delta+O_\delta/t)| f|_\lf.
		\end{split}
		\end{equation*}
	\end{repprop}
\end{rem}

\section{Main Approximation}
\label{secapproxi}

In this section, we want to complete the proof in Section \ref{secdecfou}. It remains to prove Proposition \ref{prop:mainapp} and the following Lemma \ref{lem:ntfin} and Corollary \ref{cor:ntsma}.

Recall the definitions in Section \ref{secdecfou}: Let $\mu$ be a Borel probability measure on $\slr$ with a finite exponential moment, and assume that the subgroup $\Gamma_{\mu}$ is Zariski dense. Let $\Sigma=\bigcup_{n\in\bb N}G^{\times n}$ be the symbol space of all finite sequences with elements in $G$.
Let $\bar{\mu}$ be the measure on $\Sigma$ defined by \[\bar{\mu}=\sum_{n=0}^{+\infty}\mu^{\otimes n}, \text{ where }\mu^{\otimes 0}=\delta_{\emptyset}.\]

Let the integer $\omega(g)$ be the length of an element $g$ in $\Sigma$. Let $T$ be the shift map on $\Sigma$, defined by $Tg=T(g_1,g_2,\dots,g_{\omega})=(g_1,g_2,\dots,g_{\omega-1})$, when $\omega(g)\geq 2$,  and $Tg=\emptyset$, when $\omega(g)=1,0$.
Let $L$ be the left shift map on $\Sigma$, defined by $Lg=L(g_1,g_2,\dots,g_{\omega})=(g_2,\dots,g_{\omega-1},g_{\omega})$, when $\omega(g)\geq 2$, and $Lg=\emptyset$, when $\omega(g)=1,0$.

The sets $M_t^+, N_t^+$ are defined by 
\begin{align*}
& M_t^+=\{g\in\Sigma|\ \kappa(Tg)< t\leq \kappa(g) \},\\
& N_t^+=\iota(M_t^+)=\{g\in\Sigma|\kappa(Lg)< t\leq \kappa(g) \},
\end{align*}
where $\iota(M)$ equals $\{g^{-1}|g\in M \}$ for any subset $M$ of $\Sigma$. 

Let $\check{\mu}$ be the pushforward of $\mu$ by the inverse action. It also satisfies the assumptions of Theorem \ref{thm:fourier}. By definition $\bar{\mu}(M_t^+)=\bar{\check{\mu}}(N_t^+)$.

For $x,y$ in $X$, write $s_1=\epsilon_3s$ and
\begin{equation*}
\begin{split}
M_t^+(x,y)=\{g\in M_t^+||\kappa(g)-\kappa(Tg)|< s_1,d(x_g^m,g^{-1} x)<e^{-t},
d(g^{-1} x,x),d(g^{-1} x,y)>2e^{- s_1} \}.
\end{split}
\end{equation*}

We need some regularity properties of $N_t^+$. These lemmas are of the same type as the ones with the cocycle, using the Cartan projection instead. The correspondences are: Lemma \ref{lem:ntlar} with Lemma \ref{lem:reslar}, Lemma \ref{lem:ntfin} with Lemma \ref{lem:resfin}, Lemma \ref{lem:ntind} with Lemma \ref{lem:resind}. In fact, for all the regularity properties, there are similar versions for the Cartan projection. The subadditivity is sufficient. We follow the same procedure as in  the proof for the cocycle.
\begin{lem}\label{lem:ntlar} For $s$ in $\bb R$, we have
	\begin{align}
	\bar{\mu}\{g\in N_t^+||\kappa(g)-\kappa(Lg)|>s \}=\oexp(s).
	\end{align}
\end{lem}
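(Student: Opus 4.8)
The plan is to mirror the proof of Lemma~\ref{lem:reslar}, with the Cartan projection playing the role of the cocycle: I will bound the jump $|\kappa(g)-\kappa(Lg)|$ by the size of the first letter $g_1$, reduce to estimating a Cartan renewal sum uniformly in $t$ (for which Lemma~\ref{lem:carint} is the right tool), and then absorb the first letter using the finite exponential moment.

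First I would record the elementary bound $|\kappa(g)-\kappa(Lg)|\le\kappa(g_1)$. Writing $g=(g_1,\dots,g_\omega)$, $h=g_1$ and $g'=Lg=(g_2,\dots,g_\omega)$, so that $\kappa(g)=\kappa(hg')$ and $\kappa(Lg)=\kappa(g')$, the subadditivity of the Cartan projection together with $\kappa(h^{-1})=\kappa(h)$ gives both $\kappa(hg')\le\kappa(h)+\kappa(g')$ and $\kappa(g')\le\kappa(h)+\kappa(hg')$, whence the claim. Hence it suffices to show $\bar\mu\{g\in N_t^+\mid\kappa(g_1)>s\}=\oexp(s)$. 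I would then decompose this set according to its first letter: since $\bar\mu$ restricted to length-$(n+1)$ words is $\mu^{\otimes(n+1)}$, and membership $g\in N_t^+$ reads $\kappa(g')<t\le\kappa(hg')$,
\[
\bar\mu\{g\in N_t^+\mid\kappa(g_1)>s\}=\int_{\kappa(h)>s}\ \sum_{n\ge 0}\mu^{*n}\{g'\in G\mid\kappa(g')<t\le\kappa(hg')\}\,\dd\mu(h).
\]
For fixed $h$, $t\le\kappa(hg')\le\kappa(h)+\kappa(g')$ forces $\kappa(g')\ge t-\kappa(h)$, and combined with $\kappa(g')<t$ this confines $\kappa(g')$ to $[t-\kappa(h),t]$; therefore the inner sum is at most $\sum_{n\ge 0}\mu^{*n}\{\kappa(g')\in[t-\kappa(h),t]\}=R_P(\bm 1_{[0,\kappa(h)]})(t-\kappa(h))\lesssim\max\{1,\kappa(h)^2\}$ by Lemma~\ref{lem:carint}, with the implied constant independent of $t$.

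Finally I would conclude with the tail estimate: since $\max\{1,u^2\}\le C_{\epsilon_1}e^{\epsilon_1 u/2}$ for all $u\ge 0$ and $\int e^{\epsilon_1\kappa(h)}\,\dd\mu(h)=M_\mu<\infty$, one gets for every $s\in\bb R$
\[
\bar\mu\{g\in N_t^+\mid\kappa(g_1)>s\}\lesssim\int_{\kappa(h)>s}\max\{1,\kappa(h)^2\}\,\dd\mu(h)\le C_{\epsilon_1}\int_{\kappa(h)>s}e^{\epsilon_1\kappa(h)/2}\,\dd\mu(h)\le C_{\epsilon_1}M_\mu\,e^{-\epsilon_1 s/2},
\]
where for $s<0$ the last inequality is trivial since $e^{-\epsilon_1 s/2}\ge 1$. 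This is $\oexp(s)$, completing the proof. I do not anticipate a genuine obstacle; the one point requiring care is that the renewal bound must hold uniformly in $t$, including small $t$, which is exactly why Lemma~\ref{lem:carint} (rather than Proposition~\ref{prop:renreg}, effective only for large $t$) is invoked.
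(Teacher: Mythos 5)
Your proof is correct and follows essentially the same route as the paper: bound the jump by $\kappa(g_1)$ via subadditivity, reduce to the uniform-in-$t$ Cartan renewal bound of Lemma~\ref{lem:carint}, and finish with the exponential moment. The only difference is that you spell out the tail integral explicitly, which the paper leaves implicit.
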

\begin{proof}
	Subadditivity of Cartan projection implies $\kappa(g_\omega)\geq|\kappa(g_\omega\cdots g_1)-\kappa(g_{\omega-1}\cdots g_1)|=|\kappa(g)-\kappa(Lg)|>s$ and $\kappa(Lg)\geq\kappa(g)-\kappa(g_\omega)$. Then
	\begin{align*}
		&\quad\bar{\mu}\{g\in N_t^+||\kappa(g)-\kappa(Lg)|>s \}\\
		&=\sum_{n\geq 0}\mu\otimes\mu^{* n}\{(h,g)\in G\times G|\kappa(g)<t\leq\kappa(hg),|\kappa(hg)-\kappa(g)|>s \}\\
		&\leq \sum_{n\geq 0}\mu\otimes\mu^{* n}\{(h,g)\in G\times G|t-\kappa(h)\leq\kappa(g)<t,\kappa(h)>s \}\\
		&=\int_{\kappa(h)>s}R_p(\bm 1_{[-\kappa(h),0]})(t)\dd\mu(h).
	\end{align*}
	By Lemma \ref{lem:carint} and finite exponential moment, we have
	\begin{align*}
		\bar{\mu}\{g\in N_t^+||\kappa(g)-\kappa(Lg)|>s \}\lesssim\int_{\kappa(h)>s}\max\{1,\kappa(h)^2 \}\dd\mu(h)=\oexp(s).
	\end{align*}
	The proof is complete.
\end{proof}
A special case is when $s=0$. Applying the above lemma with $\bar{\check{\mu}}$, we have
\begin{lem}\label{lem:ntfin}
	The measure $\bar{\mu}(M_t^+)=\bar{\check{\mu}}(N_t^+)$ is  uniformly bounded with $t$.
\end{lem}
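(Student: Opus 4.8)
The plan is to obtain this as the case $s=0$ of Lemma \ref{lem:ntlar}, applied to the reversed measure. Recall from the setup that $\check\mu$, the pushforward of $\mu$ by the inverse action, also satisfies the assumptions of Theorem \ref{thm:fourier}, and that $\bar\mu(M_t^+)=\bar{\check\mu}(N_t^+)$; moreover $N_t^+=\{g\in\Sigma\mid\kappa(Lg)<t\le\kappa(g)\}$ is a fixed subset of $\Sigma$, defined only through $\kappa$. Hence Lemma \ref{lem:ntlar} applies verbatim with $\bar\mu$ replaced by $\bar{\check\mu}$, giving $\bar{\check\mu}\{g\in N_t^+\mid|\kappa(g)-\kappa(Lg)|>s\}=\oexp(s)$ for all $s\in\bb R$. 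I would also record that this estimate is uniform in $t$: the proof of Lemma \ref{lem:ntlar} bounds the left-hand side by $\int_{\kappa(h)>s}R_P(\bm 1_{[-\kappa(h),0]})(t)\,\dd\mu(h)$, and Lemma \ref{lem:carint} controls $R_P(\bm 1_{[0,s]})(t)\lesssim\max\{1,s^2\}$ with no dependence on $t$, after which the finite exponential moment makes the $h$-integral converge.

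Next I would observe that for every $t>0$ and every $g\in N_t^+$ the defining inequalities $\kappa(Lg)<t\le\kappa(g)$ force $\kappa(g)>\kappa(Lg)$, i.e. $|\kappa(g)-\kappa(Lg)|>0$. The short sequences cause no trouble: for $t>0$ the empty sequence is never in $N_t^+$, and for $\omega(g)=1$ one simply has $\kappa(Lg)=0<t\le\kappa(g)$. Consequently
\[
N_t^+=\{g\in N_t^+\mid|\kappa(g)-\kappa(Lg)|>0\},
\]
so applying Lemma \ref{lem:ntlar} to $\check\mu$ at $s=0$ yields
\[
\bar\mu(M_t^+)=\bar{\check\mu}(N_t^+)=\bar{\check\mu}\{g\in N_t^+\mid|\kappa(g)-\kappa(Lg)|>0\}=\oexp(0)=O(1),
\]
with the implied constant independent of $t$, which is exactly the claim.

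There is no genuine obstacle here; the lemma is essentially a corollary of Lemma \ref{lem:ntlar}. The only two points deserving a line of justification are the uniformity in $t$ of the constant supplied by Lemma \ref{lem:ntlar} (traced back through its proof to Lemma \ref{lem:carint}) and the elementary remark that membership in $N_t^+$ already entails $|\kappa(g)-\kappa(Lg)|>0$, so that specializing to $s=0$ loses nothing.
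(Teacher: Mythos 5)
Your proposal is correct and follows exactly the paper's own route: the paper derives Lemma \ref{lem:ntfin} precisely by specializing Lemma \ref{lem:ntlar} to $s=0$ for the measure $\bar{\check\mu}$, noting that $\kappa(Lg)<t\le\kappa(g)$ already forces $|\kappa(g)-\kappa(Lg)|>0$ so nothing is lost. Your extra remarks on the uniformity in $t$ (traced through Lemma \ref{lem:carint}) and on the short sequences are sound and merely make explicit what the paper leaves implicit.
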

The following lemma quantifies the independence of the scalar part and the angle part of residue process for the Cartan projection.
\begin{lem}\label{lem:ntind}
	For $s>0$, $t>10s$  and $x,x_o\in X$, we have
	\begin{align}
	\bar{\mu}\{g\in N_t^+|d(x_g^M,gx)\geq e^{-t} \}=\oexp(t),\\
	\bar{\mu}\{g\in N_t^+|d(gx_o,x)\leq e^{-s} \}=\oexp(s).
	\end{align}
\end{lem}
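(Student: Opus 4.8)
The two inequalities will be deduced, respectively, from the regularity of the Cartan projection along products of length $\approx t/\sigma_\mu$ (Corollary \ref{cor:regcon}) and from a decomposition according to the first letter combined with the Cartan renewal bounds of Section \ref{secrentheory}. Throughout, I fix $\epsilon=\sigma_\mu/4$, so that $2\epsilon<\sigma_\mu$, and I recall from Lemma \ref{lem:ntfin} that the total $\bar\mu$-mass of $N_t^+$ is bounded uniformly in $t$ (the role of $\mu$ and $\check\mu$ being symmetric here); in particular both statements are trivial when $t$ stays bounded, so I may freely assume $t$ is as large as needed.

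\emph{First inequality.} The plan is to split $N_t^+$ according to the length $m=\omega(g)$, following Principle 1 of Subsection \ref{sec:regular}. Since $N_t^+\cap G^{\times m}$ is contained in $\{\kappa(g)\geq t\}$ and also, through $Lg$ (a product of length $m-1$), in $\{\kappa(Lg)<t\}$, Corollary \ref{cor:lardev1} bounds the total contribution of lengths $m\leq[t/(\sigma_\mu+\epsilon)]$ by $\sum_{m\leq[t/(\sigma_\mu+\epsilon)]}\mu^{*m}\{\kappa(g)\geq t\}=\oexp(t)$, and Corollary \ref{cor:lardev}, applied to $Lg$, bounds the contribution of lengths $m\geq[t/(\sigma_\mu-\epsilon)]+1$ by $\sum_{m'\geq[t/(\sigma_\mu-\epsilon)]}\mu^{*m'}\{\kappa(g')<t\}=\oexp(t)$. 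For the remaining finitely many lengths $m$ with $[t/(\sigma_\mu+\epsilon)]\leq m\leq[t/(\sigma_\mu-\epsilon)]$ I would forget the constraint $g\in N_t^+$ and estimate
\[\bar\mu\{g\in N_t^+\cap G^{\times m}\,|\,d(x_g^M,gx)\geq e^{-t}\}\leq\mu^{*m}\{g\in G\,|\,d(x_g^M,gx)\geq e^{-t}\};\]
since $m\geq t/(\sigma_\mu+\epsilon)\geq t/(2\sigma_\mu-\epsilon)$ — this is exactly where $2\epsilon<\sigma_\mu$ enters — inequality \eqref{ineq:xgminv1} of Corollary \ref{cor:regcon} gives that the right-hand side is $\oexp(t)$, uniformly in $x$ and $m$. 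Summing over the $\lesssim t$ admissible values of $m$ preserves the bound $\oexp(t)$, which is the first claim (note it uses neither $s$ nor $x_o$).

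\emph{Second inequality.} Here the plan is to decompose $g=(g_1,Lg)$ according to its first letter, which is the Cartan-projection analogue of the proof of Lemma \ref{lem:resind}. Writing $h=g_1$ and $g'=Lg$ and passing to products, we have
\[\bar\mu\{g\in N_t^+\,|\,d(gx_o,x)\leq e^{-s}\}=\sum_{n\geq 0}(\mu\otimes\mu^{*n})\{(h,g')\,|\,\kappa(g')<t\leq\kappa(hg'),\ d(hg'x_o,x)\leq e^{-s}\}.\]
I would split the integration over $h$ at $\kappa(h)=s$. On $\{\kappa(h)\geq s\}$, for fixed $h$ subadditivity of $\kappa$ forces $\kappa(g')\in[t-\kappa(h),t)$, so, dropping the constraint on $hg'x_o$, the inner sum over $g'$ is at most $R_P(\bm 1_{[-\kappa(h),0]})(t)\lesssim\max\{1,\kappa(h)^2\}$ by Lemma \ref{lem:carint}; integrating against $\mu$ and using the exponential moment bound \eqref{ineq:moment} shows this part is $\lesssim\int_{\kappa(h)\geq s}\max\{1,\kappa(h)^2\}\dd\mu(h)=\oexp(s)$. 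On $\{\kappa(h)<s\}$, subadditivity now gives $\kappa(hg')\in[t,t+s)$, so keeping only the conditions $\kappa(hg')\in[t,t+s)$ and $d(hg'x_o,x)\leq e^{-s}$ and passing to the product of length $m=n+1$, this part is bounded by $R_P(\bm 1_{B(x,e^{-s})\times[0,s]})(x_o,t)=(1+s)^2\oexp(s)=\oexp(s)$ by Lemma \ref{lem:carind}, where the hypothesis $t>10s$ is used. Adding the two contributions gives the second claim.

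I do not expect a serious obstacle: this is a transcription of the cocycle statements of Subsections \ref{sec:regular}--\ref{subsecrespro} with $\sigma$ replaced by $\kappa$, made possible by the subadditivity of the Cartan projection. The steps requiring a little care are (i) using subadditivity in both directions to localize $\kappa(g')$ and $\kappa(hg')$ once the first letter is fixed — this replaces the explicit ``last jump'' variable appearing in Lemma \ref{lem:resind}; (ii) the length-independent bound Lemma \ref{lem:carint}, which absorbs the exponentially rare atypically contracting first letters; and (iii) the choice $\epsilon=\sigma_\mu/4$, which guarantees that every length in the relevant window exceeds $t/(2\sigma_\mu-\epsilon)$ so that \eqref{ineq:xgminv1} is applicable. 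The uniformity in $x,x_o$ is automatic, since every estimate invoked is uniform in the base points.
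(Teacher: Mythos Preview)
Your proof is correct and follows essentially the same approach as the paper: the first inequality is handled exactly as the paper indicates, via Principle~1 (Corollaries \ref{cor:lardev}, \ref{cor:lardev1} outside the window and Corollary \ref{cor:regcon}, specifically \eqref{ineq:xgminv1}, inside it), and the second inequality is the Cartan analogue of Lemma \ref{lem:resind}. The only cosmetic difference is that for the large-jump part of the second inequality you inline the argument of Lemma \ref{lem:ntlar} (subadditivity, Lemma \ref{lem:carint}, exponential moment) rather than citing it, which is perfectly fine.
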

The proof of the second inequality follows the same procedure as in the proof of Lemma \ref{lem:resind}, replacing Lemma \ref{lem:reslar} and Proposition \ref{prop:cocind} with Lemma \ref{lem:ntlar} and Lemma \ref{lem:carind}. The first inequality is standard, using Principle 1 and Principle 2. When $n\in\nte$, use Corollary \ref{cor:regcon}, and when $n$ is outside of this interval, use Corollary \ref{cor:lardev} and Corollary \ref{cor:lardev1}.

Joining Lemma \ref{lem:ntlar} and Lemma \ref{lem:ntind}, we have the following corollary
\begin{cor}
Let $s>0$, $t>10s$ and let $x,y$ be in $X$. Let
\begin{equation}
\begin{split}
N_t^+(x,y)=\{g\in N_t^+||\kappa(g)-\kappa(Lg)|< s,d(x_g^M,g x)<e^{- t},
d(g x,x),d(g x,y)>2e^{- s} \}.
\end{split}
\end{equation}
Then we have
\begin{equation}
\bar{\mu}(N_t^+)-\bar{\mu}(N_t^+(x,y))=\oexp(s).
\end{equation}
\end{cor}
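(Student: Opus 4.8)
The plan is to deduce the corollary from a single union bound on the complementary set $N_t^+\setminus N_t^+(x,y)$. By the definition of $N_t^+(x,y)$, a sequence $g\in N_t^+$ fails to lie in $N_t^+(x,y)$ precisely when at least one of the four defining inequalities is violated, so
\[N_t^+\setminus N_t^+(x,y)\subset A_1\cup A_2\cup A_3\cup A_4,\]
where $A_1=\{g\in N_t^+:|\kappa(g)-\kappa(Lg)|\geq s\}$, $A_2=\{g\in N_t^+:d(x_g^M,gx)\geq e^{-t}\}$, $A_3=\{g\in N_t^+:d(gx,x)\leq 2e^{-s}\}$, and $A_4=\{g\in N_t^+:d(gx,y)\leq 2e^{-s}\}$. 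It then suffices to show that each $\bar\mu(A_i)=\oexp(s)$ and add.

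First I would handle $A_1$ directly by Lemma \ref{lem:ntlar}, which gives $\bar\mu(A_1)=\oexp(s)$. Next, Lemma \ref{lem:ntind} (first inequality) gives $\bar\mu(A_2)=\oexp(t)$; since the hypothesis $t>10s$ turns any exponential decay in $t$ into exponential decay in $s$, this yields $\bar\mu(A_2)=\oexp(s)$. For $A_3$ and $A_4$ I would invoke the second inequality of Lemma \ref{lem:ntind}, once with $x_o=x$ and reference point $x$, and once with $x_o=x$ and reference point $y$. The only cosmetic point is that the lemma is stated with radius $e^{-s}$ whereas here the radius is $2e^{-s}=e^{-(s-\log 2)}$; applying the lemma with $s$ replaced by $s-\log 2$ (legitimate since $t>10s>10(s-\log 2)$) gives $\bar\mu(A_3),\bar\mu(A_4)=\oexp(s-\log 2)=\oexp(s)$.

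Summing the four estimates gives $\bar\mu(N_t^+)-\bar\mu(N_t^+(x,y))=\bar\mu(N_t^+\setminus N_t^+(x,y))=\oexp(s)$, which is the claim. I do not expect any genuine obstacle: all the substance is already contained in Lemmas \ref{lem:ntlar} and \ref{lem:ntind}, and the corollary is just their conjunction, combined with the elementary remarks that $t>10s$ converts $t$-exponential bounds into $s$-exponential bounds and that the constant factor $2$ inside the radius is absorbed by shrinking $s$ by $\log 2$.
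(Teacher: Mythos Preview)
Your proof is correct and matches the paper's approach exactly: the paper simply states that the corollary follows by ``joining Lemma \ref{lem:ntlar} and Lemma \ref{lem:ntind}'', and your union bound on the four failure events is precisely how one joins them. The cosmetic adjustments you flag (converting $\oexp(t)$ to $\oexp(s)$ via $t>10s$, and absorbing the factor $2$ in the radius by shifting $s$ by $\log 2$) are all that is needed.
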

\begin{cor}\label{cor:ntsma}
For $s>0$, $t>10s$ and $x,y$ in $X$, we have
 \[\bar{\mu}(M_t^+)-\bar{\mu}(M_t^+(x,y))=\oexp(s).\]
\end{cor}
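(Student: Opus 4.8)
The plan is to deduce Corollary~\ref{cor:ntsma} from the immediately preceding corollary (the one comparing $\bar\mu(N_t^+)$ with $\bar\mu(N_t^+(x,y))$) simply by transporting everything through the inversion map $\iota$ on $\Sigma$. First I would recall the bookkeeping facts from the excerpt: $\check\mu$, the pushforward of $\mu$ by $g\mapsto g^{-1}$, again satisfies the hypotheses of Theorem~\ref{thm:fourier}; the measure $\bar\mu$ pushes forward to $\bar{\check\mu}$ under $\iota$, so $\bar\mu(M_t^+)=\bar{\check\mu}(N_t^+)$ and, for any subset $M\subset\Sigma$, $\bar\mu(M)=\bar{\check\mu}(\iota(M))$; in particular $\bar\mu\bigl(M_t^+(x,y)\bigr)=\bar{\check\mu}\bigl(\iota(M_t^+(x,y))\bigr)$.

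The key step is to identify $\iota(M_t^+(x,y))$ explicitly. For $g\in\Sigma$ put $h=g^{-1}$. Then $\kappa(h)=\kappa(g)$; since $(Tg)^{-1}=Lh$ we get $\kappa(Lh)=\kappa(Tg)$; by \eqref{equ:xgmxginv} we have $x^M_h=x^m_g$; and $hx=g^{-1}x$. Substituting these identities into the definition of $M_t^+(x,y)$ (which uses the parameter $s_1=\epsilon_3 s$) shows that $g\in M_t^+(x,y)$ if and only if $h=g^{-1}$ satisfies $\kappa(Lh)<t\le\kappa(h)$ (i.e.\ $h\in N_t^+$), together with $|\kappa(h)-\kappa(Lh)|<s_1$, $d(x^M_h,hx)<e^{-t}$, and $d(hx,x),d(hx,y)>2e^{-s_1}$. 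In other words, $\iota(M_t^+(x,y))$ is precisely the set $N_t^+(x,y)$ of the preceding corollary, but with its parameter $s$ replaced by $s_1$.

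It then remains to assemble the pieces. Since $\epsilon_3\le 1/10$ we have $10s_1=10\epsilon_3 s\le s<t$, so the preceding corollary applies to the measure $\check\mu$ with parameter $s_1$, giving
\[
\bar\mu(M_t^+)-\bar\mu\bigl(M_t^+(x,y)\bigr)
=\bar{\check\mu}(N_t^+)-\bar{\check\mu}\bigl(\iota(M_t^+(x,y))\bigr)
=\oexp(s_1).
\]
Finally $\oexp(s_1)=O(e^{-\epsilon' s_1})=O(e^{-\epsilon'\epsilon_3 s})$ for a uniform $\epsilon'>0$, and since $\epsilon_3$ is a fixed positive constant this is $\oexp(s)$, which is the assertion. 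There is no genuinely hard step here: the argument is pure bookkeeping, and the only point requiring care is to track the change of parameter from $s$ to $s_1=\epsilon_3 s$ and to verify that the identities $x^m_g=x^M_{g^{-1}}$, $(Tg)^{-1}=L(g^{-1})$, $\kappa(g)=\kappa(g^{-1})$ and $\bar\mu(M_t^+)=\bar{\check\mu}(N_t^+)$ align the two sets correctly, using the already-noted fact that $\check\mu$ inherits the hypotheses of Theorem~\ref{thm:fourier} so the preceding corollary may be invoked for it.
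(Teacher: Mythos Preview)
Your proof is correct and follows essentially the same approach as the paper, which also transports $M_t^+(x,y)$ to an $N_t^+$-type set via $\iota$ and invokes the preceding corollary for $\check\mu$. In fact you are more careful than the paper: you make explicit that $\iota(M_t^+(x,y))$ is the set $N_t^+(x,y)$ with parameter $s_1=\epsilon_3 s$ rather than $s$, verify the hypothesis $t>10s_1$, and note that $\oexp(s_1)=\oexp(s)$ since $\epsilon_3$ is a fixed positive constant.
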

\begin{proof}
By definition, we have
\[\bar{\mu}(M_t^+)-\bar{\mu}(M_t^+(x,y))=\bar{\check{\mu}}(N_t^+)-\bar{\check{\mu}}(N_t^+(x,y)).\]
Applying the above corollary with $\bar{\check{\mu}}$, we have completed the proof.
\end{proof}

We start to proof Proposition \ref{prop:mainapp}. The central tool here is Lemma \ref{lem:carcoc}, which enables us to replace the cocycle with the sum of the scalar part and the angle part.
\begin{proof}[Proof of Proposition \ref{prop:mainapp}]
We first replace the distance with the cocycle. By hypothesis, we have
\[d(x_g^m,x)\geq d(g^{-1} x,x)-d(x_g^m,g^{-1} x)\geq 2e^{-  s_1}-e^{-t}\geq e^{-s_1}. \]
Using the same argument, we have $d(x_g^m,y),d(x_g^m, x)\geq e^{-  s_1}$. Then \eqref{equ:distance} and \eqref{ineq:coccardis} imply
	\begin{align*}
	d(gx,gy)=d(x,y)\exp(-\sigma(g,x)-\sigma(g,y))\leq\frac{\exp(-2\kappa(g))}{d(x_g^m,x)d(x_g^m,y)}\leq e^{-2(t- s_1)}.
	\end{align*}
Applying the Newton-Leibniz formula \eqref{equ:newcir} to $\phi$ at $gx,gy$, we have
	\begin{align*}
	\phi( gx)-\phi( gy)=\sgn(gx,gy)\int_{ gx\smallfrown gy}\phi'(\theta)\dd\theta.
	\end{align*}
Since $\kappa(g)>t>s_1$, we have $d(x_g^m,x)\geq e^{-s_1}\geq e^{-\kappa(g)}$. Then \eqref{equ:sgngxy} implies that
\[ \phi( gx)-\phi( gy))=\sgn(x,y,x_g^m)\int_{ gx\smallfrown gy}\phi'(\theta)\dd\theta.\]
We need the arc length distance $d_a(\cdot,\cdot)$ on $\bb R/\pi\bb Z$.
Since $d(gx,gy)\leq e^{-2(t-s_1)}$, for $\theta$ in the small arc $ gx\smallfrown gy$, we have $d_a(\theta, g x)\leq e^{-2(t- s_1)}$. Therefore
\begin{align}
|\phi( gx)-\phi( gy)-\sgn(x,y,x_g^m)\phi'( g x)d_a( gx, gy)|\leq |\phi''|_\infty e^{-4(t- s_1)}.
\end{align}
By equality $\sin d_a( gx, gy)=d(gx,gy)$, we have
\[|d_a( gx, gy)-d(gx,gy)|=O(d(gx,gy)^3). \]
So we can replace the arc length distance with the sine distance.
Again by hypothesis, we have $d(x_g^m,g^{-1} x)\leq e^{-  t}<d(g^{-1} x,x),d(g^{-1} x,y)$. When changing $x_g^m$ to $g^{-1} x$, the relative place with respect to $x,y$ does not change, therefore we get
\[\sgn(x,y,x_g^m)=\sgn(x,y,g^{-1} x). \]
Inequality \eqref{equ:distance}, together with the above two inequalities, implies
\begin{equation}\label{ineq:gxgy}
|\phi( gx)-\phi( gy)-\sgn(x,y,g^{-1} x)\phi'( g x)d(x,y)\exp(-\sigma(g,x)-\sigma(g,y))|\leq |\phi''|_\infty 2e^{-4(t-s_1)}.
\end{equation}

We may now replace the cocycle with the Cartan projection and the angle part. Since $$\frac{e^{-2\kappa(g)}+d(x_g^m,g^{-1} x)}{d(g^{-1} x,x)}\leq 2e^{-t+ s_1}<1/2,$$ Lemma \ref{lem:carcoc} implies that
\begin{align*}
&|\sigma(g,x)-\kappa(g)-\log d(g^{-1} x,x)|\leq 2\frac{e^{-2\kappa(g)}+d(x_g^m,g^{-1} x)}{d(g^{-1} x,x)}\leq 4e^{-t+s_1}, \\
&|\sigma(g,y)-\kappa(g)-\log d(g^{-1} x,y)|\leq 2\frac{e^{-2\kappa(g)}+d(x_g^m,g^{-1} x)}{d(g^{-1} x,y)}\leq 4e^{-t+s_1}.
\end{align*}
We have an inequality for $z_1,z_2$ in $\bb C$,
\[|e^{z_1}-e^{z_2}|\leq \max\{e^{\Re z_1},e^{\Re z_2}\}|z_1-z_2|. \]
Since $\sigma(g,x)\geq \kappa(g)+\log d(x_g^m,x)\geq t-  s_1$ and $\kappa(g)+\log d(g^{-1} x,x)\geq t-  s_1$, we have 
\begin{align*}
|\exp(-\sigma(g,x))-\exp(-\kappa(g))/d(g^{-1} x,x)|\leq e^{-t+  s_1}4e^{-t+s_1}.
\end{align*}
Therefore by inequality $|a_1a_2-b_1b_2|\leq |(a_1-b_1)a_2|+|(a_2-b_2)b_1|$, we have
\[|e^{-\sigma(g,x)-\sigma(g,y)}-e^{-2\kappa(g)}/(d(g^{-1} x,x)d(g^{-1} x,y))|\leq 8e^{-3(t-s_1)}. \]
Then by the hypothesis $|\xi|=e^{2t+s}$ and \eqref{ineq:gxgy}, we have
\begin{align*}
&\left|e^{i\xi(\phi( gx)-\phi( gy)}-e^{i\xi\phi'( g x)\sgn(x,y,g^{-1} x)d(x,y)\exp(-2\kappa(g))/(d(g^{-1} x,x)d(g^{-1} x,y))}\right|\\
\leq & |\xi||\phi( gx)-\phi( gy)-\phi'( g x)\sgn(x,y,g^{-1} x)d(x,y)\exp(-2\kappa(g))/(d(g^{-1} x,x)d(g^{-1} x,y))|\\
\leq &|\xi||\phi''|_\infty 2e^{-4(t-s_1)}+|\xi||\phi'd(x,y)||e^{-\sigma(g,x)-\sigma(g,y)}-e^{-2\kappa(g)}/(d(g^{-1} x,x)d(g^{-1} x,y))|\\
\leq &|\xi|(|\phi''|_\infty 2e^{-4(t-s_1)}+8|\phi'|_\infty e^{-3(t-s_1)})\leq 8(|\phi''|_\infty+|\phi'|_\infty)e^{-t+s+3s_1}. 
\end{align*}

Finally, for $|\cartwo_0-\cartwo|$, it suffices to add the difference 
\[|r( gx)r( gy)-r( g x)^2|\leq |r|_\infty|r'|_\infty e^{-2(t-s_1)}.\]
Then 
\begin{equation*}
|\cartwo_0-\cartwo|\leq |r|_\infty|r'|_\infty e^{-2(t-s_1)}+|r|_\infty^2(|\phi''|_\infty+|\phi'|_\infty)e^{-t+s+3s_1}=\oexp(s),
\end{equation*}
where $\oexp(s)$ does not depend on $t$, but depends on $r,\phi$. The proof is complete.
\end{proof}
\begin{rem}[Minus case]
The proof works the same for $M_t^-$.
\end{rem}

\noindent Jialun LI\\
Université de Bordeaux\\
jialun.li@math.u-bordeaux.fr

\end{document}